\def\red{\color{red}}
\def\rn{{\mathbb{R}^n}}
\def\nn{{\mathbb N}}
\def\zz{{\mathbb Z}}
\def\vi{\varphi}
\def\fz{\infty }
\def\lf{\left}
\def\r{\right}
\def\ls{\lesssim}
\def\noz{\nonumber}
\def\XXint#1#2#3{{\setbox0=\hbox{$#1{#2#3}{\int}$ }
		\vcenter{\hbox{$#2#3$ }}\kern-.6\wd0}}
\DeclareMathOperator{\diam}{diam}
\newtheorem{theorem}{Theorem}[section]
\newtheorem{lemma}[theorem]{Lemma}
\newtheorem{corollary}[theorem]{Corollary}
\newtheorem{proposition}[theorem]{Proposition}
\theoremstyle{definition}
\newtheorem{remark}[theorem]{Remark}
\newtheorem{definition}[theorem]{Definition}
\newtheorem{convention}[theorem]{Convention}
\renewcommand{\appendix}{\par
	\setcounter{section}{0}%
	\setcounter{subsection}{0}%
	\setcounter{subsubsection}{0}%
	\gdef\thesection{\@Alph\c@section}%
	\gdef\thesubsection{\@Alph\c@section.\@arabic\c@subsection}%
	\gdef\theHsection{\@Alph\c@section.}%
	\gdef\theHsubsection{\@Alph\c@section.\@arabic\c@subsection}%
	\csname appendixmore\endcsname
}
\numberwithin{equation}{section}
\def\mvint_#1{\mathchoice
	{\mathop{\vrule width 6pt height 3 pt depth -2.5pt
			\kern -9pt \intop}\limits_{\kern -3pt #1}}%
	%%%% P.\Sph., 01/03/2001
	% old definition had ...\nolimits_{#1}}
	% \kern -3pt makes nicer distances between the integral sign
	% and the domain of integration
	%%%%
	{\mathop{\vrule width 5pt height 3 pt depth -2.6pt
			\kern -6pt \intop}\nolimits_{#1}}%
	{\mathop{\vrule width 5pt height 3 pt depth -2.6pt
			\kern -6pt \intop}\nolimits_{#1}}%
	{\mathop{\vrule width 5pt height 3 pt depth -2.6pt
			\kern -6pt \intop}\nolimits_{#1}}}
\title{\bf A Measure Characterization of Embedding and Extension Domains for Sobolev, Triebel--Lizorkin, and Besov Spaces on Spaces of Homogeneous Type
\footnotetext{\hspace{-0.35cm} 2020 {\it Mathematics Subject Classification}.
Primary 46E36, 46E35; Secondary  43A85, 42B35, 30L99.
\endgraf {\it Key words and phrases.}  quasi-metric measure space, space of homogeneous type,
doubling measure,
Sobolev space, Besov space, Triebel--Lizorkin space, extension, embedding, the measure density condition.
\endgraf This project is partially supported by the National
Key Research and Development Program of China (Grant No. 2020YFA0712900)
and the National Natural Science Foundation of China (Grant Nos. 11971058,
12071197, 12122102  and 11871100).}}
\author{Ryan Alvarado\footnote{Corresponding
author, E-mail: \texttt{rjalvarado@amherst.edu}/{\red February 14, 2022}/Final version.},\ \
Dachun Yang and Wen Yuan}
\date{}
\begin{document}
	
\maketitle
\vspace{-0.8cm}
	
\begin{center}
\begin{minipage}{13cm}
{\small {\bf Abstract.}\quad In this article, for an optimal range of the smoothness parameter $s$ that depends (quantitatively) on the geometric makeup of the underlying space, the authors identify purely measure theoretic conditions that fully characterize embedding and extension domains for the scale of Haj{\l}asz--Triebel--Lizorkin spaces $M^s_{p,q}$ and Haj{\l}asz--Besov spaces $N^s_{p,q}$ in  general spaces of homogeneous type. Although stated in the context of quasi-metric spaces, these characterizations improve related work even in the metric setting. In particular, as a corollary of the main results in this article, the authors obtain a new characterization for Sobolev embedding and extension domains in the context of general doubling metric measure spaces.}
\end{minipage}
\end{center}

%\vspace{0.2cm}
\tableofcontents	
\vspace{0.5cm}

\section{Introduction}

The extendability of a given class of functions satisfying
certain regularity properties (for instance, functions belonging to Sobolev, Besov or Triebel--Lizorkin
spaces)
from a subset of an ambient to the entire space,
while retaining the regularity properties in question,
is a long-standing topic in analysis that has played a fundamental role in both theoretical and applied branches of mathematics.
The literature on this topic is vast; however, we refer the reader to, for instance,  \cite{j81,T89,sd93,k98,r99,r00,S06,hkt08,hajlaszkt2,Z15,HIT16} and  the references
therein, for a series of  studies on extension problems for function spaces including
Sobolev, Besov, and Triebel--Lizorkin spaces.
In this article, we investigate the relationship between the extension (as well as embedding) properties of these functions spaces and the regularity of the underlying measure in the most general geometric and measure theoretic context that these questions still have a meaningful formulation.

It is known that the extension  and embedding properties of  Sobolev, Besov, and Triebel--Lizorkin
spaces are closely related to the so-called measure density condition.
To recall this condition,  let $(X, \rho,\mu)$ be a \emph{quasi-metric measure space},
namely, $X$ is a set of cardinality $\geq2$, $\rho$  a quasi-metric on $X$, and
$\mu$  a Borel measure such that,
  for any $x\in X$ and $r\in(0,\fz)$,  the {\it $\rho$-ball}
$B_\rho(x,r):=\{y\in X:\ \rho(x,y)<r\}$
is $\mu$-measurable and $\mu(B_\rho(x,r))\in(0,\fz)$.
Then, a $\mu$-measurable set $\Omega\subset X$ is said to satisfy
the \emph{measure density condition} if there exists a positive constant $C_\mu$ such that
\begin{equation}
\label{measdens-INT-a}
\mu(B_\rho(x,r))\leq C_\mu\,\mu(B_\rho(x,r)\cap\Omega)
\quad\mbox{for any $x\in\Omega$ and $r\in(0,1]$.}
\end{equation}
The upper bound  1 for the radii in \eqref{measdens-INT-a} is not essential and can be replaced by any strictly positive and finite threshold. From a geometric perspective, the measure density condition implies, among other things, that $\Omega$ is somewhat ``thick" nearby its boundary, and it is easy to see from definitions that the category of domains satisfying \eqref{measdens-INT-a} encompasses not only the classes of Lipschitz and $(\varepsilon,\delta)$ domains, but also fractal sets such as the `fat' Cantor and Sierpi\'nski carpet~sets \cite{jw84,S07}.

In the Euclidean setting, that is, $X=\mathbb{R}^n$ equipped with the Euclidean distance and
the Lebesgue measure,
Haj\l asz et al. \cite{hkt08}
proved that if $\Omega\subset \rn$ is an extension domain of  the classical
Sobolev space $W^{k,p}$ for some positive integer $k$ and $p\in[1,\fz)$, that is, if every
function in $W^{k,p}(\Omega)$ can be extended to a function in $W^{k,p}(\rn)$, then $\Omega$ necessarily  satisfies the measure density condition (see also \cite{k90}). Although there are domains satisfying \eqref{measdens-INT-a} that do not have the extension property (for instance, the `slit disk' example is $\mathbb{R}^2$), it was shown in \cite[Theorem~5]{hkt08} that \eqref{measdens-INT-a} together with the demand that the space $W^{k,p}(\Omega)$ admits a characterization via
some sharp maximal functions in spirit of Calder\'on \cite{c72}, is enough to characterize all $W^{k,p}$-extension domains when $p\in(1,\fz)$. In this case, there actually exists a bounded and linear operator $$\mathscr{E}: W^{k,p}(\Omega) \to W^{k,p}(\rn)$$
satisfying $\mathscr{E}u|_\Omega=u$ for any $u\in W^{k,p}(\Omega)$, which is a variant of the Whitney--Jones extension operator; see \cite{S06}.
Zhou \cite[Theorem~1.1]{Z15} considered the fractional
Slobodeckij--Sobolev space $W^{s,p}$ with $s\in(0,1)$ and $p\in(0,\fz)$ (which is known to coincide with the classical Besov and Triebel--Lizorkin spaces $B^s_{p,p}$ and $F^s_{p,p}$), and proved
that a domain $\Omega\subset \rn$ with $n\ge 2$ is a $W^{s,p}$-extension domain
if and only if $\Omega$ satisfies the  measure density condition. Going further, Zhou \cite[Theorem~1.2]{Z15} showed that the measure density condition is also equivalent to the existence of certain Sobolev-type embeddings for the space $W^{s,p}$ on $\Omega$. Remarkably, no further conditions beyond \eqref{measdens-INT-a} are needed to characterize $W^{s,p}$-embedding and extension domains; however, the extension operator  in \cite[Theorem~1.1]{Z15} is a modified Whitney-type operator based on \textit{median values} that may fail to be linear if $p<1$. We shall return to this point below.

Characterizations of extension domains  for certain classes of function spaces have been further studied in much more general  settings than the Euclidean one, namely, in the setting of \textit{metric} measure spaces $(X,\rho,\mu)$, where the measure
$\mu$ satisfies  the following  {\it doubling condition}:  there exists a positive
constant  $C_{D}$ such that
\begin{equation}\label{doub}
\mu(2B)\leq C_{D}\,\mu(B)\quad\,\mbox{ for any $\rho$-ball\,\,$B\subset X$;}
\end{equation}
here and thereafter, for any $\lambda\in(0,\fz)$ and  $\rho$-ball $B$ in $X$,
$\lambda B$ denotes the ball with the same center as $B$ and $\lambda$-times its radius; see \cite{hajlaszkt2,S07,h2001,bs07}.
When $\rho$ is a quasi-metric and $\mu$ is doubling, the triplet $(X,\rho,\mu)$ is called a \textit{space of homogeneous type} in the sense of Coifman and Weiss \cite{CoWe71,CoWe77}, and a \emph{doubling metric measure space} is a space of homogeneous type equipped with a metric.

For doubling metric measure spaces $(X,\rho,\mu)$,  Haj\l asz et al.
\cite[Theorem~6]{hajlaszkt2} proved that,
if a set $\Omega\subset X$ satisfies the measure density condition
\eqref{measdens-INT-a} then, for any given $p\in[1,\infty)$, there exists a bounded and linear Whitney-type extension operator from $M^{1,p}(\Omega)$ into $M^{1,p}(X)$, where $M^{1,p}$ denotes the Haj\l asz-Sobolev space (introduced in \cite{hajlasz2}) defined via pointwise Lipschitz-type inequalities. Under the assumption that $s\in(0,1)$, the corresponding Whitney extension theorems for
the Haj\l asz--Triebel--Lizorkin space $M^s_{p,q}$ and the Haj\l asz--Besov space
$N^s_{p,q}$ were   obtained by Heikkinen et al. \cite{HIT16}. More specifically, they proved in \cite[Theorem~1.2]{HIT16} that, if  $(X,\rho,\mu)$ is a doubling metric measure space
and $\Omega\subset X$ is a $\mu$-measurable set satisfying \eqref{measdens-INT-a} then, for any $s\in(0,1)$, $p\in(0,\fz)$, and $q\in(0,\fz]$,
there exists a bounded extension operator from $M^s_{p,q}(\Omega)$ [resp., $N^s_{p,q}(\Omega)$] into $M^s_{p,q}(X)$ [resp., $N^s_{p,q}(X)$]. Similar to \cite[Theorem~1.1]{Z15}, the extension operator in \cite[Theorem~1.2]{HIT16} may fail to be linear for small values of $p$.
The definitions and some basic properties of these function spaces can be found in Section~\ref{s-func} below. In particular, it is known that $M^{1,p}$, $M^s_{p,q}$, and $N^s_{p,q}$ coincide with the classical Sobolev, Triebel--Lizorkin, and Besov spaces on $\mathbb{R}^n$, respectively, and that
${M}^1_{p,\fz}=M^{1,p}$ for any $p\in(0,\fz)$ in general metric spaces; see \cite{KYZ11}.

The first main result of this work is to
provide a uniform approach to
extending both \cite[Theorem~6]{hajlaszkt2} and
\cite[Theorem~1.2]{HIT16} simultaneously  from metric spaces
to general \emph{quasi-metric} spaces, without compromising the quantitative
aspects of the theory pertaining to the range of the smoothness parameter $s$.
To better elucidate this latter point, we remark here that the upper bound  1
for the range of the smoothness parameter $s$ of the function spaces considered
in \cite{HIT16} is related to the fact
that one always has nonconstant Lipschitz functions (that is, H\"older continuous
functions of order 1)  in metric spaces. However, in a general quasi-metric space,
there is no guarantee that nonconstant Lipschitz functions exist. From this perspective,
it is crucial for the work being undertaken here to clarify precisely what should replace
the number 1 in the range for $s$ in the more general setting of quasi-metric spaces. As it turns out, the answer
is rather subtle and is, roughly speaking, related to the nature of the `best' quasi-metric on
$X$ which is bi-Lipschitz equivalent to $\rho$. More specifically, the upper bound on the
range of $s\in(0,1)$ that was considered in \cite{HIT16} for metric spaces should be
replaced by the following `index':
\begin{align}
\label{index-INT}
{\rm ind}\,(X,\rho):=&\sup_{\varrho\approx\rho}\left(\log_2C_\varrho\right)^{-1}
:=\sup_{\varrho\approx\rho}\left\{\log_2\left[\sup_{\substack{x,\,y,\,z\in X\\\mbox{\scriptsize{not all equal}}}}
\frac{\varrho(x,y)}{\max\{\varrho(x,z),\varrho(z,y)\}}\right]\right\}^{-1}
\in\,(0,\infty],
\end{align}
where   the first supremum is taken over all
the quasi-metrics $\varrho$ on $X$ which are  bi-Lipschitz equivalent to $\rho$,
and the second supremum is taken over all the points $x,\,y,\,z$ in $X$ which are not all
equal; see  \eqref{C-RHO.111} below for a more formal definition of $C_\varrho\in[1,\infty)$.
This index was introduced in \cite{MMMM13}, and its value encodes information about the
geometry of the underlying space, as evidenced by the following examples
(see  Section~\ref{section:preliminaries} for additional examples highlighting this fact):
\begin{itemize}[itemsep=1pt]
\item {${\rm ind}\,(\mathbb{R}^n,|\cdot-\cdot|)=1$ and  ${\rm ind}\,([0,1]^n,|\cdot-\cdot|)=1$,
where $|\cdot-\cdot|$ denotes the Euclidean distance;}

\item {${\rm ind}\,(\mathbb{R}^n,|\cdot-\cdot|^{1/\varepsilon})=\varepsilon$ and ${\rm ind}\,([0,1]^n,|\cdot-\cdot|^{1/\varepsilon})=\varepsilon$\, for any $\varepsilon\in(0,\infty)$;}

\item
{${\rm ind}\,(X,\rho)\geq 1$ if there exists a genuine distance on $X$ which is
pointwise equivalent to $\rho$;}

\item {$(X,\rho)$ cannot be bi-Lipschitzly embedded into some ${\mathbb{R}}^n$ with
$n\in{\mathbb{N}}$, whenever ${\rm ind}\,(X,\rho)<1$;}

\item  {${\rm ind}\,(X,\rho)=1$ if $(X,\rho)$ is a metric space that is equipped with a doubling measure  and supports a weak $(1, p)$-Poincar\'e inequality with $p>1$.}

\item {${\rm ind}\,(X,\rho)=\infty$ if there exists an ultrametric \footnote{Recall
that a metric $\rho$ on the set $X$ is called an \textit{ultrametric} provided that, in place of the triangle-inequality, $\rho$ satisfies the stronger condition $\rho(x,y)\leq\max\{\rho(x,z),\rho(z,y)\}$ for any $x,\,y,\,z\in X.$}
on $X$ which is pointwise equivalent to $\rho$.}
\end{itemize}
From these examples, we can deduce that the range of $s\in(0,1)$  is not optimal, even in the metric setting, as the number 1 fails to fully reflect the geometry of the underlying space.

To facilitate the statement of the main theorems in this work, we make the following notational convention: Given a quasi-metric space $(X,\rho)$ and  fixed numbers $s\in(0,\infty)$ and $q\in(0,\infty]$, we will understand by $s\preceq_q{\rm ind}\,(X,\rho)$ that $s\leq{\rm ind}\,(X,\rho)$ and that the value $s={\rm ind}\,(X,\rho)$ is only permissible when $q=\infty$ and the supremum defining the number ${\rm ind}\,(X,\rho)$ in \eqref{index-INT} is attained. Also, recall that  a measure $\mu$ is doubling (in the sense of \eqref{doub}) if and
only if $\mu$ satisfies the following $Q$-{\it doubling} property:
there exist positive constants $Q$ and $\kappa$ such that
\begin{equation}\label{Doub-2}
\kappa\left(\frac{r}{R}\right)^{Q}\leq\frac{\mu(B_\rho(x,r))}{\mu(B_\rho(y,R))},
\end{equation}
whenever $x,\,y\in X$ and $0<r\leq R<\infty$ satisfy  $B_\rho(x,r)\subset B_\rho(y,R)$.
Although \eqref{doub} and \eqref{Doub-2} are equivalent, the advantage of \eqref{Doub-2} over \eqref{doub}, from the perspective of the work in this article, is that the exponent $Q$ in \eqref{Doub-2} provides some notion of dimension for the space $X$. With this convention and piece of terminology in hand, the first main result of this article which simultaneously generalizes \cite[Theorem~6]{hajlaszkt2} and  \cite[Theorem~1.2]{HIT16}  from metric spaces to general quasi-metric spaces for an
optimal range of $s$ is as follows (see also Theorem~\ref{measext} below).

\begin{theorem}
\label{measext-INT}
Let $(X,\rho,\mu)$ be a space of homogeneous type,
where $\mu$ is a Borel regular \footnote{In the sense that every $\mu$-measurable
set is contained in a Borel set of equal measure.}
measure satisfying \eqref{Doub-2} for some $Q\in(0,\infty)$, and  fix exponents $s,\,p\in(0,\infty)$ and $q\in(0,\infty]$ such that
$s\preceq_q{\rm ind}\,(X,\rho).$
Also, suppose that $\Omega\subset X$ is a nonempty $\mu$-measurable set that satisfies the measure density condition \eqref{measdens-INT-a}.
Then any function belonging to $M^s_{p,q}(\Omega,\rho,\mu)$
can be extended to the entire space $X$ with preservation of smoothness
 while retaining control of the associated `norm'.
More precisely, there exists a positive constant $C$ such that,
\begin{eqnarray*}
\begin{array}{c}
\mbox{for any }\,u\in M^s_{p,q}(\Omega,\rho,\mu),
\,\,\mbox{ there exists a }\ \widetilde{u}\in M^s_{p,q}(X,\rho,\mu)
\\[6pt]
\mbox{ satisfying }\,\,\, u=\widetilde{u}|_{\Omega}\,\,
\mbox{ and }\,\,\,\,\|\widetilde{u}\|_{M^s_{p,q}(X,\rho,\mu)}
\leq C\|u\|_{M^s_{p,q}(\Omega,\rho,\mu)}.
\end{array}
\end{eqnarray*}
Consequently,
\begin{equation*}
M^s_{p,q}(\Omega,\rho,\mu)
=\left\{\widetilde{u}|_{\Omega}:\,\widetilde{u}\in M^s_{p,q}(X,\rho,\mu)\right\}.
\end{equation*}
Furthermore, if $p,\,q>Q/(Q+s)$, then there exists a  bounded linear  operator
$$\mathscr{E}\colon M^s_{p,q}(\Omega,\rho,\mu)\to M^s_{p,q}(X,\rho,\mu)$$
such that $\big(\mathscr{E}u\big)|_{\Omega}=u$ for any $u\in M^s_{p,q}(\Omega,\rho,\mu)$.
In addition, if $s<{\rm ind}\,(X,\rho)$, then all  of the statements above remain valid with $M^s_{p,q}$ replaced by $N^s_{p,q}$.
\end{theorem}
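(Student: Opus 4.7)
The plan is to construct a Whitney-type extension operator adapted to the quasi-metric setting and the index condition $s\preceq_q{\rm ind}(X,\rho)$. The first step is to replace $\rho$ with a bi-Lipschitz equivalent quasi-metric $\varrho$ whose constant $C_\varrho$ satisfies $\log_2 C_\varrho\leq 1/s$ (with strict inequality unless $q=\infty$ and the supremum in \eqref{index-INT} is attained). Such a $\varrho$ exists precisely by the definition of ${\rm ind}(X,\rho)$, and passing to $\varrho$ preserves both the measure density condition \eqref{measdens-INT-a} (with a different constant) and the spaces $M^s_{p,q}$, $N^s_{p,q}$ (up to equivalence of `norms'). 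Working with $\varrho$ is crucial because it makes available genuine H\"older-type test functions of order $s$.

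Next, I would build a Whitney covering of the open set $X\setminus\Omega$: a countable family $\{B_i\}_{i\in I}=\{B_\varrho(x_i,r_i)\}_{i\in I}$ with $r_i$ comparable to $\dist_\varrho(x_i,\Omega)$, with bounded overlap (quantified via the doubling property \eqref{Doub-2}), and with dilates $\lambda B_i$ still in $X\setminus\Omega$ for a fixed $\lambda>1$. Subordinate to this covering, I would construct a partition of unity $\{\varphi_i\}_{i\in I}$ whose members satisfy the $s$-H\"older estimate
\begin{equation*}
|\varphi_i(x)-\varphi_i(y)|\leq C\left[\frac{\varrho(x,y)}{r_i}\right]^s,\qquad x,y\in X,
\end{equation*}
with $\sum_i\varphi_i\equiv 1$ on $X\setminus\Omega$. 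The exponent $s$ here is what forces the passage to $\varrho$; the standard bump construction, applied to $t\mapsto\max\{0,1-t\}^{s\log_2 C_\varrho}$-type profiles, delivers exactly this exponent. For each $i$, the measure density condition lets me select a `reflected' ball $\widetilde{B}_i\subset\Omega$ with $r(\widetilde{B}_i)\approx r_i$, $\dist_\varrho(\widetilde{B}_i,B_i)\lesssim r_i$, and $\mu(\widetilde{B}_i\cap\Omega)\gtrsim\mu(\widetilde{B}_i)$.

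With these ingredients in hand, I define the extension by
\begin{equation*}
\widetilde{u}(x):=\begin{cases} u(x) & \text{if } x\in\Omega,\\[2pt] \sum_{i\in I}\varphi_i(x)\,m_i(u) & \text{if } x\in X\setminus\Omega,\end{cases}
\end{equation*}
where $m_i(u)$ is the average $\dashint_{\widetilde{B}_i\cap\Omega} u\,d\mu$ when $p,q>Q/(Q+s)$, and an appropriately chosen median of $u$ on $\widetilde{B}_i\cap\Omega$ otherwise. The average choice is manifestly linear and bounded in the regime $p,q>Q/(Q+s)$, which is exactly the threshold for the local $L^1$-integrability of $M^s_{p,q}$-functions (via the Sobolev-type embedding established earlier in the paper from the $Q$-doubling \eqref{Doub-2}); the median provides the correct substitute in the lower-integrability regime. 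To verify that $\widetilde{u}\in M^s_{p,q}(X,\rho,\mu)$, I would build a fractional $s$-gradient $\widetilde{g}=\{\widetilde{g}_k\}_{k\in\zz}$ for $\widetilde{u}$ from a given fractional $s$-gradient $g=\{g_k\}_{k\in\zz}$ of $u$, splitting the pointwise inequality $|\widetilde{u}(x)-\widetilde{u}(y)|\leq\varrho(x,y)^s[\widetilde{g}_k(x)+\widetilde{g}_k(y)]$ into the three cases $x,y\in\Omega$, $x,y\in X\setminus\Omega$, and mixed. In the two non-trivial cases, the $s$-H\"older estimate on $\varphi_i$ combines with telescoping between nearby reflected balls, and the differences of medians/averages over neighbouring reflected balls are controlled by Haj\l asz-type inequalities applied to $g$ restricted to slightly enlarged balls in $\Omega$. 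Summing in $i$ and invoking the finite overlap, together with the doubling property to transfer balls from $X\setminus\Omega$ to $\Omega$, yields a pointwise bound $\widetilde{g}_k\lesssim\mathcal{M}(g_k)$ (or a suitable shift in the index $k$) on $X\setminus\Omega$, whence the Fefferman--Stein maximal inequality (applied to either the $L^p(\ell^q)$ norm in the $M^s_{p,q}$ case or the $\ell^q(L^p)$ norm in the $N^s_{p,q}$ case) gives $\|\widetilde{g}\|\lesssim\|g\|$.

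The main obstacle I expect is twofold. First, engineering the partition of unity with the sharp H\"older exponent $s$ equal (or arbitrarily close) to ${\rm ind}(X,\rho)$ in the fully quasi-metric setting is where the subtle index analysis enters; this is also the step that distinguishes the strict inequality $s<{\rm ind}(X,\rho)$ needed in the $N^s_{p,q}$ case from the possibility $s={\rm ind}(X,\rho)$ in the $M^s_{p,q}$, $q=\infty$ case, since the latter needs the supremum in \eqref{index-INT} to be attained. Second, handling medians when $p$ or $q$ is at most $Q/(Q+s)$ requires delicate substitutes for the triangle inequality on averages (medians are only sub-additive in a weak sense), which is exactly why linearity of $\mathscr{E}$ is only claimed above the threshold $Q/(Q+s)$.
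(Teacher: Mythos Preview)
Your architecture matches the paper's: pass to an equivalent quasi-metric $\varrho$ realizing the index, Whitney-decompose $X\setminus\Omega$, build a H\"older partition of unity, reflect into $\Omega$, and extend via medians (or averages above the threshold $Q/(Q+s)$). Two points, however, are genuine gaps rather than details to be filled in.

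First, the fractional gradient bound you propose, $\widetilde g_k\lesssim\mathcal{M}(g_k)$ ``or a suitable shift in $k$'', does not work. When $x,y\in X\setminus\Omega$ with $\varrho(x,y)\approx 2^{-k}$ but $\dist_\varrho(x,\Omega)\approx 2^{-m}$ for some $m\gg k$, the H\"older estimate on the partition of unity produces a factor $2^{(m-k)\alpha}$, while the oscillation of medians over the reflected balls is controlled by $g_j$ at scales $j\gtrsim m$, not $j\approx k$. No single shift repairs this; the paper's gradient is
\[
h_k(x)=\Bigl[\mathcal{M}_{\varrho_\#}\Bigl(\sup_{j\in\mathbb Z}2^{-|k-j|\delta t}g_j^t\Bigr)(x)\Bigr]^{1/t}\quad(k\ge k_0),\qquad h_k(x)=2^{(k+1)s}|\mathcal Fu(x)|\quad(k<k_0),
\]
with $t<\min\{p,q\}$ and a carefully chosen $\delta\in(0,\min\{\alpha-s,\,s-\varepsilon\})$. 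The weighted supremum over \emph{all} scales is exactly what absorbs the mismatch between $k$ and $m$, and the parameter $\delta>0$ is what makes the $\ell^q$-sum (Lemma~\ref{heli-est} plus Fefferman--Stein) converge. This is also why the partition of unity must have H\"older order $\alpha$ with $\alpha>s$ whenever $q<\infty$ (not $\alpha=s$ as you wrote): without the gap $\alpha-s>0$ there is no room for $\delta>0$, and the Besov estimate in particular collapses. The equality $\alpha=s$ is permitted only in the endpoint $q=\infty$, $s=(\log_2 C_\varrho)^{-1}$, where $\delta=0$ is acceptable because the $\ell^\infty$-norm needs no summation.

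Second, the measure density condition \eqref{measdens-INT-a} is only assumed for radii $r\le 1$, so medians on reflected balls $\widetilde B_i$ are uncontrolled once $r_i>1$. The paper handles this by extending only to the neighborhood $V=\{x:\dist_{\varrho_\#}(x,\Omega)<2C_{\varrho_\#}\}$ (using just the Whitney balls with $r_j\le 1$), proving $\mathcal Fu\in M^s_{p,q}(V)$, and then multiplying by a fixed bounded $\alpha$-H\"older cutoff $\Psi$ supported in $V$ with $\Psi\equiv 1$ on $\Omega$ (Lemma~\ref{GVa2-prev}). Your global formula on all of $X\setminus\Omega$ would need this truncation step added.
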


The distinguishing feature of Theorem~\ref{measext-INT} is the range of $s$ for which the conclusions of this result hold true, because it is the largest range of this type to be identified and it turns out to be in the nature of best possible.
More specifically, if the underlying space is $\mathbb{R}^n$
equipped with the Euclidean distance, then ${\rm ind}\,(\mathbb{R}^n, |\,\cdot-\cdot\,|)=1$
and Theorem~\ref{measext-INT} is valid for the Haj\l asz--Sobolev space
${M}^1_{p,\infty}={M}^{1,p}$, as well as the Haj\l asz--Triebel--Lizorkin space ${M}^s_{p,q}$ and
the Haj\l asz--Besov space ${N}^s_{p,q}$  whenever $s\in(0,1)$.
Therefore, we recover the expected range for $s$ in the Euclidean setting. Similar considerations also hold whenever the underlying space is a set $S\subset\rn$ or, more generally, a metric measure space. Consequently, the trace spaces $W^{1,p}(\rn)|_\Omega=M^{1,p}(\rn)|_\Omega$, $F^s_{p,q}(\rn)|_\Omega=M^s_{p,q}(\rn)|_\Omega$, and $B^s_{p,q}(\rn)|_\Omega=N^s_{p,q}(\rn)|_\Omega$ can be identified with the pointwise spaces $M^{1,p}(\Omega)$, $M^s_{p,q}(\Omega)$, and $N^s_{p,q}(\Omega)$, respectively, for $s\in(0,1)$, where $\Omega\subset\rn$ is as in Theorem~\ref{measext-INT}, and $F^s_{p,q}$ and $B^s_{p,q}$ denote the classical Triebel--Lizorkin and Besov spaces in $\rn$. Remarkably, there are environments where the range of $s$ is strictly larger than what it would be in the Euclidean setting. For example, if the underlying  space $(X,\rho)$ is an ultrametric space (like a Cantor-type set), then ${\rm ind}\,(X,\rho)=\infty$ and, in this case,  Theorem~\ref{measext-INT} is valid for the  spaces  ${M}^s_{p,q}$ and ${N}^s_{p,q}$ for {\it all} $s\in(0,\infty)$. To provide yet another example, for the metric space $(\mathbb{R},|\cdot-\cdot|^{1/2})$, one has that ${\rm ind}\,(\mathbb{R},|\cdot-\cdot|^{1/2})=2$ and hence, Theorem~\ref{measext-INT} is
valid for the spaces ${M}^s_{p,q}$ and ${N}^s_{p,q}$ for any $s\in(0,2)$. As these examples illustrate, Theorem~\ref{measext-INT} not only extends to full generality the related work in \cite[Theorem~1.2]{HIT16} and \cite[Theorem~6]{hajlaszkt2}, but also sharpens their work even in the metric setting by identifying an optimal range of the smoothness parameter $s$ for which these extension results hold true. In particular, the existence of an $M^{1,p}$-extension operator for  $p<1$ as given by  Theorem~\ref{measext-INT} is new in the context of doubling metric measure spaces.

Loosely speaking, the extension operator in Theorem~\ref{measext-INT} is of Whitney-type
in   that it is constructed via
gluing various `averages' of a function together with using
a partition of unity that is sufficiently smooth (relative to the parameter $s$).
A key issue in this regard is that functions in $M^s_{p,q}$ and $N^s_{p,q}$
may not be locally integrable for small values of $p$, which, in turn,
implies that the usual integral average of such functions will not be well defined.
It turns out that one suitable replacement for the integral average
in this case is the so-called median value of a function
(see, for instance, \cite{F91}, \cite{Z15}, \cite{HIT16}, and also Definition~\ref{median} below). Although the Whitney extension operator based on median values has the distinct attribute that it extends functions in  $M^s_{p,q}(\Omega)$ and $N^s_{p,q}(\Omega)$ for any given $p\in(0,\infty)$, there is no guarantee that such an operator is linear. However, when $p$ is not too small, functions in $M^s_{p,q}$ and $N^s_{p,q}$ are locally integrable and we can consider a \textit{linear} Whitney-type extension operator based on integral averages.

While our approach for dealing with Theorem~\ref{measext-INT} is
related to the work in \cite{hajlaszkt2} and \cite{HIT16},
where metric spaces have been considered, the geometry of
quasi-metric spaces can be significantly more intricate,
which brings a number of obstacles. For example, as mentioned above, the extension
operators constructed in \cite{hajlaszkt2} and \cite{HIT16} rely on
the existence of a Lipschitz partition of unity;
however, depending on the quasi-metric space, nonconstant Lipschitz functions may not exist.
To cope with this fact,
we employ a partition of unity consisting of
H\"older continuous functions developed in \cite[Theorem 6.3]{AMM13} (see also \cite[Theorem~2.5]{AM15}) that exhibit a maximal amount of
smoothness (measured on the H\"older scale) that the geometry of
the underlying space can permit.

It was also shown in \cite[Theorem~1.3]{HIT16} that the measure density condition fully characterizes the $M^s_{p,q}$ and $N^s_{p,q}$ extension domains for any given $s\in(0,1)$, provided that metric measure space is \textit{geodesic} (that is, the metric
space has the property that any two points in it can be  joined by a curve whose length equals
the distance between these two points) and the measure satisfies the following $Q$-\textit{Ahlfors regularity} condition on $X$:
$$
\kappa_1r^Q\leq\mu(B_\rho(x,r))\leq\kappa_2r^Q\quad\mbox{for any $x\in X$ and finite $r\in(0,{\rm diam}_\rho(X)]$,}
$$
where $Q,\kappa_1,\kappa_2\in(0,\infty)$ and ${\rm diam}_\rho(X):=\sup\{\rho(x,y): x,\,y\in X\}$. Clearly, every $Q$-Ahlfors regular measure is $Q$-doubling; however, there are non-Ahlfors regular doubling measures. The corresponding result for Haj\l asz--Sobolev spaces can be found in \cite[Theorem~5]{hajlaszkt2}.
The geodesic assumption is a very strong connectivity condition that precludes many settings in which the spaces $M^{1,p}$, $M^s_{p,q}$, and $N^s_{p,q}$ have a rich theory, such as  Cantor-type sets and the fractal-like metric space $(\mathbb{R},|\cdot-\cdot|^{1/2})$. In fact, one important virtue that the Haj\l asz--Sobolev, Haj\l asz--Besov, and Haj\l asz--Triebel--Lizorkin spaces possess over other definitions of these spaces in the quasi-metric setting, is that there is a fruitful theory for this particular brand of function spaces without needing to assume that the space is geodesic (or even connected) or that the measure is Ahlfors regular.

In this article, we sharpen the results in \cite[Theorem~1.3]{HIT16} and \cite[Theorem~5]{hajlaszkt2}
by eliminating the $Q$-Ahlfors regularity and the geodesic assumptions   on the underlying spaces,
without compromising the main conclusions of these results and, in particular, the quantitative aspects of the theory pertaining to the optimality of the range
for the smoothness parameter $s$. Moreover, in the spirit of Zhou \cite{Z15}, we go further and show that the measure density condition also fully characterizes the existence of certain Sobolev-type embeddings for the spaces $M^s_{p,q}$ and $N^s_{p,q}$ on domains, which is a brand new result even in the Euclidean setting.
For the clarity of exposition in this introduction, we state in the
following theorem a simplified summary of these remaining principal results which,
to some degree, represents the central theorem in this article.
The reader is directed to Theorems~\ref{measdens-ext-sob} and \ref{measdens-ext-sob-besov} below
for stronger and more informative formulations of this result.

\begin{theorem}
\label{measdens-ext-sob-INT}
Let $(X,\rho,\mu)$ be a space of homogeneous type, where $\mu$ is a Borel regular measure satisfying \eqref{Doub-2} for some $Q\in(0,\infty)$, and suppose that $\Omega\subset X$ is a $\mu$-measurable locally uniformly perfect set in the sense of \eqref{U-perf}. Then  the following statements are equivalent.
\begin{enumerate}[label=\rm{(\alph*)}]
\item $\Omega$ satisfies the  measure density condition \eqref{measdens-INT-a}.

\item $\Omega$ is an $M^s_{p,q}$-extension domain for some (or all)  $p\in(0,\infty)$, $q\in(0,\infty]$, and $s\in(0,\infty)$ satisfying $s\preceq_q{\rm ind}\,(X,\rho)$.

\item $\Omega$ is an $N^s_{p,q}$-extension domain for some (or all)  $p\in(0,\infty)$, $q\in(0,\infty]$, and $s\in(0,\infty)$ satisfying $s<{\rm ind}\,(X,\rho)$.
		
\item $\Omega$ is a local $\dot{M}^s_{p,q}$-embedding domain for some (or all)  $p\in(0,\infty)$, $q\in(0,\infty]$, and $s\in(0,\infty)$ satisfying $s\preceq_q{\rm ind}\,(\Omega,\rho)$.

\item $\Omega$ is a local $\dot{N}^s_{p,q}$-embedding domain for some (or all) $p\in(0,\infty)$, $q\in(0,p]$, and $s\in(0,\infty)$ satisfying $s\preceq_q{\rm ind}\,(\Omega,\rho)$.
\end{enumerate}
If the measure $\mu$  is actually $Q$-Ahlfors regular on $X$,  then the following statements are also equivalent to each of
{\rm (a)}-{\rm (e)}.
\begin{enumerate}[label={\rm(\alph*)}]\addtocounter{enumi}{5}
\item $\Omega$ is a global $M^s_{p,q}$-embedding domain for some (or all)  $p\in(0,\infty)$, $q\in(0,\infty]$, and $s\in~\!\!(0,\infty)$ satisfying $s\preceq_q{\rm ind}\,(\Omega,\rho)$.

\item $\Omega$ is a global $N^s_{p,q}$-embedding domain for some (or all)  $p\in(0,\infty)$, $q\in(0,p]$, and $s\in(0,\infty)$ satisfying $s\preceq_q{\rm ind}\,(\Omega,\rho)$.
\end{enumerate}
\end{theorem}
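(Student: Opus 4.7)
The plan is to prove the equivalences by forming a cycle of implications with (a) as the central hub. The forward directions (a)$\Rightarrow$(b) and (a)$\Rightarrow$(c) are immediate from Theorem~\ref{measext-INT}, which under the measure density condition produces bounded (and, when $p,q>Q/(Q+s)$, linear) extension operators from $M^s_{p,q}(\Omega,\rho,\mu)$ and $N^s_{p,q}(\Omega,\rho,\mu)$ into the corresponding spaces on $X$. To pass from extension to embedding, that is, from (b), (c) to (d), (e) (and to (f), (g) under Ahlfors regularity), the idea is to extend $u\in M^s_{p,q}(\Omega)$ to $\widetilde u\in M^s_{p,q}(X)$ by Theorem~\ref{measext-INT}, then apply a Sobolev--Poincar\'e-type embedding for $M^s_{p,q}(X)$ or $N^s_{p,q}(X)$ valid on the full $Q$-doubling (respectively, $Q$-Ahlfors regular) space $X$, and finally restrict back to $\Omega$, using the measure density condition to compare norms on $\Omega$ and on $X$.

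The principal new work is closing the cycle by deducing (a) from any one of the embedding hypotheses (d)--(g). Fix $x\in\Omega$ and $r\in(0,1]$, set $B:=B_\rho(x,r)$, and consider $\mu(B\cap\Omega)$. The strategy is to construct a H\"older test function $\vi_B$ supported in $B\cap\Omega$ that is bounded below by a positive constant on a smaller concentric subball, with H\"older exponent matching the restriction $s\preceq_q{\rm ind}\,(\Omega,\rho)$. Such bumps are furnished by the quasi-metric partition-of-unity construction of \cite{AMM13} (see also \cite[Theorem~2.5]{AM15}), which is essentially optimal: it yields cutoffs with H\"older regularity arbitrarily close to ${\rm ind}\,(\Omega,\rho)$. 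The locally uniformly perfect hypothesis on $\Omega$ is precisely what guarantees the existence of the required concentric subballs and the fact that they carry a nontrivial, comparable $\mu$-mass, ensuring that $\vi_B$ has nondegenerate $L^p(\Omega)$-norm.

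Feeding $\vi_B$ into the local $\dot M^s_{p,q}$-embedding inequality (or its $\dot N^s_{p,q}$-analogue, or the corresponding global version under Ahlfors regularity), and carefully tracking the dependence on $r$, on $s$, and on the doubling exponent $Q$ from \eqref{Doub-2}, one arrives at a lower bound of the form $\mu(B\cap\Omega)\gtrsim\mu(B)$, which is precisely \eqref{measdens-INT-a}. The implications (b)$\Rightarrow$(a) and (c)$\Rightarrow$(a) proceed in the same spirit: apply the extension operator to a bump initially defined on $\Omega$ and compare its norms before and after extension, which is known to recover measure density on the ambient doubling space.

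The main technical obstacle is the construction and fine quantitative analysis of $\vi_B$ in the quasi-metric setting, where Lipschitz bumps are generally unavailable and must be replaced by H\"older bumps whose exponent is dictated by ${\rm ind}\,(\Omega,\rho)$; this is the precise reason the smoothness restriction $s\preceq_q{\rm ind}\,(\Omega,\rho)$, rather than the weaker $s<1$ used in the metric geodesic setting of \cite{HIT16}, appears in (d)--(g). A secondary difficulty is the small-$p$ regime where functions in $M^s_{p,q}$ and $N^s_{p,q}$ need not be locally integrable, forcing one to employ the median-value machinery mentioned after Theorem~\ref{measext-INT}, which introduces nonlinearities that must be handled delicately. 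Finally, passing from the \emph{local} embeddings in (d), (e) to the \emph{global} ones in (f), (g) exploits the fact that on $Q$-Ahlfors regular spaces the Sobolev exponent $pQ/(Q-sp)$ is dimensionally sharp at all scales, allowing the embedding argument to close globally rather than only for $r\in(0,1]$.
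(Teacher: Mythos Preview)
Your overall architecture---(a) as hub, Theorem~\ref{measext-INT} for the forward extension directions, bump functions plus embeddings for the reverse directions---matches the paper's. The gap is in the reverse step ``embedding $\Rightarrow$ measure density,'' where a \emph{single} bump $\vi_B$ does not suffice.

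Concretely, feeding a bump $\vi_B$ supported in $B\cap\Omega$ and identically $1$ on a smaller $B'\cap\Omega$ into the local embedding (d) yields
\[
\bigl[\mu(B'\cap\Omega)\bigr]^{1/p^\ast}\;\lesssim\;[\mu(B)]^{-s/Q}\,\bigl[\mu(B\cap\Omega)\bigr]^{1/p},
\]
with $1/p^\ast=1/p-s/Q$. To extract $\mu(B\cap\Omega)\gtrsim\mu(B)$ from this you would need $\mu(B'\cap\Omega)\approx\mu(B\cap\Omega)$---but that comparability is precisely the measure density condition you are trying to prove, at a slightly smaller scale. Your assertion that local uniform perfectness ``guarantees\ldots\ that they carry a nontrivial, comparable $\mu$-mass'' is therefore circular: uniform perfectness controls only the \emph{geometry} of $\Omega$ (nested annuli meet $\Omega$), not the \emph{measure} carried by $\Omega$ in those annuli.

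The paper resolves this with a genuinely additional mechanism: Lemma~\ref{HolderBump} produces a \emph{sequence} $\{u_j\}_j$ of nested bumps on balls of radii $r_j\in(\delta r,r)$, and the embedding then yields a recursion
\[
\bigl[\mu(B_{\varrho_\#}(x,r_{j+1})\cap\Omega)\bigr]^{1/p^\ast}\;\lesssim\;[\mu(B)]^{-s/Q}\,2^{j}\,\bigl[\mu(B_{\varrho_\#}(x,r_{j})\cap\Omega)\bigr]^{1/p}.
\]
An abstract iteration (Lemma~\ref{iteration}) then forces a uniform lower bound on $\mu(B_{\varrho_\#}(x,r_1)\cap\Omega)$, hence on $\mu(B\cap\Omega)$. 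The locally uniformly perfect hypothesis enters separately, through Lemmas~\ref{en2-4} and~\ref{tx}, to reduce to balls satisfying $r\leq C\varphi^x_{\Omega,\rho}(r)/\lambda^2$ so that Lemma~\ref{HolderBump}(f) is available when the Poincar\'e-type version (e) is assumed. Only the H\"older case (g)$\Rightarrow$(a) is handled by what is essentially a single bump, because there the embedding is a pointwise estimate and uniform perfectness directly supplies a second evaluation point.

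Two minor points: median values play no role in the direction embedding $\Rightarrow$ density (the bumps are bounded H\"older functions), so that remark is misplaced; and for (b)$\Rightarrow$(a) the paper does not ``compare norms before and after extension'' but rather uses the extension to transfer the ambient embeddings on $X$ (Theorem~\ref{DOUBembedding}) to weak embeddings on $\Omega$, and then invokes the same bump-plus-iteration argument.
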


The additional demand that $\Omega$ is a locally uniformly perfect set is only used in proving that each of the statements in (b)-(g) imply the measure density condition in (a).
Note that this assumption on $\Omega$ is optimal. Indeed, when $\mu$ is $Q$-Ahlfors regular on $X$, the measure density condition in Theorem~\ref{measdens-ext-sob-INT}(a) implies that $\Omega$ is a locally Ahlfors regular \textit{set}, and it is known that such sets are necessarily locally uniformly perfect; see, for instance \cite[Lemma~4.7]{DaSe97}.

Regarding how Theorem~\ref{measdens-ext-sob-INT} fits into the existing literature, since the metric space $(\mathbb{R},|\cdot-\cdot|^{1/2})$ is not geodesic, one cannot appeal to \cite{hajlaszkt2} and \cite{HIT16} in order to conclude that the measure density condition
holds true for ${M}^{1,p}$, ${M}^s_{p,q}$ and ${N}^s_{p,q}$-extension domains
as their results are not applicable in such settings. However, we have ${\rm ind}\,(\mathbb{R},|\cdot-\cdot|^{1/2})=2$ and hence, Theorem~\ref{measdens-ext-sob-INT} in this work is valid for the spaces ${M}^s_{p,q}$ and ${N}^s_{p,q}$ for any $s\in(0,2)$ and, in particular, for the space $M^{1,p}$. In this vein, we also wish to mention that for $q=\infty$, Theorem~\ref{measdens-ext-sob-INT} is
valid for any $s\in(0,\infty)$ satisfying $s\leq(\log_2C_\rho)^{-1}$,
where $C_\rho\in[1,\infty)$ is as in \eqref{C-RHO.111}.
On the other hand, it follows immediately from \eqref{C-RHO.111} that if $\rho$
is a genuine \textit{metric} then $C_\rho\leq2$ and so,  $(\log_2C_\rho)^{-1}\geq 1$.
Therefore, by combining this observation with the fact
that $M^1_{p,\infty}=M^{1,p}$ (see \cite{AYY21}),
we have the following consequence of Theorem~\ref{measdens-ext-sob-INT} (or, more specifically, Theorem~\ref{measdens-ext-sob}), which
is a brand new result for Haj\l asz--Sobolev spaces in the setting of
metric spaces and which generalizes and extends the work in \cite{hajlaszkt2} and
\cite{Karak1}.

\begin{corollary}
\label{measdens-ext-sob-Cor}
Let $(X,\rho,\mu)$ be a doubling metric measure space,
where $\mu$ is a Borel regular  measure satisfying \eqref{Doub-2}
for some $Q\in(0,\infty)$, and suppose that
$\Omega\subset X$ is a $\mu$-measurable
locally uniformly perfect set in the sense of \eqref{U-perf}. Then  the following statements are equivalent.
\begin{enumerate}[label={\rm (\alph*)}]
\item $\Omega$ satisfies the  measure density condition \eqref{measdens-INT-a}.
		
\item $\Omega$ is an $M^{1,p}$-extension domain for some (or all)
$p\in(0,\infty)$ in the sense that there exists a positive constant $C $ such that,
\begin{eqnarray*}
\begin{array}{c}
\mbox{for any }\,u\in M^{1,p}(\Omega,\rho,\mu),
\,\,\mbox{ there exists a\ }\ \widetilde{u}\in M^{1,p}(X,\rho,\mu)
\\[6pt]
\mbox{satisfying}\,\,\, u=\widetilde{u}|_{\Omega}\,\,
\mbox{ and }\,\,\,\,\|\widetilde{u}\|_{M^{1,p}(X,\rho,\mu)}
\leq C\|u\|_{M^{1,p}(\Omega,\rho,\mu)}.
\end{array}
\end{eqnarray*}

\item For some (or all) $p\in\big(Q/(Q+1),\infty\big)$, there exists a linear and bounded extension operator $\mathscr{E}\colon M^{1,p}(\Omega,\rho,\mu)\to M^{1,p}(X,\rho,\mu)$
    such that $(\mathscr{E}u)|_{\Omega}=u$ for any $u\in M^{1,p}(\Omega,\rho,\mu)$.

\item $\Omega$ is a local $\dot{M}^{1,p}$-embedding domain for some (or all)  $p\in(0,\infty)$.
\end{enumerate}
If the measure $\mu$ is actually $Q$-Ahlfors regular on $X$,  then the following statement is also equivalent to
each of {\rm(a)}-{\rm(d)}.
\begin{enumerate}[label={\rm(\alph*)}]\addtocounter{enumi}{4}
\item  $\Omega$ is a global ${M}^{1,p}$-embedding domain for some (or all) $p\in(0,\infty)$.
\end{enumerate}
\end{corollary}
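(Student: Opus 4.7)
The plan is to derive Corollary~\ref{measdens-ext-sob-Cor} as a direct specialization of Theorem~\ref{measdens-ext-sob-INT} (or, more precisely, its stronger counterpart Theorem~\ref{measdens-ext-sob}) to the choices $s=1$ and $q=\infty$, combined with the identification of the Haj\l{}asz--Sobolev space $M^{1,p}$ with the Haj\l{}asz--Triebel--Lizorkin space $M^1_{p,\infty}$ that is available in the metric setting. In other words, no fresh construction is required: the corollary is a reading of the master theorem under a single numerical reduction.

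The main ingredient needed to make this specialization legitimate is the verification that $s=1$ lies in the admissible range $s \preceq_q \ind(X,\rho)$ when $q = \infty$. Since $\rho$ is a genuine metric, the triangle inequality gives $\rho(x,y)\leq \rho(x,z)+\rho(z,y)\leq 2\max\{\rho(x,z),\rho(z,y)\}$, hence $C_\rho \leq 2$ in \eqref{C-RHO.111}, and taking $\varrho=\rho$ in the outer supremum of \eqref{index-INT} already yields
\[
(\log_2 C_\rho)^{-1} \geq 1.
\]
This shows simultaneously that $\ind(X,\rho) \geq 1$ and that, in the borderline case $\ind(X,\rho) = 1$, the supremum in \eqref{index-INT} is attained (by $\varrho = \rho$); thus the convention $s \preceq_q \ind(X,\rho)$ is fulfilled with $s = 1$ and $q = \infty$. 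On the function-space side, the identification
\[
M^{1,p}(\Omega,\rho,\mu) = M^1_{p,\infty}(\Omega,\rho,\mu),
\]
with equivalent quasi-norms and valid for any $p \in (0,\infty)$ on any doubling metric measure space, is precisely the content of the result recorded in \cite{AYY21} (see also \cite{KYZ11}).

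With these two observations in place, the equivalences (a)--(d) of Corollary~\ref{measdens-ext-sob-Cor} follow from the corresponding items of Theorem~\ref{measdens-ext-sob-INT} specialized to $s=1$ and $q=\infty$. In particular, the quantitative threshold $p,\,q > Q/(Q+s)$ from Theorem~\ref{measext-INT} for the existence of a \emph{linear} bounded extension operator collapses to $p > Q/(Q+1)$ in this regime, which accounts for the restriction in item (c) of the corollary. Under the additional $Q$-Ahlfors regularity hypothesis on $\mu$, item (e) likewise follows from Theorem~\ref{measdens-ext-sob-INT}(f). The only potential subtlety worth flagging is the transfer of the quantitative estimates from the $M^1_{p,\infty}$-quasi-norm to the $M^{1,p}$-quasi-norm, but this is already built into the equivalence of quasi-norms cited above, so no independent argument is required and there is no genuine obstacle to overcome.
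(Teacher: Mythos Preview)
Your proposal is correct and matches the paper's own derivation essentially verbatim: the paper also obtains Corollary~\ref{measdens-ext-sob-Cor} as an immediate consequence of Theorem~\ref{measdens-ext-sob-INT} (more precisely, Theorem~\ref{measdens-ext-sob}) by observing that a genuine metric has $C_\rho\leq 2$, hence $(\log_2 C_\rho)^{-1}\geq 1$, and invoking the identification $M^{1,p}=M^1_{p,\infty}$. Your additional care in checking that the supremum defining ${\rm ind}\,(X,\rho)$ is attained when ${\rm ind}\,(X,\rho)=1$ is a nice touch that the paper leaves implicit.
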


The remainder of this article is organized as follows.
In Section~\ref{section:preliminaries}, we review some basic
terminology and results pertaining to quasi-metric
spaces and the main classes of function spaces considered in this work,
including the fractional Haj\l asz--Sobolev spaces, the Haj\l asz--Triebel--Lizorkin spaces,
and the Haj\l asz--Besov  spaces.
In particular, we present some optimal embedding results for these spaces
recently established in \cite{AYY21}, which extends the work of \cite{agh20} and \cite{Karak2}.

The main aim of Section~\ref{section:extensions} is to
 prove Theorem~\ref{measext-INT}. This is done in
 Subsection~\ref{sssec:extensions2}, after we collect a number
 of necessary and key tools in Subsection~\ref{sssec:extensions1},
including a Whitney type decomposition of the underlying space
(see Theorem \ref{L-WHIT} below) and the related
partition of unity  (see Theorem \ref{MSz7b} below), as well as some
Poincar\'e-type inequalities in terms of both
the ball integral averages and the  median values of functions (see Lemmas \ref{embeddfracgrad}
and \ref{intavgest} below).
The  regularity parameter of the aforementioned partition of unity
is closely linked to the geometry of the underlying  quasi-metric space,
and is important  in constructing the desired extension of functions from
the spaces $M^s_{p,q}(\Omega,\rho,\mu)$ and $N^s_{p,q}(\Omega,\rho,\mu)$
for an optimal range of $s$. On the other hand,
as we consider the extension theorems of
$M^s_{p,q}(\Omega,\rho,\mu)$ and $N^s_{p,q}(\Omega,\rho,\mu)$ for full ranges of parameters,
whose elements might not  be locally integrable when $p$ or $q$ is small,
we need to use the  median values instead of the usual ball
integral averages. Via these tools, in  Subsection~\ref{sssec:extensions2},
we first successfully construct a local extension for
functions in $M^s_{p,q}(\Omega,\rho,\mu)$
[and also $N^s_{p,q}(\Omega,\rho,\mu)$] from $\Omega$ to a neighborhood $V$ of $\Omega$,
which, together with
the boundedness of  bounded H\"older continuous functions with support $V$,
operating as pointwise multipliers,
on these spaces (see Lemma~\ref{GVa2-prev} below),
further leads to the desired global extension to
the entire space $X$.

Finally, in Section~\ref{section:measuredensity}, we formulate
 and prove Theorem~\ref{measdens-ext-sob-INT} (see Theorems~\ref{measdens-ext-sob}
 and  \ref{measdens-ext-sob-besov} below), that is, to show that,
on spaces of homogeneous type and, in particular, on Ahlfors regular spaces,
the existence of an extension for functions in $M^s_{p,q}(\Omega,\rho,\mu)$
and $N^s_{p,q}(\Omega,\rho,\mu)$, from
a $\mu$-measurable locally uniformly perfect set $\Omega\subset X$
to the entire space $X$, is equivalent to the measure density condition.
The key tools are the embedding
results of these spaces recently obtained in \cite{AYY21} (see also Theorems \ref{DOUBembedding}
and \ref{mainembedding-epsilon} below) and
a family of maximally smooth H\"older continuous
`bump' functions belonging to $M^{s}_{p,q}$ and $N^{s}_{p,q}$
constructed in \cite[Lemma 4.6]{AYY21} (see also Lemma \ref{HolderBump} below).
Indeed, as we can see in Theorems~\ref{measdens-ext-sob-INT}, \ref{measdens-ext-sob},
and  \ref{measdens-ext-sob-besov},
we not only obtain the equivalence between the extension properties
for the spaces $M^s_{p,q}(\Omega,\rho,\mu)$
and $N^s_{p,q}(\Omega,\rho,\mu)$, and the measure density condition, but also
their equivalence to various Sobolev-type embeddings of these spaces.

In closing, we emphasize again that
the main results in this article
are not just generalizations of \cite[Theorems~5 and 6]{hajlaszkt2} and
\cite[Theorems~1.2 and 1.3]{HIT16} from metric to
quasi-metric spaces, but they also sharpen these
known results, even in the metric setting,
via seeking an optimal range of the smoothness parameter which is essentially
determined (quantitatively) by the geometry of the underlying space.

\section{Preliminaries}
\label{section:preliminaries}

This section is devoted to presenting some basic assumptions on the underlying quasi-metric measure space,
as well as the definitions and some facts of function spaces considered in this work.

\subsection{The Setting}

Given a nonempty set $X$, a function
$\rho\colon X\times X\to[0,\infty)$ is called a
{\it quasi}-{\it metric} on $X$, provided there exist
two positive constants $C_0$ and $C_1$ such  that, for any $x$, $y$, $z\in X$, one has
\begin{eqnarray}\label{gabn-T.2}
\begin{array}{c}
\rho(x,y)=0\Longleftrightarrow x=y,\quad
\rho(y,x)\leq C_0\rho(x,y),
\\[6pt]
\mbox{and }\quad
\rho(x,y)\leq C_1\max\{\rho(x,z),\rho(z,y)\}.
\end{array}
\end{eqnarray}
A pair $(X,\rho)$ is called a {\it quasi-metric} {\it space} if
$X$ is a nonempty set and $\rho$  a quasi-metric on $X$. Throughout
the whole article,
we tacitly assume that $X$ is of cardinality $\geq2$.
It follows from this assumption  that the
constants $C_0$ and $C_1$ appearing in \eqref{gabn-T.2} are $\geq1$.

Define $C_\rho$ to be the least constant which can play the
role of $C_1$ in \eqref{gabn-T.2}, namely,
\begin{eqnarray}\label{C-RHO.111}
C_\rho:=\sup_{\substack{x,\,y,\,z\in X\\\mbox{\scriptsize{{not all equal}}}}}
\frac{\rho(x,y)}{\max\{\rho(x,z),\rho(z,y)\}}\in[1,\fz).
\end{eqnarray}
Also, let $\widetilde{C}_\rho$ to be the least constant
which can play the role of $C_0$ in \eqref{gabn-T.2}, that is,
\begin{eqnarray}\label{C-RHO.111XXX}
\widetilde{C}_\rho:=\sup_{x,\,y\in X,\ x\not=y}
\frac{\rho(y,x)}{\rho(x,y)}\in[1,\fz).
\end{eqnarray}
When $C_\rho=\widetilde{C}_\rho=1$, $\rho$ is a genuine metric that is commonly
called  an ultrametric. Note that, if the underlying metric space
is $\mathbb{R}^n$, $n\in\mathbb{N}$, equipped with the Euclidean distance $d$, then $C_d=2$.

Two quasi-metrics $\rho$ and $\varrho$ on $X$ are said to be {\it equivalent},
denoted by $\rho\approx\varrho$, if there exists a positive constant $c$ such that
$$
c^{-1}\varrho(x,y)\leq\rho(x,y)\leq c\varrho(x,y),\quad\forall\ x,\, y\in X.
$$
It follows from \cite[(4.289)]{MMMM13} that $\rho\approx\varrho$ if and only if the
 identity $(X,\rho)\to (X,\varrho)$  is bi-Lipschitz (see \cite[Definition 4.32)]{MMMM13}).
As such, also refer to  $\rho\approx\varrho$ as $\rho$
 is \emph{bi-Lipschitz equivalent} to $\varrho$.

We now recall the concept of an `index' which was originally
introduced in \cite[Definition 4.26]{MMMM13}.
The \textit{lower smoothness index} of a quasi-metric space $(X,\rho)$ is defined as
\begin{equation}
\label{index}
{\rm ind}\,(X,\rho):=\sup_{\varrho\approx\rho}\left(\log_2C_\varrho\right)^{-1}\in(0,\infty],
\end{equation}
where the supremum is taken over all the quasi-metrics $\varrho$ on $X$ which are
bi-Lipschitz equivalent to $\rho$.
The index ${\rm ind}\,(X,\rho)$
encodes  information about the geometry of
the underlying space, as evidenced by the properties
listed in the introduction, as well as   the following additional ones:
\begin{itemize}[itemsep=1pt]

\item \mbox{${\rm ind}\,(Y,\rho)\geq{\rm ind}\,(X,\rho)$ whenever $Y \subset X$;}

\item {${\rm ind}\,(X,\|\cdot-\cdot\|)=1$ if $(X,\|\cdot\|)$ is a nontrivial
normed vector space; hence ${\rm ind}\,(\mathbb{R}^n,|\cdot-\cdot|)=1$;}

\item {${\rm ind}\,(Y,\|\cdot-\cdot\|)=1$ if $Y$ is a subset
of a normed vector space $(X,\|\cdot\|)$ containing an open line segment; hence ${\rm ind}\,([0,1]^n,|\cdot-\cdot|)=1$;}

\item {${\rm ind}\,(X,\rho)\leq 1$ whenever the interval $[0,1]$ can be
bi-Lipschitzly embedded into $(X,\rho)$;}

\item {${\rm ind}\,(X,\rho)\leq Q$ if $(X,\tau_\rho)$ is pathwise connected
and $(X,\rho)$ is equipped with a $Q$-Ahlfors-regular measure, where $\tau_\rho$ denotes the topology generated by $\rho$;}

\item{there are compact, totally disconnected, Ahlfors regular spaces
with lower smoothness index $\infty$; for instance, the four-corner planar Cantor set equipped with $|\cdot-\cdot|$;}

\item {${\rm ind}\,(X,\rho)=\infty$ whenever the underlying set
$X$ has finite cardinality;}

\end{itemize}
see also \cite[Remark~4.6]{AYY21} and \cite[Section 4.7]{MMMM13} or \cite[Section 2.5]{AM15} for more details.

We conclude this subsection with some notational conventions.
Balls in $(X,\rho)$ will be denoted by $B_\rho(x,r):=\{y\in X:\, \rho(x,y)<r\}$  with $x\in X$ and
$r\in (0,\fz)$, and let $$\overline{B}(x,r):=\lf\{y\in X:\, \rho(x,y)\leq r\r\}.$$
As a sign of warning, note that in general $\overline{B}_\rho(x,r)$ is not necessarily equal to the closure of $B_\rho(x,r)$. If $r=0$, then $B_\rho(x,r)=\emptyset$, but $\overline{B}_\rho(x,r)=\{ x\}$. Moreover, since $\rho$ is not necessarily symmetric, one needs to pay particular attention to the order of $x$ and $y$ in the definition of $B_\rho(x,r)$.  The triplet $(X,\rho,\mu)$ is  called  a \textit{quasi-metric measure space} if $X$ is a set of cardinality $\geq2$, $\rho$ is a quasi-metric on $X$, and $\mu$ is a Borel measure such that all $\rho$-balls are $\mu$-measurable and $\mu(B_\rho(x,r))\in(0,\infty)$ for any $x\in X$ and any $r\in(0,\infty)$.
Here and thereafter, the measure $\mu$ is said to be Borel regular if every $\mu$-measurable
set is contained in a Borel set of equal measure.
See also  \cite{MMMM13,AMM13,AM15} for more information on this setting.

Let $\zz$ denote all integers and $\nn$ all (strictly) positive integers.
 We always denote by $C$ a \emph{positive constant}
which is independent of the main parameters, but it
may vary from line to line. We also use
$C_{(\alpha,\beta,\ldots)}$ to denote a positive
constant depending on the indicated parameters $\alpha,
\beta,\ldots.$ The \emph{symbol} $f\lesssim g$ means
that $f\le Cg$. If $f\lesssim g$ and $g\lesssim f$,
we then write $f\approx g$. If $f\le Cg$ and $g=h$ or
$g\le h$, we then write $f\ls g\approx h$ or $f\ls g\ls h$,
\emph{rather than} $f\ls g=h$ or $f\ls g\le h$. The integral average of a locally $\mu$-measurable function $u$ on a $\mu$-measurable set
$E\subset X$ with $\mu(E)\in(0,\fz)$ is denoted by
$$
u_E:=\mvint_Eu\, d\mu :=\frac{1}{\mu(E)}\int_E u\, d\mu,
$$
whenever the integral is well defined.	
For sets $E\subset (X,\rho)$, let ${\rm diam}_\rho(E):=\sup\{\rho(x,y):\, x,\,y\in E\}$ and ${\bf 1}_E$ be the characteristic
function of $E$.
For any $p\in(0,\fz]$, let $L^p(X):=L^p(X,\mu)$ denote the \emph{Lebesgue space} on $(X,\mu)$,
that is, the set of all the $\mu$-measurable functions $f$ on $(X,\mu)$ such that
$$
\|f\|_{L^p(X)}:=\|f\|_{L^p(X,\mu)}:=
\begin{cases}
\displaystyle\left[\int_X |f(x)|^p\,d\mu(x)\right]^{1/p},\  \  & p\in(0,\fz),\\
\displaystyle{\rm ess\,sup}\left\{|f(x)|:\ x\in X\right\},\ \ & p=\fz,
\end{cases}$$
is finite.

\subsection{Triebel--Lizorkin, Besov, and Sobolev Spaces}
\label{s-func}
Suppose $(X,\rho,\mu)$ is a quasi-metric space equipped
with a nonnegative Borel measure, and let $s\in(0,\infty)$.
Following \cite{KYZ11}, a sequence $\{g_k\}_{k\in\mathbb{Z}}$ of nonnegative $\mu$-measurable
functions on $X$ is called a \textit{fractional $s$-gradient} of
a $\mu$-measurable function $u\colon X\rightarrow\mathbb{R}$ if there exists
a set $E\subset X$ with $\mu(E)=0$ such that
\begin{equation}\label{Hajlasz}
\vert u(x)-u(y)\vert\leq [\rho(x,y)]^s \left[g_k(x)+g_k(y)\right]
\end{equation}
for any $k\in\mathbb{Z}$ and  $x,\,y\in X\setminus E$ satisfying $2^{-k-1}\leq \rho(x,y)<2^{-k}.$
Let $\mathbb{D}_\rho^s(u)$ denote the set of all the fractional $s$-gradients of $u$.

Given $p\in(0,\infty)$, $q\in(0,\infty]$, and a sequence  $\vec{g}:=\{g_k\}_{k\in\mathbb{Z}}$
of $\mu$-measurable functions on $X$, define
\begin{equation*}
\Vert \vec{g}\Vert_{L^p(X,\ell^q)}:=
\lf\Vert\, \Vert \{g_k\}_{k\in\mathbb{Z}}\Vert_{\ell^q} \r\Vert_{L^p(X,\mu)}
\end{equation*}
and
\begin{equation*}
\Vert \vec{g}\Vert_{\ell^q(L^p(X))}:=\lf\Vert \lf\{\Vert \{g_k\}\Vert_{L^p(X,\mu)}\r\}_{k\in\mathbb{Z}} \r\Vert_{\ell^q},
\end{equation*}
where
\begin{equation*}
\Vert \{g_k\}_{k\in\mathbb{Z}}\Vert_{\ell^q}:=
\begin{cases}
 \displaystyle\left(\sum_{k\in\mathbb{Z}}\vert g_k\vert^q\right)^{1/q}& ~\text{if}~q\in(0,\infty),\\
 \displaystyle \sup_{k\in\mathbb{Z}}\vert g_k\vert& ~\text{if}~q=\infty.
\end{cases}
\end{equation*}
The
\textit{homogeneous Haj\l asz--Triebel--Lizorkin space}
$\dot{M}^s_{p,q}(X,\rho,\mu)$ is defined as the
collection of all the $\mu$-measurable functions $u\colon  X\rightarrow\mathbb{R}$ such that
\begin{equation*}
\Vert u\Vert_{\dot{M}^s_{p,q}(X)}:=\Vert u\Vert_{\dot{M}^s_{p,q}(X,\rho,\mu)}:=\inf_{\vec{g}\in
\mathbb{D}_\rho^s(u)}\Vert\vec{g}\Vert_{L^p(X,\ell^q)}<\infty.
\end{equation*}
Here and thereafter, we make the agreement that $\inf\emptyset:=\infty$.
The corresponding \textit{inhomogeneous Haj\l asz--Triebel--Lizorkin space}
 is defined as
$M^s_{p,q}(X,\rho,\mu):=\dot{M}^s_{p,q}(X,\rho,\mu)\cap L^p(X),$ and it is
equipped with the (quasi-)norm
\begin{equation*}
\Vert u\Vert_{M^s_{p,q}(X)}:=\Vert u\Vert_{M^s_{p,q}(X,\rho,\mu)}:=\Vert u\Vert_{\dot{M}^s_{p,q}(X,\rho,\mu)}+\Vert u\Vert_{L^p(\Omega)},
\quad\forall\ u\in M^s_{p,q}(X,\rho,\mu).
\end{equation*}

The \textit{homogeneous Haj\l asz--Besov space} $\dot{N}^s_{p,q}(X,\rho,\mu)$
is defined as the collection of all the $\mu$-measurable functions
$u\colon  X\rightarrow\mathbb{R}$ such that
\begin{equation*}
\Vert u\Vert_{\dot{N}^s_{p,q}(X,\rho,\mu)}:=
\inf_{\vec{g}\in\mathbb{D}_\rho^s(u)}\Vert\vec{g}\Vert_{\ell^q(L^p(X))}<\infty,
\end{equation*}
and the \textit{inhomogeneous Haj\l asz--Besov space}
is defined as $N^s_{p,q}(X,\rho,\mu):=\dot{N}^s_{p,q}(X,\rho,\mu)\cap L^p(X),$
and it is equipped with the   (quasi-)norm
\begin{equation*}
\Vert u\Vert_{N^s_{p,q}(X)}:=\Vert u\Vert_{N^s_{p,q}(X,\rho,\mu)}:=\Vert u\Vert_{\dot{N}^s_{p,q}(X,\rho,\mu)}+\Vert u\Vert_{L^p(\Omega)},\quad\forall\ u\in N^s_{p,q}(X,\rho,\mu).
\end{equation*}

It is known that, when  $p\in[1,\infty)$ and $q\in[1,\infty]$,
$\Vert\cdot\Vert_{M^s_{p,q}(X,\rho,\mu)}$ and
$\Vert\cdot\Vert_{N^s_{p,q}(X,\rho,\mu)}$ are genuine norms and
the corresponding spaces $M^s_{p,q}(X,\rho,\mu)$ and $N^s_{p,q}(X,\rho,\mu)$ are Banach spaces.
Otherwise, they are quasi-Banach spaces.  We will simply use
$\dot{M}^s_{p,q}(X)$, $\dot{N}^s_{p,q}(X)$, $M^s_{p,q}(X)$,
and $N^s_{p,q}(X)$ in place of $\dot{M}^s_{p,q}(X,\rho,\mu)$,
$\dot{N}^s_{p,q}(X,\rho,\mu)$, $M^s_{p,q}(X,\rho,\mu)$, and
$N^s_{p,q}(X,\rho,\mu)$, respectively, whenever the
quasi-metric and the measure are well-understood from the context.
It was shown in \cite{KYZ11} that $M^s_{p,q}(\mathbb{R}^n)$ coincides with the classical Triebel--Lizorkin space $F^s_{p,q}(\mathbb{R}^n)$ for any $s\in(0,1)$, $p\in(\frac{n}{n+s},\infty)$, and $q\in(\frac{n}{n+s},\infty]$,  and $N^s_{p,q}(\mathbb{R}^n)$ coincides with the classical Besov space $B^s_{p,q}(\mathbb{R}^n)$ for any $s\in(0,1)$, $p\in(\frac{n}{n+s},\infty)$, and $q\in(0,\infty]$.
We also refer the reader to
\cite{GKZ13,HIT16,HKT17,Karak1,Karak2,agh20,hhhpl21,AWYY21} for more information on Triebel--Lizorkin and
Besov spaces on quasi-metric measure spaces.

The following proposition highlights the fact that
equivalent quasi-metrics generate equivalent Triebel--Lizorkin and Besov spaces.

\begin{proposition}
\label{equivspaces}
Let $(X,\rho,\mu)$ be a quasi-metric space equipped with a nonnegative Borel measure, and let
$s,\,p\in(0,\infty)$ and $q\in(0,\infty]$.
Suppose that $\varrho$ is a quasi-metric
on $X$ such that $\varrho\approx\rho$ and all $\varrho$-balls are $\mu$-measurable.
Then
$\dot{M}^s_{p,q}(X,\rho,\mu)=\dot{M}^s_{p,q}(X,\varrho,\mu)$ and $\dot{N}^s_{p,q}(X,\rho,\mu)=\dot{N}^s_{p,q}(X,\varrho,\mu)$, as sets, with equivalent (quasi-)norms, and,
consequently,  $$M^s_{p,q}(X,\rho,\mu)=M^s_{p,q}(X,\varrho,\mu)\quad \mbox{and}\quad N^s_{p,q}(X,\rho,\mu)=N^s_{p,q}(X,\varrho,\mu),$$ as sets, with equivalent (quasi-)norms.
\end{proposition}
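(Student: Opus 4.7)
The plan is to show that if $\{g_k\}_{k\in\mathbb{Z}}\in\mathbb{D}^s_\rho(u)$, then a mild shift/averaging of this sequence produces an element of $\mathbb{D}^s_\varrho(u)$ with controlled norm, and then invoke symmetry of the relation $\rho\approx\varrho$ together with the identity $\|u\|_{L^p}$ being independent of the quasi-metric to handle the inhomogeneous case.

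More precisely, fix a constant $c\in[1,\infty)$ such that $c^{-1}\varrho(x,y)\leq\rho(x,y)\leq c\varrho(x,y)$ for any $x,y\in X$, and set $N_0:=\lceil\log_2 c\rceil+1$. Given $\{g_k\}_{k\in\mathbb{Z}}\in\mathbb{D}^s_\rho(u)$, with exceptional set $E$, define
\begin{equation*}
\wz{g}_k(x):=c^s\sum_{j=k-N_0}^{k+N_0}g_j(x),\quad x\in X,\ k\in\mathbb{Z}.
\end{equation*}
The first step is to verify that $\{\wz{g}_k\}_{k\in\mathbb{Z}}\in\mathbb{D}^s_\varrho(u)$ (with the same exceptional set $E$). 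Indeed, if $x,y\in X\setminus E$ satisfy $2^{-k-1}\leq\varrho(x,y)<2^{-k}$, then $c^{-1}2^{-k-1}\leq\rho(x,y)\leq c\,2^{-k}$, from which an elementary calculation shows that the unique integer $j$ with $2^{-j-1}\leq\rho(x,y)<2^{-j}$ satisfies $|j-k|\leq N_0$. Applying the defining inequality for $\{g_k\}\in\mathbb{D}^s_\rho(u)$ at this $j$ and using $\rho(x,y)\leq c\,\varrho(x,y)$ yields
\begin{equation*}
|u(x)-u(y)|\leq[\rho(x,y)]^s[g_j(x)+g_j(y)]\leq[\varrho(x,y)]^s[\wz{g}_k(x)+\wz{g}_k(y)],
\end{equation*}
as needed.

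Next I would control the norms. Since each $g_j$ appears in at most $2N_0+1$ of the $\wz{g}_k$'s, a direct computation using the $(2N_0+1)$-fold overlap together with elementary $\ell^q$-estimates (for both $q\in[1,\infty]$ and $q\in(0,1)$ via the $q$-inequality $(a+b)^q\leq a^q+b^q$) gives
\begin{equation*}
\|\{\wz{g}_k\}_{k\in\mathbb{Z}}\|_{L^p(X,\ell^q)}\leq C_{(c,s,q,N_0)}\,\|\{g_k\}_{k\in\mathbb{Z}}\|_{L^p(X,\ell^q)}
\end{equation*}
and analogously $\|\{\wz{g}_k\}\|_{\ell^q(L^p(X))}\leq C_{(c,s,q,N_0)}\|\{g_k\}\|_{\ell^q(L^p(X))}$. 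Taking the infimum over all $\{g_k\}\in\mathbb{D}^s_\rho(u)$ then yields $\|u\|_{\dot{M}^s_{p,q}(X,\varrho,\mu)}\lesssim\|u\|_{\dot{M}^s_{p,q}(X,\rho,\mu)}$ and the analogous estimate for $\dot{N}^s_{p,q}$. The reverse inequalities follow immediately by interchanging the roles of $\rho$ and $\varrho$, since $\rho\approx\varrho$ is symmetric, and the assumption that $\varrho$-balls are $\mu$-measurable ensures the integrals defining these norms are all meaningful. Finally, the inhomogeneous assertion is automatic because $\|u\|_{L^p(X,\mu)}$ does not depend on the quasi-metric.

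No serious obstacle arises; the only delicate point is the bookkeeping with the shift parameter $N_0$ (and the associated overlap constant), which arises because the dyadic shells determined by $\rho$ and $\varrho$ need not coincide but only differ by a bounded number of scales determined by the bi-Lipschitz constant $c$. Everything else is a direct transcription of the definitions combined with the equivalence of the two quasi-metrics.
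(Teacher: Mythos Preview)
Your proof is correct and follows essentially the same approach as the paper: both define the new fractional gradient by a finite shifted sum $\sum_{|j-k|\le N}g_{k+j}$ (with a constant factor like $c^s$ or $2^{sN}$), verify the $\mathbb{D}^s_\varrho(u)$ condition by locating the unique dyadic scale for $\rho$ within $N$ of the given $\varrho$-scale, and then bound the $L^p(\ell^q)$ and $\ell^q(L^p)$ norms via the bounded overlap. The only differences are cosmetic (you write the bi-Lipschitz constant as $c$ and set $N_0=\lceil\log_2 c\rceil+1$, whereas the paper takes it directly of the form $2^N$), and you spell out the $\ell^q(L^p)$ estimate and the inhomogeneous passage slightly more explicitly than the paper does.
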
	

\begin{proof}
We only provide the details
for the proof of $\dot{M}^s_{p,q}(X,\rho,\mu)\subset\dot{M}^s_{p,q}(X,\varrho,\mu)$
as all other inclusions can be handled similarly. To this end, suppose that
$u\in\dot{M}^s_{p,q}(X,\rho,\mu)$ and
take any $\vec{g}:=\{g_k\}_{k\in\mathbb{Z}}\in\mathbb{D}_\rho^s(u)$.
Since $\varrho\approx\rho$, it follows that there exists a
constant $N\in\mathbb{N}$ such that
\begin{equation}\label{av-ui}
2^{-N}\varrho(x,y)\leq\rho(x,y)\leq 2^{N}\varrho(x,y),\quad\forall\ x,\,y\in X.
\end{equation}
Define
$$
h_k:=2^{sN}\sum_{j=-N}^Ng_{k+j},\quad\forall \ k\in\mathbb{Z}.
$$
We claim that $\vec{h}:=\{h_k\}_{k\in\mathbb{Z}}\in\mathbb{D}_\varrho^s(u)$.
To see this, let $E\subset X$ be a $\mu$-measurable set such that $\mu(E)=0$ and
\begin{equation}\label{av-ui2}
\vert u(x)-u(y)\vert\leq [\rho(x,y)]^s\left[g_k(x)+g_k(y)\right]
\end{equation}
for any $k\in\mathbb{Z}$
and  $x,\,y\in X\setminus E$
satisfying $2^{-k-1}\leq \rho(x,y)<2^{-k}$.
Fix a $k\in\mathbb{Z}$ and take $x,\,y\in X\setminus E$
satisfying  $2^{-k-1}\leq \varrho(x,y)<2^{-k}$.
By \eqref{av-ui}, we have
$2^{-k-1-N}\leq \rho(x,y)<2^{-k+N}$ and hence there exists a unique integer $j_0\in[k-N,k+N]$
such that $2^{-j_0-1}\leq \rho(x,y)<2^{-j_0}$. From this,
 \eqref{av-ui2}, and \eqref{av-ui}, we deduce that
\begin{align*}
\vert u(x)-u(y)\vert&\leq [\rho(x,y)]^s\lf[g_{j_0}(x)+g_{j_0}(y)\r]\noz\\
&\leq [\rho(x,y)]^s\left[\sum_{j=-N}^Ng_{k+j}(x)+\sum_{j=-N}^Ng_{k+j}(y)\right]\noz\\
&\leq [\varrho(x,y)]^s\left[h_k(x)+h_k(y)\right].
\end{align*}
Hence, $\vec{h}:=\{h_k\}_{k\in\mathbb{Z}}\in\mathbb{D}_\varrho^s(u)$, as wanted.
It is straightforward to prove that
$\Vert\vec{h}\Vert_{L^p(X,\ell^q)}\lesssim\Vert\vec{g}\Vert_{L^p(X,\ell^q)}$,
where the implicit positive constant  only depends on $s,\,p,\,q$, and $N$
(which ultimately depends on the proportionality constants in $\varrho\approx\rho$).
This finishes the proof of Proposition \ref{equivspaces}.
\end{proof}	

\begin{proposition}
\label{constant}
Let $(X,\rho,\mu)$ be a quasi-metric space equipped
with a nonnegative Borel measure, and suppose that
$s,\,p\in(0,\infty)$ and $q\in(0,\infty]$.
For any  $u\in\dot{M}^s_{p,q}(X,\rho,\mu)$,
one has that $\Vert u\Vert_{\dot{M}^s_{p,q}(X,\rho,\mu)}=0$
if and only if $u$ is constant $\mu$-almost everywhere in $X$.
The above statement also holds true with $\dot{M}^s_{p,q}$
replaced by $\dot{N}^s_{p,q}$.
\end{proposition}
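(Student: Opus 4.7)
The plan is to handle the two directions of the equivalence separately, and to note that the argument for $\dot{N}^s_{p,q}$ is essentially identical with only a small bookkeeping change at the start.

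For the easy direction (``if''), suppose that $u$ is constant $\mu$-almost everywhere, with $u \equiv c$ off a null set $N$. I take the trivial candidate $g_k \equiv 0$ for all $k \in \mathbb{Z}$ and the exceptional set $E := N$. On $X \setminus N$ both sides of the defining inequality \eqref{Hajlasz} vanish, so $\{g_k\}_{k\in\mathbb{Z}} \in \mathbb{D}^s_\rho(u)$. Taking the infimum in the definition of either $\dot{M}^s_{p,q}$ or $\dot{N}^s_{p,q}$ then gives $0$.

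For the substantive direction (``only if'' for the $\dot{M}^s_{p,q}$ case), assume $\|u\|_{\dot{M}^s_{p,q}(X,\rho,\mu)}=0$. By definition of the infimum, I select a sequence $\vec{g}^{(n)} := \{g_k^{(n)}\}_{k\in\mathbb{Z}} \in \mathbb{D}^s_\rho(u)$ with $\|\vec{g}^{(n)}\|_{L^p(X,\ell^q)} \to 0$, and let $E_n$ denote the corresponding null set. Convergence in $L^p(X)$ of the $\ell^q$-norms yields, along some subsequence (which I relabel), pointwise $\mu$-a.e.\ convergence $\|\{g_k^{(n)}\}_{k}\|_{\ell^q}(x) \to 0$. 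Since every coordinate is dominated by the $\ell^q$-norm (for either $q<\infty$ or $q=\infty$), this forces $g_k^{(n)}(x) \to 0$ for every $k\in\mathbb{Z}$ simultaneously and $\mu$-a.e.\ $x \in X$, with no separate diagonalization required.

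Let $F$ be the null set on which this fails, and set $G := F \cup \bigcup_{n \in \mathbb{N}} E_n$, still of $\mu$-measure zero. For any $x, y \in X \setminus G$ with $x \ne y$, since $\rho(x,y)\in(0,\infty)$, there is a unique $k \in \mathbb{Z}$ with $2^{-k-1} \le \rho(x,y) < 2^{-k}$. Applying \eqref{Hajlasz} for this $k$ and each $n$, and then sending $n \to \infty$, yields
\[
|u(x)-u(y)| \le [\rho(x,y)]^s\bigl[g_k^{(n)}(x) + g_k^{(n)}(y)\bigr] \longrightarrow 0,
\]
so $u(x)=u(y)$. Hence $u$ is constant on $X \setminus G$, as required. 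The Besov analogue differs only in the extraction step: from $\|\vec{g}^{(n)}\|_{\ell^q(L^p(X))}\to 0$ one gets $\|g_k^{(n)}\|_{L^p(X)} \to 0$ for each fixed $k$ (immediate for any $q\in(0,\infty]$), and a standard diagonal argument over the countable index set $\mathbb{Z}$ then produces a subsequence along which $g_k^{(n)}(x)\to 0$ for all $k$ and $\mu$-a.e.\ $x$. From there the argument is verbatim the same. The only point demanding a bit of care — and it is more a cosmetic bookkeeping issue than a real obstacle — is ensuring that the countably many exceptional sets arising from the definition of $\mathbb{D}^s_\rho$ and from the passages to a.e.\ convergence are all absorbed into the single null set $G$ before the pointwise comparison is run.
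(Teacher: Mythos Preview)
Your proof is correct and follows essentially the same approach as the paper's: select fractional gradients with norms tending to zero, extract pointwise $\mu$-a.e.\ convergence of each $g_k^{(n)}$ to zero, and pass to the limit in \eqref{Hajlasz} off a single countable union of null sets. The only cosmetic difference is that the paper arranges $\|\vec{g}_j\|_{L^p(X,\ell^q)}<2^{-j}$ and uses summability of $\|g_{j,k}\|_{L^p}$ over $j$ to force $g_{j,k}\to0$ a.e.\ for each fixed $k$, whereas you extract a pointwise-convergent subsequence of the scalar function $x\mapsto\|\{g_k^{(n)}(x)\}_k\|_{\ell^q}$ directly and then dominate each coordinate by it --- which is arguably cleaner since it produces a single exceptional set in one stroke rather than one $N_k$ per index.
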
	

\begin{proof}
By similarity, we only consider $\dot{M}^s_{p,q}(X,\rho,\mu)$.
Fix a  $u\in\dot{M}^s_{p,q}(X,\rho,\mu)$.
If $u$ is constant $\mu$-almost everywhere in $X$,
then it is easy to check that $\{g_k\}_{k\in\mathbb{Z}}\in\mathbb{D}^s_\rho(u)$,
where $g_k\equiv0$ on $X$ for any $k\in\mathbb{N}$. Clearly,
$\Vert u\Vert_{\dot{M}^s_{p,q}(X,\rho,\mu)}
\leq\Vert\{g_k\}_{k\in\mathbb{Z}}
\Vert_{L^p(X,\ell^q)}=0$ and so, $\Vert u\Vert_{\dot{M}^s_{p,q}(X,\rho,\mu)}=0$, as desired.

Suppose next that $\Vert u\Vert_{\dot{M}^s_{p,q}(X,\rho,\mu)}=0$.
By the definition of
$\Vert \cdot\Vert_{\dot{M}^s_{p,q}(X,\rho,\mu)}$,
we can find a collection of sequences, $\{\vec{g}_{j}\}_{j\in\mathbb{N}}$,
with the property that, for any $j\in\mathbb{N}$,
$\vec{g}_j:=\{g_{j,k}\}_{k\in\mathbb{Z}}\in\mathbb{D}^s_\rho(u)$
and $\Vert g_{j,k}\Vert_{L^p(X,\mu)}\leq \Vert \vec{g}_j\Vert_{L^p(X,\ell^q)}<2^{-j}$
for any $k\in\mathbb{Z}$. Then it is straightforward to
check that, for any
fixed $k\in\mathbb{Z}$, $\sum_{j\in\mathbb{N}}g_{j,k}\in L^p(X,\mu)$ and
there exists a $\mu$-measurable set $N_k\subset X$   such that $\mu(N_k)=0$
where, for any $x\in X\setminus N_k$, $g_{j,k}(x)\to0$ as $j\to\infty$.
Going further, by \eqref{Hajlasz}, for any $j\in\mathbb{N}$,
there exists a $\mu$-measurable set $E_j\subset X$ such that $\mu(E_j)=0$ and
$$|u(x)-u(y)|\leq[\rho(x,y)]^s\lf[g_{j,k}(x)+g_{j,k}(y)\r]
$$
for any $k\in\mathbb{Z}$ and   $x,\,y\in X\setminus E_j$
satisfying $2^{-k-1}\leq\rho(x,y)<2^{-k}$. Consider the set
$$E:=\lf(\bigcup_{k\in\mathbb{Z}}N_k\r)\bigcup\lf(\bigcup_{j\in\mathbb{N}}E_j\r).$$
Then $E$ is  $\mu$-measurable and $\mu(E)=0$. Moreover, if $x,\,y\in X\setminus E$,
then there exists a unique $k\in\mathbb{Z}$ such that $2^{-k-1}\leq\rho(x,y)<2^{-k}$ and
\begin{equation}
\label{npw-656}
|u(x)-u(y)|\leq[\rho(x,y)]^s\lf[g_{j,k}(x)+g_{j,k}(y)\r],\quad\forall\ j\in\mathbb{N}.
\end{equation}
Passing to the limit in \eqref{npw-656} as $j\to\infty$, we find that $u(x)=u(y)$.
By the arbitrariness of $x,\,y\in X\setminus E$, we conclude that $u$ is constant
in $X\setminus E$, as wanted. This finishes the proof of Proposition \ref{constant}.
\end{proof}

Now, we recall the definition of  the Haj\l asz--Sobolev space.
Let $(X,\rho)$ be a quasi-metric space equipped with a
nonnegative Borel measure $\mu$, and let $s\in(0,\infty)$. Following \cite{hajlasz2,hajlasz,Y03},
a nonnegative $\mu$-measurable function  $g$  on $X$  is called
an \textit{$s$-gradient} of a $\mu$-measurable function $u\colon  X\rightarrow\mathbb{R}$ if
there exists a set $E\subset X$ with $\mu(E)=0$ such that
\begin{equation*}
\vert u(x)-u(y)\vert\leq [\rho(x,y)]^s\left[g(x)+g(y)\right],\quad\forall\ x,\,y\in X\setminus E.
\end{equation*}
The collection of all the $s$-gradients of $u$ is denoted by $\mathcal{D}_\rho^s(u)$.

Given $p\in(0,\infty)$, the \textit{homogeneous Haj\l asz--Sobolev space}
$\dot{M}^{s,p}(X,\rho,\mu)$ is defined as the collection of
all the $\mu$-measurable functions $u\colon  X\rightarrow\mathbb{R}$ such that
\begin{equation*}
\Vert u\Vert_{\dot{M}^{s,p}(X)}:=
\Vert u\Vert_{\dot{M}^{s,p}(X,\rho,\mu)}:=\inf_{g\in\mathcal{D}_\rho^s(u)}\Vert g\Vert_{L^p(X,\mu)}<\infty,
\end{equation*}
again, with the understanding that $\inf\emptyset:=\infty$.
The corresponding \textit{inhomogeneous Haj\l asz--Sobolev space}
$M^{s,p}(X,\rho,\mu)$ is defined by setting
$M^{s,p}(X,\rho,\mu):=\dot{M}^{s,p}(X,\rho,\mu)\cap L^p(X),$
equipped with the  (quasi-)norm
\begin{equation*}
\Vert u\Vert_{M^{s,p}(X)}:=\Vert u\Vert_{M^{s,p}(X,\rho,\mu)}
:=\Vert u\Vert_{\dot{M}^{s,p}(X,\rho,\mu)}+\Vert u\Vert_{L^p(X)},
\quad\forall\ u\in M^{s,p}(X,\rho,\mu).
\end{equation*}
It is known that,  for any $s\in(0,\fz)$ and $p\in(0,\fz)$,
$$\dot{M}^{s,p}(X,\rho,\mu)=\dot{M}^s_{p,\infty}(X,\rho,\mu)\quad \mbox{and}\quad
{M}^{s,p}(X,\rho,\mu)={M}^s_{p,\infty}(X,\rho,\mu);$$
see, for instance, \cite[Proposition~2.1]{KYZ11} and \cite[Proposition 2.4]{AYY21}.

%%%%%%%%%%%%%%%%%%%%%%%%%%%%%%%%%%%%%%%%%%%%%%%%

%%%%%%%%%%%%%%%%%%%%%%%%%%%%%%%%%%%%%%%%%%%%%%%%

\subsection{Embedding Theorems for Triebel--Lizorkin and Besov Spaces}

In this subsection, we recall a number of embedding
results that were recently obtained in
\cite{AYY21}, beginning with some Sobolev--Poincar\'e-type inequalities
for fractional Haj\l asz--Sobolev spaces.
To facilitate the formulation of the result,
we introduce the following piece of notation:
Given $Q,\,b\in(0,\infty)$, $\sigma\in[1,\infty)$,
and a ball $B_0\subset X$ of radius $R_0\in(0,\infty)$,
the measure $\mu$ is said to satisfy the \emph{$V(\sigma B_0,Q,b)$
condition} provided
\begin{equation}
\label{measbound}
\mu(B_\rho(x,r))\geq br^Q
\quad
\text{for any}\
\text{$x\in X$ and $r\in(0,\sigma R_0]$ satisfying $B_\rho(x,r)\subset\sigma B_0$.}
\end{equation}
We remind the reader
that the constants $C_\rho$, $\widetilde{C}_\rho\in[1,\infty)$
were defined in \eqref{C-RHO.111} and \eqref{C-RHO.111XXX}, respectively.
The following conclusion was obtained in \cite[Theorem 3.1]{AYY21}.

\begin{theorem}\label{embedding}
Let $(X,\rho,\mu)$ be a quasi-metric measure space.	
Let $s,\,p\in(0,\infty)$, $\sigma\in[C_\rho,\infty)$, and $B_0$ be  a
$\rho$-ball of radius $R_0\in(0,\infty)$.
 Assume that the measure $\mu$
satisfies the  $V(\sigma B_0,Q,b)$ condition for some $Q,\,b\in(0,\infty)$.
Let $u\in\dot{M}^{s,p}(\sigma B_0,\rho,\mu)$ and $g\in \mathcal{D}_\rho^s(u)$.
Then  $u\in L^{p^*}(B_0,\mu)$ and there exists a positive constant  $C$,
depending only on $\rho$, $s$, $p$, $Q$, and $\sigma$, such that
\begin{equation}
\label{eq19}
\inf_{\gamma\in\mathbb{R}}\left(\, \mvint_{B_0} |u-\gamma|^{p^*}\, d\mu\right)^{1/p^*}\leq
C\left[\frac{\mu(\sigma B_0)}{bR_0^Q}\right]^{1/p}R_0^s\left(\,\mvint_{\sigma B_0}g^{p}\, d\mu\right)^{1/p},
\end{equation}	
where $p^*:=Qp/(Q-sp)$.
\end{theorem}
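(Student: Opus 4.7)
The strategy is the classical Haj\l asz--Koskela scheme: a telescoping chain of balls yields a pointwise majorant of $u-\gamma_0$, which is then integrated (with the aid of a Hedberg-type truncation) to deliver the desired $L^{p^*}$-Poincar\'e estimate. Two features of the present setting require care: $(X,\rho,\mu)$ is not assumed doubling, only the local lower bound $V(\sigma B_0,Q,b)$ is available; and the regime $p<1$ forces us to work with median values rather than integral averages of $u$.

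\emph{Pointwise chaining.} Fix $x$ in a $\mu$-conull subset of $B_0$ and choose $\kappa\in(0,1)$, depending only on $\sigma$ and $C_\rho$, so that the shrinking $\rho$-balls $B_k:=B_\rho(x,r_k)$ with $r_k:=\kappa 2^{-k}R_0$ all lie in $\sigma B_0$ for $k\in\mathbb{N}_0$; also set $B_{-1}:=B_0$. Applying the fractional $s$-gradient inequality \eqref{Hajlasz} to pairs $(z,w)\in B_k\times B_k$ in the appropriate dyadic band, then averaging over $B_k\times B_{k+1}$ and telescoping, produces for a suitable constant $\gamma_0$
\[
|u(x)-\gamma_0|\lesssim\sum_{k=-1}^{\infty}r_k^{\,s}\,\mvint_{B_k}g\,d\mu.
\]
When $p\ge 1$, one may take $\gamma_0=u_{B_0}$; when $p<1$, $u$ need not be locally $\mu$-integrable, and integral averages are replaced throughout the chaining by median values of $u$ on each $B_k$ (following \cite{HIT16}), which preserves the same majorant with $\gamma_0$ a median value of $u$ on $B_0$.

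\emph{From pointwise to $L^{p^*}$.} Using H\"older's inequality and the lower bound $\mu(B_k)\ge b r_k^{Q}$, each term in the chaining is dominated by $b^{-1/p}r_k^{\,s-Q/p}\,\|g\|_{L^p(B_k,\mu)}$. Splitting the series at a free threshold $t>0$, estimating the tail via a localized Hardy--Littlewood maximal operator $M_{\sigma B_0}(g^p)(x)$ (or, equivalently, via a direct covering argument that obviates doubling) and the head via the full $L^p$-norm of $g$ on $\sigma B_0$, then optimizing in $t$, lead to the weak-type Sobolev--Poincar\'e inequality
\[
\|u-\gamma_0\|_{L^{p^{*},\infty}(B_0,\mu)}\lesssim b^{-s/Q}\,\|g\|_{L^p(\sigma B_0,\mu)}.
\]
This is upgraded to the strong $L^{p^*}$ estimate by a Maz'ya-type truncation that decomposes $u$ into its level-set layers; the fractional $s$-gradient $g$ serves as an $s$-gradient for each layer, so the weak estimate applied layer by layer sums to the strong bound. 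Dividing through by $\mu(B_0)^{1/p^*}$ and using $\mu(B_0)\le\mu(\sigma B_0)$ together with $p^*=Qp/(Q-sp)$ produces the exact prefactor $[\mu(\sigma B_0)/(bR_0^Q)]^{1/p}R_0^s$ asserted in \eqref{eq19}, with a constant depending only on $\rho$, $s$, $p$, $Q$, and $\sigma$.

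\emph{Principal obstacle.} The delicate part is the regime $p<1$: the lack of local integrability of $u$ makes the standard chaining step ill-posed, and the Haj\l asz--Koskela averaging arguments must be recast in terms of median values of $u$ on the chaining balls. Verifying that every telescoping and H\"older-type step transfers to this median-value formulation, so that the same pointwise majorant persists, is the main technical hurdle; once this is in place, the remainder of the argument proceeds uniformly across the full range $p\in(0,\infty)$.
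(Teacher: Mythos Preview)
The paper does not prove this theorem; it is quoted verbatim from \cite[Theorem~3.1]{AYY21} and used as a black box. There is therefore no in-paper proof to compare against. Your plan follows the standard Haj\l asz--Koskela scheme (pointwise chaining, median values in the regime $p<1$, weak-type via threshold splitting using only the lower mass bound $\mu(B_k)\ge b\,r_k^Q$, then Maz'ya truncation to pass to strong $L^{p^*}$), which is precisely the route taken in \cite{AYY21}; in this sense your outline is correct and aligned with the cited source.

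One small clarification: your mention of a ``localized Hardy--Littlewood maximal operator'' is potentially misleading, since without doubling its boundedness is unavailable. You correctly hedge with ``or, equivalently, via a direct covering argument that obviates doubling,'' and indeed in \cite{AYY21} the weak-type bound is obtained directly from the $V(\sigma B_0,Q,b)$ lower measure bound (the Hedberg truncation at level $t$ yields $|u(x)-\gamma_0|\lesssim b^{-1/p}t^{s-Q/p}\|g\|_{L^p(\sigma B_0)}+t^s[M_{\sigma B_0}(g^p)(x)]^{1/p}$ only morally; what one actually uses is that the superlevel set of the chained sum is contained in a ball whose $\mu$-measure is controlled by $bt^Q$ from below). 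As long as you execute that step without invoking maximal-function $L^p$ boundedness, the argument goes through with constants depending only on $\rho$, $s$, $p$, $Q$, and $\sigma$, as claimed.
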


We will also employ the following
embeddings of Haj\l asz--Triebel--Lizorkin and Haj\l asz--Besov spaces
on spaces of homogeneous type obtained in \cite{AYY21}. To state these results,
let us introduce the following piece of terminology:
Given a quasi-metric measure space $(X,\rho,\mu)$,
the measure $\mu$ is said to be \emph{$Q$-doubling up to
scale} $r_\ast\in(0,\infty]$, provided that
there exist positive
constants $\kappa$ and $Q$ satisfying
\begin{equation*}
\kappa\left(\frac{r}{R}\r)^{Q}\leq\frac{\mu(B_\rho(x,r))}{\mu(B_\rho(y,R))},
\end{equation*}
whenever $x,\,y\in X$ with $B_\rho(x,r)\subset B_\rho(y,R)$ and $0<r\leq R\le r_\ast$.
When $r_\ast=\infty$,    $\mu$ is simply $Q$-doubling  as before [see \eqref{Doub-2}].

The following inequalities were obtained in \cite[Theorem 3.7 and Remark 3.8]{AYY21}.

\begin{theorem}
\label{DOUBembedding}
Let $(X,\rho,\mu)$ be a quasi-metric measure space,
where $\mu$ is  $Q$-doubling up to scale $r_\ast\in(0,\infty]$
for some $Q\in(0,\infty)$. Let $s,\,p\in(0,\infty)$ and
$q\in(0,\infty]$, and set $\sigma:=C_\rho$.
Then there exist positive
constants $C$, $C_1$, and $C_2$, depending only on $\rho$, $\kappa$,
$Q$, $s$, and $p$, such that the following statements hold true for any
$\rho$-ball $B_0:=B_\rho(x_0,R_0)$, where $x_0\in X$ and
$R_0\in(0,r_\ast/\sigma]$ is finite:
\begin{enumerate}
\item[{\rm(a)}] If $p\in(0,Q/s)$, then any
$u\in \dot{M}^s_{p,q}(\sigma B_0,\rho,\mu)$ belongs
to $L^{p^*}(B_0,\mu)$ with $p^*:=Qp/(Q-sp)$, and satisfies
\begin{equation}
\label{eq18-DOUB}
\Vert u\Vert_{L^{p^\ast}(B_0)}\leq
\frac{C}{[\mu(\sigma B_0)]^{s/Q}}\left[
R_0^s\Vert u\Vert_{\dot{M}^s_{p,q}(\sigma B_0)}
+\Vert u\Vert_{L^{p}(\sigma B_0)}\right]
\end{equation}
and
\begin{equation}
\label{eq19-DOUB}
\inf_{\gamma\in\mathbb{R}}\Vert u-\gamma\Vert_{L^{p^\ast}(B_0)}\leq
\frac{C}{[\mu(\sigma B_0)]^{s/Q}}\,R_0^{s}\Vert u\Vert_{\dot{M}^s_{p,q}(\sigma B_0)}.
\end{equation}

\item[{\rm(b)}] If $p=Q/s$, then, for any $u\in\dot{M}^s_{p,q}(\sigma B_0,\rho,\mu)$
with $\Vert u\Vert_{\dot{M}^s_{p,q}(\sigma B_0)}>0$, one has
\begin{equation}
\label{eq20-DOUB}
\int_{B_0} {\rm exp}\left(C_1\frac{[\mu(\sigma B_0)]^{s/Q}|u-u_{B_0}|}
{R_0^s\Vert u\Vert_{\dot{M}^s_{p,q}(\sigma B_0)}}\right)\,d\mu\leq C_2\mu(B_0).
\end{equation}

\item[{\rm(c)}] If $p\in(Q/s,\fz)$,  then
 each function $u\in\dot{M}^s_{p,q}(\sigma B_0,\rho,\mu)$
 has a H\"older continuous representative of order $s-Q/p$ on $B_0$,
 denoted by $u$ again, satisfying
\begin{equation}
\label{eq30-DOUB}
|u(x)-u(y)|\leq C[\rho(x,y)]^{s-Q/p}\frac{R_0^{Q/p}}
{[\mu(\sigma B_0)]^{1/p}}\,\Vert u\Vert_{\dot{M}^s_{p,q}(\sigma B_0)},
\quad \forall \ x,\, y\in B_0.
\end{equation}
\end{enumerate}
In addition, if $q\leq p$, then the above statements are
valid with  $\dot{M}^s_{p,q}$ replaced by $\dot{N}^s_{p,q}$.
\end{theorem}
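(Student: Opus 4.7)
The plan is to reduce the theorem for arbitrary $q\in(0,\infty]$ to the Haj\l asz--Sobolev case $q=\infty$, where $\dot{M}^s_{p,q}(X,\rho,\mu)=\dot{M}^{s,p}(X,\rho,\mu)$ and the Sobolev--Poincar\'e estimate in Theorem~\ref{embedding} applies directly. Given any $\vec{g}:=\{g_k\}_{k\in\mathbb{Z}}\in\mathbb{D}_\rho^s(u)$, I would set $g:=\Vert\{g_k\}_{k\in\mathbb{Z}}\Vert_{\ell^q}$ pointwise, so that $g_k\leq g$ for every $k\in\mathbb{Z}$. The defining inequality \eqref{Hajlasz} for $\vec{g}$ then upgrades to $|u(x)-u(y)|\leq[\rho(x,y)]^s[g(x)+g(y)]$ for $\mu$-almost every $x,\,y\in X$, so that $g\in\mathcal{D}_\rho^s(u)$; moreover $\Vert g\Vert_{L^p(\sigma B_0)}=\Vert\vec g\Vert_{L^p(\sigma B_0,\ell^q)}$, and taking infima yields $\Vert u\Vert_{\dot{M}^{s,p}(\sigma B_0)}\leq\Vert u\Vert_{\dot{M}^s_{p,q}(\sigma B_0)}$. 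Hence it suffices to establish every conclusion assuming only the existence of a single $s$-gradient $g$ controlled by the Haj\l asz--Triebel--Lizorkin norm.

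For case (a), the $Q$-doubling hypothesis up to scale $r_\ast$ supplies the $V(\sigma B_0,Q,b)$ condition with $b:=\kappa\mu(\sigma B_0)(\sigma R_0)^{-Q}$ whenever $\sigma R_0\leq r_\ast$; plugging this into \eqref{eq19}, using $\mu(B_0)\approx\mu(\sigma B_0)$ (by doubling) and the identity $1/p^*-1/p=-s/Q$, and taking the infimum over $g$, yields the Poincar\'e--Sobolev estimate \eqref{eq19-DOUB}. To upgrade to \eqref{eq18-DOUB}, I would fix a near-optimal constant $\gamma_0$ in \eqref{eq19-DOUB}, write $u=(u-\gamma_0)+\gamma_0$, and bound $|\gamma_0|^p\mu(B_0)\lesssim\int_{B_0}|u|^p\,d\mu+\int_{B_0}|u-\gamma_0|^p\,d\mu$, estimating the last integral via H\"older's inequality by $\mu(B_0)^{1-p/p^*}\Vert u-\gamma_0\Vert_{L^{p^*}(B_0)}^p$. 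Combining this with the triangle inequality in $L^{p^*}(B_0)$ and \eqref{eq19-DOUB} delivers \eqref{eq18-DOUB}.

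For case (b), the critical case $p=Q/s$, I would apply the $\dot M^{s,\widetilde p}$-Poincar\'e inequality of Theorem~\ref{embedding} at subcritical exponents $\widetilde p<p$, exploiting that the single gradient $g$ lies in $L^{\widetilde p}(\sigma B_0)$ by H\"older. Letting $\widetilde p\uparrow p$ produces $L^t(B_0)$-bounds for $u-u_{B_0}$ valid for every $t\in[p,\infty)$, with constants depending polynomially on $t$ (after careful tracking); a Taylor-series expansion of the exponential then converts this family into the Trudinger--Moser-type estimate \eqref{eq20-DOUB} with explicit constants $C_1,C_2$. For case (c), the supercritical case $p>Q/s$, I would run a telescoping argument: for any $x,\,y$ in a set of full measure in $B_0$, set $B_k:=B_\rho(x,2^{-k}\rho(x,y))$ and the analogous sequence centered at $y$, apply \eqref{eq19-DOUB} to each successive pair to obtain $|u_{B_{k+1}}-u_{B_k}|\lesssim 2^{-k(s-Q/p)}[\rho(x,y)]^{s-Q/p}R_0^{Q/p}[\mu(\sigma B_0)]^{-1/p}\Vert u\Vert_{\dot M^s_{p,q}(\sigma B_0)}$ (using $Q$-doubling to relate $\mu(B_k)$ to $\mu(\sigma B_0)$), and sum the resulting geometric series (which converges since $s>Q/p$); the limit of $u_{B_k}$ defines the H\"older continuous representative and yields the pointwise bound \eqref{eq30-DOUB}.

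Finally, for the Besov version when $q\leq p$, I would apply Minkowski's inequality in $L^{p/q}$ to obtain, for every $\vec g=\{g_k\}_{k\in\mathbb{Z}}$,
\begin{equation*}
\Vert\vec g\Vert_{L^p(X,\ell^q)}^q
=\left\Vert\sum_{k\in\mathbb{Z}}g_k^q\right\Vert_{L^{p/q}(X)}
\leq\sum_{k\in\mathbb{Z}}\Vert g_k^q\Vert_{L^{p/q}(X)}
=\Vert\vec g\Vert_{\ell^q(L^p(X))}^q,
\end{equation*}
so that $\dot N^s_{p,q}(X,\rho,\mu)\hookrightarrow\dot M^s_{p,q}(X,\rho,\mu)$ continuously and every conclusion transfers verbatim. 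The principal technical obstacle is expected to lie in case (b), where precise tracking of the $t$-dependence of the constants in the subcritical Poincar\'e estimates is essential to ensure convergence of the Taylor series defining the exponential integrand.
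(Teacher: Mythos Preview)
The paper does not supply its own proof of Theorem~\ref{DOUBembedding}; it is quoted verbatim from \cite[Theorem~3.7 and Remark~3.8]{AYY21}, so there is no in-paper argument against which to compare. That said, your strategy is the standard one and is essentially what underlies the cited reference: the reduction $\dot{M}^s_{p,q}\hookrightarrow\dot{M}^{s,p}$ via $g:=\Vert\{g_k\}\Vert_{\ell^q}$ is exactly the embedding recorded in the paper (see the discussion surrounding $\dot{M}^s_{p,\infty}=\dot{M}^{s,p}$ and \cite[Proposition~2.4]{AYY21}), and your verification of the $V(\sigma B_0,Q,b)$ condition with $b=\kappa\,\mu(\sigma B_0)(\sigma R_0)^{-Q}$ is correct. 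The Minkowski argument for the $\dot{N}^s_{p,q}\hookrightarrow\dot{M}^s_{p,q}$ inclusion when $q\leq p$ is also precisely the mechanism that justifies the last sentence of the theorem.

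One imprecision worth flagging: in case~(c) you write ``apply \eqref{eq19-DOUB} to each successive pair,'' but \eqref{eq19-DOUB} (and Theorem~\ref{embedding}) are stated only for $p<Q/s$, whereas here $p>Q/s$. What you actually need at each step of the chain is the elementary $L^1$-Poincar\'e estimate
\begin{equation*}
\lf|u_{B_{k+1}}-u_{B_k}\r|\lesssim\mvint_{CB_k}|u-u_{CB_k}|\,d\mu\lesssim r_k^s\mvint_{CB_k}g\,d\mu\leq r_k^s\lf[\mu(CB_k)\r]^{-1/p}\Vert g\Vert_{L^p(\sigma B_0)},
\end{equation*}
obtained directly from the definition of $g\in\mathcal{D}^s_\rho(u)$ and H\"older's inequality, followed by the $Q$-doubling lower bound $\mu(CB_k)\gtrsim(r_k/R_0)^Q\mu(\sigma B_0)$. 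This yields the factor $r_k^{s-Q/p}$ whose geometric summability is precisely the condition $p>Q/s$. With this adjustment your outline is complete; the genuinely delicate part, as you correctly anticipate, is case~(b), where one must track the blow-up of the constant in Theorem~\ref{embedding} as $\widetilde p\uparrow Q/s$ (equivalently $\widetilde p^{\,\ast}\to\infty$) to obtain a bound of the form $\Vert u-u_{B_0}\Vert_{L^t(B_0)}\lesssim t\cdot(\cdots)$ so that the exponential series converges.
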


The embeddings for the spaces $\dot{N}^s_{p,q}$
in Theorem~\ref{DOUBembedding} are restricted to
the case when $q\leq p$; however, an upper bound
on the exponent $q$ is to be expected; see  \cite[Remark~4.17]{AYY21}.
As the following theorem (see \cite[Theorem 3.9]{AYY21}) indicates, one can relax the restriction on $q$ and still obtain
Sobolev-type embeddings with the critical exponent
$Q/s$ replaced by $Q/\varepsilon$, where
$\varepsilon\in(0,s)$ is any fixed number.
These embeddings will be very useful in Section~\ref{section:measuredensity} when
characterizing ${M}^s_{p,q}$ and ${N}^s_{p,q}$-extension domains.

\begin{theorem}
\label{mainembedding-epsilon}
Let $(X,\rho,\mu)$ be a quasi-metric measure space, where $\mu$ is
$Q$-doubling up to scale $r_\ast\in(0,\infty]$ for some $Q\in(0,\infty)$.
Let $s,\,p\in(0,\infty)$ and $q\in(0,\infty]$, and set $\sigma:=C_\rho$.
Then, for any fixed $\varepsilon\in(0,s)$,
there exist positive constants $C$, $C_1$, and $C_2$, depending only on
$\rho$, $\kappa$, $Q$, $s$, $\varepsilon$, and $p$, such that
 the following statements hold true
for any $\rho$-ball $B_0:=B_\rho(x_0,R_0)$, where $x_0\in X$ and $R_0\in(0,r_\ast/\sigma]$ is finite:
\begin{enumerate}
\item[{\rm(a)}] If $p\in(0,Q/\varepsilon)$, then
any $u\in\dot{N}^s_{p,q}(\sigma B_0,\rho,\mu)$
belongs to $L^{p^*}(B_0,\mu)$
with $p^*:=Qp/(Q-\varepsilon p)$, and  satisfies
\begin{equation*}
\Vert u\Vert_{L^{p^\ast}(B_0)}\leq
\frac{C}{[\mu(\sigma B_0)]^{\varepsilon/Q}}\left[
R_0^s\Vert u\Vert_{\dot{N}^s_{p,q}(\sigma B_0)}
+\Vert u\Vert_{L^{p}(\sigma B_0)}\right]
\end{equation*}
and
\begin{equation*}
\inf_{\gamma\in\mathbb{R}}\Vert u-\gamma\Vert_{L^{p^\ast}(B_0)}\leq
\frac{C}{[\mu(\sigma B_0)]^{\varepsilon/Q}}\,R_0^{s}\Vert u\Vert_{\dot{N}^s_{p,q}(\sigma B_0)}.
\end{equation*}

\item[{\rm(b)}] If $p=Q/\varepsilon$, then, for any
$u\in\dot{N}^s_{p,q}(\sigma B_0,\rho,\mu)$  with
$\Vert u\Vert_{\dot{N}^s_{p,q}(\sigma B_0)}>0$, one has
\begin{equation*}
\int_{B_0} {\rm exp}\left(C_1\frac{[\mu(\sigma B_0)]^{\varepsilon/Q}
|u-u_{B_0}|}{R_0^{s}\Vert u\Vert_{\dot{N}^s_{p,q}(\sigma B_0)}}\right)\,d\mu\leq C_2\mu(B_0).
\end{equation*}

\item[{\rm(c)}] If $p\in(Q/\varepsilon,\fz)$, then each  function
$u\in\dot{N}^s_{p,q}(\sigma B_0,\rho,\mu)$
has a H\"older continuous representative of order $s-Q/p$ on $B_0$, denoted by $u$ again,  satisfying
\begin{equation*}
|u(x)-u(y)|\leq C[\rho(x,y)]^{s-Q/p}\frac{R_0^{Q/p}}{[\mu(\sigma B_0)]^{1/p}}
\Vert u\Vert_{\dot{N}^s_{p,q}(\sigma B_0)},
\quad
\forall\ x,\,y\in B_0.
\end{equation*}
\end{enumerate}
\end{theorem}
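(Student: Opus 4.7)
The strategy is to reduce Theorem~\ref{mainembedding-epsilon} to Theorem~\ref{DOUBembedding} by trading the excess smoothness $s-\varepsilon$ for a factor of $R_0^{s-\varepsilon}$ in the norm. Given $u \in \dot{N}^s_{p,q}(\sigma B_0,\rho,\mu)$ together with any fractional $s$-gradient $\vec{g} = \{g_k\}_{k \in \mathbb{Z}} \in \mathbb{D}_\rho^s(u)$, let $k_0$ be the smallest integer for which $2^{-k_0}$ exceeds the $\rho$-diameter of $\sigma B_0$, and define
\[
h(x) := \sup_{k \geq k_0}\, 2^{-k(s-\varepsilon)}\, g_k(x), \qquad x \in \sigma B_0.
\]
For $x, y \in \sigma B_0$ off the null set associated to $\vec{g}$, the unique $k$ with $2^{-k-1} \leq \rho(x,y) < 2^{-k}$ must satisfy $k \geq k_0$, and hence
\[
|u(x)-u(y)| \leq [\rho(x,y)]^s [g_k(x)+g_k(y)] \leq [\rho(x,y)]^\varepsilon\, 2^{-k(s-\varepsilon)}[g_k(x)+g_k(y)] \leq [\rho(x,y)]^\varepsilon [h(x)+h(y)],
\]
which shows that $h \in \mathcal{D}_\rho^\varepsilon(u)$ on $\sigma B_0$.

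Next, using the trivial bound $\sup_k |\cdot| \leq (\sum_k |\cdot|^r)^{1/r}$ for a suitable $r > 0$, Minkowski's or H\"older's inequality in $\ell^q$ (with the cases $q \in (0,p]$, $q \in (p,\infty)$, and $q = \infty$ treated separately), together with the geometric-tail estimate $\sum_{k \geq k_0} 2^{-k(s-\varepsilon)\alpha} \approx R_0^{(s-\varepsilon)\alpha}$ (finite because $s > \varepsilon$), I would establish
\[
\|h\|_{L^p(\sigma B_0)} \lesssim R_0^{s-\varepsilon}\, \|\vec{g}\|_{\ell^q(L^p(\sigma B_0))}.
\]
Taking the infimum over $\vec{g}$ and invoking the identification $\dot{M}^\varepsilon_{p,\infty} = \dot{M}^{\varepsilon,p}$ produces the central reduction
\[
\|u\|_{\dot{M}^\varepsilon_{p,\infty}(\sigma B_0)} \lesssim R_0^{s-\varepsilon}\, \|u\|_{\dot{N}^s_{p,q}(\sigma B_0)}.
\]
For parts (a) and (b), I would then apply Theorem~\ref{DOUBembedding} to $\dot{M}^\varepsilon_{p,\infty}(\sigma B_0)$ with $s$ replaced by $\varepsilon$ and $q$ by $\infty$; the factor $R_0^\varepsilon \|u\|_{\dot{M}^\varepsilon_{p,\infty}}$ in each of the resulting inequalities converts through the preceding display into $R_0^s \|u\|_{\dot{N}^s_{p,q}}$, yielding the claimed estimates exactly (the Poincar\'e-type infimum version in (a) goes through identically, since the reduction involves only the homogeneous norm).

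Part (c) is more delicate: a direct appeal to Theorem~\ref{DOUBembedding}(c) via the reduction above produces H\"older continuity of order only $\varepsilon - Q/p$, which is strictly weaker than the sharp order $s - Q/p$ claimed. To close this gap, I would reapply the already-established part (a) on every sub-ball $B = B_\rho(z,r) \subset \sigma B_0$ of radius $r \leq R_0$ but with a \emph{smaller} auxiliary parameter $\varepsilon' \in (0, Q/p)$, which is admissible precisely because $p > Q/\varepsilon > Q/s$ forces $Q/p < \varepsilon < s$. Using the $Q$-doubling estimate $\mu(B) \approx r^Q \mu(\sigma B_0)/R_0^Q$ to consolidate exponents, the resulting Poincar\'e-type inequality takes the form
\[
\inf_{\gamma \in \mathbb{R}}\left(\mvint_B |u-\gamma|^{p^*}\, d\mu\right)^{1/p^*} \lesssim r^{s-Q/p}\, \frac{R_0^{Q/p}}{[\mu(\sigma B_0)]^{1/p}}\, \|u\|_{\dot{N}^s_{p,q}(\sigma B_0)}
\]
with $p^* := Qp/(Q - \varepsilon' p)$, from which a Campanato-type characterization of H\"older continuity delivers the target estimate. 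The main obstacle is exactly this step: the naive reduction undershoots the sharp H\"older exponent, and the remedy requires calibrating two distinct parameters $\varepsilon$ and $\varepsilon'$ and bootstrapping through the sub-critical inequality in (a). Careful bookkeeping of the $Q$-doubling constants is also needed to ensure that the factors $\mu(\sigma B_0)$ and $R_0$ appear with the correct exponents under the scale restriction $R_0 \in (0, r_\ast/\sigma]$.
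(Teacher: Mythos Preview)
The paper does not prove this theorem; it is quoted from \cite[Theorem~3.9]{AYY21} as a preliminary result, so there is no in-paper proof to compare against. That said, your reduction is the standard one and is correct: the construction $h=\sup_{k\geq k_0}2^{-k(s-\varepsilon)}g_k$ as an $\varepsilon$-gradient, together with the bound $\|h\|_{L^p}\lesssim R_0^{s-\varepsilon}\|\vec{g}\|_{\ell^q(L^p)}$, is precisely the mechanism the paper itself uses in Lemma~\ref{embeddfracgrad} (see \eqref{ntq-23-2}) and is recorded in \cite[Proposition~2.4]{AYY21} as the embedding $\dot{N}^s_{p,q}\hookrightarrow\dot{M}^{\varepsilon,p}$ on bounded sets. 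Parts (a) and (b) then follow exactly as you describe.

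Your handling of (c) is also correct and the difficulty you flag is real: the naive reduction yields only order $\varepsilon-Q/p$, and bootstrapping through (a) with an auxiliary $\varepsilon'\in(0,Q/p)$ followed by a Campanato argument is the right repair. Two minor points to track. First, to apply (a) on a sub-ball $B=B_\rho(z,r)$ with $z\in B_0$ you need $\sigma B\subset\sigma B_0$ so that $u$ (only assumed to live on $\sigma B_0$) restricts; this forces $r<R_0/C_\rho$, which is harmless for the telescoping but should be stated. Second, your choice of $k_0$ as the smallest integer with $2^{-k_0}>{\rm diam}_\rho(\sigma B_0)$ may be $-\infty$ if $\sigma B_0=X$ is unbounded; it is cleaner to take $k_0$ so that $2^{k_0}\geq C_\rho^2\widetilde{C}_\rho$ and $2^{-k_0}$ is comparable to $R_0$, as in the hypotheses of Lemma~\ref{embeddfracgrad}, since any two points of $\sigma B_0$ lie within $\rho$-distance $C_\rho^2\widetilde{C}_\rho R_0$ regardless of whether $\sigma B_0$ exhausts $X$.
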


In the Ahlfors regular setting, we have the following estimates, which follows from
\cite[Theorem~1.4]{AYY21}. (Note that the uniformly perfect property was not used in proving that {\rm (a)} implies {\rm (b)-(e)} in  \cite[Theorem~1.4]{AYY21}.)

\begin{theorem}
\label{GlobalEmbeddCor}
Let $(X,\rho,\mu)$ be a quasi-metric measure space, where $\mu$ is a $Q$-Ahlfors regular
measure on $X$ for some $Q\in(0,\infty)$.
Let $s,\,p\in(0,\infty)$ and $q\in(0,\infty]$.
Then there exists a positive constant $C$
such that the following statements hold true
for any $u\in \dot{M}^s_{p,q}(X,\rho,\mu)$:
\begin{enumerate}[label={\rm(\alph*)}]
\item If $p\in(0,Q/s)$, then
\begin{equation*}
\|u\|_{L^{p^*}(X)}\leq C\|u\|_{\dot{M}^s_{p,q}(X)}+\frac{C}{[{\rm diam}_\rho(X)]^s}\,\|u\|_{L^p(X)}
\end{equation*}		
and
\begin{equation*}
\inf_{\gamma\in\mathbb{R}}\|u-\gamma\|_{L^{p^\ast}(X)}\leq
C\|u\|_{\dot{M}^s_{p,q}(X)},
\end{equation*}
where $p^*:=Qp/(Q-sp)$.

\item  If $p=Q/s$ and  $\Vert u\Vert_{\dot{M}^s_{p,q}(X)}>0$, then
\begin{equation*}
\int_{B} {\rm exp}
\left(C_1\frac{|u-u_B|}{\Vert u\Vert_{\dot{M}^s_{p,q}(X)}}\right)\,d\mu\leq C_2r^Q
\end{equation*}
for any $\rho$-ball $B\subset X$ having finite radius $r\in(0,{\rm diam}_\rho(X)]$.

\item If $p\in(Q/s,\fz)$,
then $u$ has a H\"older continuous representative of order $s-Q/p$ on $X$, denoted by $u$ again,  satisfying
\begin{equation*}
|u(x)-u(y)|\leq C[\rho(x,y)]^{s-Q/p}\Vert u\Vert_{\dot{M}^s_{p,q}(X)},
\quad \forall\ x,\,y\in X.
\end{equation*}
\end{enumerate}
In addition, if $q\leq p$, then all of the statements above are valid
with $\dot{M}^s_{p,q}$ replaced by $\dot{N}^s_{p,q}$.
\end{theorem}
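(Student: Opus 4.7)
The plan is to deduce this global result from the local estimates in Theorem \ref{DOUBembedding} by systematically exploiting the upgrade from doubling to $Q$-Ahlfors regularity. Under $Q$-Ahlfors regularity one has $\mu(B_\rho(x,r))\approx r^Q$ for every finite $r\in(0,{\rm diam}_\rho(X)]$, so $\mu$ is $Q$-doubling up to scale $r_\ast=\infty$, and the factor $[\mu(\sigma B_0)]^{s/Q}$ appearing in \eqref{eq18-DOUB}, \eqref{eq19-DOUB}, \eqref{eq20-DOUB}, and \eqref{eq30-DOUB} is pointwise comparable to $R_0^s$ (respectively $R_0^{Q/p}$ in the H\"older case). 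Consequently, the dependence on the ball $B_0$ in each of the local estimates collapses to dimensionless constants together with the $L^p(\sigma B_0)$ term scaled by $R_0^{-s}$.

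The finite diameter case is immediate: fix any $x_0\in X$ and set $R_0:={\rm diam}_\rho(X)$, so that $X\subset B_0:=B_\rho(x_0,R_0)$ and $\sigma B_0\supset X$. Applying Theorem \ref{DOUBembedding}(a)–(c) directly to $B_0$ and performing the cancellation above, one reads off all three conclusions of Theorem~\ref{GlobalEmbeddCor}, with the prefactor $[{\rm diam}_\rho(X)]^{-s}$ on $\|u\|_{L^p(X)}$ in (a) arising precisely from the residual $R_0^{-s}$. In the infinite diameter case, the term containing $[{\rm diam}_\rho(X)]^{-s}$ is interpreted as zero, and one instead argues by a covering: pick a maximal $R$-separated family $\{x_i\}_{i}\subset X$ for $R\in(0,\infty)$; then the enlargements $B_i:=B_\rho(x_i,2R)$ cover $X$, and $\{\sigma B_i\}_i$ have bounded overlap by doubling. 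Applying (a) on each $B_i$ after the cancellation gives
\begin{equation*}
\|u\|_{L^{p^*}(B_i)}\lesssim \|u\|_{\dot M^s_{p,q}(\sigma B_i)}+R^{-s}\|u\|_{L^p(\sigma B_i)}.
\end{equation*}
Raising to the $p^*$-power, summing in $i$, and using $p^*>p$ together with the bounded overlap yields
\begin{equation*}
\|u\|_{L^{p^*}(X)}^{p^*}\lesssim \|u\|_{\dot M^s_{p,q}(X)}^{p^*}+R^{-sp^*}\|u\|_{L^p(X)}^{p^*};
\end{equation*}
the $\ell^p\hookrightarrow\ell^{p^*}$ step is legitimate because a single global $\vec g\in\mathbb{D}^s_\rho(u)$ restricts to a fractional gradient on every $\sigma B_i$, so the infima over local fractional gradients pass through the sum. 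Letting $R\to\infty$ disposes of the second term when $u\in L^p(X)$; otherwise, the conclusion is obtained from the Poincar\'e-type variant \eqref{eq19-DOUB}, which has no $L^p$ term at all. The critical-exponent statement in (b) and the H\"older statement in (c) are handled the same way: for (c), combining the bound from two balls joining $x$ and $y$ is unnecessary, since whenever $\rho(x,y)<\infty$ one may simply take $B_0$ large enough to contain both points and invoke \eqref{eq30-DOUB} after the cancellation $R_0^{Q/p}/[\mu(\sigma B_0)]^{1/p}\approx 1$.

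The Besov conclusions follow line by line from the same argument, invoking the $\dot N^s_{p,q}$ half of Theorem \ref{DOUBembedding} and using that a common $\vec g$ still witnesses the $\dot N^s_{p,q}$ seminorm on each $\sigma B_i$ (this is where the restriction $q\leq p$ enters, as in \cite[Remark~4.17]{AYY21}). The principal technical obstacle is the infinite diameter case of (a): one must make sure that passing to the infimum over fractional gradients is compatible with the $\ell^{p^*}$-summation across the cover, and that the bounded-overlap constant arising from doubling enters only multiplicatively and not in a way that deteriorates with $R$. Once a single global $\vec g$ is fixed and the cancellation coming from Ahlfors regularity is in place, these issues dissolve, and the Besov/Triebel–Lizorkin statements of Theorem~\ref{GlobalEmbeddCor} follow.
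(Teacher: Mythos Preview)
The paper does not prove Theorem~\ref{GlobalEmbeddCor} directly; it simply quotes the result from \cite[Theorem~1.4]{AYY21}, noting that the uniformly perfect hypothesis there is not needed for this direction. Your strategy of deducing it from the local estimates in Theorem~\ref{DOUBembedding} via the Ahlfors-regular cancellation $\mu(\sigma B_0)\approx R_0^Q$ is the natural one and works as written in the bounded-diameter case and for parts (b) and (c) in general (for (c) a single ball containing both $x$ and $y$ always suffices, as you observe).

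There is, however, a gap in your treatment of the unbounded case of part (a). Your covering-and-$\ell^p\hookrightarrow\ell^{p^*}$ argument correctly yields the first inequality in (a) when $u\in L^p(X)$ (so that sending $R\to\infty$ is legitimate), but it does \emph{not} yield the global Poincar\'e inequality $\inf_{\gamma}\|u-\gamma\|_{L^{p^*}(X)}\lesssim\|u\|_{\dot{M}^s_{p,q}(X)}$: applying \eqref{eq19-DOUB} on each $B_i$ produces a constant $\gamma_i$ depending on $i$, and $\sum_i\inf_{\gamma_i}\|u-\gamma_i\|_{L^{p^*}(B_i)}^{p^*}$ does not dominate $\inf_\gamma\|u-\gamma\|_{L^{p^*}(X)}^{p^*}$. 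Your remark that ``the conclusion is obtained from the Poincar\'e-type variant \eqref{eq19-DOUB}, which has no $L^p$ term at all'' skips precisely this issue. The fix is to replace the covering by an exhaustion through concentric balls $B_R:=B_\rho(x_0,R)$: after the Ahlfors cancellation, \eqref{eq19-DOUB} furnishes $\gamma_R$ with $\|u-\gamma_R\|_{L^{p^*}(B_R)}\lesssim\|u\|_{\dot{M}^s_{p,q}(X)}$ uniformly in $R$, and comparing on $B_R$ for $R'\geq R$ gives $|\gamma_R-\gamma_{R'}|\lesssim[\mu(B_R)]^{-1/p^*}\to 0$, so $\gamma_R$ converges to some $\gamma\in\mathbb{R}$ and the global bound follows by monotone convergence on the exhaustion.
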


\section{Optimal Extensions for Triebel--Lizorkin Spaces and Besov Spaces on Spaces of Homogeneous Type}
\label{section:extensions}

This section is devoted to the proof of Theorem~\ref{measext-INT}.
To this end, we first collect   a number of
necessary tools and results in Subsection~\ref{sssec:extensions1}.
The proof of Theorem~\ref{measext-INT} will be presented in Subsection
\ref{sssec:extensions2}.

\subsection{Main Tools}
\label{sssec:extensions1}

The existence of   $M^s_{p,q}$ and
$N^s_{p,q}$ extension operators on domains
satisfying the measure density condition
for an \textit{optimal} range of $s$ relies on two basic ingredients:
\begin{enumerate}
\item the existence of a Whitney decomposition of an open subset
of the quasi-metric space under consideration  (into the Whitney balls of bounded overlap);

\item the existence of a partition of unity subordinate
(in an appropriate, quantitative manner) to such a
decomposition that exhibits an optimal amount of
smoothness (measured on the H\"older scale).
\end{enumerate}
It is the existence of a partition of unity
of maximal smoothness order that  permits us to construct an
extension operator for an optimal range of $s$.
The availability of these ingredients was established in
\cite[Theorem 6.3]{AMM13} (see also \cite[Theorems~2.4 and 2.5]{AM15}).
We recall their statements here for the convenience of the reader.

\begin{theorem}[{\bf Whitney-type} {\bf decomposition}]\label{L-WHIT}
Suppose that $(X,\rho,\mu)$ is a quasi-metric measure space,
where $\mu$ is a doubling measure. Then, for any
$\theta\in (1,\infty)$, there exist constants
$\Lambda\in(\theta,\infty)$ and $M\in{\mathbb{N}}$,
which depend only on  both $\theta$ and the space $(X,\rho,\mu)$,
and which have the following significance. 	
For any proper, nonempty, open subset $\mathcal{O}$ of
 the topological space $(X,\tau_\rho)$, where $\tau_\rho$
 denotes the topology canonically induced by the quasi-metric
 $\rho$, there exist a sequence $\{x_j\}_{j\in{\mathbb{N}}}$ of points
 in $\mathcal{O}$ along with a family  $\{r_j\}_{j\in{\mathbb{N}}}$ of (strictly) positive numbers,
for which the following properties are valid:
\begin{enumerate}
\item[{\rm(i)}] $\mathcal{O}=\bigcup\limits_{j\in{\mathbb{N}}}B_\rho(x_j,r_j)$;
\item[{\rm(ii)}] $\sum\limits_{j\in{\mathbb{N}}}{\mathbf 1}_{B_\rho(x_j,\theta r_j)}\leq M$
pointwise in $\mathcal{O}$. In fact,
there exists an $\varepsilon\in(0,1)$,   depending only on both $\theta$ and the space $(X,\rho,\mu)$,
with the property that, for any $x_0\in\mathcal{O}$,
\begin{eqnarray*}
\hskip -0.20in
\#\,\Bigl\{j\in{\mathbb{N}}:\,
B_\rho\bigl(x_0,\varepsilon\,{\rm dist}_\rho(x_0,X\setminus\mathcal{O})\bigr)
\cap B_{\rho}(x_j,\theta r_j)\not=\emptyset\Bigr\}\leq M,
\end{eqnarray*}
where, in general,   $\#E$ denotes the cardinality of a set $E$;
\item[{\rm(iii)}] For any $j\in{\mathbb{N}}$ ,$B_\rho(x_j,\theta r_j)\subset
\mathcal{O}$ and $B_\rho(x_j,\Lambda r_j)
\cap [X\setminus\mathcal{O}]\not=\emptyset$;
\item[{\rm(iv)}] $r_i\approx r_j$ uniformly for $i,\,j\in{\mathbb{N}}$
such that $B_\rho(x_i,\theta r_i)\cap B_\rho(x_j,\theta r_j)\not=\emptyset$.
\end{enumerate}
\end{theorem}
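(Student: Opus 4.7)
The plan is to construct the desired family by the classical Whitney procedure adapted to the quasi-metric setting: assign to each point of $\mathcal{O}$ a radius proportional to its quasi-distance from $X\setminus\mathcal{O}$, then extract a countable, suitably disjoint subfamily via a Vitali-type selection whose gently dilated members still cover $\mathcal{O}$. Properties (i) and (iii) will be immediate from the construction, while (iv) is a consequence of the quasi-metric triangle inequality together with the fact that $\dist_\rho(\cdot,X\setminus\mathcal{O})$ is ``quasi-Lipschitz''; the bounded overlap in (ii) will then follow from the $Q$-doubling hypothesis.

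For each $x\in\mathcal{O}$, I would set $d(x):=\dist_\rho(x,X\setminus\mathcal{O})\in(0,\infty)$, which is finite and strictly positive because $\mathcal{O}$ is a proper open subset of $(X,\tau_\rho)$. Picking $\alpha=\alpha(\theta,C_\rho,\widetilde{C}_\rho)\in(0,1)$ small enough that $C_\rho\widetilde{C}_\rho\theta\alpha<1$ and setting $r(x):=\alpha d(x)$, the quasi-triangle inequality then guarantees $B_\rho(x,\theta r(x))\subset\mathcal{O}$, while taking $\Lambda>\widetilde{C}_\rho/\alpha$ forces $B_\rho(x,\Lambda r(x))\cap(X\setminus\mathcal{O})\neq\emptyset$ by the definition of $d(x)$. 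I would then extract $\{B_\rho(x_j,r_j)\}_{j\in\mathbb{N}}$ by a Vitali-type selection valid in spaces of homogeneous type: partition $\mathcal{O}$ into dyadic layers $\mathcal{O}_k:=\{x\in\mathcal{O}:\,2^k\leq r(x)<2^{k+1}\}$ (over $k\in\mathbb{Z}$), and proceed greedily from large $k$ to small, selecting within each layer a maximal collection whose associated balls $B_\rho(x_j,r_j/\lambda)$ are pairwise disjoint from one another and from all balls chosen at larger scales, with $\lambda=\lambda(C_\rho)$ large enough that the standard ``dilation covers'' argument goes through. A routine maximality argument then yields (i), and (iii) is built into the very choice of $r(x)$.

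Property (iv) is the linchpin. If $B_\rho(x_i,\theta r_i)\cap B_\rho(x_j,\theta r_j)\neq\emptyset$, two applications of the quasi-triangle inequality yield $\rho(x_i,x_j)\lesssim\theta\max\{r_i,r_j\}$, and the quasi-Lipschitz bound $d(x_i)\leq C_\rho\widetilde{C}_\rho[\rho(x_i,x_j)+d(x_j)]$ (symmetrically in $i,j$) then forces $d(x_i)\approx d(x_j)$, hence $r_i\approx r_j$. For the pointwise overlap bound in (ii), fix $x_0\in\mathcal{O}$ and suppose $x_0$ lies in $N$ distinct balls $B_\rho(x_{j_k},\theta r_{j_k})$; property (iv) forces the radii $r_{j_k}$ and the distances $\rho(x_0,x_{j_k})$ to be comparable to a common value $r^\ast\approx d(x_0)$, so the originally disjoint balls $B_\rho(x_{j_k},r_{j_k}/\lambda)$ all sit inside a single $\rho$-ball around $x_0$ of radius $\approx r^\ast$. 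Applying the $Q$-doubling property to the measures of these disjoint balls versus their common container bounds $N$ by a constant $M=M(\theta,\kappa,Q,C_\rho,\widetilde{C}_\rho)$. The quantitative strengthening of (ii) follows by the same reasoning applied with $x_0$ replaced by any point of $B_\rho(x_0,\varepsilon\,d(x_0))$, with $\varepsilon=\varepsilon(\theta,C_\rho,\widetilde{C}_\rho)$ chosen small enough to preserve the comparability chain.

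The principal obstacle is the careful bookkeeping of the quasi-metric constants $C_\rho$ and $\widetilde{C}_\rho$ throughout the Vitali step: the dilation factor $\lambda$ needed to recover covering after disjointification, the smallness parameter $\alpha$ ensuring $B_\rho(x,\theta r(x))\subset\mathcal{O}$, and the magnitude of $\Lambda$ all interact nontrivially through these constants and must be chosen in a consistent order. Once the parameters are fixed compatibly, every remaining property reduces to a routine doubling and quasi-triangle computation, so no genuinely new ideas beyond the metric-space template are required.
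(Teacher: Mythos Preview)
The paper does not supply its own proof of this theorem; it is quoted from \cite[Theorem~6.3]{AMM13} (see also \cite[Theorems~2.4 and 2.5]{AM15}) and stated only for the reader's convenience. Your sketch is the classical Whitney construction adapted to the quasi-metric setting---assign $r(x)\approx\dist_\rho(x,X\setminus\mathcal{O})$, extract a disjoint subfamily by a Vitali-type selection, and deduce bounded overlap from doubling together with the comparability of radii for intersecting balls---which is precisely the strategy carried out in those references, so your approach is correct and in line with the cited proof.
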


Given a quasi-metric space $(X,\rho)$ and an exponent $\alpha\in(0,\infty)$,
recall that the {\it homogeneous} {\it H\"older} {\it space} (of order $\alpha$)
$\dot{\mathscr{C}}^\alpha(X,\rho)$ is the collection of all the functions $f\colon X\to\mathbb{R}$ such that
\begin{eqnarray*}
\|f\|_{\dot{\mathscr{C}}^\alpha(X,\rho)}:=
\sup_{x,\,y\in X,\ x\not=y}\frac{|f(x)-f(y)|}{[\rho(x,y)]^\alpha}<\infty.
\end{eqnarray*}

\begin{theorem}[{\bf Partition} {\bf of} {\bf Unity}]\label{MSz7b}
Let $(X,\rho,\mu)$ be a quasi-metric measure space, where
$\mu$ is a doubling measure. Suppose that
$\mathcal{O}$ is a proper nonempty
subset of $X$. Fix a
number $\theta\in (C^2_\rho,\fz)$,
where $C_\rho$ is as in \eqref{C-RHO.111},
and consider the decomposition of $\mathcal{O}$
into the family $\{B_\rho(x_j,r_j)\}_{j\in\mathbb{N}}$
as given by Theorem~\ref{L-WHIT} for this choice of $\theta$.
Let $\theta'\in(C_\rho,\theta/C_\rho)$.
Then, for any $\alpha\in\mathbb{R}$ satisfying
\begin{equation}\label{FFg-11.1}
0<\alpha\leq\left[{\rm log}_2 C_{\rho}\r]^{-1},
\end{equation}
there exists a  constant $C_\ast\in [1,\fz)$,
depending only on $\rho$, $\alpha$, $M$, and
the proportionality constants in
Theorem~\ref{L-WHIT}(iv), along with a family $\{\psi_j\}_{j\in\mathbb{N}}$
of real-valued functions  on $X$ such that the following statements are valid:
\begin{enumerate}
\item[{\rm(i)}] for any $j\in\mathbb{N}$, one has
\begin{eqnarray*}
\psi_j\in\dot{\mathscr{C}}^\beta(X,\rho)\quad\mbox{and}
\quad \|\psi_j\|_{\dot{\mathscr{C}}^\beta(X,\rho)}\leq C_\ast r_j^{-\beta},\quad\forall\ \beta\in(0,\alpha];
\end{eqnarray*}

\item[{\rm(ii)}] for any $j\in\mathbb{N}$, one has
\begin{eqnarray*}
\hskip -0.40in
0\leq\psi_j\leq 1\,\,\mbox{ on }\,\,X,\quad\psi_j\equiv 0
\,\,\mbox{ on }\,\,X\setminus B_\rho(x_j,\theta' r_j),
\quad\mbox{and}\quad\psi_j\geq 1/C_\ast\,\,\mbox{ on }\,\,B_\rho(x_j,r_j);
\end{eqnarray*}
\item[{\rm(iii)}] it holds true that $$\sum_{j\in\mathbb{N}}\psi_j
={\mathbf 1}_{\bigcup_{j\in\mathbb{N}}B_\rho(x_j,r_j)}
={\mathbf 1}_{\bigcup_{j\in\mathbb{N}}B_\rho(x_j,\theta' r_j)}
={\mathbf 1}_{\bigcup_{j\in\mathbb{N}}B_\rho(x_j,\theta r_j)}$$ pointwise on $X$.
\end{enumerate}
\end{theorem}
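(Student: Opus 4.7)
The strategy is a two-stage construction: first I would build H\"older bump functions $\varphi_j$ supported in the dilated Whitney balls $B_\rho(x_j,\theta' r_j)$, and then normalize them into a genuine partition of unity by setting $\psi_j:=\varphi_j/\sum_i\varphi_i$ on $\mathcal{O}$ and $\psi_j:=0$ elsewhere. The restriction $\alpha\leq(\log_2 C_\rho)^{-1}$ in \eqref{FFg-11.1} is the precise threshold up to which, by a Mac\'ias--Segovia-type regularization, the power $\rho^\alpha$ is pointwise equivalent to a genuine metric $d$ on $X$; this metric is what fuels the supply of H\"older functions needed for the argument.

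For the bump functions, I would use that $\dist_d(\cdot,E)$ is $1$-Lipschitz with respect to $d$, hence H\"older of order $\alpha$ relative to $\rho$, for any nonempty $E\subset X$. Specifically, for each $j\in\mathbb{N}$, set
\begin{equation*}
\varphi_j(x):=\max\left\{0,\,1-\frac{\dist_d(x,B_\rho(x_j,r_j))}{c_0\,r_j^\alpha}\right\},\quad\forall\,x\in X,
\end{equation*}
where $c_0>0$ is calibrated (depending on $\theta'$, $C_\rho$, and the proportionality constants in $d\approx\rho^\alpha$) so that $\supp\varphi_j\subset B_\rho(x_j,\theta' r_j)$; the room needed for such a choice of $c_0$ is precisely what is furnished by the hypothesis $\theta'>C_\rho$. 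The resulting function satisfies $0\leq\varphi_j\leq 1$, $\varphi_j\equiv 1$ on $B_\rho(x_j,r_j)$, and $\|\varphi_j\|_{\dot{\mathscr{C}}^\alpha(X,\rho)}\lesssim r_j^{-\alpha}$, from which the stronger bound $\|\varphi_j\|_{\dot{\mathscr{C}}^\beta(X,\rho)}\lesssim r_j^{-\beta}$ for any $\beta\in(0,\alpha]$ follows by interpolating against the trivial estimate $\varphi_j\leq 1$ (using that $\varphi_j$ is localized to a $\rho$-ball of radius $\approx r_j$).

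Next set $\Phi(x):=\sum_{j\in\mathbb{N}}\varphi_j(x)$. By Theorem~\ref{L-WHIT}(ii), at most $M$ terms are nonzero at any given point of $\mathcal{O}$, so $\Phi\leq M$ on $\mathcal{O}$ and $\Phi$ is unambiguously defined; by Theorem~\ref{L-WHIT}(i) together with $\varphi_j\equiv 1$ on $B_\rho(x_j,r_j)$, one has $\Phi\geq 1$ on $\mathcal{O}$. The normalized candidates
\begin{equation*}
\psi_j(x):=\frac{\varphi_j(x)}{\Phi(x)}\mathbf{1}_{\mathcal{O}}(x),\quad\forall\,x\in X,\,j\in\mathbb{N},
\end{equation*}
then satisfy items (ii) and (iii) of the theorem directly: $\psi_j\geq\varphi_j/M\geq 1/M$ on $B_\rho(x_j,r_j)$, $\supp\psi_j\subset B_\rho(x_j,\theta' r_j)$, and $\sum_j\psi_j=\Phi/\Phi=\mathbf{1}_{\mathcal{O}}$.

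The main obstacle is verifying the H\"older estimate (i) with a constant $C_\ast$ independent of $j$, ensuring that passing to the quotient and restricting to $\mathcal{O}$ does not deteriorate the H\"older regularity. For $x,\,y\in\mathcal{O}$, the identity
\begin{equation*}
\psi_j(x)-\psi_j(y)=\frac{\varphi_j(x)-\varphi_j(y)}{\Phi(x)}-\frac{\varphi_j(y)\bigl[\Phi(x)-\Phi(y)\bigr]}{\Phi(x)\Phi(y)},
\end{equation*}
combined with $\Phi\geq 1$ and $\varphi_j\leq 1$, reduces the task to controlling $\|\Phi\|_{\dot{\mathscr{C}}^\beta}$ locally near $\supp\varphi_j$, which follows from the bounded overlap in Theorem~\ref{L-WHIT}(ii) together with the comparability $r_i\approx r_j$ of Theorem~\ref{L-WHIT}(iv) for balls $B_\rho(x_i,\theta' r_i)$ that meet $B_\rho(x_j,\theta' r_j)$, yielding $\sum_i\|\varphi_i\|_{\dot{\mathscr{C}}^\beta}\lesssim M r_j^{-\beta}$ in that neighborhood. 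The remaining boundary case, in which $\psi_j(x)\neq 0$ and $y\in X\setminus\mathcal{O}$, is handled via Theorem~\ref{L-WHIT}(iii): the inclusion $B_\rho(x_j,\theta r_j)\subset\mathcal{O}$ combined with the quasi-triangle inequality and the hypothesis $\theta'<\theta/C_\rho$ forces $\rho(x,y)\gtrsim r_j$, so $|\psi_j(x)-\psi_j(y)|=\psi_j(x)\leq 1\lesssim [\rho(x,y)/r_j]^\beta$.
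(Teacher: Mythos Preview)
Your argument is correct and follows the standard construction; note that the paper itself does not prove Theorem~\ref{MSz7b} but merely quotes it from \cite[Theorem~6.3]{AMM13} (cf.\ also \cite[Theorem~2.5]{AM15}), so there is no ``paper's own proof'' to compare against beyond those references. Your two-stage scheme---building H\"older bumps via the $1$-Lipschitz distance function in the regularized metric $d=(\rho_\#)^\alpha$ of Theorem~\ref{DST1}, then normalizing by the locally finite sum $\Phi$---is exactly the approach taken in those sources, including the use of bounded overlap and radius comparability from Theorem~\ref{L-WHIT} to control $\Phi$ in $\dot{\mathscr{C}}^\beta$ near $\operatorname{supp}\varphi_j$, and the use of the gap $\theta'<\theta/C_\rho$ to handle the boundary case $y\in X\setminus\mathcal{O}$.
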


The range for the parameter $\alpha$ in \eqref{FFg-11.1} is directly linked to
the geometry of
the underlying  quasi-metric spaces
and it is a result of the following
sharp metrization theorem established in \cite[Theorem 3.46]{MMMM13}.
\begin{theorem}
\label{DST1}
Let $(X,\rho)$ be a quasi-metric space
and assume that  $C_\rho$, $\widetilde{C}_\rho\in[1,\infty)$ are as in
\eqref{C-RHO.111} and \eqref{C-RHO.111XXX}, respectively.
Then there exists a symmetric quasi-metric
$\rho_{\#}$ on $X$ satisfying  $\rho_{\#}\approx\rho$ and  having the property that
for any finite number $\alpha\in(0,(\log_2C_\rho)^{-1}]$,
the function $(\rho_{\#})^\alpha\colon  X\times X\to\mathbb{R}$
is a genuine metric on $X$,  that is,
\begin{equation*}
[\rho_{\#}(x,y)]^\alpha\leq [\rho_{\#}(x,z)]^\alpha+[\rho_{\#}(z,y)]^\alpha,
\quad\forall\ x,\,y,\,z\in X.
\end{equation*}
Moreover, the function $\rho_{\#}:X\times X\longrightarrow [0,\infty)$
is continuous when $X\times X$ is equipped with the natural
product topology $\tau_\rho\times\tau_\rho$.
In particular, all $\rho_{\#}$-balls are open in the topology $\tau_\rho$.
\end{theorem}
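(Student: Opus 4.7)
The plan is to employ the classical Aoki--Rolewicz construction, adapted to the quasi-metric setting. First, I would symmetrize $\rho$ by setting $\rho_0(x,y):=\max\{\rho(x,y),\rho(y,x)\}$, which is a symmetric quasi-metric comparable to $\rho$ (the proportionality constant being governed by $\widetilde{C}_\rho$) and whose quasi-triangle constant is still controlled by $C_\rho$ up to a harmless multiplicative factor. Setting $\alpha_0:=(\log_2 C_\rho)^{-1}$, so that $C_\rho^{\alpha_0}=2$, I then define
\begin{equation*}
\rho_{\#}(x,y):=\left[\inf\sum_{i=0}^{n-1}[\rho_0(x_i,x_{i+1})]^{\alpha_0}\right]^{1/\alpha_0},
\end{equation*}
where the infimum runs over all finite chains $x=x_0,x_1,\ldots,x_n=y$ in $X$ and all $n\in\mathbb{N}$.

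The first step is to verify that $\rho_{\#}^{\alpha_0}$ satisfies the triangle inequality and that $\rho_{\#}$ is symmetric: both are immediate from the chain definition, by concatenating or reversing chains. The second, and most delicate, step is to establish the two-sided comparability $\rho_{\#}\approx\rho_0$. The upper bound $\rho_{\#}^{\alpha_0}(x,y)\leq[\rho_0(x,y)]^{\alpha_0}$ follows from the trivial chain $x,y$. For the matching lower bound, I would prove by induction on the length $n$ the Frink-type inequality
\begin{equation*}
[\rho_0(x_0,x_n)]^{\alpha_0}\leq 2\sum_{i=0}^{n-1}[\rho_0(x_i,x_{i+1})]^{\alpha_0}
\end{equation*}
for every chain $x_0,\ldots,x_n$ in $X$. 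The inductive step exploits the identity $C_\rho^{\alpha_0}=2$ together with a pivot index chosen to balance the partial sums on the two halves of the chain; passing to the infimum yields $\rho_0\leq 2^{1/\alpha_0}\rho_{\#}$, which together with the upper bound also gives the identity of indiscernibles for $\rho_{\#}$.

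Once the case $\alpha=\alpha_0$ is settled, the statement for general $\alpha\in(0,\alpha_0]$ follows readily. Writing $\rho_{\#}^{\alpha}=(\rho_{\#}^{\alpha_0})^{\alpha/\alpha_0}$ and using that the map $t\mapsto t^{\alpha/\alpha_0}$ on $[0,\infty)$ is concave and subadditive whenever $\alpha/\alpha_0\in(0,1]$, the triangle inequality for $\rho_{\#}^{\alpha_0}$ transfers to $\rho_{\#}^{\alpha}$. Finally, being a genuine metric, $\rho_{\#}^{\alpha_0}$ is automatically continuous on $X\times X$ with respect to the product topology it induces; since $\rho_{\#}\approx\rho$, the topologies $\tau_{\rho_{\#}}$ and $\tau_\rho$ coincide, so $\rho_{\#}$ is continuous in $\tau_\rho\times\tau_\rho$ and every $\rho_{\#}$-ball is open in $\tau_\rho$.

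The main obstacle is the Frink-type chain estimate, which is the quantitative core of the construction. The choice $\alpha_0=(\log_2 C_\rho)^{-1}$ is used in an essential way: any larger exponent would force $C_\rho^{\alpha}>2$, breaking the inductive balance and destroying the comparability of $\rho_{\#}$ with $\rho$, which is precisely what pins down the sharpness of the admissible range in the statement.
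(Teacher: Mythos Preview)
The paper does not supply its own proof of this statement; it is quoted verbatim as the sharp metrization theorem of \cite[Theorem~3.46]{MMMM13} and used as a black box. Your chain-infimum (Frink/Aoki--Rolewicz) construction is precisely the approach taken in that reference, so your plan is on target.

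Two small sharpenings are worth recording. First, your remark that the symmetrization $\rho_0(x,y):=\max\{\rho(x,y),\rho(y,x)\}$ has quasi-triangle constant ``controlled by $C_\rho$ up to a harmless multiplicative factor'' undersells the situation: a direct computation gives $C_{\rho_0}\leq C_\rho$ with no extra factor, and this exact bound is what makes the endpoint exponent $\alpha_0=(\log_2 C_\rho)^{-1}$ admissible rather than merely $(\log_2 C_{\rho_0})^{-1}$. Second, the precise constant in the Frink chain estimate (whether $2$ or $4$) only affects the proportionality constants in $\rho_{\#}\approx\rho$, not the metric property of $\rho_{\#}^{\alpha}$, which follows immediately from the infimum-over-chains definition; so you need not worry about nailing that constant exactly.
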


The following Poincar\'e-type inequality
will play an important role in proving our main extension result.
\begin{lemma}
\label{embeddfracgrad}
Let $(X,\rho,\mu)$ be a quasi-metric measure space, where
$\mu$ is a $Q$-doubling measure for some $Q\in(0,\infty)$,
and suppose that $\Omega\subset X$ is a nonempty
$\mu$-measurable set that satisfies the measure density condition \eqref{measdens-INT-a}, namely,
there exists a positive constant $C_\mu$ such that
\begin{equation}
\label{measdens-lemma-X}
\mu(B_\rho(x,r))\leq C_\mu\,
\mu(B_\rho(x,r)\cap\Omega),\quad\forall\ x\in\Omega, \ \ \forall\  r\in(0,1].
\end{equation}
Fix $t,\, s,\,\varepsilon,\,\varepsilon'\in(0,\infty)$ with
$t<Q/\varepsilon'$ and $\varepsilon'<\varepsilon<s$, and
let $k_0\in\mathbb{N}$ be any number such
that $2^{k_0}\geq C_\rho^2\widetilde{C}_\rho$, where $C_\rho$,
$\widetilde{C}_\rho\in[1,\infty)$ are as in \eqref{C-RHO.111}
and \eqref{C-RHO.111XXX}, respectively.
Then, for any $r_\ast\in[1,\infty)$,
there exists a positive constant $C$,
depending on $r_\ast$, $\mu$, $\rho$, $Q$, $t$, $\varepsilon'$, and $\varepsilon$,
such that, for any $\mu$-measurable function
$u\colon  \Omega\to\mathbb{R}$, $\{g_k\}_{k\in\mathbb{Z}}\in\mathbb{D}^s_\rho(u)$,
$x\in \Omega$, and $L\in\mathbb{Z}$ satisfying $2^{-L}\leq r_\ast$, one has
\begin{equation}
\label{cie3-2}%{xxz-482-3}
\inf_{\gamma\in\mathbb{R}}\left[\, \mvint_{B_\rho(x,2^{-L})\cap\Omega} |u-\gamma|^{t^\ast}\,
d\mu\right]^{1/t^\ast}
\leq C 2^{-L\varepsilon}\left[\,\mvint_{B_\rho(x,C_\rho2^{-L})}
\sup_{j\geq L-k_0}\lf\{2^{-j(s-\varepsilon)t}g_j^t\r\}\, d\mu\right]^{1/t},
\end{equation}
where $t^\ast:=Qt/(Q-\varepsilon' t)$. Here, for any $j\in\mathbb{Z}$, the function $g_j$ is identified with function defined on all of $X$ by setting $g_j\equiv0$ on $X\setminus\Omega$.
\end{lemma}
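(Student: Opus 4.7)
My plan is to manufacture, from the fractional $s$-gradient $\{g_k\}_{k\in\mathbb{Z}}$, a local $\varepsilon'$-gradient for $u$ on $\sigma B_0 \cap \Omega$, where $\sigma B_0 := B_\rho(x, C_\rho 2^{-L})$, and then to invoke the Sobolev--Poincar\'e inequality from Theorem~\ref{DOUBembedding}(a) applied directly to the quasi-metric measure space $(\Omega, \rho, \mu|_\Omega)$ with parameters $s \to \varepsilon'$, $p \to t$, and $q = \infty$. The hypothesis $t < Q/\varepsilon'$ places us in the Sobolev regime and produces exactly the exponent $t^* = Qt/(Q - \varepsilon' t)$.

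For the gradient, I set $G(z) := \sup_{j \geq L - k_0} 2^{-j(s-\varepsilon)} g_j(z)$, with each $g_j$ extended by zero off $\Omega$. For any distinct $x', y' \in \sigma B_0 \cap \Omega$, two applications of the quasi-triangle inequality (using $C_\rho$ and $\widetilde{C}_\rho$) give $\rho(x', y') \leq C_\rho^2 \widetilde{C}_\rho \cdot 2^{-L} \leq 2^{k_0 - L}$, where the last bound invokes the hypothesis $2^{k_0} \geq C_\rho^2 \widetilde{C}_\rho$. Consequently, the unique $k \in \mathbb{Z}$ with $2^{-k-1} \leq \rho(x', y') < 2^{-k}$ satisfies $k \geq L - k_0$, and splitting $[\rho(x', y')]^s = [\rho(x', y')]^\varepsilon \cdot [\rho(x', y')]^{s - \varepsilon}$ together with the bound $[\rho(x', y')]^{s - \varepsilon} \leq 2^{-k(s-\varepsilon)}$ yields the $\varepsilon$-gradient estimate
\begin{equation*}
|u(x') - u(y')| \leq [\rho(x', y')]^\varepsilon [G(x') + G(y')]
\end{equation*}
$\mu$-almost everywhere. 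Since $\rho(x', y') \lesssim 2^{-L}$, one may further absorb $[\rho(x', y')]^{\varepsilon - \varepsilon'} \lesssim 2^{-L(\varepsilon - \varepsilon')}$ to conclude that $\widetilde{G} := C_0\, 2^{-L(\varepsilon - \varepsilon')} G$ is an $\varepsilon'$-gradient of $u$ on $\sigma B_0 \cap \Omega$ for a suitable constant $C_0$.

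Before invoking Theorem~\ref{DOUBembedding}, I verify that $(\Omega, \rho, \mu|_\Omega)$ is itself $Q$-doubling up to the scale $r_\ast C_\rho$: given $y, z \in \Omega$ with $B_\rho(y, r) \cap \Omega \subset B_\rho(z, R) \cap \Omega$ in the relevant range, the inclusion $B_\rho(y, r) \subset B_\rho(z, C_\rho R)$ combined with the $Q$-doubling of $\mu$ in $X$ and the measure density condition (applied on both balls) produces the required lower bound on $\mu|_\Omega(B_\rho^\Omega(y, r)) / \mu|_\Omega(B_\rho^\Omega(z, R))$. Applying Theorem~\ref{DOUBembedding}(a) on $(\Omega, \rho, \mu|_\Omega)$ with $B_0 = B_\rho(x, 2^{-L}) \cap \Omega$, $R_0 = 2^{-L}$, $\sigma = C_\rho$, and the gradient $\widetilde{G}$, then converting both sides to $\mvint$ form, using $\mu(\sigma B_0 \cap \Omega) \approx \mu(B_0 \cap \Omega)$ and the algebraic identity $1/t^* = 1/t - \varepsilon'/Q$, all powers of the volumes cancel and the prefactor collapses to $R_0^{\varepsilon'} \cdot 2^{-L(\varepsilon - \varepsilon')} = 2^{-L\varepsilon}$. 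Finally, because $G$ vanishes off $\Omega$ and the measure density condition yields $\mu(\sigma B_0 \cap \Omega) \approx \mu(\sigma B_0)$, one may replace $\mvint_{\sigma B_0 \cap \Omega} G^t\, d\mu$ by $\mvint_{B_\rho(x, C_\rho 2^{-L})} G^t\, d\mu$, and recognizing $G^t = \sup_{j \geq L - k_0} 2^{-j(s-\varepsilon) t} g_j^t$ recovers \eqref{cie3-2}. I anticipate the chief technical hurdle to be the bookkeeping in extending measure density from $r \leq 1$ to $r \leq C_\rho r_\ast$ and tracking how the resulting constants depend on $r_\ast$ and $\mu$.
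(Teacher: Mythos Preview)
Your proposal is correct and follows essentially the same route as the paper: construct from $\{g_k\}$ a single $\varepsilon'$-gradient on $C_\rho B_\rho(x,2^{-L})\cap\Omega$ via the supremum $\sup_{j\geq L-k_0}2^{-j(s-\varepsilon)}g_j$, then apply a Sobolev--Poincar\'e inequality on the induced space $(\Omega,\rho,\mu|_\Omega)$ and use the measure density condition to pass from $\Omega$-balls back to $X$-balls. The only cosmetic difference is that the paper invokes Theorem~\ref{embedding} (verifying the $V(\sigma B_0\cap\Omega,Q,b)$ condition directly) whereas you invoke Theorem~\ref{DOUBembedding} (verifying $Q$-doubling of $\mu|_\Omega$ up to scale $r_\ast C_\rho$); both verifications reduce to the same combination of measure density and doubling, and both embedding theorems come from \cite{AYY21}, so the arguments are interchangeable.
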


\begin{remark}
\label{linearlocal}
In the context of Lemma~\ref{embeddfracgrad}, if the function
$u$ is  integrable on domain balls and we take $t:=Q/(Q+\varepsilon')$,
then $t^\ast=1$ and we can replace the
infimum in \eqref{cie3-2} by the ball average $u_{B_\rho(x,2^{-L})\cap\Omega}$,  and obtain
\begin{equation*}
\begin{split}
\mvint_{B_\rho(x,2^{-L})\cap\Omega} \lf|u-u_{B_\rho(x,2^{-L})\cap\Omega}\r|\, d\mu
\le C 2^{-L\varepsilon}
\left[\,\mvint_{B_\rho(x,C_\rho2^{-L})}\sup_{j\geq L-k_0}
\lf\{2^{-j(s-\varepsilon)t}g_j^t\r\}\, d\mu\right]^{1/t},
\end{split}
\end{equation*}
where the positive constant $C$ is twice the constant appearing in Lemma \ref{embeddfracgrad}.
This inequality will be important in the proof of Theorem~\ref{measext}
when constructing an extension operator that is linear.
\end{remark}

\begin{proof}[Proof of Lemma \ref{embeddfracgrad}]
Fix a $\mu$-measurable function $u\colon  \Omega\to\mathbb{R}$ and
suppose that $r_\ast\in[1,\infty)$, $\{g_k\}_{k\in\mathbb{Z}}\in\mathbb{D}^s_\rho(u)$,
$x\in \Omega$, and $L\in\mathbb{Z}$ satisfies $2^{-L}\leq r_\ast$.
Without loss of generality, we may assume that the right-hand side of
\eqref{cie3-2} is finite.

If $\Omega=\{x\}$ then \eqref{cie3-2} trivially holds. Therefore, we can assume that $\Omega$ as cardinality at least two, in which case, we have that  $(\Omega,\rho,\mu)$ is a well-defined quasi-metric measure space where $\rho$ and $\mu$ are restricted to $\Omega$.
Our plan is to use \eqref{eq19} in Theorem~\ref{embedding} for the
quasi-metric measure space $(\Omega,\rho,\mu)$ with $\sigma:=C_\rho$,
$R_0:=2^{-L}$, and $B_0:=B_\rho(x,2^{-L})$.
To this end, we first claim that the measure $\mu$
satisfies the $V(\sigma B_0\cap\Omega, Q, b)$ condition  [that is, \eqref{measbound} with $\sigma B_0$,
$B_\rho(x,r)$, and $x\in X$ replaced, respectively, by $\sigma B_0 \cap \Omega$,
$B_\rho(z,r)\cap \Omega$, and $z\in \Omega$]
for some choice of $b$. To show this,
suppose that $z\in\Omega$
and $r\in(0,\sigma R_0]$ satisfy
$B_\rho(z,r)\cap\Omega\subset \sigma B_0\cap\Omega$.
Then $(\sigma r_\ast)^{-1}r\leq r\leq \sigma^2 R_0$
and $(\sigma r_\ast)^{-1}r\leq r_\ast^{-1}R_0=r_\ast^{-1}2^{-L}\leq 1$.
Moreover, by $z\in\sigma B_0$, we obtain $B_\rho(z,(\sigma r_\ast)^{-1}r)\subset \sigma^2 B_0$.
Indeed, if $y\in B_\rho(z,(\sigma r_\ast)^{-1}r)$, then
$$
\rho(x,y)\leq C_\rho\max\{\rho(x,z),\rho(z,y)\}<C_\rho\max\{\sigma R_0,
\sigma^{-1}r\}=C_\rho\sigma R_0=\sigma^2 R_0.
$$
Using the $Q$-doubling property \eqref{Doub-2} and  the measure density condition \eqref{measdens-lemma-X},  we conclude that
\begin{align*}
\frac{\mu(B_\rho(z,r)\cap\Omega)}{\mu(\sigma B_0\cap\Omega)}
&\geq\frac{\mu(B_\rho(z,(\sigma r_\ast)^{-1}r)\cap\Omega)}{\mu(\sigma B_0)}\\
&\geq\frac{C_\mu^{-1}\mu(B_\rho(z,(\sigma r_\ast)^{-1}r))}{\mu(\sigma^2 B_0)}\geq
\kappa C_\mu^{-1}\lf[\frac{(\sigma r_\ast)^{-1}r}{\sigma^2 R_0}\r]^{Q}.
\end{align*}
Hence, $\mu$ satisfies the $V(\sigma B_0\cap\Omega, Q, b)$ condition
with $b:=\kappa C_\mu^{-1}\mu(\sigma B_0\cap\Omega)(\sigma^3 r_\ast R_0)^{-Q}$.
This proves the above claim.

Moving on, we claim next that $u\in\dot{M}^{\varepsilon',t}(\sigma B_0\cap\Omega)$.  To see this,
let
$$g:=\sup_{j\geq L-k_0}\lf\{2^{-j(s-\varepsilon')}g_j\right\},$$
where $k_0\in\mathbb{N}$ satisfies
$2^{k_0}\geq C_\rho^2\widetilde{C}_\rho$.
Then $g\in\mathcal{D}_\rho^{\varepsilon'}(u)$. Indeed, observe that,
for any points $y,\, z\in \sigma B_0\cap\Omega=B_\rho(x,C_\rho2^{-L})\cap\Omega$, we have
$$
\rho(y,z)\leq C_\rho\max\{\rho(y,x),\rho(x,z)\}<C_\rho^2\widetilde{C}_\rho 2^{-L}\leq 2^{-L+k_0},
$$
which further implies that there exists a unique integer $j_0 \geq L-k_0$
satisfying $2^{-j_0-1}\leq \rho(y,z)<2^{-j_0}.$
Then, by $\{g_k\}_{k\in\zz}\in\mathbb{D}_\rho^s(u)$, we conclude
that there exists an $E\subset X$ with $\mu(E)=0$ such that,
for any $y,\, z\in (\sigma B_0\cap\Omega)\setminus E$,
\begin{align*}
|u(y)-u(z)|&\leq[\rho(y,z)]^s\lf[g_{j_0}(y)+g_{j_0}(z)\r]\\
&<
[\rho(y,z)]^{\varepsilon'} 2^{-j_0(s-\varepsilon')}\lf[g_{j_0}(y)+g_{j_0}(z)\r]\leq [\rho(y,z)]^{\varepsilon'}[g(y)+g(z)],
\end{align*}
which implies that $g\in\mathcal{D}_\rho^{\varepsilon'}(u)$, as wanted. Observe that
\begin{align}
\label{ntq-23-2}
\|g\|_{L^t(\sigma B_0\cap\Omega,\mu)}
&=\left[\int_{\sigma B_0}
\sup_{j\geq L-k_0}\lf\{2^{-j(s-\varepsilon')t}g_j^t\r\}\,d\mu\right]^{\frac{1}{t}}\noz\\
&\leq
\sup_{j\geq L-k_0}2^{-j(\varepsilon-\varepsilon')}\left[\int_{\sigma B_0}\sup_{j\geq L-k_0}\lf\{2^{-j(s-\varepsilon)t}g_j^t\r\}\,d\mu\right]^{\frac{1}{t}}\noz\\
&\leq 2^{-(L-k_0)(\varepsilon-\varepsilon')}\left[\int_{\sigma B_0}\sup_{j\geq L-k_0}\lf\{2^{-j(s-\varepsilon)t}g_j^t\r\}\,d\mu\right]^{\frac{1}{t}}<\infty,
\end{align}
where we have used the fact that for any $j\in\mathbb{Z}$, $g_j\equiv0$ on $X\setminus\Omega$.
It follows that $u\in\dot{M}^{\varepsilon',t}(\sigma B_0\cap\Omega)$.

Since $t<Q/\varepsilon'$, from \eqref{eq19} in
Theorem~\ref{embedding} (keeping in mind
the value of $b$  above) and \eqref{ntq-23-2}, we deduce that
\begin{align}
\label{xwi-845}
&\inf_{\gamma\in\mathbb{R}}\left[\, \mvint_{B_\rho(x,2^{-L})\cap\Omega} |u-\gamma|^{t^\ast}\, d\mu\right]^{1/t^\ast}\noz\\
&\quad\lesssim 2^{-L\varepsilon'}\left[\,\mvint_{B_\rho(x,C_\rho2^{-L})\cap\Omega}g^t\, d\mu\right]^{1/t}
\lesssim 2^{-L\varepsilon}\left[\,\mvint_{B_\rho(x,C_\rho2^{-L})\cap\Omega}\sup_{j\geq L-k_0}\lf\{2^{-j(s-\varepsilon)t}g_j^t\r\}\,d\mu\right]^{\frac{1}{t}}.
\end{align}
Note that the measure density condition \eqref{measdens-lemma-X} and the doubling
property for $\mu$ imply (keeping in mind that $C_\rho,r_\ast\geq1$ and $r^{-1}_\ast2^{-L}\leq1$)
$$
\mu(B_\rho(x,C_\rho2^{-L})\cap\Omega)\geq\mu(B_\rho(x,r^{-1}_\ast2^{-L})\cap\Omega)
\gtrsim\mu(B_\rho(x,r^{-1}_\ast2^{-L}))\gtrsim\mu(B_\rho(x,C_\rho2^{-L})),
$$
which, together with \eqref{xwi-845}, implies
the desired inequality \eqref{cie3-2}. The proof of Lemma~\ref{embeddfracgrad} is now complete.
\end{proof}

The $M^s_{p,q}$ and the $N^s_{p,q}$ extension operators that we construct in Theorem~\ref{measext} will be a Whitney-type operator which, generally speaking in this context, is based on gluing various `averages' of a function together with using a partition of unity that is sufficiently smooth (related to the parameter $s$). Since functions in $M^s_{p,q}$ and $N^s_{p,q}$ may not be locally integrable for small values of $p$, we cannot always use integral averages of these functions when constructing the extension operator. As a substitute, we use the so-called median value of a function in place of its integral average when $p$ or $q$ is  small (see, for instance, \cite{F91}, \cite{Z15}, and \cite{HIT16}).

\begin{definition}\label{median}
Let $(X,\mu)$ be a measure space, where $\mu$ is a nonnegative measure.
Given a $\mu$-measurable set $E\subset X$ with $\mu(E)\in(0,\infty)$,
the \textit{median value} $m_u(E)$ of  a $\mu$-measurable
function $u\colon X\to\mathbb{R}$ on $E$ is defined by setting
$$
m_u(E):=\max_{\theta\in\mathbb{R}}\left\{\mu\left(\left\{x\in E:\,u(x)<\theta\right\}\right)\leq\frac{\mu(E)}{2}\right\}.
$$
\end{definition}	

It is straightforward to check that $m_u(E)\in\mathbb{R}$ is a well-defined number using basic properties of the measure $\mu$. We now take a moment to collect a few key properties of $m_u$ that illustrate how these quantities mimic the usual integral average of a locally integrable function. One drawback of the median value is that there is no guarantee that $m_u(E)$ is linear in $u$. Consequently, the resulting extension operator constructed using median values may not be linear.

\begin{lemma}
\label{difflemma}
Let $(X,\rho,\mu)$ be a quasi-metric measure space and fix an $\eta\in(0,\infty)$. Then,
for any $\mu$-measurable function $u\colon  X\to\mathbb{R}$,
any $\rho$-ball $B\subset X$, and any $\gamma\in\mathbb{R}$, one has
\begin{equation}
\label{hcx-23}
|m_u(B)-\gamma|\leq\left(2\,\mvint_{B}|u-\gamma|^\eta\, d\mu\right)^{1/\eta}.	
\end{equation}
\end{lemma}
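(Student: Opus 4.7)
The plan is a direct Chebyshev-type argument, splitting on the sign of $m_u(B)-\gamma$ and exploiting the one-sided mass bounds that come built into the definition of a median. Set $m:=m_u(B)$ so that, by definition, $\mu(\{x\in B:u(x)<m\})\leq \mu(B)/2$, while for every $\theta>m$ one has $\mu(\{x\in B:u(x)<\theta\})>\mu(B)/2$. Taking $\theta=m+1/n$ and invoking continuity from above of the finite measure $\mu|_B$ (using that $\{u\leq m\}\cap B=\bigcap_n\{u<m+1/n\}\cap B$), the latter upgrades to $\mu(\{x\in B:u(x)\leq m\})\geq \mu(B)/2$. Complementing the former gives $\mu(\{x\in B:u(x)\geq m\})\geq \mu(B)/2$ as well. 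These two inequalities are the only facts about the median that I will need.

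Now fix $\gamma\in\mathbb R$ and consider two cases. If $m\geq\gamma$, set $A:=\{x\in B:u(x)\geq m\}$; then for every $x\in A$ one has $u(x)-\gamma\geq m-\gamma\geq 0$, so $|u(x)-\gamma|\geq|m-\gamma|$. Since $\mu(A)\geq\mu(B)/2$, it follows that
\begin{equation*}
\int_B|u-\gamma|^\eta\,d\mu\;\geq\;\int_A|u-\gamma|^\eta\,d\mu\;\geq\;|m-\gamma|^\eta\,\mu(A)\;\geq\;\frac{|m-\gamma|^\eta}{2}\,\mu(B),
\end{equation*}
which on dividing by $\mu(B)$ and taking the $1/\eta$ power yields \eqref{hcx-23}. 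If instead $m<\gamma$, the same reasoning applies to $A':=\{x\in B:u(x)\leq m\}$: for $x\in A'$ one has $\gamma-u(x)\geq\gamma-m>0$, hence $|u(x)-\gamma|\geq|m-\gamma|$, and $\mu(A')\geq\mu(B)/2$ produces the identical bound.

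The argument is really just a routine Chebyshev inequality, so I do not anticipate any genuine obstacle. The only subtle point is the measure-theoretic one of passing from the strict inequality in the definition of $m_u(B)$ (via the $\max$) to a nonstrict inequality on $\{u\leq m\}$; this is handled by continuity from above, which is legitimate because $\mu(B)<\infty$ (recall that $(X,\rho,\mu)$ is a quasi-metric measure space, so every $\rho$-ball has finite measure). Note that the argument makes no use of the quasi-metric structure whatsoever, so the statement is really a lemma about medians on any finite measure space, and no hypothesis beyond $\mu(B)\in(0,\infty)$ is needed.
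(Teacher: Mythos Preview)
Your proof is correct and is precisely the standard Chebyshev argument for this inequality. The paper does not actually give a proof of this lemma; it cites \cite[(2.4)]{GKZ13} and \cite[Lemma~2.2]{F91} and remarks that the quasi-metric setting requires no change, which is exactly your closing observation that the argument uses only $\mu(B)\in(0,\infty)$ and nothing about the quasi-metric.
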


Estimate \eqref{hcx-23} was established in \cite[(2.4)]{GKZ13} under the assumption
that $\rho$ is a genuine metric (see also \cite[Lemma 2.2]{F91} for a proof in the
Euclidean setting). The proof in the setting of quasi-metric spaces is the same and,
therefore, we omit the details.

The following lemma is a refinement of \cite[Lemmas~3.6 and 3.8]{HIT16}, which, among other things,
sharpens the estimate \cite[(3.9)]{HIT16} in a fashion
that will permit us to simultaneously generalize \cite[Theorem 1.2]{HIT16} and \cite[Theorem~6]{hajlaszkt2} in a unified manner; see Theorem~\ref{measext} below.

\begin{lemma}
\label{intavgest}
Let $(X,\rho,\mu)$ be a quasi-metric measure space
with $\mu$ being a doubling measure, and suppose that $\Omega\subset X$
is a nonempty $\mu$-measurable set that satisfies the measure density condition
\eqref{measdens-INT-a}, namely,
there exists a constant $C_\mu\in(0,\infty)$  satisfying
\begin{equation}
\label{measdens-lemma}
\mu(B_\rho(x,r))\leq C_\mu\,\mu(B_\rho(x,r)\cap\Omega),
\quad\forall x\in\Omega,\ \ \forall\ r\in(0,1].
\end{equation}
Then the following statements are valid.
\begin{enumerate}
\item[{\rm(i)}] Suppose that $u\colon \Omega\to\mathbb{R}$ belongs locally to $\dot{M}^{\varepsilon,p}(\Omega,\rho,\mu)$ for some $\varepsilon,\,p\in(0,\infty)$,
in the sense that $u\in\dot{M}^{\varepsilon,p}(B\cap\Omega,\rho,\mu)$
for any fixed $\rho$-ball $B\subset X$ that is
centered in $\Omega$. Then
\begin{equation}
\label{SobDiff}
\lim\limits_{r\to 0^+}m_u(B_{\rho}(x,r)
\cap\Omega)=u(x)\quad\mbox{for $\mu$-almost every point $x\in \Omega$,}	
\end{equation}
here and thereafter, $r\to 0^+$ means $r\in(0,\fz)$ and $r\to0$.
In particular, \eqref{SobDiff} holds true if $u\in \dot{M}^s_{p,q}(\Omega,\rho,\mu)$ or $u\in\dot{N}^s_{p,q}(\Omega,\rho,\mu)$ for some $s,\,p\in(0,\infty)$  and $q\in(0,\infty]$.
	
\item[{\rm(ii)}] Fix $t,\, s,\,\varepsilon\in(0,\infty)$
with $\varepsilon<s$, and let $k_0\in\mathbb{N}$ be
any number such that
$2^{k_0}\geq C_\rho^2\widetilde{C}_\rho$, where $C_\rho$,
$\widetilde{C}_\rho\in[1,\infty)$ are as in \eqref{C-RHO.111} and \eqref{C-RHO.111XXX},
respectively.
Then, for any given $r_\ast\in[1,\infty)$, there exists a positive constant $C$ such that
\begin{align}
\label{BXPZ-2}
&\lf|m_u(B\cap\Omega)-m_u\lf(B_\rho(x,2^{-L})\cap\Omega\r)\r|\noz\\
&\qquad\leq C2^{-L\varepsilon}\left[\,\mvint_{B_\rho(x,C_\rho2^{-L})}\sup_{j\geq L-k_0}\lf\{2^{-j(s-\varepsilon)t}g_j^t\r\}\, d\mu\right]^{1/t}
\end{align}
for any  $\mu$-measurable function $u\colon  \Omega\to\mathbb{R}$, $\{g_k\}_{k\in\mathbb{Z}}\in\mathbb{D}^s_\rho(u)$, $x\in \Omega$,  $L\in\mathbb{Z}$
satisfying $2^{-L}\leq r_\ast$, and $B\subset B_\rho(x,2^{-L})$ which is a $\rho$-ball
centered in $\Omega$ such that $\mu(B\cap\Omega)\approx\mu(B_\rho(x,2^{-L})\cap\Omega)$.
Here, for any $j\in\mathbb{Z}$, the function $g_j$ is identified with function defined on all points of $X$ by setting $g_j\equiv0$ on $X\setminus\Omega$.
\end{enumerate}
\end{lemma}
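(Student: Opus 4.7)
My plan is to prove both parts by, on each relevant ball, constructing a single constant $\gamma\in\mathbb{R}$ that well-approximates $u$ in an $L^{\eta}$-average sense via the fractional Poincar\'e-type inequalities from Lemma~\ref{embeddfracgrad} and Theorem~\ref{embedding}, and then transferring this approximation to the median value through Lemma~\ref{difflemma}. The scheme is viable precisely because \eqref{hcx-23} bounds $|m_u(B)-\gamma|$ by any $L^\eta$-average of $u-\gamma$ for arbitrarily small $\eta>0$, which is crucial since elements of $\dot{M}^s_{p,q}$ need not be locally integrable.

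\textbf{Plan for part (ii).} First fix an auxiliary exponent $\varepsilon'\in(0,\varepsilon)$ small enough that $t<Q/\varepsilon'$, and set $t^\ast:=Qt/(Q-\varepsilon' t)$. Applying Lemma~\ref{embeddfracgrad} to $u$ at the point $x$ and level $L$ with this $\varepsilon'$ produces a near-minimizing constant $\gamma\in\mathbb{R}$ (within a factor of $2$ of the infimum) satisfying
\[
\bigg(\mvint_{B_\rho(x,2^{-L})\cap\Omega}|u-\gamma|^{t^\ast}\,d\mu\bigg)^{1/t^\ast}\leq C\,2^{-L\varepsilon}\bigg[\mvint_{B_\rho(x,C_\rho 2^{-L})}\sup_{j\geq L-k_0}\bigl\{2^{-j(s-\varepsilon)t}g_j^t\bigr\}\,d\mu\bigg]^{1/t}.
\]
Call the right-hand side $\Theta$. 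Then Lemma~\ref{difflemma} with $\eta:=t^\ast$ immediately yields $|m_u(B_\rho(x,2^{-L})\cap\Omega)-\gamma|\lesssim\Theta$. For the inner ball $B\subset B_\rho(x,2^{-L})$, the hypothesis $\mu(B\cap\Omega)\approx\mu(B_\rho(x,2^{-L})\cap\Omega)$ together with $B\cap\Omega\subset B_\rho(x,2^{-L})\cap\Omega$ forces
\[
\mvint_{B\cap\Omega}|u-\gamma|^{t^\ast}\,d\mu\leq\frac{\mu(B_\rho(x,2^{-L})\cap\Omega)}{\mu(B\cap\Omega)}\mvint_{B_\rho(x,2^{-L})\cap\Omega}|u-\gamma|^{t^\ast}\,d\mu\lesssim\Theta^{t^\ast},
\]
so a second application of Lemma~\ref{difflemma} bounds $|m_u(B\cap\Omega)-\gamma|$ by a uniform constant times $\Theta$, and the triangle inequality produces \eqref{BXPZ-2}.

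\textbf{Plan for part (i).} Fix a $\rho$-ball $B_0\subset X$ of radius at most $1$ centered in $\Omega$, and let $g\in\mathcal{D}^\varepsilon_\rho(u|_{B_0\cap\Omega})$ have finite $L^p$-norm. For points $x$ well inside $B_0$ and dyadic balls $B_k:=B_\rho(x,2^{-k})$ with $C_\rho B_k\subset B_0$, applying Theorem~\ref{embedding} together with Lemma~\ref{difflemma} exactly as in part (ii) yields
\[
|m_u(B_{k+1}\cap\Omega)-m_u(B_k\cap\Omega)|\leq C\,2^{-k\varepsilon}\,\bigl[\mathcal{M}(g^p{\mathbf 1}_\Omega)(x)\bigr]^{1/p},
\]
where $\mathcal{M}$ is the Hardy--Littlewood maximal operator on $(X,\rho,\mu)$. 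Since $\mathcal{M}(g^p{\mathbf 1}_\Omega)$ is finite $\mu$-almost everywhere by the weak-type $(1,1)$ bound in the doubling setting, and $\sum_k 2^{-k\varepsilon}<\infty$, the sequence $\{m_u(B_k\cap\Omega)\}_k$ is Cauchy, hence convergent, at $\mu$-a.e.~$x\in B_0\cap\Omega$; a further application of part (ii) interpolates between dyadic radii and arbitrary $r\to 0^+$. To identify the common limit as $u(x)$, invoke the Sobolev embedding in Theorem~\ref{embedding} to locate $u$ in some $L^\eta_{\rm loc}$ space, and apply Lemma~\ref{difflemma} with $\gamma:=u(x)$ at the standard Lebesgue points of $u$ in that space. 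Exhausting $\Omega$ by countably many such $B_0$ then yields \eqref{SobDiff}.

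\textbf{Principal obstacle.} The most delicate bookkeeping occurs in part (ii): the measure density condition controls $\mu(B_\rho(z,r)\cap\Omega)$ only for radii at most $1$, whereas the statement permits $2^{-L}\leq r_\ast$ with $r_\ast\in[1,\infty)$. This mismatch is precisely what the rescaling $(\sigma r_\ast)^{-1}r\leq 1$ inside Lemma~\ref{embeddfracgrad} is designed to absorb, and the dependence of our constants on $r_\ast$ is inherited from there without modification. A secondary concern is verifying in part (i) that the $L^\eta$-Lebesgue-point argument remains valid for the possibly small exponent $\eta$ produced by the Sobolev embedding; this is handled by the standard maximal function machinery in doubling metric measure spaces, which is available here because $\mu$ restricted to $\Omega$ inherits the doubling property at small scales via the measure density condition \eqref{measdens-lemma}.
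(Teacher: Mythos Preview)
Your argument for part (ii) is correct and matches the paper's proof essentially step for step: choose $\varepsilon'\in(0,\varepsilon)$ with $t<Q/\varepsilon'$, take a near-minimizer $\gamma$ in the Poincar\'e inequality from Lemma~\ref{embeddfracgrad}, apply Lemma~\ref{difflemma} with exponent $t^\ast$ to both $B\cap\Omega$ and $B_\rho(x,2^{-L})\cap\Omega$, and use the measure comparability to pass from the larger ball to the smaller one.

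For part (i), however, your route diverges from the paper's and carries a genuine gap. The paper does not telescope medians and then identify the limit; instead it applies Lemma~\ref{difflemma} directly with $\gamma:=u(x)$ and bounds the resulting average using the \emph{pointwise} Haj\l asz inequality $|u(y)-u(x)|\leq[\rho(x,y)]^\varepsilon[g(x)+g(y)]$. With $\eta\in(0,p)$ this gives, for $x$ outside an exceptional set where $g(x)=\infty$ or $\mathcal{M}_{\rho_\#}(g^\eta)(x)=\infty$,
\[
|m_u(B_\rho(x,r)\cap\Omega)-u(x)|\lesssim r^\varepsilon\bigl[\mathcal{M}_{\rho_\#}(g^\eta)(x)\bigr]^{1/\eta}+r^\varepsilon g(x)\to 0,
\]
and the exceptional set has measure zero by the $L^{p/\eta}$-boundedness of the maximal operator. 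No Lebesgue differentiation theorem is invoked.

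Your plan, by contrast, ends with ``apply Lemma~\ref{difflemma} with $\gamma:=u(x)$ at the standard Lebesgue points of $u$'' after embedding $u$ into some $L^\eta_{\rm loc}$. But the existence of Lebesgue points $\mu$-a.e.\ requires the Lebesgue differentiation theorem, which in this generality needs Borel (semi-)regularity of $\mu$ --- a hypothesis deliberately omitted from Lemma~\ref{intavgest} (see the Remark immediately following it). Thus your identification step fails under the stated hypotheses. The Cauchy--telescoping argument you set up is also superfluous once one sees the direct route: the Haj\l asz gradient already controls $|u(y)-u(x)|$ pointwise, so there is no need to first prove convergence of medians and then separately identify the limit.
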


\begin{remark}
We stress here that \eqref{SobDiff} holds true
without any additional regularity assumptions on the measure $\mu$.
If one assumes that the measure is, say, Borel regular, then one can rely on Lebesgue's differentiation theorem (\cite[Theorem~3.14]{AM15}) and Lemma~\ref{difflemma}  (with $\eta=1$) to show that \eqref{SobDiff} holds true whenever $u\in L^1_{\rm loc}(\Omega,\mu)$ (the set of all locally integrable functions on $\Omega$).	 	
\end{remark}

\begin{proof}[Proof of Lemma \ref{intavgest}]
Disposing of the claim in   (i) first, fix an exponent $\eta\in(0,p)$, a
point $x_0\in\Omega$, and let $B_k:=B_\rho(x_0,k)$ for any $k\in\mathbb{N}$.
Since $u$ belongs locally to $\dot{M}^{\varepsilon,p}(\Omega,\rho,\mu)$,
for any fixed $k\in\mathbb{N}$, we can find an $\varepsilon$-gradient
$g_k\in L^p(B_k\cap\Omega,\mu)$ of $u$ on the set $B_k\cap\Omega$.
Although $g_k$ is defined only on $B_k\cap\Omega$, we will
identify it with the function defined on all points of $X$ by setting $g_k\equiv0$
outside of $B_k\cap\Omega$. With this in mind, define the set
$$
E_k:=\lf\{x\in B_k\cap\Omega:\,g_k(x)=\infty\,\mbox{ or }
\,\mathcal{M}_{\rho_\#}(g_k^\eta)(x)=\infty\r\},
$$
where $\rho_\#$ is the regularized quasi-metric given by Theorem~\ref{DST1} and $\mathcal{M}_{\rho_\#}$ is
the Hardy-Littlewood maximal operator with
respect to   $\rho_\#$, which is defined by setting, for any  $f\in L^1_{\rm loc}(X,\mu)$,
$$\mathcal{M}_{\rho_\#}(f)(x):=\sup_{r\in(0,\fz)} \mvint_{B_{\rho_\#}(x,r)}
|f(y)|\,d\mu(y),\quad \forall\ x\in X.$$
By \cite[Theorem~3.7]{AM15}, we know that  the function
$ \mathcal{M}_{\rho_\#}(g_k^\eta) $ is $\mu$-measurable.
Also, $g_k^\eta\in L^{p/\eta}(X,\mu)$,
where $p/\eta>1$.  As such, it follows from the  boundedness of $\mathcal{M}_{\rho_\#}$
on $L^{p/\eta}(X,\mu)$
(see \cite[Theorem~3.7]{AM15}) that
$\mathcal{M}_{\rho_\#}(g_k^\eta)\in L^{p/\eta}(X,\mu)$.
This, together with the fact that $g_k\in L^p(B_k\cap\Omega,\mu)$,
implies that $\mu(E_k)=0$.

Suppose now that $x\in(C_\rho^{-1}B_k\cap\Omega)\setminus E_k$ for some $k\in\mathbb{N}$
and that $r\in(0,1]$ satisfies $r\leq C_\rho^{-1}$. Then $B_\rho(x,r)\subset B_k$ and, by combining \eqref{hcx-23}
with $\gamma:=u(x)$, the measure density condition \eqref{measdens-lemma},
and the fact that $g_k$ is an $\varepsilon$-gradient
for $u$ on $B_k\cap\Omega$, we conclude that
\begin{align}
\label{ynqq-34}
\lf|m_u(B_{\rho}(x,r)\cap\Omega)-u(x)\r|
&\leq\left[2\,\mvint_{B_{\rho}(x,r)\cap\Omega}|u(y)-u(x)|^\eta\, d\mu(y)\right]^{1/\eta}\noz\\
&\lesssim r^\varepsilon\left[\,\mvint_{B_{\rho}(x,r)}|g_k(y)|^\eta\, d\mu(y)\right]^{1/\eta} +r^\varepsilon g_k(x)\noz\\
&\lesssim r^\varepsilon\lf[\mathcal{M}_{\rho_\#}(g_k^\eta)(x)\r]^{1/\eta} +r^\varepsilon g_k(x).
\end{align}
Note that, in obtaining the third inequality in \eqref{ynqq-34}, we have relied on the fact that $\rho\approx\rho_\#$ implies $\mathcal{M}_{\rho}(g_k^\eta)\approx\mathcal{M}_{\rho_\#}(g_k^\eta)$, granted that $\mu$ is doubling. Given how the sets $E_k$ are defined, looking at the extreme most sides of \eqref{ynqq-34} and passing to the limit as $r\to0^+$ gives \eqref{SobDiff} for any point $x\in(C_\rho^{-1}B_k\cap\Omega)\setminus E_k$. Since we have $\Omega=\bigcup_{k=1}^\infty (C_\rho^{-1}B_k\cap\Omega)$ and $\mu\big(\bigcup_{k=1}^\infty E_k\big)=0$, the claim in \eqref{SobDiff}, as stated, now follows.

The fact that \eqref{SobDiff} holds true whenever $u\in \dot{M}^s_{p,q}(\Omega,\rho,\mu)$
or $u\in\dot{N}^s_{p,q}(\Omega,\rho,\mu)$ for some exponents $s,\,p\in(0,\infty)$,
and $q\in(0,\infty]$, is a consequence
of \cite[Proposition~2.4]{AYY21}, which gives the inclusions
$$
\dot{M}^s_{p,q}(\Omega,\rho,\mu)\hookrightarrow \dot{M}^s_{p,\infty}(\Omega,\rho,\mu)=\dot{M}^{s,p}(\Omega,\rho,\mu)
$$
and
$$
\dot{N}^s_{p,q}(B\cap\Omega,\rho,\mu)\hookrightarrow\dot{M}^{\varepsilon,p}(B\cap\Omega,\rho,\mu),
$$
whenever $B$ is a $\rho$-ball centered in $\Omega$ and $\varepsilon\in(0,s)$. This finishes the proof of (i).

Moving on to proving (ii), suppose that $u\colon \Omega\to\mathbb{R}$
is a $\mu$-measurable function, $\{g_k\}_{k\in\mathbb{Z}}\in\mathbb{D}^s_\rho(u)$,
$x\in \Omega$, $r_\ast\in[1,\infty)$, and $L\in\mathbb{Z}$
satisfies $2^{-L}\leq r_\ast$.
Recall that the doubling property of $\mu$
implies that $\mu$ is $Q$-doubling
with $Q:=\log_2C_{D}\in(0,\infty)$,
where $C_{D}\in(1,\infty)$ is the doubling constant for $\mu$
[see \eqref{doub}].
Let $\varepsilon'\in(0,\min\{\varepsilon,Q/t\})$ and
suppose that $B\subset B_\rho(x,2^{-L})$ is a
$\rho$-ball centered in $\Omega$ such that
$\mu(B\cap\Omega)\approx\mu(B_\rho(x,2^{-L})\cap\Omega)$.
Also, let $\gamma_0\in\mathbb{R}$ be such that
\begin{equation}
\label{xxz-482}
\left[\, \mvint_{B_\rho(x,2^{-L})\cap\Omega} |u-\gamma_0|^{t^\ast}\, d\mu\right]^{1/t^\ast}\leq2\inf_{\gamma\in\mathbb{R}}\left[\, \mvint_{B_\rho(x,2^{-L})\cap\Omega} |u-\gamma|^{t^\ast}\, d\mu\right]^{1/t^\ast},
\end{equation}
where $t^\ast:=Qt/(Q-\varepsilon' t)$. In concert,  \eqref{hcx-23} in Lemma~\ref{difflemma}
[used here with the induced quasi-metric measure space $(\Omega,\rho,\mu)$],
the measure density condition \eqref{measdens-lemma}, \eqref{xxz-482},
as well as \eqref{cie3-2} in Lemma~\ref{embeddfracgrad}, imply
\begin{align*}
&\lf|m_u\lf(B\cap\Omega\r)-m_u\lf(B_\rho(x,2^{-L})\cap\Omega\r)\r|\noz\\
&\quad\leq\lf|m_u\lf(B\cap\Omega\r)-\gamma_0\r|+\lf|\gamma_0-m_u\lf(B_\rho(x,2^{-L})\cap\Omega\r)\r|\noz\\
&\quad\leq\left[2\, \mvint_{B\cap\Omega} |u-\gamma_0|^{t^\ast}\,d\mu\right]^{1/t^\ast}+\left[2\, \mvint_{B_\rho(x,2^{-L})\cap\Omega} |u-\gamma_0|^{t^\ast}\,d\mu\right]^{1/t^\ast}\noz\\
&\quad\lesssim \left[\,\mvint_{B_\rho(x,2^{-L})\cap\Omega} |u-\gamma_0|^{t^\ast}\,d\mu\right]^{1/t^\ast}\lesssim 2^{-L\varepsilon}\left[\,\mvint_{B_\rho(x,C_\rho2^{-L})}\sup_{j\geq L-k_0}\lf\{2^{-j(s-\varepsilon)t}g_j^t\r\}\, d\mu\right]^{1/t}.
\end{align*}
This finishes the proof of (ii) and, in turn, the proof of  Lemma \ref{intavgest}.
\end{proof}

\begin{lemma}
\label{GVa2-prev}
Suppose that $(X,\rho,\mu)$ is a quasi-metric measure space and
fix an $\alpha\in(0,\infty)$, a $p\in(0,\infty)$, a $q\in(0,\infty]$, and an $s\in(0,\alpha]$,
where the value  $s=\alpha$ is only permissible when $q=\infty$.
Let $f\colon  X\to\mathbb{R}$ be a $\mu$-measurable function with
$\vec{h}:=\{h_k\}_{k\in\mathbb{Z}}\in\mathbb{D}^{s}_\rho(f)$, and
assume that $\Psi\in\dot{\mathscr{C}}^\alpha(X,\rho)$ is a
bounded function that vanishes pointwise outside of a $\mu$-measurable set $V\subset X$.
Then there exists a sequence $\vec{g}\in\mathbb{D}^{s}_\rho(\Psi f)$
satisfying
\begin{equation}
\label{gradest-prev}
\Vert\vec{g}\Vert_{L^p(X,\ell^q)}
\leq C\left[\|\Psi\|_{L^\infty(X)}\|\vec{h}\Vert_{L^p(V, \ell^q)}+\|\Psi\|^{s/\alpha}_{\dot{\mathscr{C}}^{\alpha}(X,\rho)}
\lf\{\|\Psi\|_{L^\infty(X)}+1\r\}\|f\Vert_{L^p(V, \mu)}\right]
\end{equation}
and
\begin{equation}
\label{gradest2-prev}
\Vert\vec{g}\Vert_{\ell^q(L^p(X))}\leq C\left[\|\Psi\|_{L^\infty(X)}\|\vec{h}\Vert_{\ell^q(L^p(V))}
+\|\Psi\|^{s/\alpha}_{\dot{\mathscr{C}}^{\alpha}(X,\rho)}
\lf\{\|\Psi\|_{L^\infty(X)}+1\r\}\|f\Vert_{L^p(V, \mu)}\right]
\end{equation}
for some positive constant  $C$ depending only on $s$, $p$, $q$, and $\alpha$.

Consequently, if $f\in M^s_{p,q}(V,\rho,\mu)$, then $\Psi f\in M^s_{p,q}(X,\rho,\mu)$ and
$$\Vert \Psi f\Vert_{M^s_{p,q}(X,\rho,\mu)}\lesssim
\Vert f\Vert_{M^s_{p,q}(V,\rho,\mu)},$$
where the implicit positive constant depends on $\Psi$, but is
independent of $f$. This last statement is also valid with $M^s_{p,q}$ replaced by $N^s_{p,q}$.
\end{lemma}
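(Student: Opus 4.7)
The plan is to build a concrete fractional $s$-gradient for $\Psi f$ out of $\vec{h}$ together with a suitable majorant for the H\"older-type behavior of $\Psi$, and then estimate its norms. The starting point is the algebraic identity
\begin{equation*}
\Psi(x)f(x)-\Psi(y)f(y)=\tfrac{1}{2}[\Psi(x)+\Psi(y)][f(x)-f(y)]+\tfrac{1}{2}[f(x)+f(y)][\Psi(x)-\Psi(y)],
\end{equation*}
which reduces the problem to interpolating between the uniform bound $|\Psi(x)-\Psi(y)|\leq 2\|\Psi\|_{L^\infty(X)}$ and the H\"older bound $|\Psi(x)-\Psi(y)|\leq\|\Psi\|_{\dot{\mathscr{C}}^\alpha(X,\rho)}[\rho(x,y)]^\alpha$. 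For each $k\in\mathbb{Z}$, I would set
\begin{equation*}
\phi_k:=\min\left\{2^{s+1}\|\Psi\|_{L^\infty(X)}\,2^{ks},\ \|\Psi\|_{\dot{\mathscr{C}}^\alpha(X,\rho)}\,2^{-k(\alpha-s)}\right\},
\end{equation*}
so that, using $\alpha\geq s$, for any $x,\,y\in X$ with $2^{-k-1}\leq\rho(x,y)<2^{-k}$ one has $|\Psi(x)-\Psi(y)|\leq\phi_k[\rho(x,y)]^s$. Then define
\begin{equation*}
g_k(x):=\|\Psi\|_{L^\infty(X)}\,h_k(x)\,{\mathbf 1}_V(x)+\phi_k\,|f(x)|\,{\mathbf 1}_V(x),\quad\forall\,x\in X,\ \forall\,k\in\mathbb{Z}.
\end{equation*}

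I would then verify that $\vec{g}:=\{g_k\}_{k\in\mathbb{Z}}\in\mathbb{D}_\rho^s(\Psi f)$ by a short case analysis (outside the $\mu$-null set associated with $\vec{h}$), based on whether each of $x,\,y$ belongs to $V$. When $x,\,y\in V$, the identity above, combined with the two estimates just displayed, immediately yields $|\Psi(x)f(x)-\Psi(y)f(y)|\leq[\rho(x,y)]^s[g_k(x)+g_k(y)]$. When $x\in V$ and $y\notin V$ (symmetrically for $x\notin V,\,y\in V$), the vanishing of $\Psi(y)$ gives $|\Psi(x)f(x)-\Psi(y)f(y)|=|\Psi(x)-\Psi(y)|\,|f(x)|\leq\phi_k[\rho(x,y)]^s|f(x)|$, which is absorbed by the second term of $g_k(x)$; when $x,\,y\notin V$, both sides of the inequality vanish.

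The remaining step is to bound the norms of $\vec{g}$. Because $g_k$ is supported on $V$, the (quasi-)subadditivity of the $L^p$ and $\ell^q$ (quasi-)norms produces
\begin{equation*}
\|\vec{g}\|_{L^p(X,\ell^q)}\lesssim\|\Psi\|_{L^\infty(X)}\|\vec{h}\|_{L^p(V,\ell^q)}+\|\{\phi_k\}_{k\in\mathbb{Z}}\|_{\ell^q}\|f\|_{L^p(V)},
\end{equation*}
with a completely analogous estimate for $\|\vec{g}\|_{\ell^q(L^p(X))}$. The key technical point is the bound
\begin{equation*}
\|\{\phi_k\}_{k\in\mathbb{Z}}\|_{\ell^q}\lesssim\|\Psi\|_{L^\infty(X)}^{(\alpha-s)/\alpha}\|\Psi\|^{s/\alpha}_{\dot{\mathscr{C}}^\alpha(X,\rho)}\leq(\|\Psi\|_{L^\infty(X)}+1)\|\Psi\|^{s/\alpha}_{\dot{\mathscr{C}}^\alpha(X,\rho)},
\end{equation*}
obtained by summing the two geometric tails meeting at the crossover index $k^\ast$ for which $2^{k^\ast\alpha}\approx\|\Psi\|_{\dot{\mathscr{C}}^\alpha(X,\rho)}/\|\Psi\|_{L^\infty(X)}$. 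The lemma's final assertion then follows by combining this gradient estimate with the pointwise bound $\|\Psi f\|_{L^p(X)}=\|\Psi f\|_{L^p(V)}\leq\|\Psi\|_{L^\infty(X)}\|f\|_{L^p(V)}$.

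The main obstacle will be precisely this $\ell^q$-summability of $\{\phi_k\}$: when $s<\alpha$, the sequence $\phi_k$ decays geometrically on both sides of $k^\ast$ and is summable (or bounded, in the case $q=\infty$) for every $q\in(0,\infty]$; by contrast, when $s=\alpha$, $\phi_k$ no longer decays for large $k$ but merely stabilizes at the value $\|\Psi\|_{\dot{\mathscr{C}}^\alpha(X,\rho)}$, so only $q=\infty$ yields a finite norm. This is exactly the restriction imposed in the hypothesis of the lemma.
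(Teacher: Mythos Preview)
Your proposal is correct and follows essentially the same strategy as the paper: build the gradient as $\|\Psi\|_{L^\infty}h_k\mathbf{1}_V$ plus a $k$-dependent coefficient times $|f|\mathbf{1}_V$, where the coefficient interpolates between the $L^\infty$ and H\"older bounds on $|\Psi(x)-\Psi(y)|$, then sum the resulting geometric tails. The only cosmetic differences are that the paper uses the asymmetric product rule $\Psi(y)f(y)-\Psi(z)f(z)=f(y)[\Psi(y)-\Psi(z)]+\Psi(z)[f(y)-f(z)]$ and defines the coefficient piecewise about the threshold $k_\Psi$ with $2^{k_\Psi}\approx\|\Psi\|_{\dot{\mathscr{C}}^\alpha}^{1/\alpha}$, whereas you use the symmetric identity and package the coefficient as a minimum; your intermediate bound $\|\Psi\|_{L^\infty}^{(\alpha-s)/\alpha}\|\Psi\|_{\dot{\mathscr{C}}^\alpha}^{s/\alpha}$ is in fact slightly sharper before both arguments collapse to the same $(\|\Psi\|_{L^\infty}+1)\|\Psi\|_{\dot{\mathscr{C}}^\alpha}^{s/\alpha}$.
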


\begin{proof}
If $\|\Psi\|_{\dot{\mathscr{C}}^\alpha(X,\rho)}=0$, then $\Psi$ is constant. In this case, we take $g_k:=\|\Psi\|_{L^\infty(X)} h_k$ for any $k\in\mathbb{Z}$. Then it is easy to check that $\vec{g}:=\{g_k\}_{k\in\mathbb{Z}}\in\mathbb{D}^{s}_\rho(\Psi f)$ and satisfies both
\eqref{gradest-prev} and \eqref{gradest2-prev}.

Suppose next that $\|\Psi\|_{\dot{\mathscr{C}}^\alpha(X,\rho)}>0$ and let $k_\Psi\in\mathbb{Z}$ be the unique integer such that
\begin{equation}
\label{VN-2-prev}
2^{k_\Psi-1}\leq\|\Psi\|^{1/\alpha}_{\dot{\mathscr{C}}^\alpha(X,\rho)}<2^{k_\Psi}.
\end{equation}
For any  $k\in\mathbb{Z}$ and $x\in X$,  define
\begin{eqnarray*}
g_k(x):=\left\{
\begin{array}{ll}
\lf[|f(x)|\,2^{-k(\alpha-s)}\|\Psi\|_{\dot{\mathscr{C}}^\alpha(X,\rho)}
+h_k(x)\|\Psi\|_{L^\infty(X)}\r]\,{\mathbf 1}_{V}(x)\quad&\mbox{if}\,\,\,k\geq k_\Psi,
\\[6pt]
\lf[2^{ks+s+1}\,|f(x)|+h_k(x)\r]\|\Psi\|_{L^\infty(X)}\,{\mathbf 1}_{V}(x)&\mbox{if}\,\,\,k<k_\Psi.
\end{array}
\right.
\end{eqnarray*}
Note that each $g_k$ is $\mu$-measurable because $f$ and each $h_k$ are $\mu$-measurable,
by assumption. In order to show that this sequence is a fractional $s$-gradient
of $\Psi f$ (with respect to $\rho$), we fix a $k\in\mathbb{Z}$ and let
$y,\,z\in X$ be such that $2^{-k-1}\leq \rho(y,z)< 2^{-k}$.
To proceed, we first consider the case when $k\geq k_\Psi$.
If $y,\,z\in X\setminus V$, then $\Psi(y)=\Psi(z)=0$ and there is nothing to show.
If, on the other hand, $y\in V$ and $z\in X$, then we write
\begin{equation}
\label{rnp-28}
|\Psi(y) f(y)-\Psi(z) f(z)|\leq|f(y)| |\Psi(y)-\Psi(z)|+|\Psi(z)| |f(y)-f(z)|,
\end{equation}
and use the H\"older continuity of $\Psi$ and the fact
that $s\leq\alpha$ and ${\rm supp}\,\Psi\subset V$ in order to estimate
\begin{align*}
&|\Psi(y) f(y)-\Psi(z) f(z)|\\
&\quad\leq |f(y)|  [\rho(y,z)]^{\alpha}
\|\Psi\|_{\dot{\mathscr{C}}^\alpha(X,\rho)}
+\|\Psi\|_{L^\infty(X)} [\rho(y,z)]^{s}\lf[h_k(y)+h_k(z)\r]\,{\mathbf 1}_{V}(z)	
\\
&\quad= |f(y)| [\rho(y,z)]^{s}[\rho(y,z)]^{\alpha-s}\|\Psi\|_{\dot{\mathscr{C}}^\alpha(X,\rho)}
+\|\Psi\|_{L^\infty(X)}[\rho(y,z)]^{s}\lf[h_k(y)+h_k(z)\r]\,{\mathbf 1}_{V}(z)	
\\
&\quad\leq [\rho(y,z)]^{s}\lf\{|f(y)| 2^{-k(\alpha-s)}\,\|\Psi\|_{\dot{\mathscr{C}}^\alpha(X,\rho)}
+\|\Psi\|_{L^\infty(X)}\lf[h_k(y)+h_k(z)\r]\,{\mathbf 1}_{V}(z)\r\}
\\
&\quad\leq [\rho(y,z)]^{s}\lf[g_k(y)+g_k(z)\r].
\end{align*}	
The estimate when $y\in X$ and $z\in V$ is similar where,
in place of \eqref{rnp-28}, we use
\begin{equation*}
|\Psi(y) f(y)-\Psi(z) f(z)|\leq|f(z)| |\Psi(y)-\Psi(z)|+|\Psi(y)| |f(y)-f(z)|.
\end{equation*}
This finishes the proof of the case when $k\geq k_\Psi$.
Assume next that $k<k_\Psi$. As in the case $k\geq k_\Psi$,
it suffices to consider the scenario when $y\in V$ and $z\in X$.
Appealing to \eqref{rnp-28}, we conclude that
\begin{align*}
&|\Psi(y) f(y)-\Psi(z) f(z)|\\
&\quad\leq 2\,|f(y)| \|\Psi\|_{L^\infty(X)}+\|\Psi\|_{L^\infty(X)}[\rho(y,z)]^{s}\lf[h_k(y)+h_k(z)\r]\,{\mathbf 1}_{V}(z)	\\
&\quad= 2\,|f(y)| \|\Psi\|_{L^\infty(X)} [\rho(y,z)]^{s}[\rho(y,z)]^{-s}\\
&\qquad+\|\Psi\|_{L^\infty(X)}[\rho(y,z)]^{s}\lf[h_k(y)+h_k(z)\r]\,{\mathbf 1}_{V}(z)\\
&\quad\leq [\rho(y,z)]^{s}\|\Psi\|_{L^\infty(X)}
\lf\{2^{s(k+1)+1}|f(y)|+\lf[h_k(y)+h_k(z)\r]\,{\mathbf 1}_{V}(z)\r\}\\
&\quad\leq [\rho(y,z)]^{s}\lf[g_k(y)+g_k(z)\r].
\end{align*}	
This finishes the proof of the claim that
$\vec{g}:=\{g_k\}_{k\in\mathbb{Z}}\in\mathbb{D}^s_\rho(\Psi f)$.

We next show that $\vec{g}:=\{g_k\}_{k\in\mathbb{Z}}$
satisfies the desired estimates in \eqref{gradest-prev} and \eqref{gradest2-prev}.
If $q<\infty$, then [keeping in mind \eqref{VN-2-prev} and the
fact that $0<s<\alpha$ in this case]
\begin{align}
\label{xc1-prev}
\left(\sum_{k\in\mathbb{Z}}g_k^q\right)^{1/q}
&=\left\{\sum_{k=-\infty}^{k_\Psi-1}\|\Psi\|_{L^\infty(X)}^q\lf[2^{(ks+s+1)}|f|+h_k\r]^q\r.\noz\\
&\quad\lf.+\sum_{k=k_\Psi}^\infty\lf[|f|\,2^{-k(\alpha-s)}
\|\Psi\|_{\dot{\mathscr{C}}^\alpha(X,\rho)}+h_k\|\Psi\|_{L^\infty(X)}\r]^q\right\}^{1/q}\noz
\\
&\lesssim\|\Psi\|_{L^\infty(X)}\left(\sum_{k\in\mathbb{Z}}h_k^q
\right)^{1/q}
+\|\Psi\|_{L^\infty(X)}\,|f|\,\left[\sum_{k=-\infty}^{k_\Psi-1}2^{(ks+s+1)q}
\right]^{1/q}\noz\\
&\quad
+\|\Psi\|_{\dot{\mathscr{C}}^\alpha(X,\rho)}\,|f|\,
\left[\sum_{k=k_\Psi}^\infty2^{-k(\alpha-s)q}\right]^{1/q}\noz
\\
&\lesssim\|\Psi\|_{L^\infty(X)}\left(\sum_{k\in\mathbb{Z}}h_k^q
\right)^{1/q}+\|\Psi\|_{L^\infty(X)}\,|f|\,2^{k_\Psi s}
+\|\Psi\|_{\dot{\mathscr{C}}^\alpha(X,\rho)}\,|f|\,2^{-k_\Psi(\alpha-s)}\noz
\\
&\lesssim\|\Psi\|_{L^\infty(X)}\left(\sum_{k\in\mathbb{Z}}h_k^q
\right)^{1/q}+\|\Psi\|_{L^\infty(X)}\|\Psi\|_{\dot{\mathscr{C}}^\alpha(X,\rho)}^{s/\alpha}\,|f|
+\|\Psi\|_{\dot{\mathscr{C}}^\alpha(X,\rho)}^{s/\alpha}\,|f|.
\end{align}
The estimate in \eqref{gradest-prev} (for $q<\infty$) now follows from the estimate in \eqref{xc1-prev} and the fact that each $g_k$ is supported in $V$.
 To verify \eqref{gradest2-prev} when $q<\infty$, observe that
\begin{align*}
\Vert\vec{g}\Vert_{\ell^q(L^p(X))}
&=\left[\sum_{k\in\mathbb{Z}}\Vert g_k\Vert_{L^p(V, \mu)}^q\right]^{1/q}\noz
\\
&\lesssim\left[\sum_{k=-\infty}^{k_\Psi-1}\|\Psi\|_{L^\infty(X)}^q\left\Vert 2^{(ks+s+1)}|f|+h_k\right\Vert_{L^p(V, \mu)}^{q}\right.\noz
\\
&\left.\quad+\sum_{k=k_\Psi}^\infty\left\Vert|f|\,2^{-k(\alpha-s)}
\|\Psi\|_{\dot{\mathscr{C}}^\alpha(X,\rho)}+h_k\|\Psi\|_{L^\infty(X)}
\right\Vert_{L^p(V, \mu)}^{q}\right]^{1/q}\noz
\\
&\lesssim\|\Psi\|_{L^\infty(X)}\|\vec{h}\Vert_{L^p(V, \ell^q)}
+\|\Psi\|_{L^\infty(X)}\Vert f\Vert_{L^p(V, \mu)}
\left[\sum_{k=-\infty}^{k_\Psi-1}2^{(ks+s+1)q}\right]^{1/q}\noz
\\
&\quad+\|\Psi\|_{\dot{\mathscr{C}}^\alpha(X,\rho)}\Vert f\Vert_{L^p(V, \mu)}\left[\sum_{k=k_\Psi}^\infty2^{-k(\alpha-s)q}\right]^{1/q}\noz
\\
&\lesssim\|\Psi\|_{L^\infty(X)}\|\vec{h}\Vert_{L^p(V, \ell^q)}
+\|\Psi\|_{L^\infty(X)}\Vert f\Vert_{L^p(V, \mu)}\,2^{k_\Psi s}
+\|\Psi\|_{\dot{\mathscr{C}}^\alpha(X,\rho)}\Vert f\Vert_{L^p(V, \mu)}
\,2^{-k_\Psi(\alpha-s)}\noz
\\
&\lesssim\|\Psi\|_{L^\infty(X)}\|\vec{h}\Vert_{L^p(V, \ell^q)}
+\|\Psi\|^{s/\alpha}_{\dot{\mathscr{C}}^{\alpha}(X,\rho)}\lf\{\|\Psi\|_{L^\infty(X)}+1\r\}\|f\Vert_{L^p(V, \mu)},
\end{align*}
as wanted.
The proof of  \eqref{gradest-prev} and  \eqref{gradest2-prev} when $q=\infty$ follow along a similar line of reasoning.
Note that the choice of $s=\alpha$ is permissible in this case because, in this scenario, the summation $[\sum_{k=k_\Psi}^\infty2^{-k(\alpha-s)q}]^{1/q}$ is
replaced by $\sup_{k\geq k_\Psi}2^{-k(\alpha-s)}=1$. This finishes the proof of  Lemma \ref{GVa2-prev}.
\end{proof}

We end this section by recalling the following inequality established in \cite[Lemma~3.1]{HIT16}.

\begin{lemma}
\label{heli-est}
Let $a\in(1,\infty)$ and $b\in(0,\infty)$.
Then there exists a positive constant $C=C(a,b)$
such that, for any sequence $\{c_k\}_{k\in\mathbb{Z}}$
of nonnegative real numbers,
$$
\sum\limits_{k\in\mathbb{Z}}\left(\sum\limits_{j\in\mathbb{Z}}a^{-|j-k|}c_j\right)^b\leq
C\sum\limits_{j\in\mathbb{Z}}c_j^b.
$$ 	
\end{lemma}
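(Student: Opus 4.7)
The plan is to prove the inequality by splitting into two cases based on whether $b\leq 1$ or $b>1$, exploiting in each case the fact that the geometric-decay kernel $k\mapsto a^{-|k|}$ is summable, i.e., $K:=\sum_{m\in\mathbb{Z}}a^{-|m|}=1+\frac{2}{a-1}<\infty$ since $a>1$.

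Case 1: $b\in(0,1]$. Here the desired inequality is essentially immediate from the elementary subadditivity bound $(\sum_j x_j)^b\leq\sum_j x_j^b$, valid for any collection of nonnegative numbers when $b\in(0,1]$. Applying this with $x_j:=a^{-|j-k|}c_j$, swapping the order of summation (which is legitimate because all terms are nonnegative), and performing the change of variable $m:=j-k$, I would obtain
\begin{equation*}
\sum_{k\in\mathbb{Z}}\left(\sum_{j\in\mathbb{Z}}a^{-|j-k|}c_j\right)^b
\leq \sum_{k\in\mathbb{Z}}\sum_{j\in\mathbb{Z}} a^{-b|j-k|}c_j^b
= \left(\sum_{m\in\mathbb{Z}}a^{-b|m|}\right)\sum_{j\in\mathbb{Z}}c_j^b,
\end{equation*}
so the constant $C=\sum_{m\in\mathbb{Z}}a^{-b|m|}=1+\frac{2}{a^b-1}$ works, which is finite since $a^b>1$.

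Case 2: $b>1$. Here I would view the inner sum as a weighted average and apply H\"older's inequality with conjugate exponents $b$ and $b/(b-1)$, writing $a^{-|j-k|}=a^{-|j-k|/b}\cdot a^{-|j-k|(b-1)/b}$. This yields
\begin{equation*}
\left(\sum_{j\in\mathbb{Z}}a^{-|j-k|}c_j\right)^b
\leq \left(\sum_{j\in\mathbb{Z}}a^{-|j-k|}c_j^b\right)\left(\sum_{j\in\mathbb{Z}}a^{-|j-k|}\right)^{b-1}
= K^{b-1}\sum_{j\in\mathbb{Z}}a^{-|j-k|}c_j^b.
\end{equation*}
Summing over $k\in\mathbb{Z}$ and swapping the order of summation again produces
\begin{equation*}
\sum_{k\in\mathbb{Z}}\left(\sum_{j\in\mathbb{Z}}a^{-|j-k|}c_j\right)^b
\leq K^{b-1}\sum_{j\in\mathbb{Z}}c_j^b\sum_{k\in\mathbb{Z}}a^{-|j-k|}
= K^{b}\sum_{j\in\mathbb{Z}}c_j^b,
\end{equation*}
giving the claim with $C=K^b$. (Alternatively, I could recognize the left-hand side as $\|h*c\|_{\ell^b}^b$ with $h(k):=a^{-|k|}$ and invoke Young's convolution inequality $\|h*c\|_{\ell^b}\leq\|h\|_{\ell^1}\|c\|_{\ell^b}$, arriving at the same constant.)

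There is no real obstacle here; the argument is a standard two-case split and the two constants from the cases may be combined into one constant $C=C(a,b)$ to match the statement. The only point worth being slightly careful about is the legitimacy of interchanging the order of the double sum, which is justified by Tonelli's theorem for counting measure because the summands are nonnegative.
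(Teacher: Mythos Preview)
Your proof is correct. The paper does not actually supply a proof of this lemma; it merely cites \cite[Lemma~3.1]{HIT16}. Your two-case argument (subadditivity of $t\mapsto t^b$ for $b\le 1$, H\"older/Young for $b>1$) is the standard way to establish this discrete convolution estimate, and all the steps---including the interchange of summation via Tonelli and the finiteness of the geometric kernel sums---are justified as you indicate.
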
	

\subsection{Principal Results}
\label{sssec:extensions2}

At this stage, we are ready to construct a Whitney-type extension operator  for the spaces $M^s_{p,q}$ and $N^s_{p,q}$ on domains satisfying the so-called measure density condition for an optimal range of $s$. Before stating this result, we recall the following piece of notational convention.

\begin{convention}
\label{sindex}
Given a quasi-metric space $(X,\rho)$ and fixed numbers $s\in(0,\infty)$ and $q\in(0,\infty]$, we will understand by $s\preceq_q{\rm ind}\,(X,\rho)$ that $s\leq{\rm ind}\,(X,\rho)$ and that the value $s={\rm ind}\,(X,\rho)$ is only permissible when $q=\infty$ and the supremum defining ${\rm ind}\,(X,\rho)$ in \eqref{index} is attained.
\end{convention}

Here is the extension result alluded to above.

\begin{theorem}
\label{measext}
Let $(X,\rho,\mu)$ be a quasi-metric measure space,
where $\mu$ is a Borel regular $Q$-doubling measure on $X$
for some $Q\in(0,\infty)$, and fix exponents $s,\,p\in(0,\infty)$ and a $q\in(0,\infty]$, where $s\preceq_q{\rm ind}\,(X,\rho)$.
Also, suppose that $\Omega\subset X$
is a nonempty $\mu$-measurable set that satisfies the   measure density condition \eqref{measdens-INT-a}, that
is, there exists a positive  constant $C_\mu$ such that
\begin{equation}
\label{measdens}
\mu(B_\rho(x,r))\leq C_\mu\,\mu(B_\rho(x,r)\cap\Omega)
\quad\mbox{for any $x\in\Omega$ and   $r\in(0,1]$.}
\end{equation}
Then there exists a positive constant $C$ such that,
\begin{eqnarray}
\label{II+kan}
\begin{array}{c}
\mbox{for any }\,u\in M^s_{p,q}(\Omega,\rho,\mu),
\,\,\mbox{ there exists a }\ \widetilde{u}\in M^s_{p,q}(X,\rho,\mu)
\\[6pt]
\mbox{ for which }\,\,\, u=\widetilde{u}|_{\Omega}\,\,
\mbox{ and }\,\,\,\,\|\widetilde{u}\|_{M^s_{p,q}(X,\rho,\mu)}
\leq C\|u\|_{M^s_{p,q}(\Omega,\rho,\mu)}.
\end{array}
\end{eqnarray}
Furthermore, if $p,\,q>Q/(Q+s)$, then there is a  linear and bounded operator
$$\mathscr{E}\colon  M^s_{p,q}(\Omega,\rho,\mu)\to M^s_{p,q}(X,\rho,\mu)$$
such that
$(\mathscr{E}u)|_{\Omega}=u$ for any $u\in M^s_{p,q}(\Omega,\rho,\mu)$.
In addition, if $s<{\rm ind}\,(X,\rho)$, then all of the statements above are also valid with $M^s_{p,q}$ replaced by $N^s_{p,q}$.
\end{theorem}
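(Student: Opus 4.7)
The plan is a Whitney-type extension carried out in two stages: first a local extension from $\Omega$ to a neighborhood $V$ of $\Omega$, then multiplication by a H\"older cutoff to obtain a globally defined function on $X$. By Proposition~\ref{equivspaces} together with the sharp metrization Theorem~\ref{DST1}, we lose nothing by replacing $\rho$ with an equivalent quasi-metric $\varrho$ whose constant $C_\varrho$ nearly attains the supremum in \eqref{index}; the convention $s\preceq_q{\rm ind}\,(X,\rho)$ then allows us to fix a H\"older exponent $\alpha\in(0,(\log_2 C_\varrho)^{-1}]$ with $\alpha\geq s$, and with $\alpha=s$ permitted precisely when $q=\infty$ and the supremum defining the index is attained. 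Throughout the proof I work with $\varrho$ in place of $\rho$.

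Assuming (else trivially) that $X\setminus\overline{\Omega}$ is nonempty, I apply Theorem~\ref{L-WHIT} with some $\theta>C_\varrho^2$ to the open set $\mathcal{O}:=X\setminus\overline{\Omega}$ to produce Whitney balls $\{B_j:=B_\varrho(x_j,r_j)\}_{j\in\mathbb{N}}$ with bounded overlap and $r_j\approx{\rm dist}_\varrho(x_j,\Omega)$. Theorem~\ref{MSz7b} then furnishes a subordinate partition of unity $\{\psi_j\}\subset\dot{\mathscr{C}}^\alpha(X,\varrho)$ with $\|\psi_j\|_{\dot{\mathscr{C}}^\alpha(X,\varrho)}\leq C_\ast r_j^{-\alpha}$. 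For each $j$ with $r_j\leq r_0$ (a fixed small threshold), I pick $y_j\in\Omega$ with $\varrho(x_j,y_j)\lesssim r_j$ and set the reflected ball $B_j^\ast:=B_\varrho(y_j,c r_j)\cap\Omega$, where the geometric constant $c$ is chosen so that \eqref{measdens} yields $\mu(B_j^\ast)\approx\mu(B_j)$. The local extension on $V:=\{x\in X:{\rm dist}_\varrho(x,\Omega)<r_0/2\}$ is then
\begin{equation*}
  \widetilde{u}(x):=\begin{cases} u(x), & x\in\Omega,\\ \sum\limits_{\{j\,:\,r_j\leq r_0\}}\psi_j(x)\,m_u(B_j^\ast), & x\in V\setminus\overline{\Omega}.\end{cases}
\end{equation*}
When $p,\,q>Q/(Q+s)$, Theorems~\ref{DOUBembedding} and \ref{mainembedding-epsilon} give local integrability of $u$, so $m_u(B_j^\ast)$ may be replaced by the integral average $u_{B_j^\ast}$, producing an operator linear in $u$.

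The technical heart is constructing a fractional $s$-gradient $\{G_k\}_{k\in\mathbb{Z}}$ of $\widetilde{u}$ bounded in $L^p(X,\ell^q)$ and $\ell^q(L^p(X))$. Starting from an arbitrary $\{g_k\}\in\mathbb{D}_\varrho^s(u)$ extended by zero off $\Omega$, a natural candidate is
\begin{equation*}
  G_k:=C_0\,\Bigl[\mathcal{M}_\varrho\Bigl(\sup_{j\geq k-k_0}\bigl\{2^{-j(s-\varepsilon)t}g_j^t\bigr\}\Bigr)\Bigr]^{1/t}
\end{equation*}
for suitably chosen $\varepsilon\in(0,s)$ and small $t>0$. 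Checking the defining pointwise inequality \eqref{Hajlasz} for pairs $x,y$ with $2^{-k-1}\leq\varrho(x,y)<2^{-k}$ splits into three cases: when both $x,y\in\Omega$ the estimate is immediate; when both lie in $V\setminus\overline{\Omega}$ I rewrite $\widetilde{u}(x)-\widetilde{u}(y)=\sum_j[\psi_j(x)-\psi_j(y)]\,[m_u(B_j^\ast)-m_u(B_{j_0}^\ast)]$ for a reference index $j_0$ near $x$ (using $\sum_j\psi_j\equiv 1$ on $\mathcal{O}$), then combine the H\"older estimate on $\psi_j$ with the Poincar\'e-type difference-of-medians bound Lemma~\ref{intavgest}(ii)—this is where $\alpha\geq s$ is indispensable; the mixed case $x\in\Omega$, $y\in V\setminus\overline{\Omega}$ reduces to the previous one via the convergence property Lemma~\ref{intavgest}(i). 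Bounded overlap of the enlarged Whitney balls, Lemma~\ref{heli-est}, and the $L^{p/t}$-boundedness of $\mathcal{M}_\varrho$ (valid for $t<p$) convert these pointwise estimates to the required mixed-norm bounds.

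For globalization, I take $\Psi(x):=\max\{0,\,1-2r_0^{-\alpha}[\varrho_\#(x,\Omega)]^\alpha\}$, which by Theorem~\ref{DST1} belongs to $\dot{\mathscr{C}}^\alpha(X,\varrho)\cap L^\infty(X)$, equals $1$ on $\Omega$, and vanishes outside $V$. Lemma~\ref{GVa2-prev} applied to $\Psi\cdot\widetilde{u}$ (which already lies in $M^s_{p,q}(V,\varrho,\mu)$ by the preceding step) delivers a function in $M^s_{p,q}(X,\varrho,\mu)$ extending $u$ with the desired norm control; the Besov counterpart $N^s_{p,q}$ proceeds identically via \eqref{gradest2-prev}, subject to $s<{\rm ind}\,(X,\rho)$ strictly so that $\alpha>s$ can be chosen (Lemma~\ref{GVa2-prev} permits $\alpha=s$ only when $q=\infty$, whereas the Besov statement requires validity for all $q\in(0,\infty]$). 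The principal obstacle is the middle case of the gradient construction: bounding $\sum_j|\psi_j(x)-\psi_j(y)|\,|m_u(B_j^\ast)-m_u(B_{j_0}^\ast)|$ by $[\varrho(x,y)]^s$ times a maximal function of $\{g_k\}$. This is precisely what dictates the optimal range $s\preceq_q{\rm ind}\,(X,\rho)$, because the largest H\"older exponent \eqref{FFg-11.1} achievable by any such partition of unity is $(\log_2 C_\varrho)^{-1}$, whose supremum over bi-Lipschitz equivalent quasi-metrics is exactly ${\rm ind}\,(X,\rho)$.
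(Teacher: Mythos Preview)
Your overall architecture matches the paper's: pass to a good equivalent quasi-metric, Whitney-decompose the complement, build a local extension via medians (or averages when $p,q>Q/(Q+s)$) glued by a maximally smooth partition of unity, then globalize with a H\"older cutoff and Lemma~\ref{GVa2-prev}. The use of Lemma~\ref{intavgest} for difference-of-medians bounds is also right.

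However, your proposed fractional gradient
\[
G_k=C_0\Bigl[\mathcal{M}_\varrho\Bigl(\sup_{j\geq k-k_0}\bigl\{2^{-j(s-\varepsilon)t}g_j^t\bigr\}\Bigr)\Bigr]^{1/t}
\]
is not viable as stated. The weight $2^{-j(s-\varepsilon)t}$ depends only on $j$, not on $k$, so $G_k$ is essentially a tail supremum of a fixed sequence; as $k\to-\infty$ it stabilizes to a nonzero constant and $\sum_k G_k^q$ diverges for any $q<\infty$. The paper repairs this with a two-piece definition \eqref{localgraddef}: for small $k$ one takes $h_k:=2^{(k+1)s}|\mathcal{F}u|$ (which supplies the needed decay as $k\to-\infty$ and is handled via $\|\mathcal{F}u\|_{L^p}$), while for $k\geq k_0$ one uses a \emph{two-sided} weight $2^{-|k-j|\delta t}$ with $\delta\in(0,\min\{\alpha-s,\,s-\varepsilon\})$ (and $\delta=0$ only when $\alpha=s$, i.e.\ $q=\infty$). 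The two-sided decay is exactly what lets Lemma~\ref{heli-est} and Fefferman--Stein close the $L^p(\ell^q)$ and $\ell^q(L^p)$ estimates; your one-sided version cannot.

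Relatedly, your ``three cases'' conflates two genuinely different regimes when $x,y\in V\setminus\overline{\Omega}$. The identity $\widetilde u(x)-\widetilde u(y)=\sum_j[\psi_j(x)-\psi_j(y)][m_u(B_j^\ast)-m_u(B_{j_0}^\ast)]$ together with the H\"older bound $\|\psi_j\|_{\dot{\mathscr C}^\alpha}\lesssim r_j^{-\alpha}$ only yields the right estimate when $\varrho(x,y)$ is \emph{small} compared to ${\rm dist}_\varrho(x,\Omega)$ (the paper's Case~4, where the $\alpha$--$\delta$ interplay enters). When $\varrho(x,y)\gtrsim{\rm dist}_\varrho(x,\Omega)$ (Case~3) the relevant Whitney radii can be much smaller than $\varrho(x,y)$, the factor $r_j^{-\alpha}[\varrho(x,y)]^\alpha$ blows up, and one must instead chain through reference balls $B_{x^\ast},B_{y^\ast}$ centered in $\Omega$ using Lemma~\ref{intavgest} telescopically. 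So you need four cases, and it is precisely the split between Cases~3 and~4 that forces the choice of $\delta<\alpha-s$ and makes the range $s\preceq_q{\rm ind}(X,\rho)$ sharp.
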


\begin{remark}
In Theorem~\ref{measext}, we only assume that $\mu$ is Borel regular to ensure that
the Lebesgue differentiation theorem holds true.
It is instructive to note that one could assume a
weaker regularity condition on $\mu$, namely, the Borel semi-regularity,
which turns out to be equivalent to the availability of the Lebesgue differentiation theorem; see \cite[Theorem~3.14]{AM15}.
\end{remark}

\begin{remark}
The linear extension operator constructed in Theorem~\ref{measext} (when $p$ and $q$ are large enough) is a Whitney-type operator and it is universal
(in the sense that it simultaneously preserves all orders of smoothness).
\end{remark}

\begin{proof}[Proof of Theorem~\ref{measext}]
We begin by making a few important observations. First, note that we
can assume  $\Omega$ is a closed set. Indeed,
the measure density condition \eqref{measdens}, when put in concert with
the Lebesgue differentiation theorem, implies that $\mu(\partial\Omega)=0$ (see, for instance,
\cite[Lemma~2.1]{S07}). Hence, $M^s_{p,q}\big(\overline{\Omega}\big)=M^s_{p,q}(\Omega)$ and $N^s_{p,q}(\overline{\Omega})=N^s_{p,q}(\Omega)$
because the membership to the spaces $M^s_{p,q}$ and $N^s_{p,q}$
is defined up to a set of measure zero.
Moreover, we can assume that $\Omega\neq X$ as the claim is trivial when $\Omega=X$.

Moving on, by  $s\preceq_q{\rm ind}\,(X,\rho)$ and its meaning,  there exists a quasi-metric $\varrho$
on $X$ such that $\varrho\approx\rho$  and $s\leq(\log_{2}C_\varrho)^{-1}$, where  $C_\varrho\in[1,\infty)$ is as in \eqref{C-RHO.111}, and the value  $s=(\log_{2}C_\varrho)^{-1}$ can only occur when $q=\infty$ and $C_\varrho>1$.
Next, let $\varrho_{\#}$ be the regularized quasi-metric given by Theorem~\ref{DST1}. Then $\varrho_\#\approx\varrho$ and $C_{\varrho_\#}\leq C_\varrho$. Thus, $s\leq(\log_{2}C_{\varrho_\#})^{-1}$, where the value  $s=(\log_{2}C_{\varrho_\#})^{-1}$ can only occur when $q=\infty$ and $C_{\varrho_\#}>1$.

Recall that, by Theorem~\ref{DST1}, all $\varrho_\#$-balls are $\mu$-measurable. Combining this with the fact that $\varrho_\#\approx\rho$, we conclude that
$(X,\varrho_\#,\mu)$ is a quasi-metric measure space where
$\mu$ is doubling with respect to $\varrho_\#$-balls.
As such, we can consider the decomposition of the open set
$\mathcal{O}:=X\setminus\Omega$ into the family $\{B_j\}_{j\in\mathbb{N}}$ of ${\varrho_\#}$-balls, where,
for any given $j\in\nn$, $B_j:=B_{\varrho_\#}(x_j,r_j)$ with $x_j\in \mathcal{O}$ and $r_j\in(0,\fz)$,
as given by Theorem~\ref{L-WHIT} with $\theta:=2C^2_{\varrho_\#}$. Also, fix  $\theta'\in(C_{\varrho_\#},\theta/C_{\varrho_\#})=(C_{\varrho_\#},2C_{\varrho_\#})$
and choose any finite number $\alpha\in[s,(\log_{2}C_{\varrho_\#})^{-1}]$,
where $\alpha\neq s$ unless $s=(\log_{2}C_{\varrho_\#})^{-1}$.
In this context, let $\{\psi_j\}_{j\in\mathbb{N}}$ be the associated partition of unity of
order $\alpha$ given by Theorem~\ref{MSz7b} [applied here with the space $(X,\varrho_\#,\mu)$].

By Theorem~\ref{L-WHIT}(iii), for any
$j\in\mathbb{N}$, there exists an  $x_j^\ast\in\Omega$   satisfying
${\varrho_\#}(x_j,x_j^\ast)<\Lambda r_j$, where $\Lambda\in(\theta,\fz)$
is as in Theorem~\ref{L-WHIT}.
Let
\begin{equation}\label{ball-is}
B_j^\ast:=B_{\varrho_\#}(x_j^\ast,r_j),\quad \forall \ j\in\mathbb{N},
\end{equation}
and, for any
$x\in X\setminus\Omega$,
define
$$r(x):={\rm dist}_{\varrho_\#}(x,\Omega)/(4C_{\varrho_\#})\quad \mbox{and}\quad
B_x:=B_{\varrho_\#}(x,\Lambda^2 r(x)),$$
where ${\rm dist}_{\varrho_\#}(x,\Omega):=\inf_{y\in \Omega} \varrho_\#(x,y)$.
We claim that
\begin{equation}
\label{qwpk-1}
B_j^\ast\subset B_x\subset C_{\varrho_\#}^2\Lambda^3 B_j^\ast\quad\mbox{whenever $j\in\mathbb{N}$ and $x\in \theta' B_j$.}
\end{equation}
To show \eqref{qwpk-1}, we   first prove that
\begin{equation}
\label{qwpk-1-radii}
\frac{r_j}{2}\leq r(x)\leq\frac{\Lambda r_j}{4}\quad\mbox{whenever $j\in\mathbb{N}$ and $x\in \theta' B_j$.}
\end{equation}
Indeed, for any $j\in\mathbb{N}$ and $x\in \theta' B_j$, by $x_j^\ast\in\Omega$ and
$\Lambda>\theta>{C}_{\varrho_\#}\theta'$, we find that (keeping in mind that $\varrho_\#$
is symmetric)
\begin{align*}
{\rm dist}_{\varrho_\#}(x,\Omega)
&\leq\varrho_\#(x,x_j^\ast)
\leq C_{\varrho_\#}\max\lf\{\varrho_\#(x,x_j),\varrho_\#(x_j,x_j^\ast)\r\}\\
&<C_{\varrho_\#}\max\lf\{\theta'r_j,\Lambda r_j\r\}
=C_{\varrho_\#}\Lambda r_j,
\end{align*}
which implies the second inequality in
\eqref{qwpk-1-radii}. To prove the
 first inequality in \eqref{qwpk-1-radii},
 fix an arbitrary $z\in\Omega$. By Theorem~\ref{L-WHIT}(iii),
 we have $\theta B_j\subset X\setminus\Omega$.
Therefore, $z\not\in \theta B_j$, which, together with $x\in \theta'B_j$ and $\theta>{C}_{\varrho_\#}\theta'$,
implies that
\begin{align}
\label{472-4ni}
\theta r_j
&\leq\varrho_\#(x_j,z)\leq C_{\varrho_\#}\max\{\varrho_\#(x_j,x),\varrho_\#(x,z)\}\nonumber\\
&<\max\{{C}_{\varrho_\#}\theta'r_j,{C}_{\varrho_\#}\varrho_\#(x,z)\}\nonumber\\
&<\max\{\theta r_j,{C}_{\varrho_\#}\varrho_\#(x,z)\}=C_{\varrho_\#}\varrho_\#(x,z).
\end{align}
Given that $z\in\Omega$ was arbitrary, by looking at the extreme
most sides of \eqref{472-4ni} and taking the infimum over all $z\in \Omega$, we  conclude that
\begin{equation}\label{est-O}
2C^2_{\varrho_\#}r_j=\theta r_j\leq C_{\varrho_\#}{\rm dist}_{\varrho_\#}(x,\Omega),\quad \forall\ x\in\theta' B_j,
\end{equation}
and the first inequality in \eqref{qwpk-1-radii} follows. This finishes the proof of \eqref{qwpk-1-radii}.

Returning to the proof of \eqref{qwpk-1}, we
fix again a $j\in\mathbb{N}$ and an $x\in \theta' B_j$.
By \eqref{qwpk-1-radii}, the choice of $\theta:=2C_{\varrho_\#}^2$, and the fact that $\Lambda>\theta>\theta'$,
we find that, for any $z\in B_j^\ast$,
\begin{align*}
\varrho_\#(x,z)
&\leq C_{\varrho_\#}^2
\max\lf\{\varrho_\#(x,x_j),\varrho_\#(x_j,x_j^\ast),\varrho_\#(x_j^\ast,z)\r\}
\\
&<C_{\varrho_\#}^2\max\lf\{\theta'r_j,\Lambda r_j,r_j\r\}
=C_{\varrho_\#}^2\Lambda r_j\leq 2C_{\varrho_\#}^2\Lambda r(x)<\Lambda^2 r(x).
\end{align*}
This implies $B_j^\ast\subset B_x$.

Next, we show  $B_x\subset C_{\varrho_\#}^2\Lambda^3 B_j^\ast$. Indeed,
by \eqref{qwpk-1-radii}, one has, for any $z\in B_x$,
\begin{align*}
\varrho_\#(x_j^\ast,z)
&\leq C_{\varrho_\#}^2\max\lf\{\varrho_\#(x_j^\ast,x_j),\varrho_\#(x_j,x),\varrho_\#(x,z)\r\}
\\
&<C_{\varrho_\#}^2\max\lf\{\Lambda r_j,\theta'r_j,\Lambda^2 r(x)\r\}
\\
&\leq C_{\varrho_\#}^2\max\lf\{\Lambda r_j,\theta'r_j,\Lambda^3 r_j\r\}
=C_{\varrho_\#}^2\Lambda^3 r_j.
\end{align*}
Thus, $B_x\subset C_{\varrho_\#}^2\Lambda^3 B_j^\ast$.
 This finishes the proof of \eqref{qwpk-1}.

Moving on, define $J:=\{j\in\mathbb{N}:\ r_j\leq1\}$. It follows from \eqref{qwpk-1}, the definition of $J$,  the measure density condition \eqref{measdens},  and the doubling property of $\mu$ that
\begin{equation}
\label{qwpk-2}
\mu(B_j^\ast\cap\Omega)\gtrsim \mu(B_j^\ast)\approx\mu(B_x),
\quad\mbox{whenever $j\in J$ and $x\in \theta' B_j$.}
\end{equation}

Our plan below is to show that each function belonging to
$M^s_{p,q}(\Omega,\varrho_\#,\mu)$ can be extended to the
entire space $X$ with preservation of smoothness and while
retaining control of the associated `norm'.
Then the conclusion of this theorem will follow from
Proposition~\ref{equivspaces}, which
implies $M^s_{p,q}(\Omega,\varrho_\#,\mu)=M^s_{p,q}(\Omega,\rho,\mu)$
with equivalent `norms'.
To this end, fix a $u\in M^s_{p,q}(\Omega,\varrho_\#,\mu)$ and choose a $\vec{g}:=\{g_k\}_{k\in\mathbb{Z}}\in\mathbb{D}^s_{\varrho_\#}(u)$ satisfying
$\Vert\vec{g}\Vert_{L^p(\Omega,\ell^q)}\lesssim \Vert u\Vert_{\dot{M}^s_{p,q}(\Omega)}$.
Note that such a choice is possible, due to Proposition~\ref{constant}.	
Although, for any $k\in\zz$,  $g_k$ and $u$ are defined only in $\Omega$,
we will identify them as a function defined on all
of $X$ by setting $u\equiv g_k\equiv0$ on $X\setminus\Omega$.

We  first extend the function $u$ to the
following neighborhood of $\Omega$:
\begin{equation}\label{neib}
V:=\lf\{x\in X:\ {\rm dist}_{\varrho_\#}(x,\Omega)<2C_{\varrho_\#}\r\}.
\end{equation}
To this end, for each $x\in V\setminus\Omega$, we let
$I_x$ denote the collection of all $j\in\mathbb{N}$ such
that $x\in \theta'B_j$.  Since $\theta'<\theta$,
by   Theorem~\ref{L-WHIT}(ii),
we have  $\#I_x\leq M$, where $M$ is a positive integer depending on $\theta$ and the space $(X,\varrho_{\#},\mu)$.
Moreover, if $j\in\mathbb{N}\setminus J$,
then it follows from \eqref{est-O} and the definition of $J$
that $${\rm dist}_{\varrho_\#}(\theta'B_j,\Omega)\geq 2C_{\varrho_\#}r_j\geq 2C_{\varrho_\#},$$
and hence  $\theta'B_j\cap V=\emptyset$ and $j\not\in I_x$.
As such, we have $I_x\subset J$. Observe that, if $j\not\in I_x$, then, by
Theorem~\ref{MSz7b}(ii), one has  $\psi_j(x)=0$ which, together with
Theorem~\ref{MSz7b}(iii), implies that
\begin{equation}
\label{qwpk-3}
\sum_{j\in I_x}\psi_j(x)=\sum_{j\in J}\psi_j(x)=\sum_{j\in\mathbb{N}}\psi_j(x)=1,\quad\forall\ x\in V\setminus\Omega.
\end{equation}

Define the local extension, $\mathcal{F}u\colon  X\to\mathbb{R}$, of $u$ to $V$ by setting
\begin{eqnarray}
\label{localextdef}
\mathcal{F}u(x):=\left\{
\begin{array}{ll}
u(x)\quad &\mbox{ if $x\in\Omega$,}\\[6pt]
\displaystyle\sum_{i\in\mathbb{N}}\psi_i(x)\,m_u(B^\ast_i\cap\Omega)
&\mbox{ if $x\in X\setminus\Omega$,}
\end{array}
\right.\quad
\end{eqnarray}
where $m_u$ is as in Definition \ref{median}.
Given the presence of the median value factor $m_u(B^\ast_i\cap\Omega)$ in \eqref{localextdef}, it is not to be expected that $\mathcal{F}$ is linear. However, if $p,\,q>Q/(Q+s)$,
then we can construct a linear extension as follows:
First, observe that $u\in L^1(B^\ast_i\cap\Omega)$ for any $j\in J$.
Indeed, if $\Omega$ contains only one point then this claim is obvious. Otherwise,
the measure density condition \eqref{measdens} and
the doubling property for $\mu$, ensure that $(\Omega,\varrho_\#,\mu)$
is a quasi-metric measure space, where $\mu$ is $Q$-doubling
on domain $\varrho_\#$-balls up to scale $C_{\varrho_\#}$ .
Here, $\varrho_\#$ is naturally restricted to the set $\Omega$.
Let $r:=Q/(Q+s)<Q/s$. Then $r<p$ and by $u\in M^s_{p,q}(\Omega,\varrho_\#,\mu)$ and
the H\"older inequality, we have $u\in M^s_{r,q}(\Omega,\varrho_\#,\mu)$. Hence,
$u\in L^{r^\ast}(B^\ast_i\cap\Omega)=L^1(B^\ast_i\cap\Omega)$, where $r^\ast:=Qr/(Q-sr)$, by Theorem~\ref{DOUBembedding}(a)
[applied here for the space $(\Omega,\varrho_\#,\mu)$ and
with $r_\ast:=\sigma:=C_{\varrho_\#}$]. \footnote{Similarly,
one can show that restrictions of functions in $N^s_{p,q}(\Omega)$
belong to $L^1(B^\ast_i\cap\Omega)$ by choosing  an $\varepsilon\in(0,s)$
so that $\tilde{r}:=Q/(Q+\varepsilon)<p$ and then using
Theorem~\ref{mainembedding-epsilon}(a) in place of Theorem~\ref{DOUBembedding}(a).}
As such, we can replace the median value  $m_u(B^\ast_i\cap\Omega)$ in \eqref{localextdef}
with the integral average $u_{B^\ast_i\cap\Omega}$.
The resulting operator $\mathcal{F}$ is linear and we can
show that it is also a local extension of $u$ by using the estimate
in Remark~\ref{linearlocal} [with $0<\varepsilon'<\varepsilon<s$ chosen
so that $p,\,q>Q/(Q+\varepsilon')=:t$] in place of \eqref{BXPZ-2}
in the proof that follows here. We omit  the details.

We now prove that $\mathcal{F}u\in M^s_{p,q}(V)$ with $\|\mathcal{F}u\|_{M^s_{p,q}(V,\mu)}
\lesssim\|u\|_{M^s_{p,q}(\Omega,\mu)}$ by establishing the following three lemmas.

\begin{lemma}
\label{claim1}
Let all the notation be as in the above proof of   Theorem~\ref{measext}.
Then $\mathcal{F}u\in L^p(V,\mu)$ and $\|\mathcal{F}u\|_{L^p(V,\mu)}
\leq C\|u\|_{L^p(\Omega,\mu)}$, where $C$ is a
positive constant independent of $u$, $V$, and $\Omega$.
\end{lemma}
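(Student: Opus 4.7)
The plan is to split $V=\Omega\cup(V\setminus\Omega)$ and handle each piece separately. On $\Omega$, the definition \eqref{localextdef} gives $\mathcal{F}u=u$ pointwise, so $\|\mathcal{F}u\|_{L^p(\Omega)}=\|u\|_{L^p(\Omega)}$ with no work required. The real content is the estimate on $V\setminus\Omega$, where $\mathcal{F}u(x)=\sum_{i\in I_x}\psi_i(x)m_u(B_i^*\cap\Omega)$ with $I_x\subset J$, $\#I_x\leq M$, and $\sum_{i\in I_x}\psi_i(x)=1$, by Theorem~\ref{L-WHIT}(ii), Theorem~\ref{MSz7b}(ii), and \eqref{qwpk-3}.

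First I would use $\sum_{i\in I_x}\psi_i(x)=1$ and $0\leq\psi_i\leq 1$, combined with Jensen's inequality (when $p\geq 1$) or the $p$-subadditivity $|a+b|^p\leq|a|^p+|b|^p$ (when $p<1$), to derive the pointwise estimate
$$
|\mathcal{F}u(x)|^p\lesssim\sum_{i\in I_x}|m_u(B_i^*\cap\Omega)|^p\leq\sum_{i\in J}\mathbf{1}_{\theta'B_i}(x)\,|m_u(B_i^*\cap\Omega)|^p.
$$
Next, Lemma~\ref{difflemma} with $\eta:=p$ and $\gamma:=0$ yields $|m_u(B_i^*\cap\Omega)|^p\leq 2\,\mvint_{B_i^*\cap\Omega}|u|^p\,d\mu$. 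A short computation based on $\varrho_\#(x_i,x_i^*)<\Lambda r_i$ and $\theta'<\Lambda$ shows $\theta'B_i\subset C_{\varrho_\#}\Lambda B_i^*$, which, together with $Q$-doubling and \eqref{qwpk-2}, delivers $\mu(\theta'B_i)\lesssim\mu(B_i^*\cap\Omega)$. Integrating the pointwise estimate over $V\setminus\Omega$ and swapping sum with integral then produces
$$
\int_{V\setminus\Omega}|\mathcal{F}u|^p\,d\mu\lesssim\sum_{i\in J}\int_{B_i^*\cap\Omega}|u|^p\,d\mu=\int_\Omega|u(y)|^p\sum_{i\in J}\mathbf{1}_{B_i^*}(y)\,d\mu(y).
$$

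The remaining task, which I expect to be the principal technical obstacle, is to establish the bounded overlap $\sum_{i\in J}\mathbf{1}_{B_i^*}\leq N$ pointwise on $\Omega$ for some finite $N$ independent of $u$. Since $B_i^*\subset C_{\varrho_\#}\Lambda B_i$, it suffices to bound the pointwise overlap of the enlarged family $\{C_{\varrho_\#}\Lambda B_i\}_{i\in\mathbb{N}}$, and this does not follow directly from Theorem~\ref{L-WHIT}(ii) because the dilation factor $C_{\varrho_\#}\Lambda$ typically exceeds the fixed $\theta=2C_{\varrho_\#}^2$. It can nevertheless be deduced from the Whitney structure in a doubling quasi-metric measure space via a standard packing argument: Theorem~\ref{L-WHIT}(iv) forces any two intersecting enlarged Whitney balls to have mutually comparable radii, so their centers $x_i$ cluster in a region of uniformly controlled $\mu$-measure, and Theorem~\ref{L-WHIT}(ii) together with $Q$-doubling then caps the cardinality by a constant depending only on $C_{\varrho_\#}$, $\Lambda$, $\theta$, $M$, $\kappa$, and $Q$. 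Combining this with the previous estimate delivers $\|\mathcal{F}u\|_{L^p(V)}\lesssim\|u\|_{L^p(\Omega)}$, completing the proof.
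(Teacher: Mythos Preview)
Your approach diverges from the paper's and contains a genuine gap: the bounded-overlap claim $\sum_{i\in J}\mathbf{1}_{B_i^*}\leq N$ on $\Omega$ is false in general, and the ``standard packing argument'' you sketch does not establish it. The assertion that any two intersecting enlarged Whitney balls $C_{\varrho_\#}\Lambda B_i$ have comparable radii is precisely what fails once the dilation exceeds~$\theta$. Property~(iv) of Theorem~\ref{L-WHIT} is tied to the scale $\theta$, and the underlying mechanism is that $\theta B_i\subset\mathcal{O}$ forces $r_i\approx{\rm dist}_{\varrho_\#}(x_i,\Omega)$; once balls are dilated past $\Lambda$ they escape $\mathcal{O}$, and balls of arbitrarily different scales can meet. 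Concretely, take $\Omega$ to be a closed half-space in $\mathbb{R}^n$ and choose each $x_i^*$ to be the orthogonal projection of $x_i$ onto $\partial\Omega$. For $y\in\Omega$ at distance $d$ from $\partial\Omega$, every Whitney scale $r_i\in(d,1]$ contributes at least one ball $B_i^*$ containing $y$, so $\sum_{i\in J}\mathbf{1}_{B_i^*}(y)\gtrsim\log(1/d)$, which is unbounded as $d\to0^+$. Consequently the final inequality $\int_\Omega|u|^p\sum_{i\in J}\mathbf{1}_{B_i^*}\,d\mu\lesssim\|u\|_{L^p(\Omega)}^p$ cannot hold for all $u\in L^p(\Omega)$.

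The paper avoids this entirely by a maximal-function argument. Fixing $\eta\in(0,\min\{1,p\})$ and using \eqref{qwpk-1}--\eqref{qwpk-2} together with Lemma~\ref{difflemma} (with $\gamma=0$), one obtains for $x\in V\setminus\Omega$ the pointwise bound
\[
|\mathcal{F}u(x)|\lesssim\sum_{i\in I_x}\Big(\mvint_{B_i^*\cap\Omega}|u|^\eta\,d\mu\Big)^{1/\eta}\lesssim\Big(\mvint_{B_x}|u|^\eta\,d\mu\Big)^{1/\eta}\lesssim\big[\mathcal{M}_{\varrho_\#}(|u|^\eta)(x)\big]^{1/\eta},
\]
the key point being that all the averages are absorbed into a \emph{single} ball $B_x$ centered at $x$. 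The $L^{p/\eta}$-boundedness of $\mathcal{M}_{\varrho_\#}$ (valid since $p/\eta>1$) then gives $\|\mathcal{F}u\|_{L^p(V)}\lesssim\|u\|_{L^p(\Omega)}$ directly, with no overlap estimate required.
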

\begin{proof}
First observe that $\mathcal{F}u$ is $\mu$-measurable, because
$u$ is $\mu$-measurable and, by Theorem~\ref{L-WHIT}(ii), and both (i) and
(ii) of Theorem~\ref{MSz7b}, the sum in \eqref{localextdef}
is a \textit{finite} linear combination of continuous functions nearby
each point in $X\setminus\Omega$.

Let $x\in X\setminus\Omega$ and fix an
$\eta\in(0,\min\{1,p\})$. Combining \eqref{qwpk-3}, \eqref{hcx-23} in Lemma~\ref{difflemma} with $\gamma=0$,
\eqref{qwpk-1}, \eqref{qwpk-2},
the fact that $\#I_x\leq M$,
and this choice of $\eta$, we conclude that
\begin{align*}
\left\vert\mathcal{F}u(x)\right\vert
&\leq\sum_{j\in I_x}\psi_j(x)\,\left\vert m_u(B^\ast_j\cap\Omega)\right\vert
\lesssim\sum_{j\in I_x}\left(\,\mvint_{B^\ast_j\cap\Omega}|u|^\eta\, d\mu\right)^{1/\eta}
\\
&\lesssim\left(\,\mvint_{B_x}|u|^\eta\, d\mu\right)^{1/\eta}
\lesssim\lf[\mathcal{M}_{\varrho_\#}\lf(|u|^\eta\r)(x)\r]^{1/\eta},
\end{align*}
which, together with
the definition of $\mathcal{F}u$ and the boundedness of the Hardy-Littlewood maximal operator on $L^{p/\eta}(X)$,
implies the desired estimate $\|\mathcal{F}u\|_{L^p(V,\mu)}\lesssim\|u\|_{L^p(\Omega,\mu)}$.
This finishes the proof of Lemma \ref{claim1}.
\end{proof}

\begin{lemma}
\label{claim2}
Let all the notation be as in the above proof of   Theorem~\ref{measext}.
Fix an $\varepsilon\in(0,s)$ and choose any number $\delta\in(0,\min\{\alpha-s,s-\varepsilon\})$
if $\alpha\neq s$, and set $\delta=0$ if $\alpha=s$. Suppose that  $t\in
(0,\min\{p,q\})$ and let $k_0\in\mathbb{Z}$
be such that $2^{k_0-1}\leq 16C_{\varrho_\#}^4\Lambda^2<2^{k_0}$.
In this context, define a sequence
$\vec{h}:=\{h_k\}_{k\in\mathbb{Z}}$ by
setting, for any $k\in\mathbb{Z}$ and  $x\in X$,
\begin{eqnarray}
\label{localgraddef}
h_k(x):=
\left\{
\begin{array}{ll}
\displaystyle \lf[\mathcal{M}_{\varrho_\#}\lf(\sup_{j\in\mathbb{Z}}
\lf\{2^{-|k-j|\delta t}g_j^t\r\}\r)(x)\r]^{1/t}
\quad&\mbox{if\ $k\geq k_0$,}\\[20pt]
2^{(k+1)s}|\mathcal{F}u(x)| &\mbox{if\ $k<k_0$.}
\end{array}
\right.
\end{eqnarray}
Then there exists a positive constant $C$, independent of $u$,  $\vec{g}$, $\vec{h}$, and $\Omega$,  such that $\{Ch_k\}_{k\in\mathbb{Z}}$
is an $s$-fractional gradient of $\mathcal{F}u$ on $V$ with respect to ${\varrho_\#}$.
\end{lemma}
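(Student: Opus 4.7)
The plan is to verify, for every $k\in\mathbb{Z}$ and $\mu$-a.e.\ pair $x,y\in V$ with $2^{-k-1}\leq\varrho_\#(x,y)<2^{-k}$, the pointwise inequality $|\mathcal{F}u(x)-\mathcal{F}u(y)|\leq C[\varrho_\#(x,y)]^s[h_k(x)+h_k(y)]$ for a universal constant $C$. I would first dispose of the scale range $k<k_0$: since $[\varrho_\#(x,y)]^{-s}\leq 2^{(k+1)s}$, combining the triangle inequality with the definition $h_k=2^{(k+1)s}|\mathcal{F}u|$ in this range gives the bound with $C=1$. For $k\geq k_0$, I would then distinguish three cases according to whether $x,y$ lie in $\Omega$.

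For $x,y\in\Omega$, the identity $\mathcal{F}u=u$ on $\Omega$ and $\vec{g}\in\mathbb{D}^s_{\varrho_\#}(u)$ directly provide $|u(x)-u(y)|\leq[\varrho_\#(x,y)]^s(g_k(x)+g_k(y))$; the pointwise bound $g_k\leq h_k$ at $\mu$-a.e.\ point would follow from the trivial inequality $g_k^t\leq\sup_{j\in\mathbb{Z}}\{2^{-|k-j|\delta t}g_j^t\}$ together with the Lebesgue differentiation theorem, which is at my disposal because $\mu$ is Borel regular. For the mixed case $x\in V\setminus\Omega$ and $y\in\Omega$ (or vice versa), I would use identity \eqref{qwpk-3} to write
\begin{equation*}
\mathcal{F}u(x)-u(y)=\sum_{i\in I_x}\psi_i(x)\bigl[m_u(B^\ast_i\cap\Omega)-u(y)\bigr],
\end{equation*}
where $\#I_x\leq M$, and then control each median difference through a telescoping tower of median values on $\varrho_\#$-balls centered at $y$ with radii decreasing geometrically from $\approx 2^{-k}$ (noting $r_i\lesssim 2^{-k}$ because $\varrho_\#(x,y)\gtrsim{\rm dist}_{\varrho_\#}(x,\Omega)\approx r_i$) down to $0$. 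Lemma~\ref{intavgest}(i) would identify the limit with $u(y)$, and Lemma~\ref{intavgest}(ii) would bound consecutive terms by geometrically summable maximal-function quantities whose total absorbs into $h_k(y)$.

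The principal case is $x,y\in V\setminus\Omega$. Fixing any $i_0\in I_x$ and telescoping via \eqref{qwpk-3},
\begin{equation*}
\mathcal{F}u(x)-\mathcal{F}u(y)=\sum_{i\in I_x\cup I_y}\bigl[\psi_i(x)-\psi_i(y)\bigr]\bigl[m_u(B^\ast_i\cap\Omega)-m_u(B^\ast_{i_0}\cap\Omega)\bigr].
\end{equation*}
Theorem~\ref{L-WHIT}(iv) gives $r_i\approx r_{i_0}$ for every $i\in I_x\cup I_y$, and \eqref{qwpk-1}--\eqref{qwpk-2} place every $B^\ast_i\cap\Omega$ inside a fixed enlargement of $B^\ast_{i_0}$ of comparable measure. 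I would pick $L\in\mathbb{Z}$ with $2^{-L}\approx r_{i_0}\leq 1$ and invoke Lemma~\ref{intavgest}(ii) to bound each median difference by a constant times $2^{-L\varepsilon}[\mathcal{M}_{\varrho_\#}(\sup_{j\geq L-k_0'}\{2^{-j(s-\varepsilon)t}g_j^t\})(x)]^{1/t}$ (with $k_0'$ the integer appearing in that lemma). Theorem~\ref{MSz7b}(i) with $\beta=s+\delta$ (or $\beta=s=\alpha$ in the borderline regime $\delta=0$) supplies $|\psi_i(x)-\psi_i(y)|\leq C_\ast r_i^{-\beta}[\varrho_\#(x,y)]^\beta$. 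Provided $\varrho_\#(x,y)\lesssim r_{i_0}$ (equivalently $k\geq L$; the opposite regime forces $x$ within distance $\lesssim\varrho_\#(x,y)$ of $\Omega$, allowing me to select $x^\ast\in\Omega$ with $\varrho_\#(x,x^\ast)\lesssim\varrho_\#(x,y)$ and reduce to the mixed case applied to $(x,x^\ast)$ and $(x^\ast,y)$), the combined prefactor becomes $2^{-k(s+\delta)}\cdot 2^{L(s+\delta-\varepsilon)}$. The hardest step---and the one where the constraint $\delta<s-\varepsilon$ enters crucially---will be the pointwise estimate
\begin{equation*}
\sup_{j\geq L-k_0'}\{2^{-j(s-\varepsilon)t}g_j^t\}\lesssim 2^{-k(s-\varepsilon)t}\,2^{(k-L)(s-\varepsilon+\delta)t}\sup_{j\in\mathbb{Z}}\{2^{-|k-j|\delta t}g_j^t\},
\end{equation*}
obtained by splitting the supremum at $j=k$: for $j\geq k$ the inequality $s-\varepsilon>\delta$ allows the weight to be traded, and for $L-k_0'\leq j<k$ the scale constraint $|k-j|\leq k-L+k_0'$ absorbs the excess. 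Passing through $\mathcal{M}_{\varrho_\#}$, extracting the $t$-th root, and reinserting into the prefactor, all exponents of $L$ cancel and the exponent of $k$ collapses to $-ks$, yielding exactly the target bound $[\varrho_\#(x,y)]^s h_k(x)$.
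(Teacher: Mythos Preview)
Your overall case split and the treatment of $k<k_0$, of $x,y\in\Omega$, and of the ``near'' regime $\varrho_\#(x,y)\lesssim r_{i_0}$ (your principal case with $k\geq L$) are essentially the paper's argument, and your key supremum estimate with the cancellation of the $L$-exponent is correct. The gap lies in your handling of the ``opposite regime'' $\varrho_\#(x,y)\gtrsim r_{i_0}$ for $x,y\in V\setminus\Omega$: the reduction to two mixed-case applications through a point $x^\ast\in\Omega$ does not close. Applying the mixed case to $(x,x^\ast)$ requires recovering the \emph{pointwise} value $u(x^\ast)$ via a telescoping tower of medians on balls centered at $x^\ast$ with radii shrinking to $0$; Lemma~\ref{intavgest}(ii) then produces averages over arbitrarily small balls about $x^\ast$, and these are controlled only by $h_{k'}(x^\ast)$, i.e.\ by the maximal function evaluated at the auxiliary point. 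There is no mechanism to dominate $h_{k'}(x^\ast)$ by $h_k(x)+h_k(y)$, since small balls about $x^\ast$ need not contain either $x$ or $y$. (A related issue already appears in your mixed-case sketch: when $r_i\ll 2^{-k}$ the tower at $y$ alone cannot reach $m_u(B^\ast_i\cap\Omega)$, and one is forced to telescope outward from $x_i^\ast$ through balls that contain $x$, picking up $h_k(x)$-terms; so the mixed case costs $h_k(x)+h_k(y)$, not just $h_k(y)$.)

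The paper avoids collapsing onto a pointwise value at any intermediate point. In its Case~3 it writes $|\mathcal{F}u(x)-\mathcal{F}u(y)|$ as a chain of \emph{median} differences over balls $B_{x^\ast},2^{n_x}B_{x^\ast},2^{n_y}B_{y^\ast},B_{y^\ast}$, all of which have radii $\gtrsim r(x)$ or $\gtrsim r(y)$; each such ball (or a bounded dilate) contains $x$ or $y$, so every application of Lemma~\ref{intavgest}(ii) lands the maximal function at $x$ or at $y$. To repair your argument you must either mimic this median-chain device or, equivalently, replace $u(x^\ast)$ by $m_u(B_{x^\ast}\cap\Omega)$ and supply the additional telescoping (the analogue of the paper's term $\widetilde{\rm II}$) linking the $x^\ast$- and $y^\ast$-towers at scale $\approx 2^{-k}$.
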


\begin{remark}
The sequence in \eqref{localgraddef} is different from than one considered in \cite[(5.3)]{HIT16}, and it is this new sequence defined in \eqref{localgraddef} that allows us to generalize \cite[Theorem 1.2]{HIT16} and \cite[Theorem~6]{hajlaszkt2} simultaneously and in a unified manner.
\end{remark}

\begin{proof}[Proof of Lemma~\ref{claim2}]
We first prove that, for any $k\in\zz$, $h_k$ is a well-defined function.
Indeed, by the choice  of $\vec g$, one has  $\Vert\vec{g}(\,\cdot\,)\Vert_{\ell^q}\in L^p(X,\mu)$.
Then, when $q<\infty$, we have  $\delta>0$ and, by the H\"older inequality (used with the
exponent $r:=q/t>1$),
\begin{equation*}
\sup_{j\in\mathbb{Z}}\lf\{2^{-|k-j|\delta t}g_j^t\r\}\leq\sum_{j\in\mathbb{Z}}2^{-|k-j|\delta t}g_j^t
\leq\left(\sum_{j\in\mathbb{Z}}2^{-|k-j|\delta tr'}\right)^{1/r'}
\left(\sum_{j\in\mathbb{Z}}g_j^q\right)^{t/q}\in L^{p/t}(X,\mu);
\end{equation*}
while, when $q=\infty$, one also has
\begin{equation*}
\sup_{j\in\mathbb{Z}}\lf\{2^{-|k-j|\delta t}g_j^t\r\}
\leq\lf(\sup_{j\in\mathbb{Z}}g_j\r)^t\in L^{p/t}(X,\mu).
\end{equation*}
Altogether, we have proved that $\sup_{j\in\mathbb{Z}}\{2^{-|k-j|\delta t}g_j^t\}\in L^{p/t}(X,\mu)$.
Consequently, by $p/t>1$, we conclude
that $\sup_{j\in\mathbb{Z}}\{2^{-|k-j|\delta t}g_j^t\}$ is locally integrable on $X$.
From this and Lemma \ref{claim1}, we further deduce that $h_k$ is a well-defined
$\mu$-measurable function on $X$ \footnote{Note that the consideration of the Hardy-Littlewood
maximal operator with respect to the regularized quasi-metric, $\varrho_\#$, ensures that the function $\mathcal{M}_{\varrho_\#}(\sup_{j\in\mathbb{Z}}\{2^{-|k-j|\delta t}g_j^t\})$ is $\mu$-measurable
(see \cite[Theorem~3.7]{AM15}).}.
	
Moving on, to prove Lemma~\ref{claim2},
we need to show that there exists a positive constant $C$
such that,  for any fixed $k\in\mathbb{Z}$, the following inequality holds true,
for $\mu$-almost every $x,\,y\in V$ satisfying $2^{-k-1}\leq {\varrho_\#}(x,y)<2^{-k}$,
\begin{equation}
\label{fracgradext}
\lf|\mathcal{F}u(x)-\mathcal{F}u(y)\r|\leq [{\varrho_\#}(x,y)]^s\lf[Ch_k(x)+Ch_k(y)\r].
\end{equation}
To this end, fix $x,\,y\in V$ and suppose that $2^{-k-1}\leq{\varrho_\#}(x,y)<2^{-k}$ for some $k\in\mathbb{Z}$.
If $k< k_0$, then
\begin{equation*}
\begin{split}
|\mathcal{F}u(x)-\mathcal{F}u(y)|
&\leq|\mathcal{F}u(x)|+|\mathcal{F}u(y)|\\
&\leq2^{(k+1)s}[{\varrho_\#}(x,y)]^s\lf[|\mathcal{F}u(x)|+|\mathcal{F}u(y)|\r]\\
&=[{\varrho_\#}(x,y)]^s\lf[h_k(x)+h_k(y)\r],
\end{split}
\end{equation*}
which is a desired estimate in this case.

Suppose next that $k\geq k_0$. By the choice of $k_0$, we actually have  $k\geq k_0\geq5$.
Moreover, since $\varrho_\#$ is symmetric (that is, $\widetilde{C}_{\varrho_\#}=1$),
our choice of $k_0$ ensures that $2^{k_0}>2C_{\varrho_\#}=C_{\varrho_\#}^2\widetilde{C}_{\varrho_\#}$.
We proceed by considering four cases based on the locations of both  $x$ and $y$.

\vspace{0.2cm}
\noindent{\bf CASE 1}: $x,\,y\in\Omega$.
\vspace{0.2cm}

In this case, without loss of generality,  we may assume that $x$ and
$y$ satisfy the inequality \eqref{Hajlasz} with $\rho$ therein replaced by
$\varrho_\#$,  as \eqref{Hajlasz} holds true
$\mu$-almost everywhere in $\Omega$. Recall that we have already
shown that $\sup_{j\in\mathbb{Z}}\{2^{-|k-j|\delta t}g_j^t\}$ is locally integrable.
As such, in light of the Lebesgue differentiation theorem (\cite[Theorem~3.14]{AM15}),
we can further assume that both
$x$ and $y$ are Lebesgue points of $\sup_{j\in\mathbb{Z}}\{2^{-|k-j|\delta t}g_j^t\}$.
For any $k\in\zz$, since $g_k^t\leq\sup_{j\in\mathbb{Z}}\{2^{-|k-j|\delta t}g_j^t\}$,
by appealing once
again to the Lebesgue differentiation theorem, we have $g_k\leq h_k$  pointwise $\mu$-almost
everywhere on $X$ which, in turn, allows us to conclude that
\begin{equation*}
|\mathcal{F}u(x)-\mathcal{F}u(y)|=|u(x)-u(y)|\leq [{\varrho_\#}(x,y)]^s\lf[h_k(x)+h_k(y)\r].
\end{equation*}
This is a desired estimate in Case 1.

\vspace{0.2cm}
\noindent{\bf CASE 2}: $x\in V\setminus\Omega$ and $y\in\Omega$.
\vspace{0.2cm}

In this case, we have
\begin{equation}\label{est-rx}
r(x):={\rm dist}_{\varrho_\#}(x,\Omega)/(4C_{\varrho_\#})
\leq{\varrho_\#}(x,y)/4<2^{-k-2}.
\end{equation}
Recall that $\Lambda^2>\theta^2\geq4C_{\varrho_\#}$,
where $\theta:=2C^2_{\varrho_\#}$ and $\Lambda\in(\theta,\fz)$
is as in Theorem~\ref{L-WHIT}.
Then  it follows from the definition of $B_x:=B_{\varrho_\#}(x,\Lambda^2 r(x))$
that we can find a point $x^\ast\in B_x\cap\Omega$. Let $m\in\mathbb{Z}$
be such that $2^{-m-1}\leq C_{\varrho_\#}\Lambda^2 r(x)<2^{-m}$ and set
$B_{x^\ast}:=B_{\varrho_\#}(x^\ast,2^{-m})$. The existence of such an $m\in\mathbb{Z}$
is  guaranteed by the fact that  $x$ belongs to the open set $X\setminus\Omega$, which implies that $r(x)>0$.
In order to obtain \eqref{fracgradext}, we write
\begin{equation}
\label{xzp-12}
\lf|\mathcal{F}u(x)-\mathcal{F}u(y)\r|\leq
\lf|\mathcal{F}u(x)-m_u(B_{x^\ast}\cap\Omega)\r|+\lf|m_u(B_{x^\ast}\cap\Omega)-u(y)\r|.
\end{equation}

We  now separately estimate the two terms appearing
in the right-hand side of \eqref{xzp-12}. For the first term,
using the fact that $\sum_{i\in I_x}\psi_i(x)=1$ in \eqref{qwpk-3} and the definition of
$\mathcal{F}u$ in \eqref{localextdef}, we have
\begin{equation}
\label{ytrw-234}
\lf|\mathcal{F}u(x)-m_u(B_{x^\ast}\cap\Omega)\r|
\leq\sum_{i\in I_x}\psi_i(x)\,\lf|m_u(B^\ast_i\cap\Omega)-m_u(B_{x^\ast}\cap\Omega)\r|,
\end{equation}
where $B^\ast_i$ is as in \eqref{ball-is}.
From \eqref{est-rx}, the choices of both $m$ and $k_0$, and $k\geq k_0$, it follows that
\begin{equation}
\label{zcpe-284}
2^{-m}\leq 2C_{\varrho_\#}\Lambda^2 r(x)<2^{-k-1}C_{\varrho_\#}\Lambda^2<1
\end{equation}
and, moreover,
\begin{equation}
\label{zcpe-285}
B_x\subset B_{x^\ast}\quad\mbox{ and }\quad C_{\varrho_\#}B_{x^\ast}\subset 2C_{\varrho_\#}^3B_x.
\end{equation}
Combining this with \eqref{qwpk-1}, the definition of $I_x$, and the doubling and
the measure density conditions of
$\mu$, we have, for any $i\in I_x$,
$$\mu(B^\ast_i\cap\Omega)\approx\mu(B_{x^\ast}\cap\Omega)\quad
\mbox{and}\quad \mu(C_{\varrho_\#}B_{x^\ast})\approx\mu(2C_{\varrho_\#}^3B_x).$$
As such, using both \eqref{qwpk-1} and \eqref{zcpe-285}, and  employing \eqref{BXPZ-2} in Lemma~\ref{intavgest}
(with $2^{-L}=2^{-m}<1=r_\ast$) allow  us to further bound the
right-hand side of \eqref{ytrw-234} as follows
\begin{align}
\label{pprb-392}
&\sum_{i\in I_x}\psi_j(x)\,\lf|m_u(B^\ast_i\cap\Omega)-m_u(B_{x^\ast}\cap\Omega)\r|\noz\\
&\quad\lesssim 2^{-m\varepsilon}\left[\,\mvint_{C_{\varrho_\#}B_{x^\ast}}
\sup\limits_{j\geq m-k_0}\lf\{2^{-j(s-\varepsilon)t}g_j^t\r\}\, d\mu\right]^{1/t}
\lesssim 2^{-m\varepsilon}\left[\,\mvint_{2C_{\varrho_\#}^3B_x}
\sup\limits_{j\geq m-k_0}\lf\{2^{-j(s-\varepsilon)t}g_j^t\r\}\, d\mu\right]^{1/t}\noz\\
&\quad\lesssim 2^{-m\varepsilon}\lf[\mathcal{M}_{\varrho_\#}
\lf(\sup\limits_{j\geq m-k_0}\lf\{2^{-j(s-\varepsilon)t}g_j^t\r\}\r)(x)\r]^{1/t},
\end{align}
where we have also used the fact that $\#I_x\leq M$ and $\psi_i(x)\leq1$ for any $i\in I_x$. Moreover, by \eqref{zcpe-284}
and the choice of $k_0$, we have
$2^{-m}<2^{-k-1}C_{\varrho_\#}\Lambda^2<2^{-k+k_0}$, so $m> k-k_0$ and by \eqref{ytrw-234} and \eqref{pprb-392}, we can estimate
\begin{align}
\label{pprb-392-2}
&\lf|\mathcal{F}u(x)-m_u(B_{x^\ast}\cap\Omega)\r|\noz\\
&\quad\lesssim 2^{-m\varepsilon}\lf[\mathcal{M}_{\varrho_\#}
\lf(\sup\limits_{j\geq m-k_0}\lf\{2^{-j(s-\varepsilon)t}g_j^t\r\}\r)(x)\r]^{1/t}\nonumber\\
&\quad\lesssim 2^{-ks}\lf[\mathcal{M}_{\varrho_\#}\lf(\sup\limits_{j\geq k-2k_0}
\lf\{2^{(k-j)(s-\varepsilon)t}g_j^t\r\}\r)(x)\r]^{1/t}\noz\\
&\quad\lesssim [\varrho_\#(x,y)]^s\lf[\mathcal{M}_{\varrho_\#}\lf(\sup\limits_{j\geq k-2k_0}\lf\{2^{(k-j)(s-\varepsilon)t}g_j^t\r\}\r)(x)\r]^{1/t}.
\end{align}
Observe next that, by $\delta\in[0,s-\varepsilon)$, if $k-2k_0-j\leq0$, then
\begin{equation}
\label{qkb-573}
2^{(k-j)(s-\varepsilon)}<2^{2k_0(s-\varepsilon)}2^{(k-2k_0-j)\delta}
=2^{2k_0(s-\varepsilon)}2^{-|k-2k_0-j|\delta}
\leq 2^{2k_0(s-\varepsilon+\delta)}2^{-|k-j|\delta},
\end{equation}
where, in obtaining the second inequality in \eqref{qkb-573}, we have simply
used the fact that $-|k-2k_0-j|\leq-|k-j|+2k_0$. This, in conjunction with \eqref{pprb-392-2}
and the definition of $h_k$, now gives
\begin{align}
\label{pprb-392-2-X}
\lf|\mathcal{F}u(x)-m_u(B_{x^\ast}\cap\Omega)\r|
&\lesssim  [\varrho_\#(x,y)]^s\lf[\mathcal{M}_{\varrho_\#}\lf(\sup\limits_{j\geq k-2k_0}
\lf\{2^{-|k-j|\delta t}g_j^t\r\}\r)(x)\r]^{1/t}\noz\\
&\lesssim  [\varrho_\#(x,y)]^s h_k(x),
\end{align}
which is a desired estimate for the first term in the right-hand side of \eqref{xzp-12}.

We now turn our attention to estimating  the second term in the right-hand side of
\eqref{xzp-12}. Let $n\in\mathbb{Z}$ be the least
integer satisfying $B_{\varrho_\#}(y,2^{-k})\subset 2^n B_{x^\ast}$. We claim that
\begin{equation}
\label{gnw-687}
2^{-k-1}C_{\varrho_\#}^{-1}\leq 2^{n-m}<C_{\varrho_\#}^2\Lambda^2 2^{-k+1}<2^{k_0-k}\leq 1.
\end{equation}
Given that we are assuming $k\geq k_0$, our choice of $k_0$ ensures that
$C_{\varrho_\#}^2\Lambda^2 2^{-k+1}<2^{k_0-k}\leq 1$. To prove the second inequality
in \eqref{gnw-687}, note that, by the choice of $n$, we necessarily have
$B_{\varrho_\#}(y,2^{-k})\not\subset 2^{n-1}B_{x^\ast}$.
Therefore, we can find a point $z_0\in X$ satisfying
${\varrho_\#}(y,z_0)<2^{-k}$ and ${\varrho_\#}(x^\ast,z_0)\geq 2^{n-1-m}$,
which, together with \eqref{est-rx}, implies that
\begin{align*}
2^{n-1-m}
&\leq{\varrho_\#}(x^\ast,z_0)
\leq C_{\varrho_\#}^2\max\{{\varrho_\#}(x^\ast,x),{\varrho_\#}(x,y),{\varrho_\#}(y,z_0)\}\\
&<C_{\varrho_\#}^2\max\{\Lambda^2r(x),2^{-k},2^{-k}\}<C_{\varrho_\#}^2\Lambda^2\,2^{-k}.
\end{align*}
The second inequality in \eqref{gnw-687} now follows. Furthermore,
by $x,\,y\in B_{\varrho_\#}(y,2^{-k})\subset2^n B_{x^\ast}$,
we have
\begin{align*}
2^{-k-1}\leq{\varrho_\#}(x,y)
&\leq C_{\varrho_\#}\max\{{\varrho_\#}(x,x^\ast),{\varrho_\#}(x^\ast,y)\}\\
&< C_{\varrho_\#}\max\{2^{n-m},2^{n-m}\}=C_{\varrho_\#}2^{n-m}.
\end{align*}
Hence, $2^{-k-1}C_{\varrho_\#}^{-1}\leq2^{n-m}$, that is,
the first inequality in \eqref{gnw-687} holds true. This proves the above claim \eqref{gnw-687}.

Moving on, we write
\begin{align}
\label{triplea}
&\lf|m_u(B_{x^\ast}\cap\Omega)-u(y)\r|\noz\\
&\quad=\lf|u(y)-m_u(B_{x^\ast}\cap\Omega)\r|\noz\\
&\quad\leq \lf|u(y)-m_u(B_{\varrho_\#}(y,2^{-k})\cap\Omega)\r|+
\lf|m_u(B_{\varrho_\#}(y,2^{-k})\cap\Omega)-m_u(2^nB_{x^\ast}\cap\Omega)\r|\noz\\
&\qquad+\lf|m_u(2^nB_{x^\ast}\cap\Omega)-m_u(B_{x^\ast}\cap\Omega)\r|\noz\\
&\quad=:{\rm I}+{\rm II}+{\rm III}.
\end{align}
Since \eqref{SobDiff} holds true $\mu$-almost everywhere in $\Omega$, without loss of generality,
we may assume  that \eqref{SobDiff}
holds true for $y$. Since $k\in\mathbb{N}$ implies that
$2^{-i-k}<1$ for any $i\in\mathbb{N}_0$,  appealing to
\eqref{BXPZ-2} in Lemma~\ref{intavgest} (with $2^{-L}=2^{-i-k}<1=r_\ast$)
and \eqref{qkb-573}, we conclude that
\begin{align*}
{\rm I}
&\leq\sum_{i=0}^\infty\big|m_u(B_{\varrho_\#}(y,2^{-(i+1)-k})
\cap\Omega)-m_u(B_{\varrho_\#}(y,2^{-i-k})\cap\Omega)\big|\\
&\lesssim\sum_{i=0}^\infty 2^{-(i+k)\varepsilon}
\left[\,\mvint_{B_{\varrho_\#}(y,C_{\varrho_\#}2^{-(i+k)})}
\sup\limits_{j\geq i+k-k_0}\lf\{2^{-j(s-\varepsilon)t}g_j^t\r\}\, d\mu\right]^{1/t}\\
&\lesssim 2^{-k\varepsilon}\lf[\mathcal{M}_{\varrho_\#}\lf(\sup\limits_{j\geq k-k_0}
\lf\{2^{-j(s-\varepsilon)t}g_j^t\r\}\r)(y)\r]^{1/t}\\
&\lesssim 2^{-ks}\lf[\mathcal{M}_{\varrho_\#}\lf(\sup\limits_{j\geq k-k_0}
\lf\{2^{(k-j)(s-\varepsilon)t}g_j^t\r\}\r)(y)\r]^{1/t}\\
&\lesssim [\varrho_\#(x,y)]^s \lf[\mathcal{M}_{\varrho_\#}
\lf(\sup\limits_{j\geq k-2k_0}\lf\{2^{-|k-j|\delta t}g_j^t\r\}\r)(y)\r]^{1/t}
\lesssim [\varrho_\#(x,y)]^s h_k(y).
\end{align*}

To estimate ${\rm II}$ in \eqref{triplea}, observe that
\eqref{gnw-687} implies both $m-n>k-k_0$ and $2^{n-m}<2^{k_0-k}\leq 1$.
On the other hand, appealing once again to \eqref{gnw-687}, we
have
\begin{equation}\label{emd}
B_{\varrho_\#}(y,2^{-k})\subset  2^{n}
B_{x^\ast}\subset B_{\varrho_\#}(y,\Lambda^2C_{\varrho_\#}^42^{-k+1}),
\end{equation}
which,  in conjunction with the doubling and the measure density properties of $\mu$,
gives
$$\mu(B_{\varrho_\#}(y,2^{-k})\cap\Omega)\approx\mu(2^nB_{x^\ast}\cap\Omega). $$
Putting these all together, we can then use  \eqref{BXPZ-2} in
Lemma~\ref{intavgest} (with $2^{-L}=2^{n-m}<1=r_\ast$) and \eqref{qkb-573} to estimate
\begin{align*}
{\rm II}&=
\lf|m_u(B_{\varrho_\#}(y,2^{-k})\cap\Omega)-m_u(2^nB_{x^\ast}\cap\Omega)\r|\noz\\
&\lesssim2^{-(m-n)\varepsilon}\left[\,\mvint_{C_{\varrho_\#}2^{n}B_{x^\ast}}
\sup_{j\geq m-n-k_0}\lf\{2^{-j(s-\varepsilon)t}g_j^t\r\}\, d\mu\right]^{1/t}\noz\\
&\lesssim2^{-k\varepsilon}\left[\,\mvint_{B_{\varrho_\#}(y,\Lambda^2C_{\varrho_\#}^42^{-k+1})}\sup_{j\geq k-2k_0}\lf\{2^{-j(s-\varepsilon)t}g_j^t\r\}\, d\mu\right]^{1/t}\noz\\
&\lesssim2^{-ks}\lf[\mathcal{M}_{\varrho_\#}\lf(\sup\limits_{j\geq k-2k_0}
\lf\{2^{(k-j)(s-\varepsilon)t}g_j^t\r\}\r)(y)\r]^{1/t}
\lesssim[\varrho_\#(x,y)]^sh_k(y).
\end{align*}
As concerns ${\rm III}$, note that, when $n\geq1$, we can write
\begin{equation}
\label{rht-3834-1}
{\rm III}=
\big|m_u(B_{x^\ast}\cap\Omega)-m_u(2^nB_{x^\ast}\cap\Omega)\big|
\leq\sum_{i=0}^{n-1}\lf|m_u(2^iB_{x^\ast}\cap\Omega)-m_u(2^{i+1}B_{x^\ast}\cap\Omega)\r|.
\end{equation}
For any $i\in\{0,\dots,n-1\}$, by \eqref{gnw-687}, we have
$2^{i+1-m}\leq2^{n-m}<1$, which,
in conjunction with the doubling and the measure
density properties of $\mu$,
gives $\mu(2^iB_{x^\ast}\cap\Omega)\approx\mu(2^{i+1}B_{x^\ast}\cap\Omega)$.
As such, we use \eqref{BXPZ-2} in
Lemma~\ref{intavgest} (with $2^{-L}=2^{i+1-m}<1=r_\ast$) again  to obtain
\begin{align}
\label{rht-3834}
&\sum_{i=0}^{n-1}\lf|m_u(2^iB_{x^\ast}\cap\Omega)-m_u(2^{i+1}B_{x^\ast}\cap\Omega)\r|\noz\\
&\quad\lesssim\sum_{i=0}^{n-1}2^{-(m-i-1)\varepsilon}
\left[\,\mvint_{C_{\varrho_\#}2^{i+1}B_{x^\ast}}
\sup\limits_{j\geq m-i-1-k_0}\lf\{2^{-j(s-\varepsilon)t}g_j^t\r\}\, d\mu\right]^{1/t}.
\end{align}
Moreover, since $B_x\subset B_{x^\ast}\subset 2C_{\varrho_\#}^2B_x$ [see \eqref{zcpe-285}], we deduce that
$$C_{\varrho_\#}2^{i+1}B_x\subset C_{\varrho_\#}2^{i+1}B_{x^\ast}\subset 2^{i+2}C_{\varrho_\#}^3B_x.$$
Hence, by \eqref{qkb-573}, the doubling property for $\mu$,
and the fact that $m-n>k-k_0$ and $2^{n-m}< 2^{k_0-k}$,
we  further estimate
the last term in \eqref{rht-3834} as follows
\begin{align}
\label{rht-3834-3}
&\sum_{i=0}^{n-1}2^{-(m-i-1)\varepsilon}\left[\,\mvint_{C_{\varrho_\#}2^{i+1}B_{x^\ast}}\sup\limits_{j\geq m-i-1-k_0}\lf\{2^{-j(s-\varepsilon)t}g_j^t\r\}\, d\mu\right]^{1/t}\noz\\
&\quad\lesssim\sum_{i=-\infty}^{n-1}2^{-(m-i-1)\varepsilon}\lf[\mathcal{M}_{\varrho_\#}\lf(\sup\limits_{j\geq m-n-k_0}\lf\{2^{-j(s-\varepsilon)t}g_j^t\r\}\r)(x)\r]^{1/t}\noz\\
&\quad\lesssim2^{(n-m)\varepsilon}\lf[\mathcal{M}_{\varrho_\#}\lf(\sup\limits_{j\geq m-n-k_0}\lf\{2^{-j(s-\varepsilon)t}g_j^t\r\}\r)(x)\r]^{1/t}\noz\\
&\quad\lesssim2^{-ks}\lf[\mathcal{M}_{\varrho_\#}\lf(\sup\limits_{j\geq k-2k_0}\lf\{2^{(k-j)(s-\varepsilon)t}g_j^t\r\}\r)(x)\r]^{1/t}
\lesssim[\varrho_\#(x,y)]^sh_k(x).
\end{align}
Combining  \eqref{rht-3834-1}, \eqref{rht-3834}, and \eqref{rht-3834-3}, we conclude that
$$
{\rm III}\lesssim[\varrho_\#(x,y)]^sh_k(x)\quad\mbox{whenever $n\geq1$.}
$$
If $n\leq0$, then $2^nB_{x^\ast}\subset B_{x^\ast}
\subset B_{\varrho_\#}(y,C_{\varrho_\#}^2\Lambda^22^{-k+1})$, where the second inclusion
follows from the fact that, for any
$z\in B_{x^\ast}$, one has (keeping in mind that $y \in  2^{n}
B_{x^\ast}$ and $2^{-m}<C_{\varrho_\#}\Lambda^22^{-k+1}$)
\begin{align*}
{\varrho_\#}(y,z)&\leq C_{\varrho_\#}\max\{{\varrho_\#}(y,x^\ast),{\varrho_\#}(x^\ast,z)\}
\\
&\leq C_{\varrho_\#}\max\{2^{n-m},2^{-m}\}=C_{\varrho_\#}2^{-m}<C_{\varrho_\#}^2\Lambda^22^{-k+1}.
\end{align*}
From this, \eqref{emd}, and the doubling and
the  measure density properties of $\mu$, it follows that
\begin{align*}
\mu(B_{x^\ast}\cap\Omega)
&\leq\mu\lf(B_{\varrho_\#}(y,C_{\varrho_\#}^2\Lambda^22^{-k+1})\r)
\lesssim\mu(B_{\varrho_\#}(y,2^{-k}))\\
&\lesssim\mu(2^nB_{x^\ast})\lesssim\mu(2^nB_{x^\ast}\cap\Omega),
\end{align*}
where we have used the fact that $2^{n-m}<1$ [see \eqref{gnw-687}]
when employing the measure density condition. Thus, we actually have $\mu(2^nB_{x^\ast}\cap\Omega)\approx\mu(B_{x^\ast}\cap\Omega)$.
Recall that $2^{-m}<1$ by \eqref{zcpe-284}. Thus, appealing again to
\eqref{BXPZ-2} in Lemma~\ref{intavgest} (with $2^{-L}=2^{-m}<1=r_\ast$) and
\eqref{qkb-573}, as well as recycling some of the estimates in
\eqref{pprb-392}-\eqref{pprb-392-2-X}, we find that
\begin{align}
\label{bvu-38}
{\rm III}&=
\lf|m_u(2^nB_{x^\ast}\cap\Omega)-m_u(B_{x^\ast}\cap\Omega)\r|
\lesssim2^{-m\varepsilon}\left[\,\mvint_{C_{\varrho_\#}B_{x^\ast}}\sup\limits_{j\geq m-k_0}\lf\{2^{-j(s-\varepsilon)t}g_j^t\r\}\, d\mu\right]^{1/t}\noz\\
&\lesssim[\varrho_\#(x,y)]^s\lf[\mathcal{M}_{\varrho_\#}\lf(\sup\limits_{j\geq k-2k_0}\lf\{2^{(k-j)(s-\varepsilon)t}g_j^t\r\}\r)(x)\r]^{1/t}
\lesssim[\varrho_\#(x,y)]^sh_k(x).
\end{align}
Combining the estimates for ${\rm I}$,
 ${\rm II}$, and ${\rm III}$,  we have proved that the
second term in the right-hand side of
\eqref{xzp-12} can be bounded by $[\varrho_\#(x,y)]^s [h_k(x)+h_k(y)]$ multiplied a positive constant.
This gives the desired estimate in  Case 2.

\vspace{0.2cm}
\noindent{\bf CASE 3}: $x,\,y\in V\setminus\Omega$ with
${\varrho_\#}(x,y)\geq\min\{{\rm dist}_{\varrho_\#}(x,\Omega),{\rm dist}_{\varrho_\#}(y,\Omega)\}$.
\vspace{0.2cm}

In this case, as in the beginning of the proof for Case 2,
we start by choosing points $x^\ast\in B_x\cap\Omega$ and  $y^\ast\in B_y\cap\Omega$, where $B_x:=B_{\varrho_\#}(x,\Lambda^2 r(x))$ and $B_y:=B_{\varrho_\#}(y,\Lambda^2 r(y))$, with $r(x)$ and $r(y)$  defined as in \eqref{est-rx}.  Let $m_x,\,m_y\in\mathbb{Z}$ satisfy
$2^{-m_x-1}\leq C_{\varrho_\#}\Lambda^2 r(x)<2^{-m_x}$ and $2^{-m_y-1}\leq C_{\varrho_\#}\Lambda^2 r(y)<2^{-m_y}$, respectively,
and set $B_{x^\ast}:=B_{\varrho_\#}(x^\ast,2^{-m_x})$ and
$B_{y^\ast}:=B_{\varrho_\#}(y^\ast,2^{-m_y})$.

In order to establish \eqref{fracgradext} in this case, we write
\begin{align}
\label{qwk-384}
\lf|\mathcal{F}u(x)-\mathcal{F}u(y)\r|&\leq \lf|\mathcal{F}u(x)-m_u(B_{x^\ast}\cap\Omega)\r|
+\lf|m_u(B_{x^\ast}\cap\Omega)-m_u(B_{y^\ast}\cap\Omega)\r|\noz\\
&\quad+\lf|m_u(B_{y^\ast}\cap\Omega)-\mathcal{F}u(y)\r|,
\end{align}
and estimate each term on the right-hand side of \eqref{qwk-384} separately. To this end,
without loss of generality, we may assume that
${\rm dist}_{\varrho_\#}(x,\Omega)\leq{\rm dist}_{\varrho_\#}(y,\Omega)$.
Then, by the assumption of Case 3, one has
\begin{equation}\label{esrx}
r(x):=\frac{{\rm dist}_{\varrho_\#}(x,\Omega)}{4C_{\varrho_\#}}\leq\frac{1}{4}{\varrho_\#}(x,y)<2^{-k-2},
\end{equation}
where we have also used the fact that $C_{\varrho_\#}\geq1$. Moreover,

\begin{equation}\label{esry}
r(y):=\frac{{\rm dist}_{\varrho_\#}(y,\Omega)}{4C_{\varrho_\#}}\leq\frac{C_{\varrho_\#}\max\{{\varrho_\#}(x,y),{\rm dist}_{\varrho_\#}(x,\Omega)\}}{4C_{\varrho_\#}}\leq\frac{1}{4}{\varrho_\#}(x,y)<2^{-k-2}.
\end{equation}
In view of these,
the first and the third terms in the right-hand side of
\eqref{qwk-384}
can be estimated, in a fashion similar to the arguments for
\eqref{ytrw-234}-\eqref{pprb-392-2-X},  in order to obtain
\begin{align}
\label{pprb-392-xxn}
&\lf|\mathcal{F}u(x)-m_u(B_{x^\ast}\cap\Omega)\r|+\lf|m_u(B_{y^\ast}\cap\Omega)-\mathcal{F}u(y)\r|\noz\\
&\quad\lesssim [\varrho_\#(x,y)]^s\left\{\lf[\mathcal{M}_{\varrho_\#}\lf(\sup\limits_{j\geq k-2k_0}
\lf\{2^{-|k-j|\delta t}g_j^t\r\}\r)(x)\r]^{1/t}\r.\noz\\
&\qquad\qquad\qquad\qquad\lf.+\lf[\mathcal{M}_{\varrho_\#}\lf(\sup\limits_{j\geq k-2k_0}\lf\{2^{-|k-j|\delta t}g_j^t\r\}\r)(y)\r]^{1/t}\right\}\noz\\
&\quad\lesssim [\varrho_\#(x,y)]^s\lf[h_k(x)+h_k(y)\r],
\end{align}
as wanted.

We still need to estimate the second term in the right-hand side of  \eqref{qwk-384}. To this end,
Let $n_x,\,n_y\in\mathbb{Z}$ be the least integers satisfying
\begin{equation}
\label{xcd-475-trnb}
B_{\varrho_\#}(y,2^{-k})\subset 2^{n_y} B_{y^\ast}\quad\mbox{ and }\quad2^{n_y} B_{y^\ast}\subset 2^{n_x} B_{x^\ast},
\end{equation}
and write
\begin{align}
\label{qwk-384-58}
&\lf|m_u(B_{x^\ast}\cap\Omega)-m_u(B_{y^\ast}\cap\Omega)\r|\noz\\
&\quad\leq \lf|m_u(B_{x^\ast}\cap\Omega)-m_u(2^{n_x}B_{x^\ast}\cap\Omega)\r|
+\lf|m_u(2^{n_x}B_{x^\ast}\cap\Omega)-m_u(2^{n_y}B_{y^\ast}\cap\Omega)\r|\noz\\
&\qquad+\lf|m_u(2^{n_y}B_{y^\ast}\cap\Omega)-m_u(B_{y^\ast}\cap\Omega)\r|\noz\\
&\quad=:\widetilde{\rm I}+\widetilde{\rm II}+\widetilde{\rm III}.
\end{align}

In order to estimate both $\widetilde{\rm I}$ and $\widetilde{\rm III}$,
we  first show that $2^{n_y-m_y}\approx 2^{-k}\approx2^{n_x-m_x}$.
Indeed, by the choice of $n_y$, there exists a
point $z_0\in X$ satisfying ${\varrho_\#}(y,z_0)<2^{-k}$
and ${\varrho_\#}(y^\ast,z_0)\geq 2^{n_y-m_y-1}$,
which, together with \eqref{esry},  allows us to estimate
\begin{align*}
2^{n_y-m_y-1}
&\leq{\varrho_\#}(y^\ast,z_0)
\leq C_{\varrho_\#}\max\{{\varrho_\#}(y^\ast,y),{\varrho_\#}(y,z_0)\}\\
&<C_{\varrho_\#}\max\{\Lambda^2r(y),2^{-k}\}\\
&<C_{\varrho_\#}\max\{\Lambda^22^{-k-2},2^{-k}\}<C_{\varrho_\#}\Lambda^2\,2^{-k}.
\end{align*}
Hence, $2^{n_y-m_y}<C_{\varrho_\#}\Lambda^2\,2^{-k+1}$, which further implies $m_y-n_y>k-k_0$, because
$C_{\varrho_\#}\Lambda^2\,2^{-k+1}<2^{k_0-k}$ [see \eqref{gnw-687}]. On the other hand,
since $x,\,y\in B_{\varrho_\#}(y,2^{-k})\subset2^{n_y}B_{y^\ast}$, one has
\begin{equation*}
2^{-k-1}\leq{\varrho_\#}(x,y)
\leq C_{\varrho_\#}\max\{{\varrho_\#}(x,y^\ast),{\varrho_\#}(y^\ast,y)\}<C_{\varrho_\#}2^{n_y-m_y},
\end{equation*}
which implies $2^{-k-1}C_{\varrho_\#}^{-1}<2^{n_y-m_y}$. Thus, by \eqref{gnw-687}, we obtain
\begin{equation}
\label{gnx-345}
2^{-k-1}C_{\varrho_\#}^{-1}<2^{n_y-m_y}<C_{\varrho_\#}\Lambda^2\,2^{-k+1}<1.
\end{equation}
Similarly, by the choice of $n_x$, we know that there exists
a point $z_0\in X$ satisfying
${\varrho_\#}(y^\ast,z_0)<2^{n_y-m_y}$ and
${\varrho_\#}(x^\ast,z_0)\geq 2^{n_x-m_x-1}$.
Making use of the second inequality in \eqref{gnx-345}, as well as
\eqref{esrx} and \eqref{esry}, we find that
\begin{align*}
2^{n_x-m_x-1}
&\leq{\varrho_\#}(x^\ast,z_0)
\leq C_{\varrho_\#}^3\max\{{\varrho_\#}(x^\ast,x),{\varrho_\#}(x,y),{\varrho_\#}(y,y^\ast),{\varrho_\#}(y^\ast,z_0)\}\\
&<C_{\varrho_\#}^3\max\{\Lambda^2r(x),2^{-k},\Lambda^2r(y),2^{n_y-m_y}\}\\
&<C_{\varrho_\#}^3\max\{\Lambda^22^{-k-2},2^{-k},
\Lambda^22^{-k-2},C_{\varrho_\#}\Lambda^2\,2^{-k+1}\}
=C_{\varrho_\#}^4\Lambda^2\,2^{-k+1}.
\end{align*}
From this and the definition of $k_0$, we deduce that
$2^{n_x-m_x}<C_{\varrho_\#}^4\Lambda^2\,2^{-k+2}\leq2^{k_0-k}$,
which further implies that
\begin{equation}
\label{xcd-475-tn}
m_x-n_x>k-k_0.
\end{equation}
On the other hand, since $x,\,y\in2^{n_y}B_{y^\ast}\subset2^{n_x}B_{x^\ast}$, we have
\begin{equation*}
2^{-k-1}\leq{\varrho_\#}(x,y)
\leq C_{\varrho_\#}\max\{{\varrho_\#}(x,x^\ast),{\varrho_\#}(x^\ast,y)\}<C_{\varrho_\#}2^{n_x-m_x}.
\end{equation*}
Thus,
\begin{equation}
\label{xcd-475}
2^{-k-1}C_{\varrho_\#}^{-1}<2^{n_x-m_x}<C_{\varrho_\#}^4\Lambda^2\,2^{-k+2}\leq2^{k_0-k}\le1,
\end{equation}
and so $2^{n_y-m_y}\approx 2^{-k}\approx2^{n_x-m_x}$, as wanted.

At this stage, we can then argue as in \eqref{rht-3834-1}-\eqref{bvu-38} to obtain
\begin{equation}\label{est-I+III}
\widetilde{\rm I}+\widetilde{\rm III}\lesssim [\varrho_\#(x,y)]^s\lf[h_k(x)+h_k(y)\r].
\end{equation}
So it remains to estimate
$\widetilde{\rm II}$. With the goal of using \eqref{BXPZ-2} in Lemma~\ref{intavgest}, we make a few observations. First, $2^{n_y}B_{y^\ast}\subset2^{n_x}B_{x^\ast}$ by design  and, from \eqref{xcd-475}, we deduce that
$2^{n_x-m_x}<1$.  Moreover, if $z\in 2^{n_x} B_{x^\ast}$,
then, by \eqref{gnx-345} and the fact that $2^{n_x-m_x}
<C_{\varrho_\#}^4\Lambda^2\,2^{-k+2}$ and
${\varrho_\#}(y^\ast,x^\ast)<C_{\varrho_\#}^2\Lambda^2\,2^{-k}$, one has
\begin{align*}
{\varrho_\#}(y^\ast,z)
&\leq C_{\varrho_\#}\max\{{\varrho_\#}(y^\ast,x^\ast),{\varrho_\#}(x^\ast,z)\}\\
&<C_{\varrho_\#}\max\{C_{\varrho_\#}^2\Lambda^2\,2^{-k},2^{n_x-m_x}\}\\
&<C_{\varrho_\#}\max\{2C_{\varrho_\#}^3\Lambda^2\,2^{n_y-m_y},
8C_{\varrho_\#}^5\Lambda^2\,2^{n_y-m_y}\}<8C_{\varrho_\#}^6\Lambda^2\,2^{n_y-m_y},
\end{align*}
which implies that
$2^{n_x} B_{x^\ast}\subset 8C_{\varrho_\#}^6\Lambda^2\,2^{n_y} B_{y^\ast}$.
From this, $2^{n_y-m_y}<1$, and the doubling and
the measure density conditions of $\mu$, it follows that
$$
\mu(2^{n_x} B_{x^\ast}\cap\Omega)\leq\mu(2^{n_x} B_{x^\ast})\leq\mu(8C_{\varrho_\#}^6\Lambda^2\,2^{n_y} B_{y^\ast})
\lesssim\mu(2^{n_y} B_{y^\ast})\lesssim\mu(2^{n_y} B_{y^\ast}\cap\Omega).$$
Hence,
$$\mu(2^{n_y} B_{y^\ast}\cap\Omega)\approx\mu(2^{n_x} B_{x^\ast}\cap\Omega)\gtrsim\mu(B_{x^\ast}), $$
where in obtaining the last inequality we have used
the measure density condition \eqref{measdens} again
and the fact that $2^{n_x-m_x}<1$.
Going further,
observe that, if $z\in C_{\varrho_\#}2^{n_x} B_{x^\ast}$, then, by $y\in 2^{n_x} B_{x^\ast}$ and \eqref{xcd-475}, it holds true
that
\begin{align*}
{\varrho_\#}(y,z)
&\leq C_{\varrho_\#}\max\{{\varrho_\#}(y,x^\ast),{\varrho_\#}(x^\ast,z)\}
<C_{\varrho_\#}\max\{2^{n_x-m_x},C_{\varrho_\#}2^{n_x-m_x}\}\\
&=C_{\varrho_\#}^22^{n_x-m_x}<C_{\varrho_\#}^6\Lambda^2\,2^{-k+2},
\end{align*}
which implies
\begin{equation*}
%\label{pwb-027}
C_{\varrho_\#}2^{n_x} B_{x^\ast}\subset B_{\varrho_\#}(y,C_{\varrho_\#}^6\Lambda^22^{-k+2}).
\end{equation*}
By this,    \eqref{xcd-475-trnb}, \eqref{xcd-475}, \eqref{BXPZ-2} in Lemma~\ref{intavgest}
(with $2^{-L}=2^{n_x-m_x}<1=r_\ast$), \eqref{xcd-475-tn},
and the doubling property for $\mu$, we conclude that
\begin{align*}
%\label{cxki-378}
\widetilde{\rm II}
&=\lf|m_u(2^{n_y}B_{y^\ast}\cap\Omega)-m_u(2^{n_x}B_{x^\ast}\cap\Omega)\r|\noz\\
&\lesssim2^{-(m_x-n_x)\varepsilon}\left[\,\mvint_{C_{\varrho_\#}2^{n_x}B_{x^\ast}}\sup\limits_{j\geq m_x-n_x-k_0}\lf\{2^{-j(s-\varepsilon)t}g_j^t\r\}\, d\mu\right]^{1/t}\noz\\
&\lesssim2^{-k\varepsilon}\left[\,\mvint_{B_{\varrho_\#}(y,C_{\varrho_\#}^6\Lambda^22^{-k+2})}\sup\limits_{j\geq k-2k_0}\lf\{2^{-j(s-\varepsilon)t}g_j^t\r\}\, d\mu\right]^{1/t}\noz\\
&\lesssim2^{-ks}\lf[\mathcal{M}_{\varrho_\#}\lf(\sup\limits_{j\geq k-2k_0}\lf\{2^{(k-j)(s-\varepsilon)t}g_j^t\r\}\r)(y)\r]^{1/t}
\lesssim[\varrho_\#(x,y)]^s h_k(y).
\end{align*}
This, together with \eqref{est-I+III}, \eqref{qwk-384-58},  \eqref{pprb-392-xxn},
and  \eqref{qwk-384}, then finishes the proof of the desired estimate \eqref{fracgradext}
in  Case 3.

\vspace{0.2cm}
\noindent{\bf CASE 4}:  $x,\,y\in V\setminus\Omega$ with ${\varrho_\#}(x,y)<\min\{{\rm dist}_{\varrho_\#}(x,\Omega),{\rm dist}_{\varrho_\#}(y,\Omega)\}$.
\vspace{0.2cm}

Again, in this case, without loss of generality, we may
assume ${\rm dist}_{\varrho_\#}(x,\Omega)
\leq{\rm dist}_{\varrho_\#}(y,\Omega)$.
Clearly, this assumption implies $r(x)\leq r(y)$,
where $r(x)$ and $r(y)$ are defined as in \eqref{est-rx}.
Let $\ell_0\in\mathbb{Z}$ be the least integer satisfying
$C^2_{\varrho_\#}\leq 2^{\ell_0}$ and note that $\ell_0$ is non-negative.
Then
\begin{align}
\label{vxlk-576}
r(y):=&\,\frac{{\rm dist}_{\varrho_\#}(y,\Omega)}{4C_{\varrho_\#}}\leq\frac{C_{\varrho_\#}\max\{{\varrho_\#}(x,y),{\rm dist}_{\varrho_\#}(x,\Omega)\}}{4C_{\varrho_\#}}\noz\\
=&\,\frac{1}{4}{\rm dist}_{\varrho_\#}(x,\Omega)= C_{\varrho_\#}r(x)\leq
C_{\varrho_\#}^2r(x)\leq2^{\ell_0}r(x),
\end{align}
and so $r(x)\approx r(y)$. Consequently, since  $r_i\approx r(x)$
for any $i\in I_x$, and $r_i\approx r(y)$ for any $i\in I_y$ [see \eqref{qwpk-1-radii}],
we have $r_i\approx r(x)$ for any $i\in I_x\cup I_y$; moreover,
by \eqref{qwpk-3}, we have
$$\sum_{i\in I_x\cup I_y}\lf[\psi_i(x)-\psi_i(y)\r]
=\sum_{i\in I_x\cup I_y}\psi_i(x)-\sum_{i\in I_x\cup I_y}\psi_i(y)=1-1=0.
$$
From these and Theorem \ref{MSz7b}(i) with $\beta=\alpha$, we deduce that
\begin{align}
\label{nri-458}
|\mathcal{F}u(x)-\mathcal{F}u(y)|
&=\sum_{i\in I_x\cup I_y}\left|\psi_i(x)-\psi_i(y)\right|\cdot\left|m_u(B_i^\ast\cap\Omega)
-m_u(2^{\ell_0}B_{x^\ast}\cap\Omega)\right|\noz\\
&\lesssim\frac{[{\varrho_\#}(x,y)]^\alpha}{[r(x)]^{\alpha}}\sum_{i\in I_x\cup I_y}\left|m_u(B_i^\ast\cap\Omega)-m_u(2^{\ell_0}B_{x^\ast}\cap\Omega)\right|,
\end{align}
where  $B_x$ and $B_{x^\ast}$ are defined as in
the proof for Case 3.

In order to bound the sum in \eqref{nri-458}, we will once again appeal to Lemma~\ref{intavgest}.
With this idea in mind, fix an $i\in I_x\cup I_y$. Observe that,
if $i\in I_x$, then, by \eqref{qwpk-1}, \eqref{zcpe-285}, and $\ell_0\ge 0$,
we have $B_i^\ast\subset B_{x}\subset
B_{x^\ast}\subset 2^{\ell_0}B_{x^\ast}$; if $i\in I_y$, then, with $B_y$
and $B_{y^\ast}$ maintaining their significance from Case~3, we have $B_i^\ast\subset B_{y}
\subset 2^{\ell_0}B_{x^\ast}$. Indeed, the first inclusion follows from the definition of $I_y$
and the analogous version of \eqref{qwpk-1} for $B_y$. To see the second inclusion, observe
that, if $z\in B_y$, then we can use \eqref{vxlk-576} to obtain
\begin{align*}
{\varrho_\#}(x^\ast,z)&\leq C^2_{\varrho_\#}\max\{{\varrho_\#}(x^\ast,x),{\varrho_\#}(x,y),{\varrho_\#}(y,z)\}\\
&<C^2_{\varrho_\#}\max\{\Lambda^2r(x),{\rm dist}_{\varrho_\#}(x,\Omega),\Lambda^2r(y)\}\\
&=C^2_{\varrho_\#}\max\{\Lambda^2r(x),4C_{\varrho_\#}r(x),\Lambda^2r(y)\}
=C^2_{\varrho_\#}\Lambda^2r(y)\leq C^3_{\varrho_\#} \Lambda^2r(x)< 2^{\ell_0-m_x},
\end{align*}
where the second equality follows from the estimate $\Lambda^2>\theta^2=4C_{\varrho_\#}^4\geq4C_{\varrho_\#}$, and the last inequality is obtained from the choices of $\ell_0$
and  $m_x$.  Hence, $B_{y}\subset  2^{\ell_0}B_{x^\ast}$.
Altogether, we proved that   $B_i^\ast\subset  2^{\ell_0}B_{x^\ast}$ whenever $i\in I_x\cup I_y$.

Next, we show
that $\mu(B_i^\ast\cap\Omega)\approx\mu(2^{\ell_0}B_{x^\ast}\cap\Omega)$ for any $i\in I_x\cup I_y$. Fix $i\in I_x\cup I_y$.
It follows from what we have just shown that $\mu(B_i^\ast\cap\Omega)
\leq\mu(2^{\ell_0}B_{x^\ast}\cap\Omega)$. To see $\mu(2^{\ell_0}B_{x^\ast}
\cap\Omega)\lesssim\mu(B_i^\ast\cap\Omega)$, observe that, if $i\in I_x$, then
the inclusions $B_{x^\ast}\subset 2C_{\varrho_\#}^2B_x$ in \eqref{zcpe-285}  and $B_{x}\subset
C_{\varrho_\#}^2\Lambda^3B_i^\ast$ in \eqref{qwpk-1}, in conjunction with the doubling and the
measure density properties for $\mu$, give
$$
\mu(2^{\ell_0}B_{x^\ast}\cap\Omega)\leq\mu(2^{\ell_0}B_{x^\ast})
\lesssim\mu(B_{x})\lesssim\mu(C_{\varrho_\#}^2\Lambda^3B_i^\ast)\lesssim\mu(B_i^\ast)\lesssim\mu(B_i^\ast\cap\Omega).
$$
On the other hand, if $i\in I_y$, then similar to the proof of
\eqref{qwpk-1}, we find that  $B_y\subset C_{\varrho_\#}^2\Lambda^3B_i^\ast$. Moreover, we have
$B_{x^\ast}\subset 2C_{\varrho_\#}^2B_y$ since, for any $z\in B_{x^\ast}$, there holds
\begin{align*}
{\varrho_\#}(y,z)&\leq C^2_{\varrho_\#}\max\{{\varrho_\#}(y,x),{\varrho_\#}(x,x^\ast),{\varrho_\#}(x^\ast,z)\}\\
&<C^2_{\varrho_\#}\max\{{\rm dist}_{\varrho_\#}(x,\Omega),\Lambda^2r(x),2^{-m_x}\}\\
&<C^2_{\varrho_\#}\max\{4C_{\varrho_\#}r(x),\Lambda^2r(x),2C_{\varrho_\#}\Lambda^2r(x)\}
=2C^3_{\varrho_\#}\Lambda^2r(x)\leq 2C^3_{\varrho_\#}\Lambda^2r(y).
\end{align*}
These inclusions, together with the measure density and doubling conditions imply that
$$
\mu(2^{\ell_0}B_{x^\ast}\cap\Omega)\leq\mu(2^{\ell_0}B_{x^\ast})
\lesssim\mu(B_{y})\lesssim\mu(C_{\varrho_\#}^2\Lambda^3B_i^\ast)\lesssim\mu(B_i^\ast)\lesssim\mu(B_i^\ast\cap\Omega).
$$
This finishes the proof of the fact that $\mu(B_i^\ast\cap\Omega)
\approx\mu(2^{\ell_0}B_{x^\ast}\cap\Omega)$.

To invoke Lemma~\ref{intavgest}, we finally need to uniformly bound the
radius of the ball $2^{\ell_0}B_{x^\ast}$, namely, $2^{-(m_x-\ell_0)}$.
Indeed, the choices of both $\ell_0$ and $m_x$, the definition of $r(x)$ [see \eqref{esrx}],
and the membership of $x\in V$ [see \eqref{neib}]  imply that
$$
2^{-(m_x-\ell_0)}\leq 4C_{\varrho_\#}^3\Lambda^2r(x)=C_{\varrho_\#}^2\Lambda^2{\rm dist}_{\varrho_\#}(x,\Omega)<2C_{\varrho_\#}^3\Lambda^2<2^{k_0}.
$$
From this and   \eqref{BXPZ-2} in Lemma~\ref{intavgest}
[with $2^{-L}=2^{-(m_x-\ell_0)}<2^{k_0}=r_\ast$],
we deduce for any $i\in I_x\cup I_y$ that (keeping in mind $B_{x}\subset C_{\varrho_\#}2^{\ell_0}B_{x^\ast}\subset 2^{\ell_0+1}C_{\varrho_\#}^3B_x$)
\begin{align*}
&\left|m_u(B_i^\ast\cap\Omega)-m_u(2^{\ell_0}B_{x^\ast}\cap\Omega)\right|\noz\\
&\quad\lesssim2^{-(m_x-\ell_0)\varepsilon}\left[\,\mvint_{C_{\varrho_\#}2^{\ell_0}B_{x^\ast}}\sup\limits_{j\geq m_x-\ell_0-k_0}\lf\{2^{-j(s-\varepsilon)t}g_j^t\r\}\, d\mu\right]^{1/t}\noz\\
&\quad\lesssim2^{-m_x\varepsilon}\left[\,\mvint_{2^{\ell_0+1}C_{\varrho_\#}^3B_{x}}\sup\limits_{j\geq m_x-\ell_0-k_0}\lf\{2^{-j(s-\varepsilon)t}g_j^t\r\}\, d\mu\right]^{1/t}\noz\\
&\quad\lesssim2^{-m_x\varepsilon}\lf[\mathcal{M}_{\varrho_\#}\lf(\sup\limits_{j\geq m_x-\ell_0-k_0}\lf\{2^{-j(s-\varepsilon)t}g_j^t\r\}\r)(x)\r]^{1/t},
\end{align*}
which, together with  \eqref{nri-458} and the fact that $\#(I_x\cup I_y)\leq 2M$ (see Theorem~\ref{L-WHIT}),
further implies that
\begin{align}
\label{nri-458-X}
\lf|\mathcal{F}u(x)-\mathcal{F}u(y)\r|
&\lesssim\frac{[{\varrho_\#}(x,y)]^\alpha}{[r(x)]^{\alpha}}\sum_{i\in I_x\cup I_y}\left|m_u(B_i^\ast\cap\Omega)-m_u(2^{\ell_0}B_{x^\ast}\cap\Omega)\right|\noz\\
&\lesssim[{\varrho_\#}(x,y)]^\alpha [r(x)]^{-\alpha}2^{-m_x\varepsilon}
\lf[\mathcal{M}_{\varrho_\#}\lf(\sup\limits_{j\geq m_x-\ell_0-k_0}\lf\{2^{-j(s-\varepsilon)t}g_j^t\r\}\r)(x)\r]^{1/t}.
\end{align}
Since $2^{-m_x}\leq 2C_{\varrho_\#}^2\Lambda^2r(x)$,
${\varrho_\#}(x,y)<\min\{2^{-k},4C_{\varrho_\#}r(x)\}$, by the ranges of $s$ and $\alpha$,
and $\delta\geq0$, we conclude that
\begin{align*}
&[{\varrho_\#}(x,y)]^\alpha [r(x)]^{-\alpha}2^{-m_x\varepsilon}\\
&\quad=[{\varrho_\#}(x,y)]^\alpha [r(x)]^{s+\delta-\alpha}[r(x)]^{-s-\delta}2^{-m_x\varepsilon}\\
&\quad\lesssim[{\varrho_\#}(x,y)]^\alpha [r(x)]^{s+\delta-\alpha}2^{m_x(s-\varepsilon+\delta)}\\
&\quad\lesssim[{\varrho_\#}(x,y)]^{s+\delta}2^{m_x(s-\varepsilon+\delta)}
\lesssim[{\varrho_\#}(x,y)]^{s}2^{(m_x-k)\delta+m_x(s-\varepsilon)},
\end{align*}
which, together with \eqref{nri-458-X}, allows us to write
\begin{equation}
\label{nri-458-X-X}
\lf|\mathcal{F}u(x)-\mathcal{F}u(y)\r|
\lesssim [{\varrho_\#}(x,y)]^{s}
\lf[\mathcal{M}_{\varrho_\#}\lf(\sup\limits_{j\geq m_x-\ell_0-k_0}
\lf\{2^{(m_x-k)\delta t+(m_x-j)(s-\varepsilon)t}g_j^t\r\}\r)(x)\r]^{1/t}.
\end{equation}
To bound the supremum in \eqref{nri-458-X-X}, observe that, by the
choice of $m_x$, the definition of $r(x)$,
the fact that $\Lambda>\theta=2C_{\varrho_\#}^2$ and the assumption of Case 4,
one has
$$2^{-m_x}>C_{\varrho_\#}\Lambda^2r(x)>C_{\varrho_\#}^4
{\rm dist}_{\varrho_\#}(x,\Omega)>{\varrho_\#}(x,y)\geq 2^{-k-1}.
$$
Hence, $m_x<k+1$. As   $\ell_0$ and $k_0$ are non-negative, we further have
 $m_x-\ell_0-k_0<m_x\leq k$. Using this and the fact that
 $m_x\leq j+\ell_0+k_0$, we can bound the supremum
 in \eqref{nri-458-X-X} as
\begin{align*}
&\lf[\mathcal{M}_{\varrho_\#}\lf(\sup\limits_{j\geq m_x-\ell_0-k_0}\lf\{2^{(m_x-k)\delta t+(m_x-j)(s-\varepsilon)t}g_j^t\r\}\r)(x)\r]^{1/t}\noz\\
&\quad=\lf[\mathcal{M}_{\varrho_\#}\lf(\sup\limits_{j\in[m_x-\ell_0-k_0,k-1]}\lf\{2^{(m_x-k)\delta t+(m_x-j)(s-\varepsilon)t}g_j^t\r\}\r)(x)\r]^{1/t}\noz\\
&\quad\quad+\lf[\mathcal{M}_{\varrho_\#}\lf(\sup\limits_{j\geq k}\lf\{2^{(m_x-k)\delta t+(m_x-j)(s-\varepsilon)t}g_j^t\r\}\r)(x)\r]^{1/t}\noz\\
&\quad\lesssim\lf[\mathcal{M}_{\varrho_\#}\lf(\sup\limits_{j\leq k-1}\lf\{2^{(j-k)\delta t}g_j^t\r\}\r)(x)\r]^{1/t}+\lf[\mathcal{M}_{\varrho_\#}\lf(\sup\limits_{j\geq k}\lf\{2^{(k-j)(s-\varepsilon)t}g_j^t\r\}\r)(x)\r]^{1/t}.
\end{align*}
For the first  supremum in the last line above,  since $j<k$,  we easily have
$$
\lf[\mathcal{M}_{\varrho_\#}\lf(\sup\limits_{j\leq k-1}\lf\{2^{(j-k)\delta t}g_j^t\r\}\r)(x)\r]^{1/t}=\lf[\mathcal{M}_{\varrho_\#}
\lf(\sup\limits_{j\leq k-1}\lf\{2^{-|j-k|\delta t}g_j^t\r\}\r)(x)\r]^{1/t}
\leq h_k(x),
$$
while, for the second one, we use the fact
that $k-j\leq0$ and $\delta<s-\varepsilon$ to obtain
\begin{align*}
&\lf[\mathcal{M}_{\varrho_\#}\lf(\sup\limits_{j\geq k}\lf\{2^{(k-j)(s-\varepsilon)t}g_j^t\r\}\r)(x)\r]^{1/t}\\
&\quad\leq\lf[\mathcal{M}_{\varrho_\#}\lf(\sup\limits_{j\geq k}\lf\{2^{(k-j)\delta t}g_j^t\r\}\r)(x)\r]^{1/t}
=\lf[\mathcal{M}_{\varrho_\#}\lf(\sup\limits_{j\geq k}\lf\{2^{-|k-j|\delta t}g_j^t\r\}\r)(x)\r]^{1/t}
\leq h_k(x).
\end{align*}
From these estimates   and \eqref{nri-458-X-X}, we deduce
the desired estimate \eqref{fracgradext}
in  Case 4.

Combining all obtained results in Cases 1-4, we complete
the proof of Lemma~\ref{claim2}.
\end{proof}

Recall that $\vec{h}:=\{h_k\}_{k\in\mathbb{Z}}$
[originally defined in \eqref{localgraddef}] has the following formula:  for any $x\in X$,
\begin{eqnarray}
\label{localgraddef-claim3}
h_k(x):=
\left\{
\begin{array}{ll}
\displaystyle \lf[\mathcal{M}_{\varrho_\#}\lf(\sup_{j\in\mathbb{Z}}
\lf\{2^{-|k-j|\delta t}g_j^t\r\}\r)(x)\r]^{1/t}\quad &\mbox{if $k\geq k_0$,}\\[20pt]
2^{(k+1)s}|\mathcal{F}u(x)| &\mbox{if $k<k_0$.}
\end{array}
\right.
\end{eqnarray}
We then have the following result.

\begin{lemma}
\label{claim3}
Let all the notation be as in the above proof of   Theorem~\ref{measext}.
The sequence $\vec{h}$ defined in \eqref{localgraddef-claim3} satisfies
\begin{equation*}
\lf\Vert\vec{h}\r\Vert_{L^p(V,\ell^q)}\lesssim\lf\|\vec{g}\r\|_{L^p(\Omega,\ell^q)}
+\left\Vert u\right\Vert_{L^p(\Omega,\mu)},
\end{equation*}
where the implicit positive constant is independent of $u$, $\vec g$, and $\vec h$.
\end{lemma}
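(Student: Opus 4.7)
The plan is to split the series defining $\|\vec h\|_{L^p(V,\ell^q)}$ at the index $k_0$ and estimate the low-frequency tail $k<k_0$ and the high-frequency head $k\geq k_0$ separately, combining the two bounds via the $p$-subadditivity $(a+b)^p\lesssim a^p+b^p$. For the low-frequency part, the identity $h_k=2^{(k+1)s}|\mathcal{F}u|$ together with $s>0$ yields, via a convergent geometric series (or the trivial supremum bound when $q=\infty$), the pointwise estimate $\|\{h_k\}_{k<k_0}\|_{\ell^q}\lesssim|\mathcal{F}u|$, after which Lemma~\ref{claim1} bounds the $L^p(V,\mu)$ norm of $\mathcal{F}u$ by $\|u\|_{L^p(\Omega,\mu)}$.

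For the high-frequency part, write $h_k^t=\mathcal{M}_{\varrho_\#}(G_k)$, where $G_k:=\sup_{j\in\mathbb{Z}}\{2^{-|k-j|\delta t}g_j^t\}$, and observe that the hypothesis $t\in(0,\min\{p,q\})$ forces both $p/t>1$ and $q/t>1$. The idea is to invoke the Fefferman--Stein vector-valued maximal inequality on the doubling quasi-metric measure space $(X,\varrho_\#,\mu)$ to move $\mathcal{M}_{\varrho_\#}$ outside the $\ell^{q/t}$-sum, reducing matters to bounding
\[
\int_X\left(\sum_{k\in\mathbb{Z}}G_k^{q/t}\right)^{p/q}d\mu
\]
by $\|\vec g\|_{L^p(\Omega,\ell^q)}^p$. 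When $q<\infty$ (so $\delta>0$), the trivial estimate $G_k^{q/t}\leq\sum_j 2^{-|k-j|\delta q}g_j^q$, combined with interchanging the sums over $k$ and $j$ and exploiting the convergent geometric sum $\sum_k 2^{-|k-j|\delta q}\lesssim 1$, gives $\sum_k G_k^{q/t}\lesssim\sum_j g_j^q$ pointwise; when $q=\infty$ (so $\delta=0$ is allowed), each $G_k$ collapses to the single function $\sup_j g_j^t$ independent of $k$, and the scalar $L^{p/t}$-boundedness of $\mathcal{M}_{\varrho_\#}$ handles everything directly. In both cases, the convention $g_j\equiv 0$ off $\Omega$ converts the integral over $X$ into one over $\Omega$.

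The principal technical point, and the only step requiring any real care, is the justification of the vector-valued Fefferman--Stein inequality for $\mathcal{M}_{\varrho_\#}$ on $(X,\varrho_\#,\mu)$. This is, however, by now a standard consequence in spaces of homogeneous type of the scalar $L^{p/t}$-boundedness of $\mathcal{M}_{\varrho_\#}$ (available from \cite[Theorem~3.7]{AM15}) via the classical duality/linearization argument, and demands no new geometric input beyond the doubling property already at our disposal.
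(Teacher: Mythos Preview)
Your proposal is correct and follows essentially the same route as the paper: split at $k_0$, control the low-frequency tail by Lemma~\ref{claim1}, and for the high-frequency head invoke the Fefferman--Stein vector-valued maximal inequality on $(X,\varrho_\#,\mu)$ together with the geometric decay $2^{-|k-j|\delta}$. The only difference is the order of operations in the case $q<\infty$: the paper first dominates the supremum by a sum and uses the sublinearity of $\mathcal{M}_{\varrho_\#}$ to write $h_k^t\leq\sum_j 2^{-|k-j|\delta t}\mathcal{M}_{\varrho_\#}(g_j^t)$, then applies Lemma~\ref{heli-est} (with exponent $q/t$) pointwise, and finally Fefferman--Stein; you instead apply Fefferman--Stein first to the sequence $\{G_k\}$ and then use the pointwise estimate $\sum_k G_k^{q/t}\lesssim\sum_j g_j^q$ (which is Lemma~\ref{heli-est} with exponent $1$, i.e.\ a direct Fubini plus geometric series). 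Your ordering is marginally cleaner in that it avoids the sublinearity step, but the two arguments are otherwise interchangeable and rely on the same ingredients.
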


\begin{proof}
Suppose first that $q<\infty$ and recall that $\delta>0$,
by design, in this case. Dominating the supremum by
the corresponding summation in the definition of $h_k$ in
\eqref{localgraddef-claim3} and using the sublinearity
of $\mathcal{M}_{\varrho_\#}$, we find that
\begin{equation}
\label{tby-386}
\lf[\mathcal{M}_{\varrho_\#}\lf(\sup\limits_{j\in\mathbb{Z}}\lf\{2^{-|k-j|\delta t}g_j^t\r\}\r)\r]^{1/t}\leq
\left[\sum_{j\in\mathbb{Z}}2^{-|k-j|\delta t}\mathcal{M}_{\varrho_\#}\lf(g_j^t\r)\right]^{1/t}.
\end{equation}
Using this and $s>0$, and applying Lemma~\ref{heli-est}   with $a=2^{\delta t}$, $b=q/t$,
and $c_j=\mathcal{M}_{\varrho_\#}(g_j^t)$, one has
\begin{align*}
\left(\sum_{k\in\mathbb{Z}}|h_k|^q\right)^{1/q}
&=\left\{\sum_{k=k_0}^\infty\lf[\sum_{j\in\mathbb{Z}}
2^{-|k-j|\delta t}\mathcal{M}_{\varrho_\#}\lf(g_j^t\r)\r]^{q/t}
+|\mathcal{F}u|^q\sum_{k=-\infty}^{k_0-1}2^{(k+1)sq}\right\}^{1/q}
\\
&\lesssim\left\{\sum_{j\in\mathbb{Z}}
\lf[\mathcal{M}_{\varrho_\#}\lf(g_j^t\r)\r]^{q/t}+2^{k_0sq}|\mathcal{F}u|^q\right\}^{1/q}\\
&\lesssim\left\{\sum_{j\in\mathbb{Z}}\lf[\mathcal{M}_{\varrho_\#}\lf(g_j^t\r)\r]^{q/t}
\right\}^{1/q}+|\mathcal{F}u|,
\end{align*}
which, together with
the Fefferman--Stein inequality (see \cite[p.~4, Theorem~1.2]{Grafk})
as well as  Lemma~\ref{claim1}, further implies that
\begin{align}
\label{abru-572}
\lf\Vert\vec{h}\r\Vert_{L^p(V,\ell^q)}
&=\left\Vert\left(\sum_{k\in\mathbb{Z}}|h_k|^q\right)^{1/q}\right\Vert_{L^p(V,\mu)}
\lesssim\left\Vert\left\{\sum_{j\in\mathbb{Z}}
\lf[\mathcal{M}_{\varrho_\#}\lf(g_j^t\r)\r]^{q/t}\right\}^{t/q}\right\Vert_{L^{p/t}(X,\mu)}^{1/t}+
\left\Vert\mathcal{F}u\right\Vert_{L^p(V,\mu)}
\noz\\
&\lesssim\left\Vert\left(\sum_{j\in\mathbb{Z}}|g_j|^q\right)^{t/q}\right\Vert_{L^{p/t}(X,\mu)}^{1/t}+
\left\Vert\mathcal{F}u\right\Vert_{L^p(V,\mu)}
\noz\\
&\lesssim\left\Vert\left(\sum_{j\in\mathbb{Z}}|g_j|^q\right)^{1/q}\right\Vert_{L^{p}(X,\mu)}+
\left\Vert u\right\Vert_{L^p(\Omega,\mu)}
\lesssim\lf\|\vec{g}\r\|_{L^p(\Omega,\ell^q)}+\left\Vert u\right\Vert_{L^p(\Omega,\mu)},
\end{align}
where we have used the fact that for any $j\in\zz$,  $g_j\equiv0$  on $X\setminus\Omega$ in
obtaining the last inequality of \eqref{abru-572}.
Also, note that we have used the assumption
$\min\{q/t,p/t\}>1$  in applying the Fefferman--Stein inequality.

Suppose next that $q=\infty$. Then $\delta\geq0$ and we
use the fact that $2^{-|k-j|\delta t}\leq1$ to estimate
\begin{equation*}
\sup_{k\in\mathbb{Z}}|h_k|
\leq\lf[\mathcal{M}_{\varrho_\#}\lf(\sup_{j\in\mathbb{Z}}g_j^t\r)\r]^{1/t}+2^{k_0s}|\mathcal{F}u|.
\end{equation*}
Note that, by the definition
of $\dot{M}^s_{p,\infty}(X,\mu)$,
we have $\sup_{j\in\mathbb{Z}}g_j^t\in L^{p/t}(X,\mu)$, where $p/t>1$. From these observations,
Lemma~\ref{claim1}, and the boundedness of $\mathcal{M}_{\varrho_\#}$
on $L^{p/t}(X,\mu)$,  it follows that
\begin{align*}
%\label{abru-572-xx}
\lf\Vert\vec{h}\r\Vert_{L^p(V,\ell^\infty)}
&=\left\Vert\sup_{k\in\mathbb{Z}}|h_k|\right\Vert_{L^p(V,\mu)}
\lesssim\left\Vert\mathcal{M}_{\varrho_\#}\lf(\sup_{j\in\mathbb{Z}}g_j^t\r)\right\Vert_{L^{p/t}(X,\mu)}^{1/t}+
\left\Vert\mathcal{F}u\right\Vert_{L^p(V,\mu)}
\noz\\
&\lesssim\left\Vert\sup_{j\in\mathbb{Z}}g_j^t\right\Vert_{L^{p/t}(X,\mu)}^{1/t}+
\left\Vert\mathcal{F}u\right\Vert_{L^p(V,\mu)}
\noz\\
&\lesssim\left\Vert\sup_{j\in\mathbb{Z}}g_j\right\Vert_{L^{p}(X,\mu)}+
\left\Vert u\right\Vert_{L^p(\Omega,\mu)}
\lesssim\lf\|\vec{g}\r\|_{L^p(\Omega,\ell^\infty)}
+\left\Vert u\right\Vert_{L^p(\Omega,\mu)}.
\end{align*}
This finishes the proof of Lemma~\ref{claim3}.
\end{proof}

Combining Lemmas~\ref{claim1}, \ref{claim2}, and \ref{claim3}, we conclude that
\begin{equation}
\label{localextest}
\mathcal{F}u\in M^s_{p,q}(V)\quad\mbox{ and }\quad\Vert\mathcal{F}u\Vert_{M^s_{p,q}(V)}
\lesssim\Vert u\Vert_{M^s_{p,q}(\Omega)},
\end{equation}	
that is, $\mathcal{F}$ serves as a bounded extension operator from $\Omega$ to $V$
for functions in Haj\l asz--Triebel--Lizorkin spaces.

We now define the final extension of $u$ to the entire space $X$. Let $\Psi\colon X\to[0,1]$
be any H\"older continuous function of order $\alpha$ on $X$ such that $\Psi\lfloor_\Omega\equiv1$
and $\Psi\lfloor_{X\setminus V}\equiv0$, where
here and thereafter, for any set $E\subset X$, the symbol $\Psi\lfloor_E$ means
the restriction of $\Psi$ on $E$.
The existence of such a function having this order with
these properties is guaranteed by \cite[Theorem~4.1]{AMM13}
(see also \cite[Theorem~2.6]{AM15}). Define $\widetilde{u}\colon  X\to\mathbb{R}$ by setting
$$\widetilde{u}:=\Psi\mathcal{F}u.$$
Then $\widetilde{u}\lfloor_\Omega\equiv u$ and  it follows from Lemma~\ref{GVa2-prev} and \eqref{localextest} that
$$
\Vert\widetilde{u}\Vert_{M^s_{p,q}(X)}\lesssim\Vert\mathcal{F}u\Vert_{M^s_{p,q}(V)}
\lesssim\Vert u\Vert_{M^s_{p,q}(\Omega)}.
$$
This finishes the proof of \eqref{II+kan} in the statement of Theorem \ref{measext} and,
in turn, the proof of this theorem for  $M^s_{p,q}$ spaces.

Finally, let us turn our attention to
extending Haj\l{}asz--Besov functions
when $s<{\rm ind}\,(X,\rho)$. To construct an
$N^s_{p,q}$-extension operator, we will proceed
just as we did in the Triebel--Lizorkin case with
a few key differences. More specifically,
since $s<{\rm ind}\,(X,\rho)$, we can choose a number $\alpha\in\mathbb{R}$
and a quasi-metric $\varrho$ on $X$
such that $\varrho\approx\rho$ and $s<\alpha\leq(\log_{2}C_\varrho)^{-1}$.
For any $u\in N^s_{p,q}(\Omega,\varrho_\#,\mu)$,
we define the local
extension $\mathcal{F}u$ as in \eqref{localextdef}. Then
Lemma~\ref{claim2} still gives an $s$-fractional gradient
of $\mathcal{F}u$ that is given (up to a multiplicative constant) by \eqref{localgraddef},
where $\vec{g}\in\mathbb{D}^s_{\varrho_\#}(u)$ is such that
$\Vert\vec{g}\Vert_{\ell^q(L^p(\Omega))}\lesssim \Vert u\Vert_{\dot{N}^s_{p,q}(\Omega)}$.
By Proposition~\ref{constant}, such a choice of $\vec{g}$ is possible.
Moreover, we have that $\delta>0$ by its definition, because $\alpha\neq s$.
In place of the norm estimate in Lemma~\ref{claim3}, we use the following lemma.

\begin{lemma}
\label{claim3-besov}
The sequence $\vec{h}:=\{h_k\}_{k\in\mathbb{Z}}$ defined in \eqref{localgraddef} satisfies
\begin{equation*}
\lf\Vert\vec{h}\r\Vert_{\ell^q(L^p(V))}\lesssim \lf\|\vec{g}\r\|_{\ell^q(L^p(\Omega))}+\left\Vert u\right\Vert_{L^p(\Omega,\mu)},
\end{equation*}
where the implicit positive constant is independent of $u$, $\vec g$, and $\vec h$.
\end{lemma}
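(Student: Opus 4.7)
The plan is to mirror the strategy of Lemma~\ref{claim3}, but with the order of the $\ell^q$ and $L^p$ norms swapped---this forces us to estimate $\|h_k\|_{L^p(V,\mu)}$ pointwise in $k$ before applying any sequence inequalities. I would split the outer $\ell^q$-summation into the two regimes $k<k_0$ and $k\geq k_0$, matching the two branches in the definition \eqref{localgraddef} of $h_k$.

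For the low-frequency tail $k<k_0$, where $h_k=2^{(k+1)s}|\mathcal{F}u|$, Lemma~\ref{claim1} yields $\|h_k\|_{L^p(V,\mu)}\lesssim 2^{ks}\|u\|_{L^p(\Omega,\mu)}$. Since $s>0$, the geometric series $\sum_{k<k_0}2^{ksq}$ converges (with the analogous statement $\sup_{k<k_0}2^{ks}<\infty$ when $q=\infty$), so the contribution of this range is bounded by a constant multiple of $\|u\|_{L^p(\Omega,\mu)}$.

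The substantive step concerns the high-frequency range $k\geq k_0$. Here the key point to exploit is that the Besov hypothesis $s<{\rm ind}\,(X,\rho)$ permits a choice of $\alpha>s$ in Lemma~\ref{claim2}, which forces the parameter $\delta$ to be \emph{strictly} positive. Since $p/t>1$ by the choice of $t$ in Lemma~\ref{claim2}, the $L^{p/t}$-boundedness of $\mathcal{M}_{\varrho_\#}$ delivers
\begin{equation*}
\|h_k\|_{L^p(V,\mu)}^t
\lesssim\left\|\sup_{j\in\mathbb{Z}}\left\{2^{-|k-j|\delta t}g_j^t\right\}\right\|_{L^{p/t}(X,\mu)}
\lesssim\sum_{j\in\mathbb{Z}}2^{-|k-j|\delta t}\|g_j\|_{L^p(\Omega,\mu)}^t,
\end{equation*}
where the last step dominates the supremum by the sum and applies the triangle inequality in $L^{p/t}(X,\mu)$ (permissible since $p/t>1$). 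When $q<\infty$, raising both sides to the $q/t$ power, summing over $k\geq k_0$, and invoking Lemma~\ref{heli-est} with $a:=2^{\delta t}\in(1,\infty)$ and $b:=q/t\in(0,\infty)$ collapses the double sum to $\sum_j\|g_j\|_{L^p(\Omega,\mu)}^q=\|\vec{g}\|_{\ell^q(L^p(\Omega))}^q$. When $q=\infty$, the $\ell^1$-summability of the kernel $\{2^{-|k-j|\delta t}\}_{j\in\mathbb{Z}}$ gives $\sup_{k\geq k_0}\|h_k\|_{L^p(V,\mu)}\lesssim\sup_j\|g_j\|_{L^p(\Omega,\mu)}$ by a straightforward Young-type estimate.

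The main obstacle will be handling this ``convolution'' structure in $k$ with the correct order of norms. This is where the strict positivity of $\delta$---itself a direct consequence of the strengthened hypothesis $s<{\rm ind}\,(X,\rho)$ in the Besov case---is indispensable: without it, neither Lemma~\ref{heli-est} nor the $\ell^1$-summability of the kernel in the $q=\infty$ case would be available, which illuminates why the Besov extension theorem in Theorem~\ref{measext} requires this stricter range of $s$ than its Haj\l{}asz--Triebel--Lizorkin counterpart.
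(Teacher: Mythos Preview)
Your proposal is correct and follows essentially the same route as the paper: split into $k<k_0$ and $k\geq k_0$, handle the low-frequency part via Lemma~\ref{claim1} and a geometric sum, and for the high-frequency part use the $L^{p/t}$-boundedness of $\mathcal{M}_{\varrho_\#}$ together with Lemma~\ref{heli-est} (with $a=2^{\delta t}$, $b=q/t$, $c_j=\|g_j\|_{L^p}^t$), relying on $\delta>0$. The only cosmetic difference is the order of operations---the paper first dominates the supremum pointwise by a sum via \eqref{tby-386} and then applies Minkowski and term-by-term maximal boundedness, whereas you apply maximal boundedness once to the full supremum before dominating by the sum; both paths land on the same estimate \eqref{mxiw-547-1}.
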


\begin{proof}
By the definition of
$N^s_{p,q}(X,\mu)$, we have $g_j^t\in L^{p/t}(X,\mu)$ for
any $j\in\mathbb{Z}$. Since $p/t>1$,
using \eqref{tby-386}, the Minkowski inequality, and the boundedness
of $\mathcal{M}_{\varrho_\#}$ on $L^{p/t}(X,\mu)$, one has, for any $k\in\mathbb{Z}$,
\begin{align}
\label{mxiw-547-1}
&\left\Vert\lf[\mathcal{M}_{\varrho_\#}\lf(\sup\limits_{j\in\mathbb{Z}}\lf\{2^{-|k-j|\delta t}g_j^t\r\}\r)\r]^{1/t}\right\Vert_{L^{p}(X,\mu)}\noz\\
&\quad\leq\left\Vert\sum_{j\in\mathbb{Z}}2^{-|k-j|\delta t}\mathcal{M}_{\varrho_\#}\lf(g_j^t\r)\right\Vert_{L^{p/t}(X,\mu)}^{1/t}
\leq\left[\sum_{j\in\mathbb{Z}}2^{-|k-j|\delta t}\lf\Vert\mathcal{M}_{\varrho_\#}\lf(g_j^t\r)\r\Vert_{L^{p/t}(X,\mu)}\right]^{1/t}\noz\\
&\quad\lesssim\left[\sum_{j\in\mathbb{Z}}2^{-|k-j|\delta t}
\,\Vert g_j\Vert_{L^{p}(X,\mu)}^t\right]^{1/t}.
\end{align}
Therefore, if $q<\infty$, then Lemma~\ref{heli-est}
(used here with $a=2^{\delta t}$, $b=q/t$, and $c_j=\Vert g_j\Vert_{L^{p}(X,\mu)}^t$)
and Lemma~\ref{claim1} give
\begin{align*}
\lf\Vert\vec{h}\r\Vert_{\ell^q(L^p(V))}
&=\left\{\sum_{k=k_0}^\infty\left\Vert\lf[\mathcal{M}_{\varrho_\#}
\lf(\sup\limits_{j\in\mathbb{Z}}\lf\{2^{-|k-j|\delta t}g_j^t\r\}\r)\r]^{1/t}\right\Vert_{L^{p}(X,\mu)}^q+\Vert\mathcal{F}u\Vert_{L^{p}(V,\mu)}^q
\sum_{k=-\infty}^{k_0-1}2^{(k+1)sq}\right\}^{1/q}
\\
&\leq\left\{\sum_{k\in\mathbb{Z}}\lf[\sum_{j\in\mathbb{Z}}2^{-|k-j|\delta t}\,\Vert g_j\Vert_{L^{p}(X,\mu)}^t\r]^{q/t}+\Vert\mathcal{F}u\Vert_{L^{p}(V,\mu)}^q2^{k_0sq}\right\}^{1/q}
\\
&\lesssim\left\{\sum_{j\in\mathbb{Z}}\Vert g_j\Vert_{L^{p}(X,\mu)}^q+\Vert\mathcal{F}u\Vert_{L^{p}(V,\mu)}^q\right\}^{1/q}
\lesssim\|\vec{g}\|_{\ell^q(L^p(\Omega))}+\Vert u\Vert_{L^{p}(\Omega,\mu)},
\end{align*}
where we have used the fact that, for any $j\in\zz$, $g_j\equiv0$ on $X\setminus\Omega$. If $q=\infty$, then,
by \eqref{mxiw-547-1}, we find that (keeping in mind that $\delta>0$)
\begin{align*}
\lf\Vert\vec{h}\r\Vert_{\ell^\infty(L^p(V))}
&\leq\sup_{k\geq k_0}\left\Vert\lf[\mathcal{M}_{\varrho_\#}
\lf(\sup\limits_{j\in\mathbb{Z}}\lf\{2^{-|k-j|\delta t}g_j^t\r\}\r)\r]^{1/t}\right\Vert_{L^{p}(X,\mu)}
+\Vert\mathcal{F}u\Vert_{L^{p}(V,\mu)}\sup_{k\leq k_0-1}2^{(k+1)s}
\noz\\
&\leq\sup_{k\geq k_0}\left[\sum_{j\in\mathbb{Z}}2^{-|k-j|\delta t}\,\Vert g_j\Vert_{L^{p}(X,\mu)}^t\right]^{1/t}+\Vert\mathcal{F}u\Vert_{L^{p}(V,\mu)}2^{k_0s}
\noz\\
&\lesssim\|\vec{g}\|_{\ell^\infty(L^p(X))}\sup_{k\geq k_0}\lf[\sum_{j\in\mathbb{Z}}2^{-|k-j|\delta t}\r]^{1/t}+\Vert u\Vert_{L^{p}(\Omega,\mu)}\\
&\lesssim\lf\|\vec{g}\r\|_{\ell^\infty(L^p(\Omega))}+\Vert u\Vert_{L^{p}(\Omega,\mu)}.
\end{align*}
This finishes the proof of Lemma~\ref{claim3-besov}.
\end{proof}

The remainder of the proof of Theorem \ref{measext}  for $N^s_{p,q}$ spaces,
including the definition for the final extension of $u$ to
the entire space $X$, is carried out as
it was in the Triebel--Lizorkin case. This finishes the proof of Theorem~\ref{measext}.
\end{proof}

\section{Equivalency of The Extension and Embedding Properties of The Domain and The Measure Density Condition}
\label{section:measuredensity}
In this section, we  fully characterize the $M^s_{p,q}$ and the $N^s_{p,q}$ extension domains which are locally uniformly perfect
via the so-called measure density condition in general spaces of homogeneous
type, for an optimal range of $s$;
see Theorem~\ref{measdens-ext-sob} below.

\subsection{Main Tools}
Before formulating the main theorems in this section, we  first collect
a few technical results that were established in \cite{agh20} and \cite{AYY21}.
Given the important role that these results play in the proofs of the main theorems in this article,
we include their statements for the convenience of the reader.

We begin with a few definitions. Throughout this section, let $(X,\rho,\mu)$ be a quasi-metric
measure space and suppose that $\Omega\subset X$ is a $\mu$-measurable
set with at least two points and has the property that
$\mu(B_\rho(x,r)\cap\Omega)>0$ for any $x\in\Omega$ and
$r\in(0,1]$ \footnote{The upper bound   1 on the radius can be arbitrarily
replaced by any other threshold $\delta\in(0,\infty)$.}.
Then $(\Omega,\rho,\mu)$ can be naturally viewed
as a quasi-metric measure space, where the quasi-metric $\rho$
and the measure $\mu$ are restricted to the set $\Omega$.
In this context, for any $x\in X$ and $r\in[0,\infty)$, we let
\begin{equation}
\label{radphiomeg}
\mbox{$\vi^x_{\Omega,\rho}(r):=\sup\lf\{s\in [0,r]:\, \mu(B_\rho(x,s)\cap\Omega)\leq\frac{1}{2}\mu(B_\rho(x,r)\cap\Omega)\r\}$.}
\end{equation}
Note that, when $s=0$, $B_\rho(x,s)\cap\Omega=\emptyset$
and so $\varphi^x_{\Omega,\rho}(r)\geq 0$. When $\Omega=X$, we
abbreviate $\vi^x_{\rho}:=\vi^x_{X,\rho}$.

In the Euclidean setting $(\mathbb{R}^n,|\cdot-\cdot|,\mathcal{L}^n)$, where $\mathcal{L}^n$
denotes the $n$-dimensional Lebesgue measure, for any $x\in\Omega$ and $r\in(0,\infty)$,
it is always possible to find a radius $\tilde{r}<r$ with the property that
$\mathcal{L}^n(B(x,\tilde{r})\cap\Omega)=\frac{1}{2}\mathcal{L}^n(B(x,r)\cap\Omega))$,
whenever $\Omega\subset\mathbb{R}^n$ is an open connected set.
In a general quasi-metric measure space there is no guarantee
that there exists such a radius $\tilde{r}$;
however, $\vi^x_{\Omega,\rho}(r)$ will  be a suitable
replacement of $\tilde{r}$ in this more general setting.

In the context above, the quasi-metric space $(\Omega,\rho)$
(or simply the set $\Omega$) is said to be
\textit{locally uniformly perfect}  if there exists a
constant $\lambda\in(0,1)$ with the property that, for any $x\in\Omega$
and   $r\in(0,1]$ \footnote{The upper bound   1 can be replaced by any positive and finite number.},
\begin{equation}
\label{U-perf}
\lf(B_\rho(x,r)\cap\Omega\r)\setminus B_\rho(x,\lambda r)\neq\emptyset\quad
\mbox{ whenever }\quad \Omega\setminus B_\rho(x,r)\neq\emptyset.
\end{equation}
If \eqref{U-perf} holds true
for any $x\in\Omega$ and $r\in(0,\infty)$, then $\Omega$ is
simply referred as \textit{uniformly perfect}.
It is easy to see that every connected quasi-metric space is uniformly perfect;
however, there are very disconnected Cantor-type sets that are also uniformly perfect.
Moreover, every uniformly perfect set is locally uniformly perfect,
but the set $\Omega:=\bigcup_{n=0}^\infty[2^{2^n},2^{2^n}+1)$,
when equipped with the restriction of the standard Euclidean distance on $\mathbb{R}$,
is a locally uniformly perfect set which is not uniformly
perfect (see \cite{gs20} for additional details).
Finally, observe that it follows immediately from \eqref{U-perf} that,
whenever \eqref{U-perf} holds true for some $\lambda\in(0,1)$,
then it holds true for any $\lambda'\in(0,\lambda]$.

The first main result of this subsection is a technical
lemma which is needed in proving that $M^s_{p,q}$
and $N^s_{p,q}$ extension domains necessarily satisfy the measure density condition.

\begin{lemma}
\label{en2-4}
Let $(X,\rho,\mu)$ be a quasi-metric measure space,
where $\mu$ is a doubling measure on $X$, and suppose that $\Omega\subset X$ is a
$\mu$-measurable locally uniformly perfect set.
Let $\lambda\in(0,(C_{\rho\lfloor_\Omega}\widetilde{C}_{\rho\lfloor_\Omega})^{-2})$ be as
in \eqref{U-perf}, where ${\rho\lfloor_\Omega}$ denotes the restriction
of $\rho$ to $\Omega$, and assume that there exists a positive constant $C$ such that
\begin{equation}
\label{eq48omega}
\mu(B_\rho(x,r))\leq C\mu(B_\rho(x,r)\cap\Omega),
\end{equation}
whenever both $x\in \Omega$ and $r\in(0,1]$ satisfy $r\leq C_{\rho\lfloor_\Omega}\varphi^x_{\Omega,\rho}(r)/\lambda^2$.
Then there exists a positive constant $\widetilde{C}$, depending only on $C$, $\rho$, $\lambda$,
and the doubling constant for $\mu$, such that

\begin{equation}
\label{eq48omega-2}
\mu(B_\rho(x,r))\leq \widetilde{C}\mu(B_\rho(x,r)\cap\Omega)
\end{equation}
holds true  for any $x\in X$ and $r\in(0,1]$.
\end{lemma}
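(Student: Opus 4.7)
The proof proceeds by a dichotomy on whether the hypothesis \eqref{eq48omega} is directly applicable at $(x,r)$, and iterates downward in the \textit{bad} case. Fix $x\in\Omega$ and $r\in(0,1]$ and set $\tau:=C_{\rho\lfloor_\Omega}/\lambda^2$, which is strictly greater than $1$ because $\lambda<(C_{\rho\lfloor_\Omega}\widetilde{C}_{\rho\lfloor_\Omega})^{-2}\leq C_{\rho\lfloor_\Omega}^{-1}$. If $r\leq\tau\,\varphi^x_{\Omega,\rho}(r)$, then the hypothesis yields \eqref{eq48omega-2} at $(x,r)$ with $\widetilde{C}=C$, and there is nothing further to prove. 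Otherwise $\varphi^x_{\Omega,\rho}(r)<r/\tau$, and the definition \eqref{radphiomeg} forces $\mu(B_\rho(x,r/\tau)\cap\Omega)>\tfrac{1}{2}\mu(B_\rho(x,r)\cap\Omega)$, meaning that the $\Omega$-mass of $B_\rho(x,r)$ concentrates inside the smaller ball $B_\rho(x,r/\tau)$.

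The plan is to iterate this concentration. Define $r_0:=r$ and, whenever $\varphi^x_{\Omega,\rho}(r_j)<r_j/\tau$, set $r_{j+1}:=r_j/\tau$. By the preceding observation each bad step yields $\mu(B_\rho(x,r_{j+1})\cap\Omega)>\tfrac{1}{2}\mu(B_\rho(x,r_j)\cap\Omega)$, so
\[
\mu(B_\rho(x,r_k)\cap\Omega)>2^{-k}\mu(B_\rho(x,r)\cap\Omega)
\]
after $k$ bad iterations. Assuming the iteration terminates after at most $K$ bad steps, with $K$ depending only on the prescribed constants, the hypothesis \eqref{eq48omega} applies at $(x,r_k)$ for some $k\leq K$; combining with the doubling of $\mu$ used to ascend from $r_k$ to $r$ then gives
\[
\mu(B_\rho(x,r))\leq C_D^{K\log_2\tau}\,\mu(B_\rho(x,r_k))\leq C_D^{K\log_2\tau}\,C\,\mu(B_\rho(x,r_k)\cap\Omega)\leq C_D^{K\log_2\tau}\,C\,\mu(B_\rho(x,r)\cap\Omega),
\]
which is the desired \eqref{eq48omega-2} with $\widetilde{C}:=C\cdot C_D^{K\log_2\tau}$.

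The main obstacle is thus to secure a uniform bound $K$ on the number of consecutive bad iterations, and this is where local uniform perfectness \eqref{U-perf} becomes essential. The inequality $\lambda>1/\tau=\lambda^2/C_{\rho\lfloor_\Omega}$ (equivalent to $C_{\rho\lfloor_\Omega}>\lambda$) guarantees that at each bad scale $r_j$ with $\Omega\not\subset B_\rho(x,r_j)$, \eqref{U-perf} supplies a point $y_j\in\Omega$ satisfying $\lambda r_j\leq\rho(x,y_j)<r_j$; such a $y_j$ lies in the annulus $B_\rho(x,r_j)\setminus B_\rho(x,r_j/\tau)$, while the concentration from paragraph one forces the $\Omega$-mass of this annulus to be at most $\tfrac{1}{2}\mu(B_\rho(x,r_j)\cap\Omega)$. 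The hard part will be to invoke the hypothesis \eqref{eq48omega} at $y_j$ for a scale $s\approx r_j$---a scale at which the auxiliary condition $s\leq\tau\varphi^{y_j}_{\Omega,\rho}(s)$ can be verified through a further application of \eqref{U-perf} in a neighborhood of $y_j$---so as to produce a ball $B_\rho(y_j,s)$ contained in the annulus whose full $\mu$-measure is controlled, via the $Q$-doubling lower bound, by a definite fraction of $\mu(B_\rho(x,r_j))$. The resulting lower bound on the annular $\Omega$-mass, balanced against its halving upper bound, will force $j$ to remain below a threshold $K$ depending only on $C$, $\rho$, $\lambda$, and the doubling constant of $\mu$, closing the argument.
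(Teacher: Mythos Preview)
Your argument has a genuine gap at precisely the point you flag as ``the hard part.'' You need, at some annular point $y_j$ and at a scale $s\approx r_j$, that the auxiliary condition $s\leq\tau\,\varphi^{y_j}_{\Omega,\rho}(s)$ holds, so that hypothesis~\eqref{eq48omega} becomes available there. But this condition depends on the \emph{measure} $\mu$ (through $\varphi^{y_j}_{\Omega,\rho}$), not merely on the metric, so a further appeal to the local uniform perfectness~\eqref{U-perf} cannot by itself secure it. In fact, verifying the condition at $y_j$ is the same problem you started with at $x$: nothing prevents $y_j$ from also being ``bad'' at every scale $\approx r_j$, and your scheme then recurses without a stopping mechanism. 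The iteration-at-the-same-center part of your outline is correct as far as it goes (the concentration inequality from the definition of $\varphi^x_{\Omega,\rho}$ is fine), but it does not terminate on its own, and the proposed termination device is circular. A secondary issue: even if the condition at $y_j$ held, a ball $B_\rho(y_j,s)$ with $s\approx r_j$ need not sit inside the annulus $B_\rho(x,r_j)\setminus B_\rho(x,r_{j+1})$, so the balancing step as stated is not justified.

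The paper avoids all of this by invoking Lemma~\ref{tx} (from \cite{AYY21}) applied to the space $(\Omega,\rho\lfloor_\Omega,\mu\lfloor_\Omega)$. That lemma furnishes, in a single stroke, a point $\tilde{x}\in\Omega$ and a radius $\tilde{r}$ with $\lambda r\leq\tilde{r}\leq r$, $B_\rho(\tilde{x},\tilde{r})\cap\Omega\subset B_\rho(x,r)\cap\Omega$, \emph{and} $\tilde{r}\leq\tau\,\varphi^{\tilde{x}}_{\Omega,\rho}(\tilde{r})$. The hypothesis then applies directly at $(\tilde{x},\tilde{r})$, and doubling transports the estimate back to $(x,r)$ in one step; no iteration is needed. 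The content you are missing is exactly the content of Lemma~\ref{tx}: if you want a self-contained argument, you must prove that such a good pair $(\tilde{x},\tilde{r})$ exists, and that proof (not merely an invocation of~\eqref{U-perf}) is where the uniform perfectness is actually used.
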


We will employ the following result from \cite[Lemma 4.2]{AYY21}
in the proof of Lemma~\ref{en2-4}. Strictly speaking, the result in \cite[Lemma 4.2]{AYY21} was proven for uniformly perfect spaces; however, the same proof is valid for the class of locally uniformly perfect spaces.

\begin{lemma}\label{tx}
Let $(X,\rho,\mu)$ be a locally uniformly perfect quasi-metric measure space and suppose that $\lambda\in (0,(C_\rho\widetilde{C}_\rho)^{-2})$ is the constant appearing in \eqref{U-perf},
where $C_\rho,\,\widetilde{C}_\rho\in[1,\infty)$ are as in \eqref{C-RHO.111} and \eqref{C-RHO.111XXX}, respectively.
If $x\in X$ and $r\in (0,{\rm \diam}_\rho(X)]$ satisfy $C_\rho\varphi^x_{\rho}(r)/\lambda^2<r\leq1$, then there exist a point $\tilde{x}\in X$ and a radius $\tilde{r}\in(0,1]$ such that $\lambda r\leq\tilde{r}\leq\min\{r,C_\rho\varphi^{\tilde{x}}_{\rho}(\tilde{r})/\lambda^2\}$ and $B_\rho(\tilde{x},\tilde{r})\subset B_\rho(x,r)$.
\end{lemma}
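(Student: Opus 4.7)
The plan is to use local uniform perfectness at the scale $r/C_\rho$ to locate a displaced center $\tilde{x}$ inside $B_\rho(x,r)$, then to select $\tilde{r}$ as the maximal radius for which the containment $B_\rho(\tilde{x},\tilde{r})\subset B_\rho(x,r)$ remains in force, and finally to leverage the strong mass concentration encoded by the hypothesis against a disjointness estimate to verify the required measure condition.

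First, the hypothesis rewrites cleanly as $\mu(B_\rho(x,\lambda^2 r/C_\rho))>\tfrac{1}{2}\mu(B_\rho(x,r))$; that is, more than half the mass of the outer ball already sits in the small inner ball $B_\rho(x,\lambda^2 r/C_\rho)$. Since $\lambda<(C_\rho\widetilde{C}_\rho)^{-2}\leq 1/C_\rho$, one has $r/C_\rho\leq r\leq 1$, so local uniform perfectness \eqref{U-perf}, applied at radius $r/C_\rho$ (the nondegeneracy $X\setminus B_\rho(x,r/C_\rho)\neq\emptyset$ being handled as a separate trivial case), yields a point $y\in X$ with $\lambda r/C_\rho\leq\rho(x,y)<r/C_\rho$. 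Set $\tilde{x}:=y$. For every $t\in(0,r/C_\rho]$ and $z\in B_\rho(\tilde{x},t)$, the quasi-triangle inequality \eqref{gabn-T.2} gives
\begin{equation*}
\rho(x,z)\leq C_\rho\max\{\rho(x,\tilde{x}),\rho(\tilde{x},z)\}<C_\rho\cdot r/C_\rho=r,
\end{equation*}
so $B_\rho(\tilde{x},t)\subset B_\rho(x,r)$ whenever $t\leq r/C_\rho$; in particular $t=\lambda r$ is admissible since $\lambda r<r/C_\rho$.

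Define $\tilde{r}$ to be the supremum of $t\in[\lambda r,r]\cap(0,1]$ for which $B_\rho(\tilde{x},t)\subset B_\rho(x,r)$. Then $\lambda r\leq\tilde{r}\leq r\leq 1$, and the containment $B_\rho(\tilde{x},\tilde{r})\subset B_\rho(x,r)$ passes to the supremum via monotonicity. Three of the four required conclusions are now in hand.

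The remaining inequality $\tilde{r}\leq C_\rho\varphi^{\tilde{x}}_\rho(\tilde{r})/\lambda^2$ is the main obstacle, and I intend to prove it by contradiction: if $\varphi^{\tilde{x}}_\rho(\tilde{r})<\lambda^2\tilde{r}/C_\rho$, then $\mu(B_\rho(\tilde{x},\lambda^2\tilde{r}/C_\rho))>\tfrac{1}{2}\mu(B_\rho(\tilde{x},\tilde{r}))$. A short quasi-triangle computation, exploiting $\rho(x,\tilde{x})\geq\lambda r/C_\rho$ together with the sharp bound $\lambda<(C_\rho\widetilde{C}_\rho)^{-2}$, will show that the two inner balls $B_\rho(x,\lambda^2 r/C_\rho)$ and $B_\rho(\tilde{x},\lambda^2\tilde{r}/C_\rho)$ are disjoint while both still sit inside $B_\rho(x,r)$. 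Combining this disjointness with the two mass-concentration statements—one from the hypothesis, one from the assumed failure—and with the doubling property of $\mu$ (to relate $\mu(B_\rho(\tilde{x},\tilde{r}))$ and $\mu(B_\rho(x,r))$ through their comparable radii and nested inclusions up to a bounded dilation factor) should produce the contradiction. Calibrating the doubling estimate precisely so the numerical inequalities are genuinely incompatible is the delicate point I expect to be the hardest step, and it is precisely there that the specific exponent $\lambda^2$ appearing in the denominator—rather than a softer factor like $\lambda$—plays its role.
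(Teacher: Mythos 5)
Your reduction of the hypothesis to $\mu(B_\rho(x,\lambda^2 r/C_\rho))>\tfrac12\mu(B_\rho(x,r))$, your containment computation, and the three ``easy'' conclusions are fine, but the step you leave open, $\tilde r\le C_\rho\varphi^{\tilde x}_\rho(\tilde r)/\lambda^2$, is precisely the content of the lemma, and the route you sketch cannot close it. First, the lemma assumes no doubling: $(X,\rho,\mu)$ is only a locally uniformly perfect quasi-metric measure space, so a doubling comparison is not available. Even granting doubling, your ingredients (the two concentration statements plus disjointness of the two small balls inside $B_\rho(x,r)$) only give $\tfrac12\mu(B_\rho(\tilde x,\tilde r))<\mu(B_\rho(x,r))-\mu(B_\rho(x,\lambda^2 r/C_\rho))<\tfrac12\mu(B_\rho(x,r))$, i.e.\ $\mu(B_\rho(\tilde x,\tilde r))<\mu(B_\rho(x,r))$, which is no contradiction at all (it already follows from $B_\rho(\tilde x,\tilde r)\subset B_\rho(x,r)$); to clash with it you would need $\mu(B_\rho(\tilde x,\tilde r))\ge\mu(B_\rho(x,r))$, whereas doubling only yields $\mu(B_\rho(x,r))\le D\,\mu(B_\rho(\tilde x,\tilde r))$ with $D\ge1$. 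Second, and decisively, the inequality you owe is actually false for pairs your construction can output, because \eqref{U-perf} gives no control over where in the annulus $\lambda r/C_\rho\le\rho(x,\tilde x)<r/C_\rho$ the point lands. Take $X=\mathbb{R}$ with the usual metric (so $C_\rho=2$, $\widetilde C_\rho=1$), $\lambda=1/8$, $x=0$, $r=1$, and $\mu=\varepsilon\mathcal{L}^1$ plus mass $10$ spread on $(-0.0075,-0.003)$ and mass $5$ spread on $(0.498,0.5)$. Then $\varphi^0_\rho(1)\le 0.0075<1/128=\lambda^2 r/C_\rho$, so the hypothesis holds; the point $\tilde x=0.499$ lies in your annulus $[1/16,1/2)$, and your supremum gives $\tilde r=0.501$. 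But $\mu(B_\rho(\tilde x,0.002))\ge 5>\tfrac12\mu(B_\rho(\tilde x,0.501))\approx 2.5$, so $\varphi^{\tilde x}_\rho(\tilde r)\le 0.002<\lambda^2\tilde r/C_\rho\approx 0.0039$, i.e.\ $\tilde r>C_\rho\varphi^{\tilde x}_\rho(\tilde r)/\lambda^2$. So no calibration can rescue this choice of $(\tilde x,\tilde r)$; also, the ``trivial case'' $X=B_\rho(x,r/C_\rho)$ you set aside is not trivial (it can occur when $\widetilde C_\rho>1$, and even in metric spaces) and needs its own argument.

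The missing idea is a containment, not a doubling estimate: $\tilde x$ must be taken much closer to $x$, at distance at most roughly $\tilde r/(C_\rho\widetilde C_\rho)$, so that the concentration ball $B_\rho(x,\lambda^2 r/C_\rho)$ is \emph{contained in} $B_\rho(\tilde x,\tilde r)$, while the lower bound on $\rho(x,\tilde x)$ coming from \eqref{U-perf} keeps every $B_\rho(\tilde x,s)$ with $s<\lambda^2\tilde r/C_\rho$ \emph{disjoint} from it. Concretely, one may take $\tilde r:=r/C_\rho$ and apply \eqref{U-perf} at a radius $R$ with $\lambda\max\{r,\widetilde C_\rho\tilde r\}<R\le\tilde r/(C_\rho\widetilde C_\rho)$; such an $R$ exists precisely because $\lambda<(C_\rho\widetilde C_\rho)^{-2}$ (this is where the full strength of that bound enters, not merely $\lambda\le 1/C_\rho$), and nondegeneracy of $X\setminus B_\rho(x,R)$ follows from $r\le\mathrm{diam}_\rho(X)$ via the quasi-triangle inequality. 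With both the containment and the disjointness in hand, pure monotonicity and additivity of $\mu$ finish the argument: for every $s<\lambda^2\tilde r/C_\rho$,
$\mu(B_\rho(\tilde x,s))\le\mu(B_\rho(\tilde x,\tilde r))-\mu(B_\rho(x,\lambda^2 r/C_\rho))<\mu(B_\rho(\tilde x,\tilde r))-\tfrac12\mu(B_\rho(x,r))\le\tfrac12\mu(B_\rho(\tilde x,\tilde r))$,
which says exactly that $\varphi^{\tilde x}_\rho(\tilde r)\ge\lambda^2\tilde r/C_\rho$, with no doubling used anywhere. (The paper itself does not reprove this lemma, citing \cite[Lemma~4.2]{AYY21}; the mechanism above is the one compatible with its hypotheses.)
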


\begin{proof}[Proof of Lemma~\ref{en2-4}]
We first remark that, by the definition of a locally uniformly perfect set,
we conclude that $(\Omega,\rho\lfloor_\Omega,\mu\lfloor_\Omega)$ is
a locally uniformly perfect quasi-metric-measure space,
where the quasi-metric $\rho$ and the measure $\mu$ are restricted to
the set $\Omega$.
As such, we can invoke Lemma~\ref{tx} for the quasi-metric measure
space $(\Omega,\rho\lfloor_\Omega,\mu\lfloor_\Omega)$.

With this in mind, fix a point $x\in\Omega$ and a radius $r\in(0,1]$.
If $r\leq C_{\rho\lfloor_\Omega}\varphi^x_{\Omega,\rho}(r)/\lambda^2$,
then we immediately have $C\mu(B_\rho(x,r)\cap\Omega)\geq\mu(B_\rho(x,r))$ by
the assumption of Lemma~\ref{en2-4}.
Thus, it suffices to assume that
$r> C_{\rho\lfloor_\Omega}\varphi^x_{\Omega,\rho}(r)/\lambda^2$.
Observe that \eqref{eq48omega} implies that $C\geq1$. Therefore,
if $r>{\rm diam}_\rho(\Omega)$, then $B_\rho(x,r)=\Omega$ and \eqref{eq48omega-2}
holds true trivially.
Thus, we may further assume that
$r\leq{\rm diam}_\rho(\Omega)$. By Lemma~\ref{tx},
there exist a
point $\tilde{x}\in\Omega$ and a
radius $\tilde{r}\in(0,1]$ such that
$\lambda r\leq\tilde{r}\leq\min\{r,C_{\rho\lfloor_\Omega}
\varphi^{\tilde{x}}_{\Omega,\rho}(\tilde{r})/\lambda^2\}$ and
$B_\rho(\tilde{x},\tilde{r})\cap\Omega\subset B_\rho(x,r)\cap\Omega$.
Since $\tilde{x}\in B_\rho(x,r)$, we infer that
$B_\rho(x,r)\subset B_\rho(\tilde{x},C_{\rho}\widetilde{C}_\rho r)$,
which, together with \eqref{eq48omega} and the doubling property for $\mu$, implies
that
\begin{align*}
C\mu(B_\rho(x,r)\cap\Omega)
&\geq C\mu(B_\rho(\tilde{x},\tilde{r})\cap\Omega)
\geq \mu(B_\rho(\tilde{x},\tilde{r}))\noz\\
&\geq\mu(B_\rho(\tilde{x},\lambda r))\gtrsim\mu(B_\rho(\tilde{x},C_{\rho}\widetilde{C}_\rho r))
\gtrsim \mu(B_\rho(x,r)).
\end{align*}
This finishes the proof of Lemma~\ref{en2-4}.
\end{proof}

The following result was recently established in \cite[Lemma~4.7]{AYY21} and
it concerns the ability to construct certain
families of maximally smooth H\"older continuous `bump' functions belonging to $M^{s}_{p,q}$ and $N^{s}_{p,q}$.

\begin{lemma}\label{HolderBump}
Suppose $(X,\rho,\mu)$ is a quasi-metric measure space and $\varrho$
any quasi-metric on $X$ such that $\varrho\approx\rho$.
Let $C_\varrho\in[1,\infty)$ be as in \eqref{C-RHO.111} and fix
exponents $p\in(0,\infty)$ and $q\in(0,\infty]$, along with
a finite number $s\in(0,(\log_{2}C_\varrho)^{-1}]$,
where the value  $s=(\log_{2}C_\varrho)^{-1}$
is only permissible when $q=\infty$. Also suppose
that $\varrho_{\#}$ is the regularized quasi-metric
given by Theorem~\ref{DST1}. Then there exists a
number $\delta\in(0,1)$, depending only on $s$, $\varrho$, and the proportionality
constants in $\varrho\approx\rho$, such that, for any $x\in X$ and any finite
$r\in(0,{\rm diam}_\rho(X)]$,
there exist a sequence $\{r_j\}_{j\in\mathbb{N}}$ of radii
and a collection $\{u_j\}_{j\in\mathbb{N}}$ of functions  such that, for any $j\in\mathbb{N}$,
\begin{enumerate}[label=\rm{(\alph*)}]
\item $B_{\varrho_{\#}}(x,r_j)\subset B_{\rho}(x,r)$ \,\,\mbox{and }\,\,$\delta r<r_{j+1}<r_j<r$;
\item $0\leq u_j\leq 1$ pointwise on $X$;
\item $u_j\equiv 0$ on $X\setminus B_{\varrho_{\#}}(x,r_j)$;
\item $u_j\equiv 1$ on $\overline{B}_{\varrho_{\#}}(x,r_{j+1})$;
\item $u_j$ belongs to both $M^s_{p,q}(X,\rho,\mu)$ and $N^s_{p,q}(X,\rho,\mu)$, and
$$0<\Vert u_j\Vert_{\dot{M}^s_{p,q}(X,\rho,\mu)}\leq C 2^{j}r^{-s}\lf[\mu(B_{\varrho_{\#}}(x,r_j))\r]^{1/p}$$
and
$$0<\Vert u_j\Vert_{\dot{N}^s_{p,q}(X,\rho,\mu)}\leq C 2^{j}r^{-s}\lf[\mu(B_{\varrho_{\#}}(x,r_j))\r]^{1/p}
$$
for some positive constant $C$ which is independent of $x$, $r$, and $j$.
\end{enumerate}

Furthermore, if there exists a constant $c_0\in(1,\infty)$
such that $r\leq c_0\varphi^x_{\rho}(r)$, where
$\varphi^x_{\rho}(r)$ is as in \eqref{radphiomeg},
then the radii $\{r_j\}_{j\in\mathbb{N}}$ and the
functions $\{u_j\}_{j\in\mathbb{N}}$ can be chosen to
have the following property in addition
to  {\rm(a)}-{\rm(e)}  above:
\begin{enumerate}
\item[{\rm(f)}] for any fixed $j\in\mathbb{N}$ and $\gamma\in(0,\infty)$, there exists a $\mu$-measurable set $E^\gamma_j\subset B_\rho(x,r)$ such that $\mu(E^\gamma_j)\geq\mu(B_{\varrho_{\#}}(x,r_{j+1}))$ and $|u_j-\gamma\,|\geq\frac{1}{2}$ pointwise on $E^\gamma_j$.
\end{enumerate}
In this case, the number $\delta$   depends also on $c_0$.
\end{lemma}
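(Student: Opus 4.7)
The strategy is to construct the bump functions $u_j$ as cutoffs of the regularized quasi-metric $\varrho_{\#}$ supplied by Theorem~\ref{DST1}, exploiting the crucial fact that $(\varrho_{\#})^\alpha$ is a genuine metric on $X$ for every finite $\alpha\in(0,(\log_{2}C_\varrho)^{-1}]$. I would first fix such a number $\alpha\geq s$, with $\alpha>s$ whenever $q<\infty$; the standing hypothesis on $s$ always permits this, and the equality $\alpha=s$ is reserved for the endpoint $q=\infty$. Next, I would take a geometrically decreasing sequence $\{r_j\}_{j\in\mathbb{N}}$ satisfying $\delta r<r_{j+1}<r_j<r$, for instance $r_j:=r(1+2^{-j})/2$ in the generic case (which gives $\delta=1/2$), and set
\[
u_j(y):=\phi_j\bigl([\varrho_{\#}(x,y)]^\alpha\bigr),\qquad y\in X,
\]
where $\phi_j\colon[0,\infty)\to[0,1]$ is the piecewise affine function equal to $1$ on $[0,r_{j+1}^\alpha]$, vanishing on $[r_j^\alpha,\infty)$, and linear in between. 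Properties (a)--(d) are immediate from the construction, and since $(\varrho_{\#})^\alpha$ is a metric and $\phi_j$ has Lipschitz constant $(r_j^\alpha-r_{j+1}^\alpha)^{-1}$, an elementary calculation produces the key H\"older bound $\|u_j\|_{\dot{\mathscr{C}}^\alpha(X,\varrho_{\#})}\lesssim 2^j r^{-\alpha}$.

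To verify (e), I would build a fractional $s$-gradient of $u_j$ interpolating between the $\alpha$-H\"older control at small scales and the uniform bound $\|u_j\|_{L^\infty(X)}\leq 1$ at large scales. Fixing $k_0\in\mathbb{Z}$ with $2^{-k_0}\approx r$ and letting $B$ be a controlled enlargement of $B_{\varrho_{\#}}(x,r_j)$ chosen (via $\varrho_{\#}\approx\rho$) so that every pair $y,z$ with $2^{-k-1}\leq\rho(y,z)<2^{-k}$ and at least one of $y,z$ in $\supp(u_j)$ is captured by $B$, I would take
\[
g_k:=\begin{cases}
\|u_j\|_{\dot{\mathscr{C}}^\alpha(X,\varrho_{\#})}\,2^{-k(\alpha-s)}\,\mathbf{1}_B &\mbox{if } k\geq k_0,\\[4pt]
2^{(k+1)s+1}\,\mathbf{1}_B &\mbox{if } k<k_0.
\end{cases}
\]
A direct case check then yields $|u_j(y)-u_j(z)|\leq[\rho(y,z)]^s[g_k(y)+g_k(z)]$ on the corresponding annulus. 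The geometric sums $\sum_{k\geq k_0}2^{-k(\alpha-s)q}\approx r^{(\alpha-s)q}$ and $\sum_{k<k_0}2^{(k+1)sq}\approx r^{-sq}$, combined with the H\"older bound of the preceding paragraph, give $\|\vec g\|_{\ell^q(L^p(X))}+\|\vec g\|_{L^p(X,\ell^q)}\lesssim 2^j r^{-s}\,[\mu(B_{\varrho_{\#}}(x,r_j))]^{1/p}$, which is exactly (e). The boundary case $q=\infty$, $s=\alpha$ proceeds analogously, the $\ell^\infty$ supremum no longer requiring decay in $k$.

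For (f) I would exploit the hypothesis $r\leq c_0\varphi^x_\rho(r)$ to arrange the measure comparison $\mu(B_{\varrho_{\#}}(x,r_j))\leq\frac{1}{2}\mu(B_\rho(x,r))$ for every $j$. The definition \eqref{radphiomeg} of $\varphi^x_\rho(r)$ furnishes a scale $\widetilde{r}>r/(2c_0)$ at which $\mu(B_\rho(x,\widetilde{r}))\leq\mu(B_\rho(x,r))/2$, and shrinking $\delta$ (now permitted to depend on $c_0$) so that $B_{\varrho_{\#}}(x,r_1)\subset B_\rho(x,\widetilde{r})$ propagates the estimate to every $r_j$. Given $\gamma\in(0,\infty)$, the set $E^\gamma_j$ is then defined by cases: $E^\gamma_j:=B_{\varrho_{\#}}(x,r_{j+1})$ when $\gamma<1/2$ (on which $u_j\equiv 1$) and $E^\gamma_j:=B_\rho(x,r)\setminus B_{\varrho_{\#}}(x,r_j)$ when $\gamma\geq 1/2$ (on which $u_j\equiv 0$); in both cases $|u_j-\gamma|\geq 1/2$ pointwise on $E^\gamma_j$, while the measure comparison just arranged forces $\mu(E^\gamma_j)\geq\mu(B_{\varrho_{\#}}(x,r_{j+1}))$.

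I expect the most delicate aspect to be the critical endpoint $s=(\log_2 C_\varrho)^{-1}$, where the forced choice $\alpha=s$ precludes extracting any extra decay in $k$ from a higher-order H\"older regularity; this is precisely why the lemma restricts that endpoint to $q=\infty$, since only then can the fractional gradient be taken essentially constant in $k$ at small scales while still producing a finite $\ell^q$-norm. A secondary accounting issue is to track carefully how $\varrho_{\#}\approx\rho$ distorts the support of $u_j$, since ensuring that the gradient remains confined to a controlled enlargement of $B_{\varrho_{\#}}(x,r_j)$ is what keeps the measure factor in (e) proportional to $[\mu(B_{\varrho_{\#}}(x,r_j))]^{1/p}$ rather than to a much larger ball.
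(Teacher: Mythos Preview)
The paper does not prove this lemma; it is quoted verbatim from \cite[Lemma~4.7]{AYY21} and merely cited. Your construction is the natural one---build $u_j$ as a Lipschitz cutoff of the metric $(\varrho_\#)^\alpha$---and it matches the spirit of what the phrase ``maximally smooth H\"older continuous bump functions'' in the paper suggests. The H\"older estimate, the two-regime fractional gradient, the $\ell^q$ summation, and the argument for (f) are all correct in outline.

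There is, however, one genuine slip. You insist that the gradient $g_k$ live on a \emph{controlled enlargement} $B$ of $B_{\varrho_\#}(x,r_j)$, chosen so that any pair $y,z$ at scale $2^{-k}$ with one point in $\supp(u_j)$ has \emph{both} points in $B$. You then need $\mu(B)\approx\mu(B_{\varrho_\#}(x,r_j))$ to land the estimate in (e). But the lemma does \emph{not} assume $\mu$ is doubling, so even a factor-two enlargement of the radius can have arbitrarily larger measure; your ``secondary accounting issue'' is in fact a real obstruction as written. The fix is simply that no enlargement is needed: take $B=B_{\varrho_\#}(x,r_j)$ itself. If $y\in\supp(u_j)$ and $z\notin B_{\varrho_\#}(x,r_j)$, then $u_j(z)=0$ and $g_k(z)=0$, but the global H\"older bound still gives
\[
|u_j(y)-u_j(z)|=|u_j(y)|\lesssim\|u_j\|_{\dot{\mathscr{C}}^\alpha(X,\varrho_\#)}[\rho(y,z)]^\alpha\leq[\rho(y,z)]^s\cdot C\,2^j r^{-\alpha}\,2^{-k(\alpha-s)}=[\rho(y,z)]^s g_k(y),
\]
so $g_k(y)$ alone carries the inequality. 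The point is that the fractional-gradient condition only asks for $g_k(y)+g_k(z)$, and one nonzero term suffices.

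A minor related adjustment: your sample sequence $r_j=r(1+2^{-j})/2$ lies in $(r/2,r)$, but property (a) requires $B_{\varrho_\#}(x,r_j)\subset B_\rho(x,r)$, which via $\varrho_\#\approx\rho$ forces $r_j\leq r/c$ for the equivalence constant $c$. Scale the whole sequence down by $c$ (so $\delta\approx 1/(2c)$); this is exactly why the lemma records that $\delta$ depends on the proportionality constants.
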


\begin{remark}
In the context of Lemma~\ref{HolderBump}, we  say that both
the sequence $\{r_j\}_{j\in\mathbb{N}}$ of radii and the collection $\{u_j\}_{j\in\mathbb{N}}$ of functions  are associated to the ball $B_\rho(x,r)$.
\end{remark}

The last result in this subsection is an abstract
iteration scheme that will be applied many times in the proofs of our main results (see also \cite[Lemma 4.9]{AYY21}).
A version of this lemma in the setting of metric-measure spaces was proven in \cite[Lemma~16]{agh20},
which itself is an abstract version of an argument used in \cite{korobenkomr}.

\begin{lemma}
\label{iteration}
Let $(X,\rho,\mu)$ be a quasi-metric measure space.
Suppose that $a,\,b,\,p,\,t,\,\theta\in(0,\infty)$ satisfy $a<b$ and $p<t$.
Let  $x\in X$ and $\{r_j\}_{j\in\mathbb{N}}$  be  a sequence of radii  such that
\begin{equation*}
a\leq r_j\leq b
\quad
\text{and}
\quad
\lf[\mu(B_\rho(x,r_{j+1}))\r]^{1/t}\leq\theta 2^j \lf[\mu(B_\rho(x,r_j))\r]^{1/p},
\qquad
\forall\ j\in\mathbb{N}.
\end{equation*}
Then
$$\mu(B_\rho(x,r_{1}))\geq \theta^{-pt/(t-p)}\,2^{-pt^2/(t-p)^2}.$$
\end{lemma}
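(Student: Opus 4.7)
The plan is to iterate the hypothesized inequality in log scale, exploiting the contraction factor $\alpha := t/p > 1$ (valid since $p < t$), and then send the iteration index to infinity, using the lower bound $r_j \geq a$ to prevent the log-measures from diverging to $-\infty$.

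First I would set $V_j := \mu(B_\rho(x,r_j))$. Since $(X,\rho,\mu)$ is a quasi-metric measure space and $r_j \geq a > 0$, each $V_j$ is bounded below by $\mu(B_\rho(x,a)) > 0$. Raising the hypothesized inequality to the $t$-th power gives $V_{j+1} \leq \theta^{t}\, 2^{jt}\, V_j^{\alpha}$. Taking logarithms and setting $v_j := \log V_j$ and $w_j := \alpha^{-j} v_j$, a one-line rearrangement (divide both sides by $\alpha^{j+1}$) puts this in the telescoping form
$$w_{j+1} - w_j \leq \alpha^{-(j+1)}\bigl(t\log\theta + j\, t\log 2\bigr), \qquad \forall\, j\in\mathbb{N}.$$

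Next I would sum from $j = 1$ to $J$ and invoke the convergent geometric series $\sum_{j\geq 1}\alpha^{-(j+1)} = \frac{1}{\alpha(\alpha-1)}$ and $\sum_{j\geq 1} j\,\alpha^{-(j+1)} = \frac{1}{(\alpha-1)^2}$ (both finite precisely because $\alpha > 1$) to obtain the uniform upper bound
$$w_{J+1} - w_1 \leq \frac{t\log\theta}{\alpha(\alpha-1)} + \frac{t\log 2}{(\alpha-1)^2}.$$
Because $v_J \geq \log\mu(B_\rho(x,a))$ and $\alpha^{-J}\to 0$, we have $\liminf_{J\to\infty} w_{J+1} \geq 0$, so letting $J\to\infty$ converts the display into an explicit upper bound for $-w_1 = -\alpha^{-1}\log V_1$ in terms of $\alpha$, $\theta$, and $t$ alone.

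Finally I would multiply through by $\alpha$, exponentiate, and substitute $\alpha = t/p$ so that $\alpha - 1 = (t-p)/p$: the two exponents collapse to $t/(\alpha-1) = pt/(t-p)$ and $\alpha t/(\alpha-1)^2 = pt^2/(t-p)^2$, yielding exactly the advertised bound
$$\mu(B_\rho(x,r_1)) \geq \theta^{-pt/(t-p)}\, 2^{-pt^2/(t-p)^2}.$$
There is no real obstacle here; the only points worth flagging are (i) the hypothesis $p < t$ is used precisely to make the two geometric sums converge, and (ii) the assumption $r_j \geq a$ is used precisely to secure $\liminf_{J}w_{J+1} \geq 0$, while the upper bound $r_j \leq b$ turns out to be unnecessary for this argument.
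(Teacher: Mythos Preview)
Your proof is correct. The paper does not supply its own argument for this lemma, instead citing \cite[Lemma~4.9]{AYY21} and \cite[Lemma~16]{agh20}; the log-scale telescoping you carry out is the standard approach for such iteration schemes, and your observation that only the lower bound $r_j\geq a$ is actually used (to secure $\liminf_{J\to\infty}w_{J+1}\geq 0$ via the uniform positive lower bound $\mu(B_\rho(x,a))>0$) is accurate.
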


\subsection{Principal Results}

The stage is now set to prove our next main result which, among other things, includes a converse to Theorem~\ref{measext}.
The reader is first reminded of the
following pieces of terminology: Given a quasi-metric measure space
$(X,\rho,\mu)$ and an exponent $Q\in(0,\infty)$,
a measure $\mu$ is said to be \emph{$Q$-doubling (on $X$)} provided that there exists a positive constant $\kappa$
such that
\begin{equation}\label{Doub-2-eh}
\kappa\lf(\frac{r}{R}\r)^{Q}\leq\frac{\mu(B_\rho(x,r))}{\mu(B_\rho(y,R))},
\end{equation}
whenever $x,\,y\in X$ and $0<r\leq R<\infty$ satisfy $B_\rho(x,r)\subset B_\rho(y,R)$,
and $\mu$ is said to be \emph{$Q$-Ahlfors regular (on $X$)}  if there exists a positive  constant $\kappa$
such that
\begin{equation*}
\kappa^{-1}\,r^Q\leq\mu\lf(B_\rho(x,r)\r)\leq\kappa r^Q
\end{equation*}
for any $x\in X$ and any finite $r\in (0,{\rm diam}_\rho(X)]$.

\begin{theorem}
\label{measdens-ext-sob}
Let $(X,\rho,\mu)$ be a quasi-metric
measure space where $\mu$ is a Borel
regular $Q$-doubling measure for some $Q\in(0,\infty)$,
and suppose that $\Omega\subset X$ is a
$\mu$-measurable locally uniformly perfect
set. Then  the following statements are equivalent.
\begin{enumerate}[label={\rm(\alph*)}]
\item The measure $\mu$ satisfies the following  measure
density condition on $\Omega$:\ there exists a positive constant $C_\mu$ such that
\begin{equation}
\label{measdensity}
\mu(B_\rho(x,r))\leq C_\mu\,\mu(B_\rho(x,r)\cap\Omega)\,\,\,\mbox{ for any }\,x\in \Omega\mbox{ and  }r\in(0,1].
\end{equation}
		
\item The set $\Omega$ is an $M^s_{p,q}$-extension domain
for some $p\in(0,\infty)$, $q\in(0,\infty]$, and $s\in(0,\infty)$
satisfying $s\preceq_q{\rm ind}\,(X,\rho)$, in the sense that there exists a positive constant $C$
such that,
\begin{eqnarray}
\label{ext1}
\begin{array}{c}
\mbox{for any }\,u\in M^s_{p,q}(\Omega,\rho,\mu),
\,\,\mbox{ there exists a\ }\ \widetilde{u}\in M^s_{p,q}(X,\rho,\mu)
\\[6pt]
\mbox{ for which }\,\,\, u=\widetilde{u}|_{\Omega}\,\,
\mbox{ and }\,\,\,\,\|\widetilde{u}\|_{M^s_{p,q}(X,\rho,\mu)}
\leq C\|u\|_{M^s_{p,q}(\Omega,\rho,\mu)}.
\end{array}
\end{eqnarray}

\item There exist $s,\,p\in(0,\infty)$ and a $q\in(0,\infty]$, satisfying
$s\preceq_q{\rm ind}\,(X,\rho)$ and $p,\,q>Q/(Q+s)$,
and a linear and bounded operator $\mathscr{E}\colon  M^s_{p,q}(\Omega,\rho,\mu)\to M^s_{p,q}(X,\rho,\mu)$
with the property that $(\mathscr{E}u)|_{\Omega}=u$ for any $u\in M^s_{p,q}(\Omega,\rho,\mu)$.

\item There exist an $s\in(0,\infty)$, a $p\in(0,Q/s)$, a $q\in(0,\infty]$,
and a $C_S\in(0,\infty)$  satisfying $s\preceq_q{\rm ind}(\Omega,\rho)$ such that, for any ball $B_0:=B_\rho(x_0,R_0)$ with $x_0\in\Omega$
and $R_0\in(0,1]$,
\begin{equation}
\label{ptc}
\Vert u\Vert_{L^{p^\ast}(B_0\cap\Omega)}\leq
\frac{C_S}{[\mu(B_0)]^{s/Q}}
\left[R_0^s\Vert u\Vert_{\dot{M}^s_{p,q}(\Omega,\rho,\mu)}+\Vert u\Vert_{L^{p}(\Omega)}\right],
\end{equation}
whenever $u\in\dot{M}^s_{p,q}(\Omega,\rho,\mu)$. Here, $p^*:=Qp/(Q-sp)$.

\item There exist an $s\in(0,\infty)$, a $p\in(0,Q/s)$, a $q\in(0,\infty]$,
and a $C_P\in(0,\infty)$ satisfying $s\preceq_q{\rm ind}(\Omega,\rho)$ such that,
for any ball $B_0:=B_\rho(x_0,R_0)$ with $x_0\in\Omega$ and $R_0\in(0,1]$,
\begin{equation}
\label{ptc-poin}
\inf_{\gamma\in\mathbb{R}}\Vert u-\gamma\Vert_{L^{p^\ast}(B_0\cap\Omega)}\leq
\frac{C_PR_0^s}{[\mu(B_0)]^{s/Q}}\,\Vert u\Vert_{\dot{M}^s_{p,q}(\Omega,\rho,\mu)},
\end{equation}
whenever $u\in\dot{M}^s_{p,q}(\Omega,\rho,\mu)$. Here, $p^*:=Qp/(Q-sp)$.

\item There exist  constants $C_1,\,C_2,\,\omega\in(0,\fz)$, $q\in(0,\infty]$,
and $s\in(0,\infty)$ satisfying $s\preceq_q{\rm ind}(\Omega,\rho)$ such that
\begin{equation*}
\int_{B_0\cap\Omega} {\rm exp}\left(C_1\frac{[\mu(B_0)]^{s/Q}|u-u_{B_0\cap\Omega}|}{R_0^s\Vert u\Vert_{\dot{M}^s_{Q/s,q}(\Omega,\rho,\mu)}}\right)^\omega\,d\mu\leq C_2\mu(B_0),
\end{equation*}
whenever $B_0$ is a $\rho$-ball centered in $\Omega$ having radius $R_0\in(0,1]$, and $u\in \dot{M}^{s}_{p,q}(\Omega,\rho,\mu)$ with $\Vert u\Vert_{\dot{M}^s_{p,q}(\Omega,\rho,\mu)}>0$.

\item There exist an $s\in(0,\infty)$, a $p\in(Q/s,\infty)$, a $q\in(0,\infty]$, and a $C_H\in(0,\infty)$ satisfying
$s\preceq_q{\rm ind}(\Omega,\rho)$
such that, for any ball $B_0:=B_\rho(x_0,R_0)$ with $x_0\in\Omega$ and $R_0\in(0,1]$,
every function $u\in \dot{M}^s_{p,q}(\Omega,\rho,\mu)$
has a H\"older continuous representative of order $s-Q/p$ on $B_0\cap \Omega$, denoted by $u$ again, satisfying
\begin{equation}
\label{ptc-hold}
|u(x)-u(y)|\leq C_H\,[\rho(x,y)]^{s-Q/p}\frac{R_0^{Q/p}}{[\mu(B_0)]^{1/p}}\,\Vert u\Vert_{\dot{M}^s_{p,q}(\Omega,\rho,\mu)},
\quad\forall\,x,\,y\in B_0\cap \Omega.
\end{equation}
\end{enumerate}
If the measure $\mu$ is actually $Q$-Ahlfors-regular on $X$ for some $Q\in(0,\infty)$,
then the following statements are also equivalent to each of {\rm(a)-(g)}.
\begin{enumerate}[label={\rm(\alph*)}]\addtocounter{enumi}{7}
\item There exist an $s\in(0,\infty)$, a $p\in(0,Q/s)$, a $q\in(0,\infty]$, and a $c_S\in(0,\infty)$
satisfying $s\preceq_q{\rm ind}(\Omega,\rho)$
such that
\begin{equation*}
\Vert u\Vert_{L^{p^\ast}(\Omega)}\leq
c_S\Vert u\Vert_{{M}^s_{p,q}(\Omega,\rho,\mu)},
\end{equation*}
whenever $u\in{M}^s_{p,q}(\Omega,\rho,\mu)$. Here, $p^*:=Qp/(Q-sp)$.

\item There exist an $s\in(0,\infty)$, a $p\in(0,Q/s)$, a $q\in(0,\infty]$,
and a $c_P\in(0,\infty)$ satisfying $s\preceq_q{\rm ind}(\Omega,\rho)$ such that
\begin{equation*}
\inf_{\gamma\in\mathbb{R}}\Vert u-\gamma\Vert_{L^{p^\ast}(\Omega)}\leq
c_P\Vert u\Vert_{{M}^s_{p,q}(\Omega,\rho,\mu)},
\end{equation*}
whenever $u\in{M}^s_{p,q}(\Omega,\rho,\mu)$. Here, $p^*:=Qp/(Q-sp)$.

\item There exist constants $c_1,\,c_2,\,\omega\in(0,\infty)$,
$q\in(0,\infty]$, and $s\in(0,\infty)$ satisfying $s\preceq_q{\rm ind}(\Omega,\rho)$ such that
\begin{equation*}
\int_{B_0\cap\Omega} {\rm exp}\left(c_1\frac{|u-u_{B_0\cap\Omega}|}{\Vert u\Vert_{{M}^s_{Q/s,q}(\Omega,\rho,\mu)}}\right)^\omega\,d\mu\leq c_2R_0^Q
\end{equation*}
for any $\rho$-ball $B_0$ centered in $\Omega$ with finite radius $R_0\in(0,{\rm diam}_\rho(X)]$,
and  any nonzero function $u\in {M}^{s}_{p,q}(\Omega,\rho,\mu)$.

\item There exist an $s\in(0,\infty)$, a $p\in(Q/s,\infty)$, a $q\in(0,\infty]$, and a
$c_H\in(0,\infty)$ satisfying $s\preceq_q{\rm ind}(\Omega,\rho)$ such that  every function $u\in{M}^s_{p,q}(\Omega,\rho,\mu)$
has a H\"older continuous representative of order $s-Q/p$ on $\Omega$, denoted by $u$ again, satisfying
\begin{equation*}
|u(x)-u(y)|\leq c_H\,[\rho(x,y)]^{s-Q/p}\,\Vert u\Vert_{{M}^s_{p,q}(\Omega,\rho,\mu)},
\quad\forall\ x,\,y\in \Omega.
\end{equation*}
\end{enumerate}
\end{theorem}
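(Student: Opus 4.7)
The plan is to establish the equivalences by a circular chain of implications rooted at (a): I would first prove (a) implies each of (b)-(g) (and (h)-(k) under Ahlfors regularity), then close the circle by showing that any one of (b)-(k) implies (a). The forward implication (a) $\Rightarrow$ (b), (c) is precisely the content of Theorem~\ref{measext} applied to $(X,\rho,\mu)$. For (a) $\Rightarrow$ (d)-(g), the key observation is that the measure density condition \eqref{measdensity}, combined with the global $Q$-doubling of $\mu$ on $X$, upgrades $\mu\lfloor_\Omega$ to a $Q$-doubling measure on $(\Omega,\rho\lfloor_\Omega)$ up to scale $r_\ast\approx 1$: comparing $\mu(B_\rho(x,r)\cap\Omega)$ with $\mu(B_\rho(x,r))$ via \eqref{measdensity} and invoking $Q$-doubling on $X$ at the enclosing ball furnishes \eqref{Doub-2-eh} on $\Omega$ for $R\leq 1/C_\rho$. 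Applying Theorem~\ref{DOUBembedding} to $(\Omega,\rho\lfloor_\Omega,\mu\lfloor_\Omega)$ then delivers the local Sobolev, Poincar\'e, Trudinger, and Morrey inequalities of (d)-(g); the Ahlfors-regular analogues (h)-(k) follow identically from Theorem~\ref{GlobalEmbeddCor} since measure density lifts $Q$-Ahlfors regularity from $X$ to $\Omega$.

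The reverse implications are the substantive part. First, (b) and (c) each imply (d) via a straightforward composition: if $\mathscr{E}$ is any bounded extension, then Theorem~\ref{DOUBembedding}(a) applied to $(X,\rho,\mu)$ on a $\rho$-ball $B_0:=B_\rho(x_0,R_0)$ centered in $\Omega$ (with $R_0\in(0,1]$) gives
\begin{equation*}
\|u\|_{L^{p^\ast}(B_0\cap\Omega)}\leq\|\mathscr{E} u\|_{L^{p^\ast}(B_0)}\ls \frac{1}{[\mu(B_0)]^{s/Q}}\lf[R_0^s\|\mathscr{E} u\|_{\dot{M}^s_{p,q}(X)}+\|\mathscr{E} u\|_{L^p(X)}\r],
\end{equation*}
which, together with the boundedness of $\mathscr{E}$, is precisely (d). An analogous restriction-to-balls argument sends (h)-(k) to their local counterparts (d)-(g). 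Consequently, it suffices to show that each of (d)-(g) implies (a). Fix $x_0\in\Omega$ and $R_0\in(0,1]$, and set $B_0:=B_\rho(x_0,R_0)$. By Lemma~\ref{en2-4}, one only needs to establish $\mu(B_0)\ls\mu(B_0\cap\Omega)$ when $R_0\leq C_{\rho\lfloor_\Omega}\varphi^{x_0}_{\Omega,\rho}(R_0)/\lambda^2$, where $\lambda\in(0,1)$ is the local uniform perfectness constant. Under this restriction, Lemma~\ref{HolderBump} applied to $(\Omega,\rho\lfloor_\Omega,\mu\lfloor_\Omega)$ (invoking its supplementary conclusion (f)) supplies radii $\{r_j\}_{j\in\nn}$ with $\delta R_0<r_{j+1}<r_j<R_0$ and bumps $\{u_j\}_{j\in\nn}\subset M^s_{p,q}(\Omega)$ obeying properties (a)-(f) of that lemma.

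Substituting each $u_j$ into the Sobolev-Poincar\'e inequality (e), taking $\gamma:=1/2$ in Lemma~\ref{HolderBump}(f) to bound the left-hand side from below by $\tfrac{1}{2}[\mu(B_{\varrho_\#}(x_0,r_{j+1}))]^{1/p^\ast}$, and dominating the right-hand side via Lemma~\ref{HolderBump}(e) yields the recurrence
\begin{equation*}
[\mu(B_{\varrho_\#}(x_0,r_{j+1}))]^{1/p^\ast}
\leq
\theta\,2^{j}\,[\mu(B_{\varrho_\#}(x_0,r_j))]^{1/p},\quad\forall\,j\in\nn,
\end{equation*}
with $\theta\approx C_P\,[\mu(B_0)]^{-s/Q}$. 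Lemma~\ref{iteration}, applied with $a:=\delta R_0$, $b:=R_0$, and $t:=p^\ast$, then gives $\mu(B_{\varrho_\#}(x_0,r_1))\gtrsim\theta^{-pp^\ast/(p^\ast-p)}$. The dimensional identity $pp^\ast/(p^\ast-p)=Q/s$ collapses this to $\mu(B_{\varrho_\#}(x_0,r_1))\gtrsim\mu(B_0)$, and since $B_{\varrho_\#}(x_0,r_1)\subset B_0$ by Lemma~\ref{HolderBump}(a), the membership of each $u_j$ in $\Omega$ forces $\mu(B_0\cap\Omega)\gtrsim\mu(B_0)$, which is (a). The implication (d) $\Rightarrow$ (a) is handled with the same scheme, using the \emph{a priori} term $\|u_j\|_{L^p(\Omega)}$ absorbed into the recurrence via the uniform bound $\|u_j\|_{L^p(\Omega)}\ls[\mu(B_{\varrho_\#}(x_0,r_j))]^{1/p}$ from Lemma~\ref{HolderBump}(b), (c).

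The main obstacle will be adapting this template to the critical and supercritical statements (f), (g), (j), (k), where no direct $L^{p^\ast}$ inequality is available. For the H\"older statements (g), (k), I would test \eqref{ptc-hold} on the pair $(x_0, z)$ where $u_j(x_0)=1$ (by Lemma~\ref{HolderBump}(d)) and $z$ is chosen in the annulus furnished by Lemma~\ref{HolderBump}(f) applied with $\gamma:=0$, so that $u_j(z)=0$ and $\rho(x_0,z)\approx r_j$; after absorbing the factor $[\rho(x_0,z)]^{s-Q/p}$, this produces a recurrence of the same shape as above. For the Trudinger-type statements (f), (j), I would bound the exponential integral from below by restricting integration to the set $E_j^{\gamma}$ from Lemma~\ref{HolderBump}(f), whose measure dominates $\mu(B_{\varrho_\#}(x_0,r_{j+1}))$ and on which $|u_j-\gamma|\geq 1/2$, again generating an iterable recurrence. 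A secondary technical layer is the faithful tracking of the optimal constraint $s\preceq_q{\rm ind}(\cdot,\rho)$: whenever one needs to exploit smoothness near $(\log_2 C_\varrho)^{-1}$, one must first use Proposition~\ref{equivspaces} to pass to a quasi-metric $\varrho\approx\rho$ approaching the supremum in \eqref{index}, then to its symmetric regularization $\varrho_\#$ from Theorem~\ref{DST1}, and descend back to $\rho$ at the conclusion—bookkeeping that must be done consistently at every iteration step.
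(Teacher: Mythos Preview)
Your architecture matches the paper's: (a) $\Rightarrow$ (d)--(g) via local $Q$-doubling on $\Omega$ and Theorem~\ref{DOUBembedding}; (a) $\Rightarrow$ (b),(c) via Theorem~\ref{measext}; and (d)--(g) $\Rightarrow$ (a) via the bumps of Lemma~\ref{HolderBump} fed into Lemma~\ref{iteration} after the reduction of Lemma~\ref{en2-4}. Three details need correction.

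First, your route (b) $\Rightarrow$ (d) only works when the exponent $p$ witnessing (b) lies in $(0,Q/s)$, since Theorem~\ref{DOUBembedding}(a) is restricted to that range; if (b) holds for some $p=Q/s$ or $p>Q/s$ you must compose the extension with Theorem~\ref{DOUBembedding}(b) or (c) instead, landing in weak forms of (f) or (g), and the paper makes exactly this three-case split. Second, your argument for (g) $\Rightarrow$ (a) misreads Lemma~\ref{HolderBump}(f): that clause requires $\gamma\in(0,\infty)$ and only delivers $|u_j-\gamma|\geq\tfrac12$ on $E_j^\gamma$, so taking $\gamma=0$ is both out of range and, even formally, would give $|u_j|\geq\tfrac12$ rather than a point where $u_j$ vanishes. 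The paper instead uses local uniform perfectness directly to pick $x_0\in\big(B_\rho(x,r)\cap\Omega\big)\setminus B_\rho(x,\lambda r)$, applies Lemma~\ref{HolderBump} to the \emph{smaller} ball $B_\rho(x,\lambda r)\cap\Omega$ so that $u_1$ is supported there and hence $u_1(x_0)=0$, and then evaluates \eqref{ptc-hold} at the pair $(x,x_0)$ with the single function $u_1$; no iteration is needed in this case. Third, measure density only lifts $Q$-Ahlfors regularity from $X$ to $\Omega$ for radii $r\in(0,1]$, so Theorem~\ref{GlobalEmbeddCor} cannot be applied directly on $(\Omega,\rho,\mu)$ when ${\rm diam}_\rho(\Omega)>1$; the paper obtains (h)--(k) by going through (b) first and applying Theorem~\ref{GlobalEmbeddCor} on $X$, where full Ahlfors regularity is available by hypothesis.
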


\begin{remark}
Theorem~\ref{measdens-ext-sob} asserts that in particular, if just one of the statements in (b) - (k) holds for some $p,q,s$ (in their respective ranges), then all of the statements (b) - (k) hold for \textit{all} $p,q,s$ (in their respective ranges).
\end{remark}

\begin{remark}
In the context of Theorem~\ref{measdens-ext-sob}, if we
include the additional demand that $q\leq p$ in statements (d)-(k),
and require that $s<{\rm ind}\,(X,\rho)$ in statements (b) and (c), then
all of the statements in Theorem~\ref{measdens-ext-sob} remain equivalent with $\dot{M}^s_{p,q}$
and ${M}^s_{p,q}$ replaced by $\dot{N}^s_{p,q}$ and ${N}^s_{p,q}$, respectively.
The case for the full range of $q\in(0,\infty]$ is addressed in Theorem~\ref{measdens-ext-sob-besov} below. Note that an upper bound on
the exponent $q$ for the Besov spaces is to be expected; see \cite[Remark~4.17]{AYY21}.
\end{remark}

\begin{remark}
Recall that we assume that $\Omega$ satisfies
$\mu(B_\rho(x,r)\cap\Omega)>0$ for any
$x\in \Omega$ and   $r\in(0,1]$,
and so $(\Omega,\rho,\mu)$ in Theorem~\ref{measdens-ext-sob}
is a well-defined locally uniformly perfect quasi-metric-measure space.
 This is only used in proving that each of {(b)-(k)} $\Longrightarrow$ {(a)}.
\end{remark}

\begin{proof}[Proof of Theorem \ref{measdens-ext-sob}]
We prove the present theorem  in seven steps.
\medskip

\noindent\textbf{Step 1:\ Proof that  {(a)} implies  {(d)-(g)}:}
Assume that the measure density condition in \eqref{measdensity} holds true. Our plan
is to use the embeddings in Theorem~\ref{DOUBembedding} with $r_\ast:=C_\rho$
for the induced quasi-metric-measure space $(\Omega,\rho,\mu)$. To justify our
use of Theorem~\ref{DOUBembedding} with this choice of $r_\ast$,
we need to show that $\mu$ is $Q$-doubling on $\Omega$ up to scale $r_\ast$. That is, we need to show that
\begin{equation}
\label{gcq5-xt}
\lf(\frac{r}{R}\r)^{Q}\lesssim\frac{\mu(B_\rho(x,r)\cap\Omega)}{\mu(B_\rho(y,R)\cap\Omega)}
\end{equation}
for any balls $B_\rho(x,r)\cap\Omega\subset B_\rho(y,R)\cap\Omega$
with $x,\,y\in\Omega$ and $0<r\leq R\leq C_\rho$.
Indeed, by $C_\rho\geq1$,
we have $C_\rho^{-1}r\leq r\leq C_\rho R$ and $C_\rho^{-1}r\leq C_\rho^{-1}R\leq1$.
Moreover, since $x\in B_\rho(y,R)$, we dduce that $B_\rho(x,C_\rho^{-1}r)\subset B_\rho(y,C_\rho R)$.
Then the $Q$-doubling property \eqref{Doub-2-eh} and the
measure density condition \eqref{measdensity} imply
\begin{equation*}
%\label{gcq5}
\frac{\mu\big(B_\rho(x,r)\cap\Omega\big)}{\mu\big(B_\rho(y,R)\cap\Omega\big)}
\geq\frac{\mu\big(B_\rho(x,C_\rho^{-1}r)\cap\Omega\big)}{\mu\big(B_\rho(y,R)\big)}\geq
\frac{C_\mu^{-1}\,\mu\big(B_\rho(x,C_\rho^{-1}r)\big)}{\mu\big(B_\rho(y,C_\rho R)\big)}\geq\kappa C_\mu^{-1}\left(\frac{C_\rho^{-1}r}{C_\rho R}\right)^{Q},
\end{equation*}
and so \eqref{gcq5-xt} holds true. With \eqref{gcq5-xt}
in hand, applying Theorem~\ref{DOUBembedding} with $r_\ast:=C_\rho$
implies that the embeddings in \eqref{eq18-DOUB}, \eqref{eq19-DOUB}, \eqref{eq20-DOUB}, and \eqref{eq30-DOUB} hold true
for any domain ball $B_\rho(x_0,R_0)\cap\Omega$, where $x_0\in\Omega$ and $R_0\in(0,1]$.

Moving on, fix exponents $s,\,p\in(0,\infty)$ and
$q\in(0,\infty]$. Consider a $\rho$-ball $B_0:=B_\rho(x_0,R_0)$,
where $x_0\in\Omega$ and $R_0\in(0,1]$, and
suppose that $u\in \dot{M}^s_{p,q}(\Omega,\rho,\mu)$.
Note that the pointwise restriction of $u$ to $\sigma B_0\cap\Omega$, also denoted by $u$,
belongs to $\dot{M}^s_{p,q}(\sigma B_0\cap\Omega,\rho,\mu)$. As such,   Theorem~\ref{DOUBembedding}(a)
implies that, if $p\in(0,Q/s)$, then there exists a positive constant $C$ such that
\begin{align}
\label{eq18-DOUB-last}
\Vert u\Vert_{L^{p^\ast}(B_0\cap\Omega)}&\leq
\frac{C}{[\mu(\sigma B_0\cap\Omega)]^{s/Q}}\left[
R_0^s\Vert u\Vert_{\dot{M}^s_{p,q}(\sigma B_0\cap\Omega)}
+\Vert u\Vert_{L^{p}(\sigma B_0\cap\Omega)}\right]
\noz\\
&\leq\frac{C  C_\mu^{s/Q}}{[\mu(B_0)]^{s/Q}}\left[
R_0^s\Vert u\Vert_{\dot{M}^s_{p,q}(\Omega)}
+\Vert u\Vert_{L^{p}(\Omega)}\right],
\end{align}
where, in obtaining the second inequality in \eqref{eq18-DOUB-last}, we have used the measure density condition \eqref{measdensity} and the fact that $\sigma\geq1$ to write
$$
\mu(\sigma B_0\cap\Omega)\geq\mu(B_0\cap\Omega)\geq C_\mu^{-1}\mu(B_0).
$$
This is the desired inequality in \eqref{ptc} and hence, (d) holds  true.
A similar line of reasoning will  show that (a) also implies each of {(e)-(g)}.
This finishes Step 1.

\medskip

\noindent \textbf{Step 2:\ Proof that each of {(d)-(g)} implies {(a)}:} As a preamble,
we make a few comments to clarify for the reader how we will employ Lemma~\ref{HolderBump}
for the space $(\Omega,\rho,\mu)$ in the sequel. First, observe that the assumptions on
$\Omega$ ensure that $(\Omega,\rho)$ is a well-defined locally uniformly perfect
quasi-metric measure space. Moreover, if $s\preceq_q{\rm ind}(\Omega,\rho)$,
then we can always choose a quasi-metric $\varrho$ on $\Omega$
such that $\varrho\approx\rho\lfloor_\Omega$  and $s\leq(\log_{2}C_\varrho)^{-1}$,
where $C_\varrho\in[1,\infty)$ is as in \eqref{C-RHO.111}, and the value  $s=(\log_{2}C_\varrho)^{-1}$
can only occur when $q=\infty$. Throughout this step,
we let $\varrho$ denote such a quasi-metric whenever
$s\preceq_q{\rm ind}(\Omega,\rho)$, and let $\varrho_{\#}$
be the regularized quasi-metric given by Theorem~\ref{DST1}.
Also, $\lambda\in(0,1)$ will stand for the local uniformly perfect
constant as in \eqref{U-perf} for the quasi-metric space $(\Omega,\rho)$.
Recall that there is no loss in generality by assuming that $\lambda<(C_{\rho\lfloor_\Omega}\widetilde{C}_{\rho\lfloor_\Omega})^{-2}$.

We now prove that (d)  implies (a). Fix a ball $B:=B_\rho(x,r)$
with $x\in\Omega$ and $r\in(0,1]$. By assumption, there exist a $p\in(0,Q/s)$,
a $q\in(0,\infty]$,  an $s\in(0,\infty)$ satisfying $s\preceq_q{\rm ind}(\Omega,\rho)$, and a $C_S\in(0,\infty)$
such that \eqref{ptc} holds true whenever $u\in\dot{M}^s_{p,q}(\Omega,\rho,\mu)$.
Note that, if $r>{\rm diam}_\rho(\Omega)$, then $B=\Omega$
and \eqref{measdensity} is trivially satisfied with any $C_\mu\in[1,\infty)$.
As such, we assume that $r\leq{\rm diam}_\rho(\Omega)$. Let
$\{r_j\}_{j\in\mathbb{N}}$ and $\{u_j\}_{j\in\mathbb{N}}$ be as in
Lemma~\ref{HolderBump} associated to the domain ball $B\cap\Omega$ in the induced quasi-metric measure space $(\Omega,\rho,\mu)$. Then,
for any $j\in\mathbb{N}$, we have $u_j\in M^s_{p,q}(\Omega,\rho,\mu)$ which,
in turn, implies that  $u_j$ satisfies \eqref{ptc} with $B_0$ replaced by $B$.
Moreover, $B_{\varrho_{\#}}(x,r_j)\cap\Omega\subset B\cap\Omega$.
As such, it follows from the properties listed in {(b)-(e)} of Lemma~\ref{HolderBump}
that, for any $j\in\mathbb{N}$,
\begin{equation}\label{xu-12-X}
\|u_j\|_{\dot{M}^s_{p,q}(\Omega)}\lesssim r^{-s}\,2^j\lf[\mu(B_{\varrho_\#}(x,r_j)\cap\Omega)\r]^{1/p}
\end{equation}
and
\begin{equation}\label{xu-13-X}
\Vert u_j\Vert_{L^{p}(\Omega)}\lesssim \lf[\mu(B_{\varrho_\#}(x,r_j)\cap\Omega)\r]^{1/p}.
\end{equation}
Therefore, we have
\begin{align}
\label{djq-476}
r^{s}\|u_j\|_{\dot{M}^s_{p,q}(\Omega)}+\Vert u_j\Vert_{L^{p}(\Omega)}
&\lesssim (2^j+1)\,\lf[\mu(B_{\varrho_\#}(x,r_j)\cap\Omega)\r]^{1/p}\noz
\\
&\lesssim 2^{j+1}\,\lf[\mu(B_{\varrho_\#}(x,r_j)\cap\Omega)\r]^{1/p}.
\end{align}
Moreover, since $u_j\equiv1$ on the set $B_{\varrho_\#}(x,r_{j+1})\cap\Omega$, one deduces that
\begin{equation}\label{xu-14-X}
\Vert u_j\Vert_{L^{p^\ast}(B\cap\Omega)}\ge \Vert u_j\Vert_{L^{p^\ast}(B_{\varrho_\#}(x,r_{j+1})\cap\Omega)}=\lf[\mu(B_{\varrho_{\#}}(x,r_{j+1})\cap\Omega)\r]^{1/p^*}.
\end{equation}
Combining \eqref{ptc}, \eqref{djq-476}, and \eqref{xu-14-X} gives
\begin{equation}
\label{xu-15-X}
\lf[\mu(B_{\varrho_{\#}}(x,r_{j+1})\cap\Omega)\r]^{1/p^*}
\leq C
[\mu(B)]^{-s/Q} 2^{j}\lf[\mu(B_{\varrho_{\#}}(x,r_j)\cap\Omega)\r]^{1/p},
\quad\forall\,j\in\mathbb{N},
\end{equation}
where $C\in(0,\infty)$ is independent of both $x$ and $r$. Since $0<\delta r<r_j<r<\infty$ for
any $j\in\mathbb{N}$, where $\delta\in(0,1)$ is as in Lemma~\ref{HolderBump},
we  invoke Lemma~\ref{iteration} with the quasi-metric $\varrho_\#$, and
$$
p:=p,\quad
t:=p^*,\quad
\text{and}
\quad\theta:=C\,[\mu(B)]^{-s/Q},
$$
to conclude that
\begin{equation}
\label{xu-16-X}
\mu(B\cap\Omega)\geq\mu(B_{\varrho_{\#}}(x,r_1)\cap\Omega)\gtrsim 2^{-\frac{Q^2}{s^2p}}
\left[\mu(B)^{-s/Q}\right]^{-Q/s},
\end{equation}
which implies the  measure density condition \eqref{measdensity}. This finishes the proof of
the statement that {(d)} implies {(a)}.

We next prove that {(e)} implies {(a)}.
Fix a ball $B:=B_\rho(x,r)$ with $x\in\Omega$ and $r\in(0,1]$
and recall that we can assume that $r\leq{\rm diam}_\rho(\Omega)$.
By the statement in  {(e)},   there exist an $s\in(0,\infty)$
satisfying $s\preceq_q{\rm ind}(\Omega,\rho)$, a $p\in(0,Q/s)$, a $q\in(0,\infty]$,
and a $C_P\in(0,\infty)$ such that \eqref{ptc-poin} holds true
whenever $u\in{M}^s_{p,q}(\Omega,\rho,\mu)$, which further leads to the
following  weaker version of
\eqref{ptc-poin} (stated here with $B_0=B$ for the arbitrarily fixed ball $B$)
\begin{equation}
\label{ptc-poin-weak}
\inf_{\gamma\in\mathbb{R}}\Vert u-\gamma\Vert_{L^{p^\ast}(B\cap\Omega)}\leq
\frac{C_Pr^s}{[\mu(B)]^{s/Q}}\,\Vert{u}\Vert_{M^s_{p,q}(\Omega,\rho,\mu)}.
\end{equation}
We next show that \eqref{ptc-poin-weak} implies \eqref{measdensity}.
In light  of Lemma~\ref{en2-4}, we may
assume that $$r\leq C_{\rho\lfloor_\Omega}\varphi^x_{\Omega,\rho}(r)/\lambda^2.$$
Then, by Lemma~\ref{HolderBump}, we find that there exist $\{r_j\}_{j\in\mathbb{N}}\subset(0,\fz)$ and
a collection  $\{u_j\}_{j\in\mathbb{N}}$ of functions  that are associated to
$B\cap\Omega$ and have the properties {(a)-(f)} listed in Lemma~\ref{HolderBump}, with the choice $c_0:=C_{\rho\lfloor_\Omega}/\lambda^2\in(1,\infty)$.
In particular,  for any $j\in\mathbb{N}$,  $u_j\in M^s_{p,q}(\Omega,\rho,\mu)$.
Hence, for any $j\in\nn$,  $u_j$ satisfies \eqref{ptc-poin-weak} and the estimates in
both
\eqref{xu-12-X} and \eqref{xu-13-X}. In particular, we obtain
\begin{equation}
\label{Gkw-924-2-p}
\begin{split}
\|u_j\|_{{M}^s_{p,q}(\Omega)}
&\lesssim (r^{-s}2^j+1)\,\lf[\mu(B_{\varrho_\#}(x,r_j)\cap\Omega)\r]^{1/p}
\lesssim r^{-s}2^{j}\,\lf[\mu(B_{\varrho_\#}(x,r_j)\cap\Omega)\r]^{1/p},
\end{split}
\end{equation}
where the last inequality follows from the fact that $1\leq r^{-s}2^{j}$. On the other hand,
Lemma~\ref{HolderBump}(f) gives
\begin{align}
\label{Gkw-925-2-p}
\inf_{\gamma\in\mathbb{R}}\Vert u_j-\gamma\Vert_{L^{p^\ast}(B\cap\Omega)}
\geq\inf_{\gamma\in\mathbb{R}}\frac{1}{2}\lf[\mu\lf(E_j^\gamma\r)\r]^{1/p^*}
\geq\frac{1}{2}\lf[\mu\lf(B_{\varrho_{\#}}(x,r_{j+1})\cap\Omega\r)\r]^{1/p^*},\quad \forall\,j\in\mathbb{N}.
\end{align}
In concert, \eqref{ptc-poin-weak}, \eqref{Gkw-925-2-p}, and \eqref{Gkw-924-2-p} give
\begin{equation*}
\lf[\mu\lf(B_{\varrho_{\#}}(x,r_{j+1})\cap\Omega\r)\r]^{1/p^*}
\lesssim
 \frac{1}{[\mu(B)]^{s/Q}}  2^{j}\,\lf[\mu(B_{\varrho_\#}(x,r_j)\cap\Omega)\r]^{1/p},
\quad\forall\,j\in\mathbb{N}.
\end{equation*}
At this stage, we have arrived at the inequality  \eqref{xu-15-X}. Therefore, arguing as in the proofs of
both
\eqref{xu-15-X} and \eqref{xu-16-X}  leads to \eqref{measdensity}. This finishes
the proof of the statement that  {(e)} implies {(a)}.

Next, we turn our attention to showing that
{(f)} implies {(a)}  by proving that the measure density
condition \eqref{measdensity} can be deduced from the
following weaker version of {(f)}: There exist constants $C_1,\,C_2,\,\omega\in(0,\infty)$,
$q\in(0,\infty]$, and $s\in(0,\infty)$ satisfying $s\preceq_q{\rm ind}(\Omega,\rho)$ such that,
for any $\rho$-ball $B:=B_\rho(x,r)$ with $x\in\Omega$ and $r\in(0,1]$, one has
\begin{equation}
\label{ptc-trud-weak}
\inf_{\gamma\in\mathbb{R}}\int_{B\cap\Omega} {\rm exp}\left(C_1\frac{[\mu(B)]^{s/Q}|u-\gamma|}{r^s\Vert{u}\Vert_{M^s_{Q/s,q}(\Omega,\rho,\mu)}}\right)^\omega\,d\mu\leq C_2\mu(B),
\end{equation}
whenever $u\in M^s_{Q/s,q}(\Omega,\rho,\mu)$ is nonconstant. To this end, fix a ball $B:=B_\rho(x,r)$ with $x\in\Omega$ and $r\in(0,1]$. As before, it suffices to consider the case when  $r\leq{\rm diam}_\rho(\Omega)$ and $r\leq C_{\rho\lfloor_\Omega}\varphi^x_{\Omega,\rho}(r)/\lambda^2$.
Now, let $\{r_j\}_{j\in\mathbb{N}}$ and $\{u_j\}_{j\in\mathbb{N}}$ be as in Lemma~\ref{HolderBump}
associated to the domain ball $B\cap\Omega$.
Then, by Lemma~\ref{HolderBump}(e), for any $j\in\mathbb{N}$,
$u_j\in M^s_{p,q}(\Omega,\rho,\mu)$ is nonconstant in $\Omega$
and hence satisfies \eqref{ptc-trud-weak} with
$B$ and the estimate in \eqref{Gkw-924-2-p}. Therefore, for any $\gamma\in\mathbb{R}$ and any $j\in\mathbb{N}$ we have
$$
\frac{[\mu(B)]^{s/Q}|u_j-\gamma|}{r^s\Vert{u}\Vert_{M^s_{p,q}(\Omega,\rho,\mu)}}
\geq\frac{C[\mu(B)]^{s/Q}|u_j-\gamma|}{2^{j}
\,[\mu(B_{\varrho_{\#}}(x,r_{j})\cap\Omega)]^{s/Q}},
$$
for some positive constant $C$ independent of $x$, $r$, $\gamma$, and $j$.
Combining this with   Lemma~\ref{HolderBump}(f) and \eqref{ptc-trud-weak}, we conclude that
\begin{align}
\label{Iue.5-2-X-52-p}
&\mu(B_{\varrho_{\#}}(x,r_{j+1})\cap\Omega)\,
{\rm exp}\left(\frac{C[\mu(B)]^{s/Q}}{2^{j+1}
\,[\mu(B_{\varrho_{\#}}(x,r_{j})\cap\Omega)]^{s/Q}}\right)^\omega\nonumber\\
&
\quad\leq\inf_{\gamma\in\mathbb{R}}\int_{E_j^\gamma} {\rm exp}\left(C_1\frac{[\mu(B)]^{s/Q}|u_j-\gamma|}{r^s\Vert{u_j}\Vert_{M^s_{Q/s,q}(\Omega,\rho,\mu)}}\right)^\omega\,d\mu\leq C_2\mu(B).
\end{align}
By increasing the positive constant $C_2$,
we may assume that $C_2>1$. As such, using the elementary
estimate $\log(z)\leq 2Q(s\omega)^{-1}\,z^{s\omega/(2Q)}$ for any $z\in(0,\infty)$,
a rewriting of \eqref{Iue.5-2-X-52-p} implies that
\begin{align*}
\frac{C[\mu(B)]^{s/Q}}{2^{j+1}\,[\mu(B_{\varrho_{\#}}(x,r_{j})\cap\Omega)]^{s/Q}}
&\leq\lf[\log\lf(C_2\frac{\mu(B)}{\mu(B_{\varrho_{\#}}(x,r_{j+1})\cap\Omega)}\r)\r]^{1/\omega}\\
&\leq \lf[2Q(s\omega)^{-1}\r]^{1/\omega}\,C_2^{s/(2 Q)}\left[\frac{\mu(B)}{\mu(B_{\varrho_{\#}}(x,r_{j+1})\cap\Omega)}\right]^{s/(2Q)}.
\end{align*}
Therefore,
\begin{equation*}
\lf[\mu(B_{\varrho_{\#}}(x,r_{j+1})\cap\Omega)\r]^{s/(2Q)}\leq
 \frac{C}{[\mu(B_0)]^{s/(2Q)}} \,2^{j}\,[\mu(B_{\varrho_{\#}}(x,r_{j})\cap\Omega)]^{s/Q}.
\end{equation*}
Now, applying Lemma~\ref{iteration} with the quasi-metric $\varrho_\#$,
$$
p:=Q/s,\quad
t:=2Q/s,\quad
\text{and}
\quad
\theta:=\frac{C}{[\mu(B)]^{s/(2Q)}},
$$
we obtain
\begin{equation}
\label{ptc-trud-weak-last}
\mu(B\cap\Omega)\geq\mu(B_{\varrho_{\#}}(x,r_1)\cap\Omega)\gtrsim
\left\{\frac{C}{[\mu(B)]^{s/(2Q)}}\right\}^{\frac{-2Q}{s}}\,
2^{\frac{-4Q}{s^2}},
\end{equation}
which proves \eqref{measdensity}.
The proof of the statement that {(e)} implies {(a)} is now complete.

As concerns the implication {(g)}\,$\Longrightarrow$\,{(a)},
note that, by  {(g)}, we find that
there exist an $s\in(0,\infty)$ satisfying $s\preceq_q{\rm ind}(\Omega,\rho)$, a
$p\in(Q/s,\infty)$, a $q\in(0,\infty]$, and a $C_H\in(0,\infty)$ such that
the following weaker version of \eqref{ptc-hold} holds true,
whenever $B:=B_\rho(x,r)$ with $x\in\Omega$ and $r\in(0,1]$
is a $\rho$-ball and $u\in M^s_{p,q}(\Omega,\rho,\mu)$,
\begin{eqnarray}
\label{ptc-hold-weak}
|u(x)-u(y)|\leq C_H\,[\rho(x,y)]^{s-Q/p}\frac{r^{Q/p}}{[\mu(B)]^{1/p}}\,\Vert{u}\Vert_{M^s_{p,q}(\Omega,\rho,\mu)},
\quad\forall\,x,\,y\in B\cap \Omega.
\end{eqnarray}
We  now show that  {(a)} follows from \eqref{ptc-hold-weak}.
To this end, fix an $x\in\Omega$ and an $r\in(0,1]$ again.
If $B_\rho(x,r)=\Omega$, then \eqref{measdensity} easily follows
with any choice of $C_\mu\in[1,\infty)$. Thus, in what follows, we
assume that $\Omega\setminus B_\rho(x,r)\neq\emptyset$.
Granted this, using the local uniform perfectness of $(\Omega,\rho)$,
we can choose a point $x_0\in [B_{\rho}(x,r)\cap\Omega]\setminus B_{\rho}(x,\lambda r)$.
By Lemma~\ref{HolderBump}, we know that there exist a sequence  $\{r_j\}_{j\in\mathbb{N}}$ of
radii and a family $\{u_j\}_{j\in\mathbb{N}}$ of functions that are associated to
the domain ball $B_\rho(x,\lambda r)\cap\Omega$.
Consider the function $u_1$ from this collection. According to Lemma~\ref{HolderBump}, we have
$u_1\in M^s_{p,q}(\Omega,\rho,\mu)$, $u_1\equiv 1$ on $B_{\varrho_\#}(x,r_2)\cap\Omega$
and $u_1\equiv 0$ on $\Omega\setminus B_{\varrho_\#}(x,r_1)$. Moreover,
by appealing to \eqref{Gkw-924-2-p}, we obtain the following estimate
\begin{equation}
\label{fna-487}
\|u_1\|_{{M}^s_{p,q}(\Omega)}\lesssim r^{-s}\,\lf[\mu(B_{\varrho_\#}(x,r_1)\cap\Omega)\r]^{1/p}
\lesssim r^{-s}\,\lf[\mu(B_{\rho}(x,r)\cap\Omega)\r]^{1/p}.
\end{equation}
Note that we have simply used the inclusion $B_{\varrho_\#}(x,r_1)\cap\Omega\subset B_{\rho}(x,\lambda r)\cap\Omega$
[see Lemma~\ref{HolderBump}(a)] and the fact that $\lambda<1$ in obtaining the last inequality in \eqref{fna-487}.
Now, the choice of the point $x_0$ ensures that $x_0\not\in B_{\varrho_\#}(x,r_1)$ and so $u_1(x_0)=0$.
Thus, it follows from \eqref{ptc-hold-weak} (used with $u=u_1$) and \eqref{fna-487} that 	
\begin{align*}
1&=|u_1(x)-u_1(x_0)|
\leq C_H [\rho(x,x_0)]^{s-Q/p}\frac{r^{Q/p}}{[\mu(B_\rho(x, r))]^{1/p}}\,\|u_1\|_{{M}^s_{p,q}(\Omega)}\\
&\lesssim r^{s-Q/p}\frac{r^{Q/p-s}}{[\mu(B_\rho(x, r))]^{1/p}}\,[\mu(B_{\rho}(x,r)\cap\Omega)]^{1/p}\\
&\lesssim\frac{[\mu(B_\rho(x, r)\cap\Omega)]^{1/p}}{[\mu(B_\rho(x,r))]^{1/p}}.
\end{align*}
The desired estimate in \eqref{measdensity} now follows.
This finishes the proof that {(g)}\,$\Longrightarrow$\,{(a)} and concludes Step~2.	

\medskip

\noindent\textbf{Step 3:\ Proof that {(a)} implies {(c)}:} This implication follows immediately from
Theorem~\ref{measext}.

\medskip

\noindent\textbf{Step 4:\ Proof that {(c)} implies {(b)}:} This trivially holds true.

\medskip

\noindent \textbf{Step 5:\ Proof that {(b)} implies {(a)}:} Assume that the statement in  {(b)} holds
true. That is, $\Omega$ is an $M^s_{p,q}$-extension domain in the sense of \eqref{ext1}
for some exponents $p\in(0,\infty)$, $q\in(0,\infty]$, and $s\in(0,\infty)$
satisfying $s\preceq_q{\rm ind}\,(X,\rho)$. Since $C_{\rho\lfloor_\Omega}\leq C_{\rho}$ [due to their definitions in \eqref{C-RHO.111}], it follows from  \eqref{index}
that ${\rm ind}\,(X,\rho)\leq{\rm ind}\,(\Omega,\rho)$.
Hence, we have $s\preceq_q{\rm ind}\,(\Omega,\rho)$.
To ease the notation in what follows, we will omit the notation `$\lfloor_\Omega$' on the quasi-metric. In what follows, we let $\sigma:=C_\rho$.
We now proceed by considering three cases depending on the size of $p$.

\medskip

\noindent\texttt{CASE 1:\ $p\in(0,Q/s)$.}
In this case, we first show that inequality \eqref{ptc-poin-weak} holds true
for any ball $B:=B_\rho(x,r)$ with $x\in\Omega$ and $r\in(0,1]$. Fix such a ball $B$ and a function $u\in M^s_{p,q}(\Omega,\rho,\mu)$. By \eqref{ext1},
we know that there exists a $\widetilde{u}\in M^s_{p,q}(X,\rho,\mu)$ such that $u=\widetilde{u}|_{\Omega}$ and $\|\widetilde{u}\|_{M^s_{p,q}(X,\rho,\mu)}\ls\|u\|_{M^s_{p,q}(\Omega,\rho,\mu)}$. Since $\mu$ is $Q$-doubling on $X$
and $\widetilde{u}\in M^s_{p,q}(\sigma B)$, \eqref{eq19-DOUB} in Theorem~\ref{DOUBembedding}
(applied with $B_0$ therein replaced by $B$) gives
\begin{align*}
\|\widetilde{u}\|_{M^s_{p,q}(X,\rho,\mu)}
&\geq \|\widetilde{u}\|_{\dot{M}^s_{p,q}(\sigma B,\rho,\mu)}
\gtrsim\frac{[\mu(\sigma B)]^{s/Q}}{r^s}\inf_{\gamma\in\mathbb{R}}\Vert
\widetilde{u}-\gamma\Vert_{L^{p^\ast}(B)}\\
&\gtrsim\frac{[\mu(B)]^{s/Q}}{r^s}\inf_{\gamma\in\mathbb{R}}\Vert u-\gamma\Vert_{L^{p^\ast}(B\cap\Omega)},
\end{align*}	
where $p^\ast:=Qp/(Q-sp)$. As such, we have
\begin{align*}
\inf_{\gamma\in\mathbb{R}}\Vert u-\gamma\Vert_{L^{p^\ast}(B\cap\Omega)}
&\lesssim\frac{r^s}{[\mu(B)]^{s/Q}}\|\widetilde{u}\|_{M^s_{p,q}(X,\rho,\mu)}
\lesssim\frac{r^s}{[\mu(B)]^{s/Q}}\|u\|_{M^s_{p,q}(\Omega,\rho,\mu)}.
\end{align*}
Hence, \eqref{ptc-poin-weak} holds true whenever $u\in M^s_{p,q}(\Omega,\rho,\mu)$ which, in turn, implies that the measure density condition \eqref{measdensity}, as we have proved in Step 2.
\medskip

\noindent\texttt{CASE 2:\ $p=Q/s$.} In this case,
we claim that \eqref{ptc-trud-weak} holds true.
To see this, let  $B:=B_\rho(x,r)$ with $x\in\Omega$ and $r\in(0,1]$,
and fix a nonconstant function $u\in M^s_{p,q}(\Omega,\rho,\mu)$.
By \eqref{ext1}, we know that there exists a $\widetilde{u}\in M^s_{p,q}(X,\rho,\mu)$ such
that $u=\widetilde{u}|_{\Omega}$ and $\|\widetilde{u}\|_{M^s_{p,q}(X,\rho,\mu)}\le C
\|u\|_{M^s_{p,q}(\Omega,\rho,\mu)}$
for some positive constant $C$ independent of $u$. If $\|\widetilde{u}\|_{\dot{M}^s_{p,q}(\sigma B)}=0$, then, by
Proposition~\ref{constant}, we know that $\widetilde{u}$ is constant $\mu$-almost everywhere in $\sigma B$.
Hence, $u=\widetilde{u}$ is constant $\mu$-almost everywhere in
$B\cap\Omega$ and \eqref{ptc-trud-weak} is trivial.
If $\|\widetilde{u}\|_{\dot{M}^s_{p,q}(\sigma B)}>0$,
since $\mu$ is $Q$-doubling on $X$, applying Theorem~\ref{DOUBembedding}(b) with $B_0=B$,
we find that
\begin{align*}
\mu(B)&\gtrsim\int_{B} {\rm exp}\left(C_1\frac{[\mu(\sigma B)]^{s/Q}|\widetilde{u}-\widetilde{u}_{B}|}{r^s\|\widetilde{u}\|_{\dot{M}^s_{p,q}(\sigma B,\rho,\mu)}}\right)^\omega\,d\mu
\\
&\gtrsim\inf_{\gamma\in\mathbb{R}}\int_{B\cap\Omega} {\rm exp}\left(C_1\frac{[\mu(B)]^{s/Q}|u-\gamma|}{r^s\|\widetilde{u}\|_{{M}^s_{p,q}(X,\rho,\mu)}}\right)^\omega\,d\mu
\\
&\gtrsim\inf_{\gamma\in\mathbb{R}}\int_{B\cap\Omega} {\rm exp}\left(C_1\frac{[\mu(B)]^{s/Q}|u-\gamma|}{Cr^s\|u\|_{M^s_{p,q}(\Omega,\rho,\mu)}}\right)^\omega\,d\mu.
\end{align*}
Thus, \eqref{ptc-trud-weak} holds true, and the measure density condition \eqref{measdensity}
now follows from the argument presented in \eqref{ptc-trud-weak}-\eqref{ptc-trud-weak-last}.
\bigskip

\noindent\texttt{CASE 3:\ $p\in(0,Q/s)$.} In this case, fix a ball $B:=B_\rho(x,r)$ with $x\in\Omega$ and $r\in(0,1]$, and let $u\in M^s_{p,q}(\Omega,\rho,\mu)$.
Again, by \eqref{ext1}, we know that
there exists a $\widetilde{u}\in M^s_{p,q}(X,\rho,\mu)$ such that
$u=\widetilde{u}|_{\Omega}$ and $\|\widetilde{u}\|_{M^s_{p,q}(X,\rho,\mu)}
\ls\|u\|_{M^s_{p,q}(\Omega,\rho,\mu)}$. Since $\mu$ is $Q$-doubling on $X$, applying
Theorem~\ref{DOUBembedding}(c) with $B_0=B$, we conclude that, if $x,\,y\in B\cap\Omega$, then
\begin{align*}
|u(x)-u(y)|
&=|\widetilde{u}(x)-\widetilde{u}(y)|
\\
&\lesssim[\rho(x,y)]^{s-Q/p}\frac{r^{Q/p}}{[\mu(\sigma B)]^{1/p}}\,\|\widetilde{u}\|_{\dot{M}^s_{p,q}(\sigma B,\rho,\mu)}
\\
&\lesssim[\rho(x,y)]^{s-Q/p}\frac{r^{Q/p}}{[\mu(B)]^{1/p}}\,\|u\|_{M^s_{p,q}(\Omega,\rho,\mu)}.
\end{align*}
This implies that \eqref{ptc-hold-weak} holds true, which further implies
the measure density condition \eqref{measdensity}, as proved in Step 2.
This finishes the proof of the statement that  {(b)}\,$\Longrightarrow$\,{(a)}, and hence  of Step~5.

\medskip

There remains to show that {(a)-(k)} are equivalent under
the assumption that $\mu$ is $Q$-Ahlfors regular on $X$. Since a
$Q$-Ahlfors regular measure is also a $Q$-doubling measure (with the same value of $Q$),
we immediately have that {(a)-(g)} are equivalent in light of Steps~1-5.
As such, there are only two more steps left in the proof of the present theorem.

\medskip

\noindent \textbf{Step 6:\ Proof that each of {(h)-(k)} implies {(a)}:}
Note that the $Q$-Ahlfors regularity condition ensures that $\mu(B(x,r))\approx r^Q$ for
any $x\in X$ and $r\in(0,1]$. With this observation in hand, it is straightforward to check
that the statements  (h), {(i)}, {(j)}, and {(k)} of Theorem~\ref{measdens-ext-sob} imply the statements (d), (e), (f), and (g), respectively. Hence, {(a)} follows as a consequence of {(i)}-{(k)},
given what we have already established in Step~2. This finishes the proof of  Step~6.

\medskip

\noindent \textbf{Step 7:\ Proof that  {(b)} implies each of {(h)-(k)}:} This step   proceeds along a similar line of reasoning as the one in Step~5 where, in place of the local embeddings for $Q$-doubling measures in Theorem~\ref{DOUBembedding}, we use the global embeddings for $Q$-Ahlfors regular measures in Theorem~\ref{GlobalEmbeddCor}. This finishes the proof of Step~7 and, in turn, the proof of Theorem~\ref{measdens-ext-sob}.
\end{proof}

The following theorem is a version of Theorem~\ref{measdens-ext-sob} adapted to $N^s_{p,q}$ spaces which, to our knowledge, is a brand new result even in the Euclidean setting.

\begin{theorem}
\label{measdens-ext-sob-besov}
Let $(X,\rho,\mu)$ be a quasi-metric measure space
where $\mu$ is a Borel regular $Q$-doubling measure for
some $Q\in(0,\infty)$, and suppose that $\Omega\subset X$
is a $\mu$-measurable locally uniformly perfect set. Then
the following statements are equivalent.
\begin{enumerate}[label={\rm(\alph*)}]
\item The measure $\mu$ satisfies the following  measure density condition on $\Omega$:\ there exists some constant $C_\mu\in(0,\infty)$ with the property that
\begin{equation*}
\mu(B_\rho(x,r))\leq C_\mu\,\mu(B_\rho(x,r)\cap\Omega)\,\,\,\mbox{for any }\,x\in \Omega\mbox{ and  }r\in(0,1].
\end{equation*}
		
\item The set $\Omega$ is an $N^s_{p,q}$-extension domain for some $p\in(0,\infty)$,
$q\in(0,\infty]$, and $s\in(0,{\rm ind}\,(X,\rho))$  in the sense that
there exists a positive constant $C$ such that,
\begin{eqnarray*}
\begin{array}{c}
\mbox{for any }\,u\in N^s_{p,q}(\Omega,\rho,\mu),
\,\,\mbox{ there exists a }\ \widetilde{u}\in N^s_{p,q}(X,\rho,\mu)
\\[6pt]
\mbox{ for which }\,\,\, u=\widetilde{u}|_{\Omega}\,\,
\mbox{ and }\,\,\,\,\|\widetilde{u}\|_{N^s_{p,q}(X,\rho,\mu)}
\leq C\|u\|_{N^s_{p,q}(\Omega,\rho,\mu)}.
\end{array}
\end{eqnarray*}

\item There exist an $s\in(0,{\rm ind}\,(X,\rho))$, a $p\in(0,\infty)$, and a $q\in(0,\infty]$ with $p,\,q>Q/(Q+s)$,
and a bounded linear   operator $\mathscr{E}\colon  N^s_{p,q}(\Omega,\rho,\mu)\to N^s_{p,q}(X,\rho,\mu)$ with the property that $(\mathscr{E}u)|_{\Omega}=u$ for any $u\in N^s_{p,q}(\Omega,\rho,\mu)$.

\item There exist  $\varepsilon,\,s\in(0,\infty)$
satisfying $\varepsilon<s\preceq_q{\rm ind}(\Omega,\rho)$, a $p\in(0,Q/\varepsilon)$,
a $q\in(0,\infty]$, and a $C_S\in(0,\infty)$ such that, for any
ball $B_0:=B_\rho(x_0,R_0)$ with $x_0\in\Omega$ and $R_0\in(0,1]$, one has
\begin{equation*}
\Vert u\Vert_{L^{p^\ast}(B_0\cap\Omega)}\leq
\frac{C_S}{[\mu(B_0)]^{\varepsilon/Q}}\left[R_0^s\Vert u\Vert_{\dot{N}^s_{p,q}(\Omega,\rho,\mu)}+\Vert u\Vert_{L^{p}(\Omega)}\right],
\end{equation*}
whenever $u\in\dot{N}^s_{p,q}(\Omega,\rho,\mu)$. Here, $p^*:=Qp/(Q-\varepsilon p)$.

\item There exist $\varepsilon,\,s\in(0,\infty)$, a
$p\in(0,Q/\varepsilon)$, a $q\in(0,\infty]$, and a $C_P\in(0,\infty)$
satisfying $\varepsilon<s\preceq_q{\rm ind}(\Omega,\rho)$
such that, for any ball $B_0:=B_\rho(x_0,R_0)$ with $x_0\in\Omega$ and $R_0\in(0,1]$, one has
\begin{equation*}
\inf_{\gamma\in\mathbb{R}}\Vert u-\gamma\Vert_{L^{p^\ast}(B_0\cap\Omega)}\leq
\frac{C_PR_0^s}{[\mu(B_0)]^{\varepsilon/Q}}\,\Vert u\Vert_{\dot{N}^s_{p,q}(\Omega,\rho,\mu)},
\end{equation*}
whenever $u\in\dot{N}^s_{p,q}(\Omega,\rho,\mu)$. Here, $p^*:=Qp/(Q-\varepsilon p)$.
		
\item There exist $c_1,\,c_2,\,\omega\in(0,\infty)$, a $q\in(0,\infty]$,
and  $\varepsilon,\,s\in(0,\infty)$ satisfying
$\varepsilon<s\preceq_q{\rm ind}(\Omega,\rho)$ such that
\begin{equation*}
\int_{B_0\cap\Omega} {\rm exp}
\left(c_1\frac{[\mu(B_0)]^{\varepsilon/Q}|u-u_{B_0\cap\Omega}|}{R_0^s\Vert u\Vert_{\dot{N}^s_{Q/\varepsilon,q}(\Omega,\rho,\mu)}}\right)^\omega\,d\mu\leq c_2\mu(B_0),
\end{equation*}
whenever $B_0$ is a $\rho$-ball centered in $\Omega$ having radius $R_0\in(0,1]$, and $u\in \dot{N}^{s}_{Q/\varepsilon,q}(\Omega,\rho,\mu)$ with $\Vert u\Vert_{\dot{N}^s_{Q/\varepsilon,q}(\Omega,\rho,\mu)}>0$.
		
\item There exist $\varepsilon,\,s\in(0,\infty)$, a $p\in(Q/\varepsilon,\infty)$, a $q\in(0,\infty]$, and a $C_H\in(0,\infty)$ satisfying
$\varepsilon<s\preceq_q{\rm ind}(\Omega,\rho)$ such that,
for any ball $B_0:=B_\rho(x_0,R_0)$ with $x_0\in\Omega$ and $R_0\in(0,1]$, every function
$u\in \dot{N}^s_{p,q}(\Omega,\rho,\mu)$ has a H\"older continuous representative of order $s-Q/p$ on $B_0\cap \Omega$, denoted by $u$ again,
satisfying
\begin{equation*}
|u(x)-u(y)|\leq C_H\,[\rho(x,y)]^{s-Q/p}\frac{R_0^{Q/p}}{[\mu(B_0)]^{1/p}}\,\Vert u\Vert_{\dot{N}^s_{p,q}(\Omega,\rho,\mu)},
\quad\forall\,x,\,y\in B_0\cap \Omega.
\end{equation*}
\end{enumerate}
\end{theorem}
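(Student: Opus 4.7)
The plan is to follow the same seven-step structure used for Theorem~\ref{measdens-ext-sob}, with three principal adjustments to accommodate the Besov scale. First, to pass from the measure density condition (a) to the local Sobolev--Poincar\'e, Trudinger, and H\"older embeddings (d)--(g), I would invoke Theorem~\ref{mainembedding-epsilon} on the induced quasi-metric measure space $(\Omega,\rho,\mu)$ with $r_\ast:=C_\rho$, after verifying (exactly as in Step~1 of the proof of Theorem~\ref{measdens-ext-sob}) that the restriction of $\mu$ to $\Omega$ is $Q$-doubling on $\Omega$ up to scale $C_\rho$. The critical exponent $Q/\varepsilon$ appearing in Theorem~\ref{mainembedding-epsilon} is precisely the one that appears in the statements (d)--(g), so the implications go through by restricting functions to $\sigma B_0\cap\Omega$ and then invoking \eqref{measdensity} to compare $\mu(\sigma B_0\cap\Omega)$ with $\mu(B_0)$.

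Next, for the reverse implications (d)--(g)$\Longrightarrow$(a), the argument would mirror Step~2 of the proof of Theorem~\ref{measdens-ext-sob}. I would fix a ball $B:=B_\rho(x,r)$ with $x\in\Omega$ and $r\in(0,1]$, reduce (via Lemma~\ref{en2-4}) to the case $r\leq C_{\rho\lfloor_\Omega}\varphi^x_{\Omega,\rho}(r)/\lambda^2$, and invoke Lemma~\ref{HolderBump} (including property (f)) to produce a sequence $\{u_j\}_{j\in\mathbb{N}}$ of H\"older bump functions in $N^s_{p,q}(\Omega,\rho,\mu)$ associated to the domain ball $B\cap\Omega$. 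The bound $\|u_j\|_{\dot{N}^s_{p,q}(\Omega)}\lesssim 2^j r^{-s}[\mu(B_{\varrho_\#}(x,r_j)\cap\Omega)]^{1/p}$ provided by Lemma~\ref{HolderBump}(e) is precisely the analog of \eqref{xu-12-X} used for $\dot{M}^s_{p,q}$, and together with Lemma~\ref{HolderBump}(d) and (f) it yields lower bounds on the left-hand side of the embedding in (d), (e), (f), or (g), respectively. Feeding these into Lemma~\ref{iteration} with $t:=p^\ast=Qp/(Q-\varepsilon p)$ (automatically $p<t$ since $p<Q/\varepsilon$) then gives $\mu(B\cap\Omega)\gtrsim \mu(B)$, which is (a). The only nontrivial change from the Triebel--Lizorkin setting is that the factor $[\mu(B)]^{s/Q}$ becomes $[\mu(B)]^{\varepsilon/Q}$, but the iteration in Lemma~\ref{iteration} is insensitive to this substitution.

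For the remaining chain (a)$\Longrightarrow$(c)$\Longrightarrow$(b)$\Longrightarrow$(a): the implication (a)$\Longrightarrow$(c) is immediate from the Besov part of Theorem~\ref{measext}, which is exactly why (b) and (c) require the strict inequality $s<{\rm ind}\,(X,\rho)$; and (c)$\Longrightarrow$(b) is trivial. For (b)$\Longrightarrow$(a), starting from $u\in N^s_{p,q}(\Omega,\rho,\mu)$ and its extension $\widetilde{u}\in N^s_{p,q}(X,\rho,\mu)$, I would select an auxiliary parameter $\varepsilon\in(0,s)$ with $p<Q/\varepsilon$ (always possible by taking $\varepsilon$ small) and apply Theorem~\ref{mainembedding-epsilon}(a) to $\widetilde{u}$ on $\sigma B$, where $B:=B_\rho(x,r)$ with $x\in\Omega$ and $r\in(0,1]$ is arbitrary. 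Restricting the resulting inequality back to $B\cap\Omega$ and using $\|\widetilde{u}\|_{N^s_{p,q}(X)}\lesssim\|u\|_{N^s_{p,q}(\Omega)}$ yields the weak form of (e) for $N^s_{p,q}$, from which (a) follows via the bump-function argument of the previous paragraph.

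The main obstacle is the presence of the free parameter $\varepsilon$ in the Besov embeddings, which is absent in the Triebel--Lizorkin case. In particular, one must choose $\varepsilon$ compatibly both in Step~2 (to make the exponent $p^\ast=Qp/(Q-\varepsilon p)$ finite) and in Step~5 (to use the subcritical branch of Theorem~\ref{mainembedding-epsilon}); by taking $\varepsilon$ sufficiently small in each case, the same argument works uniformly. A second, more cosmetic difficulty is keeping track of the requirement $s<{\rm ind}\,(X,\rho)$ versus $s\preceq_q{\rm ind}\,(\Omega,\rho)$ in different statements: the former is needed only in (b) and (c), because it enters through Theorem~\ref{measext}, while the latter suffices for (d)--(g) since those items are proved via Lemma~\ref{HolderBump} applied inside $\Omega$.
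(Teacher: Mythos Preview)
Your proposal is correct and matches the paper's own proof, which consists of a single sentence: follow the seven-step structure of Theorem~\ref{measdens-ext-sob} verbatim, replacing the $\dot{M}^s_{p,q}$-embeddings of Theorem~\ref{DOUBembedding} by the $\dot{N}^s_{p,q}$-embeddings of Theorem~\ref{mainembedding-epsilon}. Your treatment of (b)$\Longrightarrow$(a) is in fact a mild streamlining: rather than splitting into the three cases $p<Q/\varepsilon$, $p=Q/\varepsilon$, $p>Q/\varepsilon$ as in Step~5 of Theorem~\ref{measdens-ext-sob}, you observe that since $\varepsilon$ is a free parameter one can always take $\varepsilon<\min\{s,Q/p\}$ and land directly in the subcritical branch of Theorem~\ref{mainembedding-epsilon}(a), bypassing the Trudinger and H\"older cases entirely.
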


The proof of this theorem follows along a similar line of reasoning as in the proof of Theorem~\ref{measdens-ext-sob} where, in place of the $\dot{M}^s_{p,q}$-embeddings from Theorem~\ref{DOUBembedding}, we use the $\dot{N}^s_{p,q}$-embeddings from Theorem~\ref{mainembedding-epsilon}.
We omit the details.

\addcontentsline{toc}{section}{References}

\bigskip

\noindent Ryan Alvarado (Corresponding author)

\medskip

\noindent Department of Mathematics and Statistics, Amherst College, Amherst, MA, USA

\smallskip

\noindent{\it E-mail:} \texttt{rjalvarado@amherst.edu}
\bigskip

\noindent Dachun Yang  and Wen Yuan

\medskip

\noindent Laboratory of Mathematics and Complex Systems (Ministry of Education of China),
School of Mathematical Sciences, Beijing Normal University, Beijing 100875, People's Republic of China

\smallskip

\noindent{\it E-mails:} \texttt{dcyang@bnu.edu.cn} (D. Yang)

\noindent\phantom{{\it E-mails:} }\texttt{wenyuan\@@bnu.edu.cn} (W. Yuan)

\end{document}